\title{Computational Barriers to Estimation from Low-Degree Polynomials}
\author[1]{Tselil Schramm\thanks{Email: \textit{tselil@stanford.edu}. This work was done while virtually visiting the Microsoft Research Machine Learning and Optimization group.}}
\author[2]{Alexander S.\ Wein\thanks{Email: \textit{awein@cims.nyu.edu}. Partially supported by NSF grant DMS-1712730 and by the Simons Collaboration on Algorithms and Geometry.}}
\affil[1]{Department of Statistics, Stanford University}
\affil[2]{Department of Mathematics, Courant Institute of Mathematical Sciences, NYU}
\date{}
\begin{document}

\maketitle

\begin{abstract}
One fundamental goal of high-dimensional statistics is to detect or recover planted structure (such as a low-rank matrix) hidden in noisy data. 
A growing body of work studies low-degree polynomials as a restricted model of computation for such problems: it has been demonstrated in various settings that low-degree polynomials of the data can match the statistical performance of the best known polynomial-time algorithms. 
Prior work has studied the power of low-degree polynomials for the task of \emph{detecting} the presence of hidden structures.
In this work, we extend these methods to address problems of estimation and recovery (instead of detection). 
For a large class of ``signal plus noise'' problems, we give a user-friendly lower bound for the best possible mean squared error achievable by any degree-$D$ polynomial. 
To our knowledge, these are the first results to establish low-degree hardness of recovery problems for which the associated detection problem is easy.
As applications, we give a tight characterization of the low-degree minimum mean squared error for the planted submatrix and planted dense subgraph problems, resolving (in the low-degree framework) open problems about the computational complexity of recovery in both cases.
\end{abstract}

\thispagestyle{empty}
\newpage
\thispagestyle{empty}
\setcounter{tocdepth}{2}
\tableofcontents
\newpage

\section{Introduction}
Many problems in high-dimensional statistics exhibit a gap between what is achievable statistically and what is achievable with known computationally-efficient (i.e., polynomial-time) algorithms. 
Such {\em information-computation gaps} appear in many of the canonical models of statistical estimation problems, including sparse principal component analysis (PCA), planted clique, and community detection, among others.
Because these are {\em average-case} problems in which the input is drawn from a specially chosen probability distribution, it is unlikely that the computational hardness of such problems can be established under standard worst-case complexity assumptions such as $\mathsf{P} \ne \mathsf{NP}$ (e.g.~\cite{FF93,BT06,AGGM06}).
Instead, to provide rigorous evidence for such gaps, researchers either give reductions between different statistical problems (e.g. \cite{BR-reduction,MW-reduction,BBH-reduction}), or prove lower bounds in restricted models of computation; these include lower bounds against families of convex programs (see e.g.\ \cite{sos-survey}), lower bounds in the statistical query framework (e.g.\ \cite{kearns-sq,sq-clique}), lower bounds against local algorithms (e.g.~\cite{GZ-regression,BGJ-tensor}), and more.

The focus of this work is the {\em low-degree polynomial} model of computation, in which we require that our algorithm's output is computable via a polynomial of bounded degree in the input.
This model has recently come into focus as a promising framework for studying the complexity of hypothesis testing problems.
The study of low-degree polynomials for hypothesis testing was initiated implicitly in the work of Barak et al.\ on sum-of-squares lower bounds for planted clique~\cite{pcal}, and was subsequently refined and extended to numerous additional settings by Hopkins and Steurer \cite{HS-bayesian}, followed by others (see e.g.~\cite{sos-power,sam-thesis}, and also~\cite{lowdeg-survey} for a survey).
In these works, the goal is to hypothesis test (with asymptotically vanishing error probability) between a \emph{null} distribution (typically i.i.d.\ ``noise'') and a \emph{planted} distribution (which includes a planted structure hidden in noise). 
Many state-of-the-art algorithms for such problems---including spectral methods and approximate message passing (AMP) algorithms~\cite{amp}---can be represented as low-degree (multivariate) polynomial functions of the input, where ``low'' means logarithmic in the dimension. 
Furthermore, it has been shown that the class of low-degree polynomials is precisely as powerful as the best known polynomial-time algorithms for many canonical problems, including planted clique~\cite{pcal}, sparse PCA~\cite{subexp-sparse-pca}, community detection~\cite{HS-bayesian}, tensor PCA~\cite{sos-power,lowdeg-survey}, and others. 
From this picture emerges the intriguing conjecture that low-degree polynomials may be as powerful as any polynomial-time algorithm for a broad class of high-dimensional testing problems~\cite{sam-thesis}. 
Thus, an impossibility result for low-degree polynomials is not merely a lower bound within a restricted model of computation, but further constitutes compelling evidence for average-case computational hardness.
\medskip

While the low-degree framework has had many successes, one limitation of the existing theory is that it is restricted to hypothesis testing (also called \emph{detection}) problems (although one exception is the work of~\cite{GJW-lowdeg,opt-ld-indep,ld-ksat}, which studies low-degree polynomials in the context of random optimization problems with no planted signal). 
But more often, in high-dimensional statistics we are interested in \emph{recovery} or \emph{estimation}, where the goal is to approximate the planted structure (in some norm, which may vary with the application) rather than merely detect its presence.
For some problems (e.g. planted clique) detection and recovery are believed to be equally hard in the sense that both tasks admit polynomial-time algorithms in precisely the same regime of parameters.
In such cases, computational hardness of recovery can often be deduced from computational hardness of detection (via a polynomial-time reduction from detection to recovery, as in Section~5.1 of~\cite{MW-reduction}). 
On the other hand, other problems are believed to exhibit \emph{detection-recovery gaps} where the recovery task is strictly harder (computationally) than the associated detection task.
For such problems, existing work has often struggled to find compelling concrete evidence for hardness of recovery in the parameter regime where detection is easy. In this work, we refer to \emph{detection-recovery gaps} as situations where the \emph{computational} limits of detection and recovery differ; there are also situations where the \emph{statistical} limits of detection and recovery differ (e.g.~\cite{DJ-higher-crit,planting-trees}).

One popular problem that appears to exhibit a detection-recovery gap is the following \emph{planted submatrix} problem~(studied by e.g.~\cite{minmax-loc,subm-it-det,subm-it-rec,MW-reduction,CX-pds,CLR-submatrix,HWX-amp,ogp-submatrix} and also used as a model for sparse PCA in the spiked Wigner model~\cite{amp-sparse-pca,all-none-sparse,ogp-sparse-pca}), where a submatrix of elevated mean is hidden in a Gaussian random matrix. 
In the planted submatrix problem, we observe an $n \times n$ matrix $Y = \lambda vv^\top + W$ where $\lambda > 0$ is the \emph{signal-to-noise ratio (SNR)}, $v \in \{0,1\}^n$ is a planted signal with i.i.d.\ $\mathrm{Bernoulli}(\rho)$ entries, and $W$ is a symmetric matrix of Gaussian $\mathcal{N}(0,1)$ noise (see Definition~\ref{def:subm}). 
We are interested in the high-dimensional setting, where $n \to \infty$ with $\lambda = n^{-a}$ and $\rho = n^{-b}$ for constants $a > 0$ and $0 < b < 1$. (To see why this is the interesting regime for $a$ and $b$: if $a < 0$ then recovery is easy by entrywise thresholding, and if $b > 1$ then the planted submatrix typically has size zero.) When $b < 1/2$ there appears to be a detection-recovery gap: distinguishing between $Y = \lambda vv^\top + W$ and the null distribution $Y = W$ is easy when $a < 2(1/2-b)$, simply by summing all entries of $Y$; however, when $a > 1/2-b$ there are no known polynomial-time algorithms for recovering $v$, or even for producing a non-trivial estimate of $v$.
(The reader may wonder whether the detection-recovery gap can be closed by simply choosing a better null distribution that matches the mean and covariance of the planted distribution. We show in Appendix~\ref{app:detection} that this closes the gap partially but not all the way: detection is still easy when $a < \frac{4}{3}(1/2-b)$.) 

In this work we give the first results that directly address recovery (as opposed to detection) in the low-degree framework. 
Suppose we are given some observation $Y \in \RR^N$, and the goal is to estimate a scalar quantity $x \in \RR$ (which could be, for instance, the first coordinate of the signal vector). Let $\RR[Y]_{\le D}$ denote the space of polynomials $f: \RR^N \to \RR$ of degree at most $D$. Define the \emph{degree-$D$ minimum mean squared error}
\begin{equation}\label{eq:mmse}
\MMSE_{\le D} := \inf_{f \in \RR[Y]_{\le D}} \EE_{(x,Y) \sim \PP}\left(f(Y) - x\right)^2
\end{equation}
where the expectation is over the joint distribution $\PP$ of $x$ and $Y$. No generality is lost by restricting to polynomials with deterministic (as opposed to random) coefficients; see Appendix~\ref{app:basic}. As we see below, understanding $\MMSE_{\le D}$ is equivalent to understanding the \emph{degree-$D$ maximum correlation} (which will be technically more convenient)
\begin{equation}\label{eq:corr}
\Corr_{\le D} := \sup_{\substack{f \in \RR[Y]_{\le D} \\ \EE_{\PP}[f^2] = 1}} \EE_{(x,Y) \sim \PP}[f(Y)\cdot x] = \sup_{\substack{f \in \RR[Y]_{\le D} \\ \EE_{\PP}[f^2] \ne 0}} \frac{\EE_{(x,Y)\sim\PP}[f(Y)\cdot x]}{\sqrt{\EE_{Y\sim \PP}[f(Y)^2]}}.
\end{equation}
An expert reader will note that in contrast to prior low-degree polynomial lower bounds (e.g.~\cite{HS-bayesian}), there is no null distribution involved in this expression; the expectations in the numerator and denominator are both over the planted distribution $\PP$.
From now on, expectations will be implicitly taken over $\PP$ unless stated otherwise.

\begin{fact}\label{fact:mmse-corr}
$\MMSE_{\le D} = \EE[x^2] - \Corr_{\le D}^2$.
\end{fact}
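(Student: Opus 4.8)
The plan is to expand the squared error and reduce the whole problem to a one-dimensional optimization, essentially by completing the square. First I would write, for any $f \in \RR[Y]_{\le D}$,
\[
\EE(f(Y) - x)^2 \;=\; \EE[f(Y)^2] - 2\,\EE[f(Y)\,x] + \EE[x^2],
\]
so that $\MMSE_{\le D} = \EE[x^2] + \inf_{f \in \RR[Y]_{\le D}}\big(\EE[f^2] - 2\,\EE[fx]\big)$, where I assume $\EE[x^2] < \infty$ (otherwise the identity is vacuous). Polynomials with $\EE[f^2] = \infty$ give infinite MSE and may be discarded; those with $\EE[f^2] = 0$ satisfy $\EE[fx] = 0$ by Cauchy--Schwarz and contribute exactly what $f \equiv 0$ does; so the infimum is unchanged if we restrict to $f$ with $0 < \EE[f^2] < \infty$ (the constant polynomial $1$ shows this set is nonempty, and $c\to 0^+$ below shows the restricted infimum is still $\le 0$).

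Next I would normalize. Writing such an $f$ as $f = c\,g$ with $c \in \RR$ and $\EE[g^2] = 1$, the objective becomes the quadratic $c^2 - 2c\,\EE[gx] = (c - \EE[gx])^2 - \EE[gx]^2$, whose minimum over $c \in \RR$ is $-\EE[gx]^2$. Optimizing over $c$ first and then over $g$ turns the infimum into $-\sup_{g:\,\EE[g^2]=1}\EE[gx]^2$. The final step is to identify this supremum with $\Corr_{\le D}^2$: since $\RR[Y]_{\le D}$ is a vector space, it is closed under $g \mapsto -g$, so $\sup_g \EE[gx] \ge 0$ and $\sup_g \EE[gx]^2 = \big(\sup_g |\EE[gx]|\big)^2 = \big(\sup_g \EE[gx]\big)^2 = \Corr_{\le D}^2$ (all suprema over $\{\EE[g^2]=1\}$). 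Substituting into the expression for $\MMSE_{\le D}$ gives the claim.

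I do not expect a genuine obstacle here; the content is a completion of squares together with the convenient fact that there is no null distribution, so both the numerator and denominator in $\Corr_{\le D}$ live under the same expectation $\PP$. The only points needing a moment's care are the degenerate cases $\EE[f^2] \in \{0,\infty\}$ and the sign bookkeeping in the last step. Both are routine: since $\RR[Y]_{\le D}$ is finite-dimensional and all relevant second moments are finite, one could even note that the linear functional $f \mapsto \EE[fx]$ vanishes on the kernel of the PSD form $f \mapsto \EE[f^2]$, so it descends to a space where that form is positive definite and the optimizing $f$ and $g$ are in fact attained.
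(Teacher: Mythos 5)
Your proposal is correct and follows essentially the same route as the paper: write $f = c\,g$ with $\EE[g^2]=1$, complete the square in the scalar $c$, and then identify the resulting supremum of $\EE[gx]$ with $\Corr_{\le D}$. The only difference is that you spell out the degenerate cases $\EE[f^2]\in\{0,\infty\}$ and the sign bookkeeping, which the paper leaves implicit.
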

\begin{proof}
Suppressing the constraint $f \in \RR[Y]_{\le D}$ for ease of notation, we have
\begin{align*}
\MMSE_{\le D} &= \inf_{\EE[f^2]=1} \;\inf_{\alpha \in \RR}\; \EE \left(\alpha f(Y) - x\right)^2
= \inf_{\EE[f^2]=1} \; \EE[x^2] - \left(\EE[f(Y) \cdot x]\right)^2 \\
&= \EE[x^2] - \left(\sup_{\EE[f^2]=1} \; \EE[f(Y) \cdot x]\right)^2
= \EE[x^2] - \Corr_{\le D}^2,
\end{align*}
completing the proof.
\end{proof}

\noindent For the case of the planted submatrix problem (discussed above) we will take $x$ to be the first coordinate of the signal: $x = v_1$. Note that due to symmetry, recovering $v_1$ is equivalent to recovering the entire vector $v$ in the sense that
\[ \inf_{f_1,\ldots,f_n \in \RR[Y]_{\le D}} \EE\, \frac{1}{n} \sum_{i=1}^n \left(f_i(Y) - v_i\right)^2 = \inf_{f \in \RR[Y]_{\le D}} \EE \left(f(Y) - v_1\right)^2 = \MMSE_{\le D}. \]
We remark that in some problems, inherent symmetries make it impossible to discern whether the planted vector is $v$ or $-v$. In such cases, one can break symmetry by choosing $x = v_1 v_2$. However, this issue does not arise for the examples considered in this paper.

\subsection{Our Contributions}

While $\MMSE_{\le D}$ is a natural quantity, in many cases it is difficult to bound, and for this reason it has not yet received attention in prior work.
In this work, we obtain the first useful lower bounds on $\MMSE_{\le D}$ in various ``signal plus noise'' settings, namely the general additive Gaussian noise model (see Section~\ref{sec:gauss}) and the general binary observation model (see Section~\ref{sec:binary}). 
This allows us to tightly characterize $\MMSE_{\le D}$ for both the planted submatrix problem (see Section~\ref{sec:subm}) and the related planted dense subgraph problem (see Section~\ref{sec:subg}). For example, we show the following for planted submatrix (restricting to the most interesting regime $b < 1/2$, where there appears to be a detection-recovery gap).
\begin{theorem}[Special case of Theorem~\ref{thm:mmse-subm}]
Consider the planted submatrix problem with $n \to \infty$, $\lambda = n^{-a}$, and $\rho = n^{-b}$ for constants $a > 0$ and $0 < b < 1/2$.
\begin{enumerate}
    \item[(i)] If $a > 1/2 - b$ then $\MMSE_{\le n^\eps} = \rho - (1+o(1)) \rho^2$ for some constant $\eps = \eps(a,b) > 0$, i.e., no degree-$n^\eps$ polynomial outperforms the trivial estimator $f(Y) = \EE[v_1] = \rho$ (which has mean squared error $\rho - \rho^2$).
    \item[(ii)] If $a < 1/2 - b$ then $\MMSE_{\le C} = o(\rho)$ for some constant $C = C(a,b)$, i.e., some degree-$C$ polynomial achieves asymptotically perfect estimation.
\end{enumerate}
\end{theorem}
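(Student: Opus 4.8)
The plan is to obtain part~(i) from the general low-degree lower bound for the additive Gaussian noise model (Section~\ref{sec:gauss}, which underlies Theorem~\ref{thm:mmse-subm}), applied with signal $X = \lambda vv^\top$, i.i.d.\ Gaussian noise $W$, and target scalar $x = v_1$, and to obtain part~(ii) by exhibiting an explicit constant-degree estimator. By Fact~\ref{fact:mmse-corr} it is in both cases equivalent to control $\Corr_{\le D}^2$: for (i) we want $\Corr_{\le D}^2 = \rho^2(1+o(1))$ at $D = n^\eps$, and for (ii) we want $\Corr_{\le D}^2 = \rho - o(\rho)$ at $D = C$ a constant. Note that the constant polynomial $f \equiv 1$ already achieves $\Corr = \EE[v_1] = \rho$, so part~(i) asserts that no degree-$n^\eps$ polynomial improves on this by more than lower-order terms, while part~(ii) asserts near-perfect recovery.

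For part~(i): expanding a degree-$D$ estimator $f$ in the Hermite basis of the noise and using $v_i^j = v_i$, one computes $\EE[f(Y)\,v_1] = \sum_{\alpha}\hat f_\alpha\,\lambda^{|\alpha|}\,\rho^{\,|\mathrm{supp}(\alpha)\cup\{1\}|}/\sqrt{\alpha!}$, where $\alpha$ ranges over multi-indices on the matrix entries and $\mathrm{supp}(\alpha)$ is the set of vertices incident to an edge of positive multiplicity. The nontrivial point is that the normalization in $\Corr_{\le D}$ is $\EE_{\PP}[f^2]$ under the \emph{planted} distribution rather than under a pure-noise null; the general theorem supplies the bound on $\Corr_{\le D}^2$ that correctly incorporates this, and its upshot for the submatrix model is a sum over combinatorial structures in which the empty structure contributes $\EE[v_1]^2 = \rho^2$, the leading nontrivial contributions come from connected subgraphs \emph{containing the target vertex~$1$}, and structures disjoint from vertex~$1$ are suppressed by the planted normalization. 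A connected structure on $t$ vertices (including~$1$) and $k \ge t-1$ edges (counted with multiplicity) contributes on the order of at most $n^{t-1}\lambda^{2k}\rho^{2t} \le \rho^2\,(n\lambda^2\rho^2)^{\,t-1} = \rho^2\,n^{(t-1)(1-2a-2b)}$, the extreme case being a simple path of length $t-1$ out of vertex~$1$. When $a > 1/2-b$ we have $1-2a-2b < 0$, so these decay geometrically in $t$; summing over all structures of total degree at most $D = n^\eps$ (with $\eps$ a small enough constant depending on $a,b$, so that the number of structures of a given size is outweighed by the decay) gives $\Corr_{\le D}^2 = \rho^2(1+o(1))$, hence $\MMSE_{\le D} = \rho - (1+o(1))\rho^2$. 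The bulk of the effort here is the combinatorial bookkeeping bounding every nonempty structure by the displayed quantity, together with verifying the hypotheses of the general theorem for this model.

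For part~(ii): I would take $f(Y) = c\cdot(Y^{C}\mathbf{1})_1$, the rescaled first coordinate of $C$ steps of power iteration on $Y$ from the all-ones vector --- a degree-$C$ polynomial in the entries of $Y$, equivalently a weighted count of length-$C$ walks from vertex~$1$. Its signal part is $\big((\lambda vv^\top)^{C}\mathbf{1}\big)_1 = \lambda^{C}\|v\|_1^{C}\,v_1 = (1+o(1))\,\lambda^{C}(\rho n)^{C}\,v_1$ using $\|v\|_1 = (1+o(1))\rho n$, while the pure-noise part $(W^{C}\mathbf{1})_1$ has second moment of order $n^{C}$; choosing $c \asymp (\lambda\rho n)^{-C}$ gives $f(Y) = (1+o(1))\,v_1 + \xi$ with $\EE[\xi^2]$ of order $n^{-2C(1/2-a-b)}$ up to lower-order terms, which is $o(\rho)$ once $C$ is a large enough constant depending on $(a,b)$ since $1/2-a-b > 0$. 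The one subtlety is that the mixed signal-noise terms in the walk expansion naively produce spuriously large ``bias'' contributions from walks that revisit a vertex; I would handle this with the self-avoiding analogue $\sum_{1 = i_0,\ldots,i_C\,\mathrm{distinct}}Y_{i_0 i_1}\cdots Y_{i_{C-1}i_C}$, for which those contributions vanish and the remaining mixed terms are genuinely lower order when $a < 1/2-b$ (alternatively one can invoke the analysis of a constant number of rounds of AMP, which is likewise a constant-degree polynomial). Then $\MMSE_{\le C} = \rho - \Corr_{\le C}^2 = o(\rho)$.

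I expect the main obstacle to lie in part~(i): obtaining a bound on $\Corr_{\le D}^2$ that is sensitive to the normalization being under the planted rather than the null distribution. A crude estimate that ignores this distinction fails to suppress ``global'' structure (e.g.\ the $\Theta(n^2)$ single edges disjoint from vertex~$1$) and yields only the non-tight threshold $a > 1-2b$, which is precisely the threshold for the associated \emph{detection} problem; pushing this down to the correct recovery threshold $a > 1/2-b$ is exactly what the general theorem of Section~\ref{sec:gauss} accomplishes, and invoking it correctly --- together with the combinatorial sums above --- is the crux.
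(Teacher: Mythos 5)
Your proposal for part~(i) takes essentially the same route as the paper: invoke Theorem~\ref{thm:corr-gauss}, observe that the cumulant structure of $\kappa_\alpha$ annihilates any $\alpha$ with a connected component disjoint from vertex~$1$ (this is Lemma~\ref{lem:conn}, proved via Proposition~\ref{prop:cumulant-indep}), bound $|\kappa_\alpha|$ and count connected multigraphs through vertex~$1$, then sum a geometric series. You correctly identify the crux — that the \emph{planted} normalization, not the null one, is what kills the disconnected structures that would otherwise dominate and push the threshold to the detection threshold $a > 1-2b$. One caveat: your per-structure bound $n^{t-1}\lambda^{2k}\rho^{2t}$ omits the combinatorial blow-up factors that actually appear (the bound on $|\kappa_\alpha|$ in Lemma~\ref{lem:kappa-bound} carries a $(|\alpha|+1)^{|\alpha|}$ factor, and the multigraph count in Lemma~\ref{lem:count-graphs} carries $d^d$ and $d^h$ factors). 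These are not negligible at degree $D=n^\eps$; the paper absorbs them by requiring $\lambda \le \frac{r}{D(D+1)}\min\{1,(\rho\sqrt n)^{-1}\}$, which is what forces $\eps$ to be strictly smaller than $(a+b-1/2)/2$ rather than anything up to $a+b-1/2$. You flag ``$\eps$ small enough so that the number of structures is outweighed by the decay,'' which is the right caveat, but the quantitative bookkeeping is where the real work lives.

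For part~(ii) you take a genuinely different route. The paper's estimator is $\tau_k\bigl(\tfrac{1}{\lambda\rho n}\sum_i Y_{1i}\bigr)$: a single linear ``power iteration'' step (summing row $1$), fed into a degree-$(2k+1)$ polynomial threshold $\tau_k$ (Proposition~\ref{prop:poly-thresh}), with the MSE controlled by a scalar concentration bound plus hypercontractivity (Corollary~\ref{cor:hyp}) to pass from a high-probability guarantee to a bound in expectation. You instead propose $c\,(Y^C\mathbf 1)_1$, a raw $C$-step walk count with no final threshold, and handle the cross/backtracking terms via a self-avoiding modification (or AMP). Both routes should yield $\MMSE_{\le C}=o(\rho)$ for constant $C$ when $a<1/2-b$ and $b<1/2$. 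What the paper's construction buys is that the entire analysis reduces to one-dimensional concentration of the argument of $\tau_k$, and the smoothing of $\tau_k$ converts a high-probability $O(\Delta)$ error into an expected squared error of order $(6\Delta)^{2k}$, so making the degree larger directly makes the MSE smaller with no new combinatorics. Your approach requires a careful second-moment computation over walks — in particular verifying that the mixed signal/noise terms and the fluctuation of $\|v\|_1^C$ are genuinely lower order — which is doable (and is roughly what subgraph-count algorithms à la~\cite{graph-matching} look like) but is more delicate than the thresholding argument; the self-avoiding walk fix you suggest is a reasonable way to tame the moment calculation. There is no gap in your plan, just a different and somewhat heavier construction for the upper bound.
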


\noindent Part (ii) is unsurprising and simply confirms that low-degree polynomials succeed in the regime where recovery is already known to be computationally easy. 
Our main result is part (i), which establishes that low-degree polynomials cannot be used for recovery throughout the ``hard'' regime where no computationally-efficient algorithms are known.
This resolves (in the low-degree framework) an open problem that has been mentioned in various works~\cite{stat-comp-bi,MW-reduction,CX-pds}. 
Crucially, our result shows hardness of recovery in a regime where detection is easy, and thus provides concrete evidence for a detection-recovery gap. There is nothing fundamental about the choice of degree $n^\epsilon$ in part (i); this is simply the highest degree we are able to rule out. As discussed in Section~\ref{sec:other-models} below, even ruling out degree $O(\log n)$ is considered evidence that no polynomial-time algorithm exists.

While Theorem~\ref{thm:mmse-subm} focuses on the goal of estimating $v$ in $\ell_2$ loss, another natural task is \emph{support recovery} (also called \emph{localization}), where the goal is to estimate $v$ in Hamming ($\ell_0$) loss. In Appendix~\ref{app:est-rec} we show that these notions are equivalent for our purposes: if there is a polynomial-time estimator $\hat v$ achieving $\|\hat v - v\|_2^2 = o(\rho n)$ then there is a polynomial-time estimator $\hat u$ achieving $\|\hat u - v\|_0 = o(\rho n)$, and vice versa. Thus, part (i) above suggests that support recovery is also hard when $a > 1/2 - b$ (although we have not shown unconditionally that low-degree polynomials fail at support recovery). On the positive side, the proof of part (ii) actually shows that when $a < 1/2 - b$, it is possible to exactly recover $v$ with high probability by thresholding a low-degree polynomial (see Section~\ref{sec:upper-bounds}).

\subsubsection{Implications for other models of computation} \label{sec:other-models}

We note that many of the best known algorithmic approaches can be represented as (or approximated by) polynomials of degree $O(\log n)$, and so part (i) of Theorem~\ref{thm:mmse-subm} implies failure of any such algorithm in the ``hard'' regime.
One such family of algorithms are \emph{spectral methods} which involve computing the leading eigenvector of some symmetric matrix $M$ (of dimension $\mathrm{poly}(n)$) constructed from the data $Y$. The matrix $M$ can either be $Y$ itself (e.g.~\cite{BBP,FP,BN-eigenvec}), or a more sophisticated function of $Y$ (e.g.~\cite{sos-tensor-pca,sos-fast,sos-power,tdecomp-robust}). 
It is typical for the leading ``signal'' eigenvalue to be larger than the rest by a constant factor, in which case $O(\log n)$ rounds of power iteration suffice to approximate the leading eigenvector.
If each entry of $M$ is a constant-degree polynomial in $Y$ (which is the case for most natural spectral algorithms), the whole process amounts to computing an $O(\log n)$-degree polynomial.
Another family of low-degree algorithms are those based on the \emph{approximate message passing (AMP)} framework~\cite{amp} (see also e.g.~\cite{bolthausen,BM-amp,RF-amp,JM-amp}). 
These typically involve a constant number of nonlinear iterations, each of which can be well-approximated by a constant-degree polynomial; thus the whole process is a constant-degree polynomial. 
In some cases, a spectral initialization is needed (e.g.~\cite{amp-spectral-init}), bringing the total degree to $O(\log n)$. 
Finally, other state-of-the-art algorithms have been designed directly from low-degree polynomials~\cite{HS-bayesian,graph-matching}.
Part (i) of Theorem~\ref{thm:mmse-subm} rules out the success of all of these algorithms; in fact, it also rules out polynomials of much higher degree: $n^{\Omega(1)}$ instead of merely $O(\log n)$, which suggests that the runtime required is $\exp(n^{\Omega(1)})$ instead of merely super-polynomial (see Hypothesis~2.1.5 of~\cite{sam-thesis}, or~\cite{lowdeg-survey,subexp-sparse-pca}).

\subsection{Prior Work}

In this section, we discuss how low-degree polynomials compare to some other restricted models of computation which are popular in the study of information-computation gaps.
 While each model offers valuable (and complementary) insights, we emphasize that our work is the only one to simultaneously meet the following two criteria:
\begin{itemize}
    \item Our approach directly addresses the recovery problem and is able to establish detection-recovery gaps, i.e., it can show hardness of recovery in regimes where detection is easy.
    \item Our approach rules out a class of algorithms (namely low-degree polynomials) which are as powerful as all known polynomial-time algorithms for standard testbed problems such as planted clique, sparse PCA, and tensor PCA. (As we will see, some methods do not predict the correct computational thresholds for these problems, at least without some caveats.)
\end{itemize}

\noindent For the specific applications we consider---planted submatrix and planted dense subgraph---we defer an in-depth discussion of prior work to Sections~\ref{sec:subm} and \ref{sec:subg}, respectively.

\subsubsection{Low-degree likelihood ratio}

In the setting of hypothesis testing between a planted distribution $\PP_n$ and a null distribution $\QQ_n$ (where $n$ is a notion of problem size, e.g.~dimension), the most closely related prior work (including~\cite{HS-bayesian,sos-power,sam-thesis,sk-cert,lowdeg-survey,subexp-sparse-pca}) has analyzed (the norm of) the \emph{low-degree likelihood ratio}
\begin{equation}\label{eq:ldlr}
\|L^{\le D}\|:= 
\sup_{f \in \RR[Y]_{\le D}} \frac{\EE_{Y \sim \PP_n}[f(Y)]}{\sqrt{\EE_{Y \sim \QQ_n}[f(Y)^2]}}.
\end{equation}
The notation $\|L^{\le D}\|$ stems from the fact that this quantity can be computed by projecting the likelihood ratio $L_n = \frac{d\PP_n}{d\QQ_n}$ onto the subspace of degree-$D$ polynomials (see e.g.~\cite{sam-thesis,lowdeg-survey} for details). 
As is made clear in e.g.~\cite{sam-thesis}, we may equivalently define
\begin{equation}\label{eq:ldlr-2}
\|L^{\le D}\|^2 - 1 = 
\left(\sup_{\substack{f \in \RR[Y]_{\le D}\\\E_{Y \sim \QQ_n}[f(Y)] = 0}} \frac{\EE_{Y \sim \PP_n}[f(Y)]}{\sqrt{\Var_{Y \sim \QQ_n}[f(Y)]}}\right)^2.
\end{equation}
The quantity $\|L^{\le D}\|$ is a heuristic measure of how well degree-$D$ polynomials can distinguish $\PP_n$ from $\QQ_n$. 
To see why, note that the variational problem in (\ref{eq:ldlr-2}) relaxes a program which optimizes over degree-$D$ polynomials $f$ that test between $\QQ_n$ and $\PP_n$: if $\|L^{\le D}\| = O(1)$ then no degree-$D$ polynomial $f$ can separate $\PP_n$ from $\QQ_n$ in the sense $\EE_{\PP_n}[f] - \EE_{\QQ_n}[f] \ge 1$, $\Var_{\QQ_n}[f] = o(1)$, and $\Var_{\PP_n}[f] = o(1)$.

The minimum degree of a polynomial hypothesis test (and $\|L^{\le D}\|$ as a proxy for such) is an interesting measure of problem complexity in its own right---the idea of studying a function's degree as a proxy for its computational complexity has a rich history in the theory of worst-case complexity, e.g.~\cite{NS94,Paturi92}. 
Furthermore, a polynomial hypothesis test of degree $D$ (whose coefficients have polynomial bit complexity) can always be implemented by an algorithm with running time $O(n^{D})$, as one may simply take the weighted sum of the values of $n^D$ monomials. 
Some polynomials with special structure, such as those corresponding to spectral algorithms in matrices with a spectral gap, can be implemented even more efficiently---there, a degree-$O(\log n)$ polynomial can be implemented in time $n^{O(1)}$ using power iteration.
A priori, it is unclear that the failure low-degree polynomial hypothesis tests should imply anything for other, more powerful models of computation.
But remarkably, the behavior of $\|L^{\le D}\|$ for $D \approx \log n$ has been observed to coincide with the conjectured computational threshold in many detection problems, including planted clique, community detection, tensor PCA, and sparse PCA~\cite{pcal,HS-bayesian,sos-power,sam-thesis,lowdeg-survey,subexp-sparse-pca}: $\|L^{\le D}\| \to \infty$ as $n \to \infty$ in the ``easy'' regime (where polynomial-time algorithms are known) while $\|L^{\le D}\| = O(1)$ in the ``hard'' regime. 
It has been conjectured~\cite{sam-thesis} that the boundedness of $\|L^{\le D}\|$ as the dimension goes to infinity indicates computational intractability for a large class of high-dimensional testing problems. (See~\cite{sam-thesis,lowdeg-survey,lowdeg-counter} for discussion regarding the class of problems for which the low-degree conjecture is believed to hold.)
Assuming this conjecture, bounding $\|L^{\le D}\|$ for any $D$ with $\frac{D}{\log n} \to \infty$ as $n \to \infty$ implies hardness of detection.
However, this approach is limited to regimes where detection is hard, and so cannot be used to establish detection-recovery gaps.

\subsubsection{Null-normalized correlation}
\label{sec:null-corr}

Hopkins and Steurer~\cite{HS-bayesian}, in one of the early works that proposed the low-degree likelihood ratio, also proposed a related heuristic for understanding recovery problems. Namely, they study the quantity
\begin{equation}\label{eq:null-corr}
\sup_{f \in \RR[Y]_{\le D}} \frac{\EE_{Y \sim \PP_n}[f(Y) \cdot x]}{\sqrt{\EE_{Y \sim \QQ_n}[f(Y)^2]}}.
\end{equation}
Note that this resembles the maximum correlation $\Corr_{\le D}$ defined in~\eqref{eq:corr} (which is related to $\MMSE_{\le D}$ via Fact~\ref{fact:mmse-corr}), except the expectation in the denominator is not taken with respect to the planted distribution but rather with respect to some appropriate choice of null distribution---this is purely a matter of convenience, as it makes bounding~\eqref{eq:null-corr} analytically tractable when $\QQ_n$ has independent coordinates.
In~\cite{HS-bayesian} it is shown that the behavior of~\eqref{eq:null-corr} correctly captures the conjectured computational threshold in the stochastic block model (which does not have a detection-recovery gap).
However, we show in Appendix~\ref{app:null-corr} that in the presence of a detection-recovery gap, \eqref{eq:null-corr} actually captures the detection threshold instead of the recovery threshold: when detection is easy, a polynomial can ``cheat'' by outputting a much larger value under $\PP_n$ than under $\QQ_n$, causing~\eqref{eq:null-corr} to diverge to infinity. In this work, we give the first techniques for bounding the more natural quantity $\Corr_{\le D}$, which resolves an open problem of Hopkins and Steurer~\cite{HS-bayesian}.

\subsubsection{Sum-of-squares lower bounds}

The \emph{sum-of-squares (SoS) hierarchy} is a powerful family of semidefinite programs that has been widely successful at obtaining state-of-the-art algorithmic guarantees for many problems; see e.g.~\cite{sos-survey} for a survey. 
SoS is most naturally suited to the task of {\em certification} (or {\em refutation}): when there is no hidden structure, SoS can certify (or fail to certify) the absence of structure.
SoS lower bounds show that SoS fails to certify the absence of structure (e.g.~\cite{pcal,KMOW,sos-power}), providing strong evidence of computational intractability for certification.
(In fact, the low-degree likelihood ratio was originally motivated by its connection to the {\em pseudocalibration} approach to SoS lower bounds~\cite{pcal,sos-power,KMOW}.)

However, recent work reveals that the certification problem can sometimes be fundamentally harder than the associated recovery problem (see e.g.~\cite{sk-cert,quiet-coloring}); for instance, this seems to be the case for the Sherrington--Kirkpatrick spin glass model~\cite{sk-cert,lifting-sos,sos-sk,M-opt-sk} and the planted coloring model~\cite{theta-function,quiet-coloring}.
Thus, SoS certification lower bounds are not necessarily evidence for computational hardness of recovery.
A variant of the SoS hierarchy called the \emph{local statistics hierarchy}~\cite{local-stats} was recently proposed to directly address detection, but there is not currently an analogue for recovery.

\subsubsection{Spectral algorithms}
Spectral algorithms involve computing eigenvectors or singular vectors of matrices constructed from the data; for example, the leading algorithms for a number of PCA tasks are spectral methods~\cite{BBP,FP,BN-eigenvec}.
Recently, a line of work has demonstrated that sum-of-squares algorithms for recovery can often be translated into ``low-degree'' spectral algorithms, in which the resulting matrix has entries that are constant-degree polynomials of the data and constant spectral gap \cite{sos-tensor-pca,sos-fast,tdecomp-robust}. As discussed in Section~\ref{sec:other-models} above, such spectral methods can be approximated by low-degree polynomials. Thus, remarkably, for many recovery problems, even algorithms based on powerful convex programs are equivalent in power to low-degree algorithms. (There is a result due to Hopkins et al.~\cite{sos-power} which formally equates the power of SoS and low-degree spectral algorithms for detection in a wide variety of noise-robust problems; however this result does not guarantee that the low-degree matrices have a sufficiently large spectral gap, and so it falls short of implying that SoS is captured by low-degree polynomials.)

\subsubsection{Statistical query algorithms}

For settings where the observed data consists of i.i.d.\ samples drawn from some distribution, the \emph{statistical query (SQ) model}~\cite{kearns-sq,sq-clique} is used to understand information-computation tradeoffs.
A statistical query algorithm is allowed to compute the average of any (not necessarily efficiently computable) bounded function or ``query'' on the samples, up to an adversarial error of bounded magnitude---the number of queries is a proxy for computational efficiency, and the magnitude of the error is a proxy for the signal-to-noise ratio.
In some cases, it is possible to map a single-sample problem (like the ones we consider in this paper) to a computationally equivalent multi-sample problem, as was done in e.g.~\cite{DH21,BBHLS}.
In some cases, predictions in the SQ framework match the conjectured computational thresholds; in other cases, there are discrepancies.
For instance, in the case of tensor PCA, SQ lower bounds suggest that there is a detection-recovery gap whereas in reality there is not (i.e., efficient algorithms for recovery are known in the regime where no SQ algorithms using the ``VSTAT'' oracle exist)~\cite{DH21}.
In a concurrent work, \cite{BBHLS} compare SQ and low-degree algorithms, showing that lower bounds against both algorithms agree for a large class of detection problems; we refer the reader there for a more thorough discussion.

\subsubsection{Approximate message passing}

The \emph{approximate message passing (AMP)} framework (e.g.~\cite{bolthausen,amp,BM-amp,RF-amp,JM-amp}) gives state-of-the-art algorithmic guarantees for a wide variety of problems. 
In some settings, AMP is information-theoretically optimal (e.g.~\cite{DAM-amp}), and when it is not, AMP is often conjectured to be optimal among polynomial-time algorithms (or at least among nearly-linear-time algorithms)~\cite{LKZ-sparse-pca,LKZ-mmse}. 
For this reason, the failure of AMP is often taken as evidence that no efficient algorithm exists (specifically for the recovery problem).
However, there are some natural problems---including planted clique and tensor PCA---where AMP is known to have strictly worse performance than other polynomial-time algorithms~\cite{amp-clique,RM-tensor-pca}. 
There are state-of-the art algorithms for tensor PCA which can be interpreted as a ``lifting'' of AMP in some sense~\cite{kik}, but it remains unclear whether similar liftings can be performed more generally (e.g.~for planted clique).

AMP (and its liftings) can often be approximated by low-degree polynomials: AMP analyses typically consider a constant number of iterations, and the nonlinearities applied in each step are often well-approximated by constant-degree polynomials (in fact, the use of constant-degree polynomial approximation is important in some analyses; see e.g.~\cite{amp-clique,HWX-amp}). Hence, the success of AMP in such settings is ruled out by our lower bounds. We note that the more traditional analysis of AMP based on the so-called \emph{state evolution (SE)} equations~\cite{amp,BM-amp,JM-amp} typically gives sharper results than what we can achieve with our methods; namely, the SE equations allow for the exact mean squared error of AMP (in the limit $n \to \infty$) to be calculated in many cases.

\subsubsection{Optimization landscape}

There are a number of related approaches for understanding the ``difficulty'' of combinatorial or non-convex optimization landscapes.
Typically, one characterizes structural properties of the solutions space, including variants of the \emph{overlap gap property (OGP)}, to prove that certain classes of algorithms fail (e.g.~\cite{alg-barriers,GS-ogp,RV-ogp,walksat,maxcut-ogp,GJ-ogp,GJW-lowdeg}). 
Restricting our discussion to the context of planted problems, the algorithms ruled out include certain local algorithms and MCMC (Markov chain Monte Carlo) methods~\cite{GZ-regression,GZ-clique,ogp-submatrix,ogp-sparse-pca}. 
This framework complements the low-degree framework because low-degree methods do not rule out local/MCMC algorithms and OGP-based methods (for planted problems) do not rule out low-degree algorithms. 
Like AMP, the OGP approach directly addresses the recovery problem but suffers from one caveat: in some settings---including planted clique and tensor PCA---the natural local and MCMC methods perform strictly worse than the best known polynomial-time algorithms (see~\cite{BGJ-tensor,GZ-clique}). 
This can sometimes be overcome with problem-specific fixes (e.g.~\cite{GZ-clique,replicated-gd,ogp-sparse-pca}), but there is currently no general framework for lower bounds which captures all plausible fixes.

\subsubsection{Average-case reductions}

Average-case reductions (e.g.~\cite{BR-reduction,MW-reduction,matrix-comp-reduction,HWX-pds,WBS-reduction,sparse-cca,CLR-submatrix,WBP-rip,ZX-reduction,CW-reduction,BBH-reduction,BB-opt-reduction,secret-leakage}) provide fully rigorous connections between average-case problems, showing that if one problem can be solved in polynomial time then so can another.
Existing reductions that establish hardness of average-case problems need to start from an average-case problem that is assumed to be hard (such as planted clique or its ``secret leakage'' variants~\cite{secret-leakage}).
Methods such as the low-degree framework complement these results by giving concrete evidence that the starting problem is hard.

Existing average-case reductions have only established the presence of detection-recovery gaps when using a starting problem that is assumed to already have a detection-recovery gap. 
The work of \cite{CLR-submatrix} does establish the hardness of recovery for one version of the planted submatrix problem assuming hardness of detection for planted clique; however, the noise in their planted submatrix problem is more complex than the canonical i.i.d.\ Gaussian noise that we consider here.
The work \cite{BBH-reduction} establishes detection-recovery gaps in a few different problems assuming the \emph{planted dense subgraph (PDS) recovery conjecture}, which asserts the presence of a detection-recovery gap in the planted dense subgraph problem. However, concrete evidence for this conjecture 
has been somewhat lacking until now: in this paper, we give tight recovery lower bounds for planted dense subgraph in the low-degree framework.

\subsection{Proof Techniques}
\label{sec:tech}

We now summarize the difficulties in proving lower bounds on $\MMSE_{\le D}$, and explain the key insights that we use to overcome these. By Fact~\ref{fact:mmse-corr}, it is equivalent to instead prove an upper bound on $\Corr_{\le D}$, defined in~\eqref{eq:corr}. Suppose we choose a basis $\{h_\alpha\}$ for $\RR[Y]_{\le D}$, which could be, for instance, the standard monomial basis or the Hermite basis. Expanding an arbitrary polynomial $f \in \RR[Y]_{\le D}$ as $f = \sum_\alpha \hat f_\alpha h_\alpha$ with coefficients $\hat f_\alpha \in \RR$, and treating $\hat f = (\hat f_\alpha)$ as a vector, we can equivalently express $\Corr_{\le D}$ as 
\begin{equation}\label{eq:corr-eq}
\Corr_{\le D} = \sup_{\hat f} \frac{\langle c,\hat f \rangle}{\sqrt{\hat f^\top P \hat f}}
\end{equation}
where
\[ c_\alpha := \EE[h_\alpha(Y) \cdot x] \]
and
\[ P_{\alpha \beta} := \EE[h_\alpha(Y) h_\beta(Y)]. \]
We can compute $c$ and $P$ explicitly, and then after the change of variables $g = P^{1/2} \hat f$ we have that the value of~\eqref{eq:corr-eq} is
\[ \Corr_{\le D} = \sup_g \frac{c^\top P^{-1/2} g}{\|g\|} = \sqrt{c^\top P^{-1} c}, \]
since the optimizer is $g = P^{-1/2} c$. Thus, we have an explicit formula for $\Corr_{\le D}$. However, it seems difficult to analytically control this expression due to the matrix inversion $P^{-1}$ (which in most examples does not seem tractable to express in closed form). If $\{h_\alpha\}$ were a basis of orthogonal polynomials (with respect to the planted distribution) then $P$ would be a diagonal matrix and inversion would be trivial; however, it is not clear how to find such an orthogonal basis in closed form.

We are able to overcome the above difficulties by performing some simplifying manipulations: we apply Jensen's inequality to the ``signal'' but not the ``noise''.
Concretely, consider the additive Gaussian noise model $Y = X + Z$ where the signal $X$ is drawn from some prior, and $Z$ is i.i.d.\ Gaussian noise. In this case, we can bound the denominator of~\eqref{eq:corr} using
\begin{equation}\label{eq:jensen}
\EE[f(Y)^2] = \Ex_Z \Ex_X f(X+Z)^2 \ge \Ex_Z \left(\Ex_X f(X+Z) \right)^2.
\end{equation}
In turns out that after applying this bound, some fortuitous simplifications occur: we end up arriving at a bound of the form
\[ \Corr_{\le D} \le \sup_{\hat f} \frac{\langle c,\hat f \rangle}{\|M \hat f\|} = \|c^\top M^{-1}\| \]
for a particular matrix $M$ which, crucially, is upper triangular and can thus be inverted explicitly. The resulting upper bound on $\Corr_{\le D}$ is presented in Theorem~\ref{thm:corr-gauss}, and the full proof is given in Section~\ref{sec:gauss-proof}. 
Surprisingly enough, the bounds obtained in this fashion are sharp enough to capture the conjectured recovery thresholds for problems like planted submatrix and planted dense subgraph. 
Some intuition for why the Jensen step~\eqref{eq:jensen} is reasonably tight is as follows: in the parameter regime where recovery is hard, the output of any low-degree polynomial of $Y$ depends almost completely on $Z$ and hardly at all on $X$, since the signal is too ``small'' to be seen by low-degree polynomials; thus, $f(X+Z) \approx f(Z)$ and so the inequality in~\eqref{eq:jensen} is essentially tight. We remark that applying Jensen's inequality to both $X$ and $Z$ does not yield a useful bound on $\Corr_{\le D}$.

\subsection{Notation}
\label{sec:notation}

We use the conventions $\NN = \{0,1,2,\ldots\}$ and $[N] = \{1,2,\ldots,N\}$. For $\alpha \in \NN^N$, define $|\alpha| = \sum_{i=1}^N \alpha_i$, $\alpha! = \prod_{i=1}^N \alpha_i!$, and (for $X \in \RR^N$) $X^\alpha = \prod_{i=1}^N X_i^{\alpha_i}$. We use $\alpha \ge \beta$ to mean $\alpha_i \ge \beta_i$ for all $i$. The operations $\alpha + \beta$ and $\alpha - \beta$ are performed entrywise. For $\alpha,\beta \in \NN^N$ with $\alpha \ge \beta$, define $\binom{\alpha}{\beta} = \prod_{i=1}^N \binom{\alpha_i}{\beta_i}$. The notation $\beta \lneq \alpha$ means $\beta \le \alpha$ and $\alpha \ne \beta$ (but not necessarily $\beta_i < \alpha_i$ for every $i$). On the other hand, $\beta \not\le \alpha$ simply means that $\beta \le \alpha$ does not hold. In some cases we will restrict to $\alpha \in \{0,1\}^N$, in which case all the same notation applies.

In some cases we will take $N = n(n+1)/2$ and view $\alpha \in \NN^N$ as a multigraph (with self-loops allowed) on vertex set $[n]$, i.e., for each $i \le j$, we let $\alpha_{ij}$ represent the number of edges between vertices $i$ and $j$. In this case, $V(\alpha) \subseteq [n]$ denotes the set of vertices spanned by the edges of $\alpha$.

We use standard asymptotic notation such as $O(\cdot)$, $\Omega(\cdot)$, $\Theta(\cdot)$ and $o(\cdot)$, always pertaining to the limit $n \to \infty$ unless stated otherwise. Notation such as $\tilde{O}(\cdot)$, $\tilde{\Omega}(\cdot)$ and $\tilde\Theta(\cdot)$ hides factors of $\mathrm{polylog}(n) = \log^{O(1)} n$ in the numerator {\em or} denominator. We use $f_n \ll g_n$ to mean there exists a constant $\eps > 0$ (not depending on $n$) such that $f_n \le n^{-\eps} g_n$ for all sufficiently large $n$.

We use $\One_A$ or $\One[A]$ for the $\{0,1\}$-valued indicator of an event $A$. All logarithms use the natural base unless stated otherwise.

\section{Main Results}
\label{sec:main-results}

We now state our main results. We first analyze a general additive Gaussian noise model (Section~\ref{sec:gauss}) and then specialize to the planted submatrix problem (section~\ref{sec:subm}). Then we analyze a general binary observation model (Section~\ref{sec:binary}) and specialize to the planted dense subgraph problem (Section~\ref{sec:subg}).

\subsection{Additive Gaussian Noise Model}
\label{sec:gauss}

We consider the following general setting, which captures the planted submatrix problem but also various other popular models such as the spiked Wigner~\cite{FP,CDF-wigner} and (positively-spiked) Wishart~\cite{BBP,BS-wishart} models with any prior on the planted vector (see e.g.~\cite{opt-pca} and references therein), as well as tensor PCA~\cite{RM-tensor-pca}. A general low-degree analysis of the \emph{detection} problem in the additive Gaussian noise model can be found in~\cite{lowdeg-survey}, and the above special cases are treated in~\cite{sos-power,sam-thesis,sk-cert,subexp-sparse-pca,sparse-clustering}.

\begin{definition}\label{def:agnm}
In the \emph{general additive Gaussian noise model} we observe $Y = X + Z$ where $X \in \RR^N$ is drawn from an arbitrary (but known) prior, and $Z$ is i.i.d.\ $\mathcal{N}(0,1)$, independent from $X$. The goal is to estimate a scalar quantity $x \in \RR$, which is a function of $X$.
\end{definition}

\noindent Our main result for the additive Gaussian noise model is the following upper bound on $\Corr_{\le D}$ as defined in~\eqref{eq:corr} (which by Fact~\ref{fact:mmse-corr} implies a lower bound on $\MMSE_{\le D}$). The notation used here is defined in Section~\ref{sec:notation} above.

\begin{theorem}\label{thm:corr-gauss}
In the general additive Gaussian model (Definition~\ref{def:agnm}),
\begin{equation}\label{eq:corr-kappa-gauss}
 \Corr_{\le D}^2 \le \sum_{\substack{\alpha \in \NN^N \\ 0 \le |\alpha| \le D}} \frac{\kappa_\alpha^2}{\alpha!}, 
\end{equation}
where $\kappa_\alpha$ for $\alpha \in \NN^N$ is defined recursively by
\begin{equation}\label{eq:kappa-def}
\kappa_\alpha = \EE[x X^\alpha] - \sum_{0 \le \beta \lneq \alpha} \kappa_\beta \binom{\alpha}{\beta} \EE[X^{\alpha-\beta}].
\end{equation}
\end{theorem}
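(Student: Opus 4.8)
The plan is to start from the variational characterization $\Corr_{\le D}^2 = c^\top P^{-1} c$ described in Section~\ref{sec:tech}, but to avoid ever inverting the Gram matrix $P$ by first applying the Jensen bound~\eqref{eq:jensen}. Concretely, fix the monomial basis $h_\alpha(Y) = Y^\alpha$ for $\RR[Y]_{\le D}$ and write $f = \sum_{|\alpha| \le D} \hat f_\alpha Y^\alpha$. For the numerator of $\Corr_{\le D}$ we have $\EE[f(Y)\cdot x] = \sum_\alpha \hat f_\alpha \EE[x (X+Z)^\alpha] = \sum_\alpha \hat f_\alpha \, \EE[x X^\alpha]$ after taking the expectation over $Z$ first and using $\EE[Z_i] = 0$ (only the top-degree term in each $Z_i$-expansion with all $Z$-powers zero survives — more precisely, $\EE_Z (X+Z)^\alpha = X^\alpha$ since each coordinate of $Z$ is mean zero and independent). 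Wait — that is not quite right: $\EE_Z (X_i + Z_i)^{\alpha_i} = \sum_k \binom{\alpha_i}{k} X_i^{\alpha_i - k}\EE[Z_i^k]$, which is a Hermite-type polynomial in $X_i$, not simply $X_i^{\alpha_i}$. The cleaner route is therefore to work in the \emph{Hermite basis} in the $Y$ variables, or equivalently to define $\kappa_\alpha$ precisely so as to absorb these lower-order corrections; this is exactly what the recursion~\eqref{eq:kappa-def} does. So the first real step is to set up the right basis/normalization so that the numerator becomes a clean inner product $\langle c, \hat f\rangle$ with an explicitly computable $c$.

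For the denominator, I would apply~\eqref{eq:jensen}: $\EE[f(Y)^2] \ge \EE_Z (\EE_X f(X+Z))^2 = \EE_Z g(Z)^2$ where $g(Z) := \EE_X f(X+Z)$ is itself a degree-$\le D$ polynomial in $Z$. Expanding $g$ in the (probabilists') Hermite basis $\{He_\alpha(Z)/\sqrt{\alpha!}\}$, which is orthonormal under the standard Gaussian, gives $\EE_Z g(Z)^2 = \sum_\alpha \hat g_\alpha^2$ where $\hat g_\alpha$ is the Hermite coefficient of $g$. The key computation is then to express $\hat g_\alpha$ as a \emph{linear} function of the coefficient vector $\hat f$ — say $\hat g = M \hat f$ for an explicit matrix $M$ — and to check that, in a suitable monomial-type ordering of multi-indices, $M$ is \emph{upper triangular} with nonzero diagonal, hence invertible in closed form. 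Granting this, we obtain $\Corr_{\le D}^2 \le \sup_{\hat f} \langle c,\hat f\rangle^2 / \|M\hat f\|^2 = \|c^\top M^{-1}\|^2 = \|(M^{-\top}c)\|^2$, and the theorem follows once we identify the entries of the vector $M^{-\top}c$ with $\kappa_\alpha/\sqrt{\alpha!}$. This last identification is where the recursion~\eqref{eq:kappa-def} must come out: writing the triangular system $M^\top w = c$ and solving it row by row should reproduce exactly $\kappa_\alpha = \EE[xX^\alpha] - \sum_{0\le\beta\lneq\alpha}\kappa_\beta\binom{\alpha}{\beta}\EE[X^{\alpha-\beta}]$, with the binomial factor coming from the expansion of $He_\alpha(X+Z)$ (or of $(X+Z)^\alpha$) into lower-order pieces, i.e.\ from the fact that shifting a Hermite polynomial by $X$ produces a binomial combination of lower Hermite polynomials.

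The main obstacle, and the step I would spend the most care on, is verifying the triangularity of $M$ together with getting all the combinatorial bookkeeping right in the Hermite shift identity $He_\alpha(X+Z) = \sum_{\beta \le \alpha} \binom{\alpha}{\beta} (\text{something in }X)\, He_{\alpha-\beta}(Z)$ and its multivariate (product-over-coordinates) form, since the precise constants there are what make the recursion~\eqref{eq:kappa-def} have exactly the stated binomial coefficients and exactly the moments $\EE[X^{\alpha-\beta}]$ (as opposed to some Hermite moment of $X$). A secondary subtlety is legitimacy of the manipulations: one must ensure all the relevant moments $\EE[xX^\alpha]$ and $\EE[X^{\alpha-\beta}]$ are finite so that the basis expansions and the Jensen step are valid, and one should note (as the paper does in Appendix~\ref{app:basic}) that restricting to deterministic coefficients loses nothing. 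Once the triangular structure and the shift identity are pinned down, the remaining steps — plugging into $\sup \langle c,\hat f\rangle^2/\|M\hat f\|^2$, reading off $w = M^{-\top}c$, and recognizing $\|w\|^2 = \sum_\alpha \kappa_\alpha^2/\alpha!$ — are routine linear algebra.
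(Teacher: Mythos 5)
Your proposal follows essentially the same route as the paper's proof: expand $f$ in the normalized Hermite basis, apply Jensen's inequality to the signal $X$ only, compute $\hat g = M\hat f$ with $M$ upper triangular, invert the triangular system to get $w_\alpha = \kappa_\alpha/\sqrt{\alpha!}$, and conclude $\Corr_{\le D}^2 \le \|w\|^2$. The detour through the monomial basis is correctly abandoned, and the key technical ingredient you flag (the shifted-Hermite identity $h_k(z+\mu) = \sum_{\ell\le k}\sqrt{\ell!/k!}\binom{k}{\ell}\mu^{k-\ell}h_\ell(z)$) is precisely the paper's Proposition~\ref{prop:hermite-shift}.
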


\noindent The proof is given in Section~\ref{sec:gauss-proof}; the strategy is outlined in Section~\ref{sec:tech}. The recursive definition~\eqref{eq:kappa-def} requires no explicit base case; in other words, the base case is simply the case $\alpha = 0$ of~\eqref{eq:kappa-def}, which is $\kappa_0 = \EE[x]$.

\begin{remark}\label{rem:cumulants}
The quantity $\kappa_\alpha$ is equal to the \emph{joint cumulant} of the following collection of dependent random variables: one instance of $x$, and $\alpha_i$ (perfectly-correlated) copies of $X_i$ for each $i \in [N]$. We discuss cumulants and their connection to this formula in more detail in Section~\ref{sec:cumulants}.
\end{remark}

\subsection{Planted Submatrix}
\label{sec:subm}

We now restrict our attention to a special case of the additive Gaussian model: the planted submatrix problem (which is also a variant of sparse PCA in the spiked Wigner model).

\begin{definition}\label{def:subm}
In the \emph{planted submatrix problem}, we observe the $n \times n$ matrix $Y = \lambda vv^\top + W$ where $\lambda \ge 0$, $v \in \{0,1\}^n$ is i.i.d.\ $\mathrm{Bernoulli}(\rho)$ for some $\rho \in (0,1)$, and $W$ has entries $W_{ij} = W_{ji} \sim \mathcal{N}(0,1)$ for $i < j$ and $W_{ii} \sim \mathcal{N}(0,2)$, where $\{W_{ij} \,:\, i \le j\}$ are independent. We assume the parameters $\lambda$ and $\rho$ are known. The goal is to estimate $x = v_1$.
\end{definition}

\noindent Prior work has extensively studied this model and variations thereof~\cite{subm-em,minmax-loc,stat-comp-bi,detection-cluster,subm-it-det,subm-it-rec,MW-reduction,CX-pds,amp-sparse-pca,CLR-submatrix,HWX-amp,BBH-reduction,ogp-submatrix,all-none-sparse,ogp-sparse-pca}. The statistical limits of both the detection~\cite{subm-it-det} and recovery~\cite{minmax-loc,subm-it-rec} tasks are well-understood, as well as the computational limits of detection (for an asymmetric version of the problem) assuming the planted clique hypothesis~\cite{MW-reduction,BBH-reduction}. We will focus our discussion on the regime $\rho = n^{-b}$ for $b \in (0,1)$, although the regime $\rho = \Theta(1)$ has also received attention~\cite{amp-sparse-pca,ogp-submatrix}. We are primarily interested in identifying the correct power of $n$ for $\lambda$, and so will use in our informal discussions the notation $f \ll g$ to mean there exists a constant $\eps > 0$ such that $f \le n^{-\eps} g$ (although often only logarithmic factors will be hidden by $\ll$). We now summarize some of the relevant statistical and computational thresholds.

\begin{itemize}
    \item {\bf Sum test for detection}: The \emph{detection} task is to hypothesis test between $Y = \lambda vv^\top + W$ and $Y = W$ with $o(1)$ error probability for both type I and type II errors. Detection is easy when $\lambda \gg (\rho \sqrt{n})^{-2}$, simply by summing all entries of $Y$ and thresholding. In Appendix~\ref{app:detection} we investigate detection against different null distributions that more closely match the moments of the planted model.
    \item {\bf Recovery algorithms}: When $\lambda \gg 1$, a simple entrywise thresholding algorithm can exactly recover $v$ (with probability $1-o(1)$)~\cite{minmax-loc}. When $\lambda = (1+\Omega(1))(\rho \sqrt{n})^{-1}$, the leading eigenvector of $Y$ has non-trivial correlation with $v$ (due to the ``BBP transition'' in random matrix theory~\cite{BBP,BS-wishart,FP,CDF-wigner,BN-eigenvec}), and this can be ``boosted'' to exact recovery as explained in~\cite{ogp-sparse-pca}. Thus, exact recovery is possible in polynomial time when $\lambda \gg \min\{1, (\rho\sqrt{n})^{-1}\}$. On the other hand, no efficient algorithm is known (even for non-trivial estimation of $v$) when $\lambda \ll \min\{1, (\rho\sqrt{n})^{-1}\}$.
    \item {\bf Statistical threshold for recovery}: An estimator based on exhaustive search can achieve exact recovery when $\lambda \gg (\rho n)^{-1/2}$~\cite{minmax-loc,subm-it-rec}, but this is not computationally efficient. No estimator can succeed when $\lambda \ll (\rho n)^{-1/2}$~\cite{minmax-loc,subm-it-rec}.
\end{itemize}

\noindent The regime $\rho \gg 1/\sqrt{n}$ is of particular interest because here a detection-recovery gap appears: when $(\rho \sqrt{n})^{-2} \ll \lambda \ll (\rho \sqrt{n})^{-1}$, detection is easy but no efficient algorithm for recovery is known. Resolving the computational complexity of recovery in this regime is a problem that was left open by prior work~\cite{stat-comp-bi,MW-reduction,CX-pds}. Some evidence has been given that recovery is hard when $\lambda \ll (\rho \sqrt{n})^{-1}$: a reduction from planted clique shows recovery hardness for a variant of the problem, but not the canonical i.i.d.\ Gaussian noise model that we consider~\cite{CLR-submatrix}; a different reduction shows recovery hardness under the assumption that a similar detection-recovery gap exists in the planted dense subgraph problem (which we discuss in Section~\ref{sec:subg})~\cite{BBH-reduction}; and a large family of MCMC methods have been shown to fail~\cite{ogp-sparse-pca}.

We now state our main result, which resolves the computational complexity of recovery in the low-degree framework. Part (i) shows that degree-$n^{\Omega(1)}$ polynomials fail at recovery when $\lambda \ll \min\{1,(\rho \sqrt{n})^{-1}\}$, matching the best known algorithms. Part (ii) confirms that $O(\log n)$-degree polynomials succeed in the ``easy'' regime $\lambda \gg \min\{1,(\rho \sqrt{n})^{-1}\}$ (provided $\rho \gg 1/n$, which ensures that the planted submatrix has non-trivial size). For context, note that the trivial estimator $f(Y) = \EE[x] = \rho$ achieves the mean squared error $\EE(f(Y)-x)^2 = \rho - \rho^2$. The results are non-asymptotic: they apply for all values of $n, D, \lambda, \rho$.

\begin{theorem}\label{thm:mmse-subm}
Consider the planted submatrix problem (Definition~\ref{def:subm}).
\begin{enumerate}
    \item [(i)] For any $0 < r < 1$ and $D \ge 1$, if
    \[ \lambda \le \frac{r}{D(D+1)} \min\left\{1,\,\frac{1}{\rho \sqrt{n}}\right\} \]
    then
    \[ \MMSE_{\le D} \ge \rho - \rho^2/(1-r^2)^2. \]
In particular, when $\lambda \le \tilde\Theta\left(\min \left\{1, \frac{1}{\rho\sqrt{n}}\right\}\right)$, the $\MMSE_{\le \polylog(n)}$ is at most $o(\rho^2)$ smaller than the error of the trivial estimator.
    \item [(ii)] For any $0 < r < 1$ and odd $D \ge 1$, if
    \[ \lambda \ge \frac{24}{r} \sqrt{\log 8 + 2D \log(9/\rho)} \min\left\{1,\,\frac{1}{\rho\sqrt{n}}\right\} \]
    and
    \[ \frac{324}{r^2 n} [\log 8 + 2D \log(9/\rho)] \le \rho \le 1/2 \]
    then
    \[ \MMSE_{\le D} \le D^2 r^{D-1}. \]
In particular, provided that $\tilde \Theta\left(\frac{1}{n}\right) \le \rho \le \frac{1}{2}$ and $\lambda \ge \tilde\Theta\left(\min\left\{1, \frac{1}{\rho \sqrt n}\right\}\right)$, the $\MMSE_{\le \log n}$ is at most $n^{-\log\log n}$.
\end{enumerate}
\end{theorem}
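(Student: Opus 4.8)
The plan is to obtain part~(i) from the general upper bound on $\Corr_{\le D}$ in Theorem~\ref{thm:corr-gauss}, and part~(ii) by exhibiting an explicit degree-$D$ polynomial and estimating its first two moments; in both directions Fact~\ref{fact:mmse-corr} converts between $\Corr_{\le D}$ and $\MMSE_{\le D}$, and throughout one uses $\EE[x^2]=\EE[v_1^2]=\rho$.

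For part~(i), I would specialize Theorem~\ref{thm:corr-gauss} to $N=n(n+1)/2$, $X_{ij}=\lambda v_iv_j$, viewing $\alpha\in\NN^N$ as a multigraph on $[n]$. Since each $v_i\in\{0,1\}$, monomials collapse to products over distinct vertices: $v_1X^\alpha=\lambda^{|\alpha|}\prod_{i\in V(\alpha)\cup\{1\}}v_i$ and $X^\alpha=\lambda^{|\alpha|}\prod_{i\in V(\alpha)}v_i$, so $\EE[xX^\alpha]=\lambda^{|\alpha|}\rho^{|V(\alpha)\cup\{1\}|}$ and $\EE[X^\alpha]=\lambda^{|\alpha|}\rho^{|V(\alpha)|}$ --- depending on $\alpha$ only through its size and its vertex set. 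Feeding this into~\eqref{eq:kappa-def} (equivalently, using the joint-cumulant reading of Remark~\ref{rem:cumulants}) yields two structural facts: (a)~$\kappa_\alpha=0$ unless $\alpha=0$, where $\kappa_0=\rho$, or $\alpha$ is connected with $1\in V(\alpha)$, since a joint cumulant vanishes once the dependency structure disconnects and $v_1$ is independent of $\{v_i:i\notin V(\alpha)\}$; and (b)~writing $\kappa_\alpha=\lambda^{|\alpha|}\mu_\alpha$, the reduced coefficient $\mu_\alpha$ solves a recursion purely in $\rho$, with $|\mu_\alpha|$ bounded by $\rho^{|V(\alpha)|}$ times a factor that in general grows at most factorially in $|\alpha|$ (from higher-order Bernoulli cumulants at self-loops). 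It then remains to bound the right-hand side of~\eqref{eq:corr-kappa-gauss}, i.e.\ $\sum_\alpha\kappa_\alpha^2/\alpha!$, over connected multigraphs through vertex~$1$ with at most $D$ edges. Grouping by the number of edges $m$ and of vertices $k$, choosing the $k-1$ fresh vertices in at most $\binom{n-1}{k-1}$ ways, and using the multinomial identity $\sum_{|\alpha|=m}1/\alpha!=\binom{k+1}{2}^m/m!$ to absorb edge multiplicities, each edge carries a factor of order $\lambda^2\max\{1,\rho^2 n\}$ (the $\max$ reflecting that a fresh vertex costs $\rho^2$ but gains about $n$ choices), times a combinatorial/cumulant factor growing at most like $m^{O(m)}$, with an overall factor $\rho^2$ coming from vertex~$1$. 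The hypothesis $\lambda\le\frac{r}{D(D+1)}\min\{1,(\rho\sqrt n)^{-1}\}$ is calibrated so that $\lambda^{2m}$ supplies a factor $(D(D+1))^{-2m}\le(m(m+1))^{-2m}$ that beats the $m^{O(m)}$ term-by-term, leaving each edge an effective cost of at most $r^2$; the resulting geometric series over $m\ge1$ sums to at most $\rho^2\big(\frac{1}{(1-r^2)^2}-1\big)$, and adding the $\alpha=0$ contribution $\rho^2$ yields $\Corr_{\le D}^2\le\rho^2/(1-r^2)^2$, so $\MMSE_{\le D}\ge\rho-\rho^2/(1-r^2)^2$ by Fact~\ref{fact:mmse-corr}.

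For part~(ii), the polynomial depends on which branch of $\min\{1,(\rho\sqrt n)^{-1}\}$ is active. When $\rho\sqrt n\ge1$, take the self-avoiding walk polynomial rooted at vertex~$1$, $f(Y)=\sum\prod_{t=1}^DY_{i_{t-1}i_t}$ over distinct tuples $(1=i_0,i_1,\ldots,i_D)$ --- a degree-$D$ surrogate for the spiked leading eigenvector --- and use $\Corr_{\le D}\ge\EE[f(Y)v_1]/\sqrt{\EE[f(Y)^2]}$. Expanding $Y_e=\lambda v_iv_j+W_e$ and using that distinct noise entries are independent and mean zero, only the all-$(\lambda v_iv_j)$ term survives the first moment, giving the exact identity $\EE[f(Y)v_1]=\lambda^D\rho^{D+1}P$ with $P:=(n-1)(n-2)\cdots(n-D)$. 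For the second moment, sum over pairs of walks $(P_1,P_2)$: a private edge must take its $\lambda v_iv_j$ value, a shared edge contributes $\EE[W_e^2]=1$ or $\lambda^2v_iv_j$, and every surviving term is a nonnegative product of powers of $\rho$ and $\lambda^2$. Pairs meeting only at vertex~$1$ supply the main term, of order $\lambda^{2D}\rho^{2D+1}P^2$; every other pair loses a relative factor of order at most $\mathrm{poly}(D)/(\rho n)$ per extra shared vertex (controlled by $\rho\ge\tilde\Theta(1/n)$, which is where that hypothesis enters) or $\mathrm{poly}(D)/(\lambda\rho\sqrt n)^2$ per shared edge forced to its noise value (controlled by the hypothesis on $\lambda$), the extreme case being a coincident pair all $D$ of whose edges are noise, contributing $(\lambda\rho\sqrt n)^{-2D}\le r^{2D}$. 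Summing, $\EE[f(Y)^2]\le\lambda^{2D}\rho^{2D+1}P^2\big(1+\rho^{-1}D^2r^{D-1}\big)$, whence $\Corr_{\le D}^2\ge\rho/(1+\rho^{-1}D^2r^{D-1})\ge\rho-D^2r^{D-1}$, i.e.\ $\MMSE_{\le D}\le D^2r^{D-1}$. When instead $\rho\sqrt n<1$ (so the planted set has $o(\sqrt n)$ vertices and the hypothesis forces $\lambda$ to be of order at least $\sqrt{D\log(1/\rho)}$), I would instead take $f(Y)=p(Y_{11})$ where $p$ is a degree-$D$ polynomial approximating the single-entry posterior mean $\EE[v_1\mid Y_{11}]$ on a window of radius $\Theta(\sqrt{\log(1/\rho)+D})$ around $0$ and $\lambda$; since $Y_{11}\sim\mathcal N(\lambda v_1,2)$ the two hypotheses on $v_1$ are well separated there, and balancing the geometric decay of the approximation error on the window against the small contribution of the Gaussian tails --- where $p$ may grow like the window radius to the $D$ --- gives a bound of the same shape. (The restriction to odd $D$ is a technical convenience in the walk computation.) The ``in particular'' claims follow by taking $D=\Theta(\log n)$ with $r=1/\log n$ in part~(ii), and $r,D$ constant with $\lambda=\tilde\Theta(\min\{1,(\rho\sqrt n)^{-1}\})$ in part~(i).

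The main obstacle in both parts is the bookkeeping, not any conceptual point. In part~(i) it is proving the cumulant bound on $|\mu_\alpha|$ uniformly over all $\rho\in(0,1)$ and all multigraphs $\alpha$ --- in particular pinning down the factorial-type growth from multi-edges and self-loops --- and then running the sum over connected multigraphs so that the per-edge cost closes at exactly the stated $D(D+1)$ scaling rather than a worse function of $D$. In part~(ii) it is the second-moment estimate for the walk polynomial: enumerating the ways two self-avoiding walks can overlap in vertices and in edges and verifying that the corrections telescope to the clean bound $D^2r^{D-1}$ under the hypotheses on $\lambda$ and $\rho$ (the numerator and the leading denominator term are immediate; the sum over overlap patterns is where the work is), together with the approximation-plus-tail estimate for $p(Y_{11})$ in the complementary regime.
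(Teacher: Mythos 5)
Your part~(i) is essentially the paper's own argument: specialize Theorem~\ref{thm:corr-gauss} to $X_{ij}=\lambda v_iv_j$, kill disconnected $\alpha$ and those missing vertex~1 via the cumulant interpretation (Lemma~\ref{lem:conn}), bound $|\kappa_\alpha|\le(|\alpha|+1)^{|\alpha|}\lambda^{|\alpha|}\rho^{|V(\alpha)|}$ by induction on the recursion (Lemma~\ref{lem:kappa-bound}), count connected multigraphs through vertex~1 by edges and excess (Lemma~\ref{lem:count-graphs}), and sum the resulting geometric series. Your accounting of the $1/\alpha!$ factor differs cosmetically (the paper just drops it), but the calibration against $D(D+1)$ is the same and the outline is sound.

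Part~(ii) has a genuine gap in the regime $\rho\sqrt n\ge 1$. Your second-moment bound for the self-avoiding-walk polynomial, $\EE[f(Y)^2]\le\lambda^{2D}\rho^{2D+1}P^2(1+\rho^{-1}D^2r^{D-1})$, is false: consider pairs of walks from vertex~1 that share exactly one non-root vertex and no edges. All $2D$ edges are distinct, so only the all-signal term survives, contributing exactly $\lambda^{2D}\rho^{2D}$ per pair (one factor of $\rho$ is saved because the union spans $2D$ rather than $2D+1$ vertices), and there are $\Theta(P^2D^2/n)$ such pairs. This is a nonnegative, unavoidable contribution of relative size $\Theta(D^2/(\rho n))$, which does not decay with $D$. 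Consequently the best scalar multiple of $f$ has mean squared error $\Omega(D^2/n)$, which exceeds the claimed $D^2r^{D-1}$ precisely when $r^{D-1}<1/n$ --- and that is exactly the regime of the ``in particular'' conclusion $\MMSE_{\le\log n}\le n^{-\log\log n}$ (there $r^{D-1}=n^{-\log\log n}\ll 1/n$, and the theorem's hypotheses do permit $r$ this small). The obstruction is intrinsic to any unthresholded linear statistic: its fluctuations around $v_1$ are of fixed polynomial size in the SNR and do not shrink geometrically in $D$. The paper gets the geometric decay by composing a \emph{degree-one} statistic (the row sum $\frac{1}{\lambda\rho n}\sum_i Y_{1i}$, or $Y_{11}/\lambda$ in the other branch) with the degree-$(2k+1)$ polynomial $\tau_k$ approximating the $0/1$ step function (Proposition~\ref{prop:poly-thresh}); conditioned on the linear statistic landing within $\Delta$ of $v_1$, the output is within $(k+1/2)(6\Delta)^k$ of $v_1$, and the rare bad event is absorbed using hypercontractivity (Corollary~\ref{cor:hyp}). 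Your treatment of the complementary regime $\rho\sqrt n<1$ via a degree-$D$ approximation to $\EE[v_1\mid Y_{11}]$ is morally the paper's diagonal-thresholding estimator and is fine in outline, though the tail control is cleaner via hypercontractivity than via direct moment bounds on the polynomial's growth. To repair the first branch, replace the SAW polynomial by $\tau_k$ applied to the normalized first-row sum, as in Theorem~\ref{thm:power-iter}.
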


\noindent The proof of part (i) is given in Section~\ref{sec:subm-proof}, while part (ii) follows by combining Theorems~\ref{thm:diag-thresh} and \ref{thm:power-iter} from Section~\ref{sec:upper-bounds}.
In Appendix~\ref{app:sharp} we give a more refined analysis that suggests a sharp computational threshold at $\lambda = (\rho\sqrt{en})^{-1}$ in the regime $1/\sqrt{n} \ll \rho \ll 1$, although these results are limited to fairly low degree: $D \le \log_2(1/\rho)-1$. 
This sharp constant in the threshold matches the one discovered in~\cite{HWX-amp} (see also~\cite{amp-clique}), where an AMP-style algorithm is shown to succeed above the threshold.

\subsubsection{Discussion and future directions}

While we have pinned down the correct power of $n$ for the critical value of $\lambda$, some more fine-grained questions remain open. The first is to extend the result to larger values of $D$. It has been conjectured~\cite{sam-thesis} that for a broad class of high-dimensional problems, degree-$D$ polynomials are as powerful as $n^{\tilde\Theta(D)}$-time algorithms, where $\tilde\Theta(\cdot)$ hides factors of $\log n$. This conjecture is consistent with the best known algorithms for tensor PCA and sparse PCA; see~\cite{lowdeg-survey,subexp-sparse-pca}. In the setting of planted submatrix, the best known algorithms in the ``hard'' regime $\lambda \ll \min\{1,(\rho\sqrt{n})^{-1}\}$ run in time $n^{\tilde{O}(\lambda^{-2})}$~\cite{subexp-sparse-pca,anytime-pca}, and a large class of MCMC methods can do no better~\cite{ogp-sparse-pca}. To give low-degree evidence that this is the best possible runtime, one could hope to prove that $\MMSE_{\le D}$ is large for all $D \ll \lambda^{-2}$ whenever $\lambda \ll \min\{1,(\rho\sqrt{n})^{-1}\}$.

An additional direction for future work is to more precisely understand the boundary between the ``easy'' and ``hard'' regimes. We have done this for very low degree $D \le \log_2(1/\rho)-1$ in Appendix~\ref{app:sharp}, and ideally this would be extended to some $D = \omega(\log n)$.

\subsection{Binary Observation Model}
\label{sec:binary}

We next consider a different setting, where the observed variables are binary-valued. This captures, for instance, various problems where the observation is a random graph.

\begin{definition}\label{def:binary}
The \emph{general binary observation model} is defined as follows. First, a signal $X \in [0,1]^N$ is drawn from an arbitrary (but known) prior. We observe $Y \in \{0,1\}^N$ where $\EE[Y_i | X] = X_i$ and $\{Y_i\}$ are conditionally independent given $X$. Assume that the law of $X$ is supported on $[\tau_0,\tau_1]^N$ where $0 < \tau_0 \le \tau_1 < 1$. The goal is to estimate a scalar quantity $x \in \RR$, which is a function of $X$.
\end{definition}

\noindent In this setting, we obtain an upper bound on $\Corr_{\le D}$ that is similar to the additive Gaussian setting.

\begin{theorem}\label{thm:corr-binary}
In the general binary observation model (Definition~\ref{def:binary}),
\begin{equation} \label{eq:corr-kappa-bin}
\Corr_{\le D}^2 \le \sum_{\substack{\alpha \in \{0,1\}^N \\ 0 \le |\alpha| \le D}} \frac{\kappa_\alpha^2}{(\tau_0(1-\tau_1))^{|\alpha|}},
\end{equation}
where $\kappa_\alpha$ for $\alpha \in \{0,1\}^N$ is defined recursively by
\begin{align} \kappa_\alpha = \EE[x X^\alpha] - \sum_{0 \le \beta \lneq \alpha} \kappa_\beta\, \EE[X^{\alpha-\beta}]. \label{eq:kappa-def-bin}\end{align}
\end{theorem}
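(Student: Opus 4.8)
The plan is to mirror the proof of Theorem~\ref{thm:corr-gauss} (the additive Gaussian case), replacing the Gaussian structure with the corresponding features of the binary model. As in Section~\ref{sec:tech}, by Fact~\ref{fact:mmse-corr} it suffices to prove an upper bound on $\Corr_{\le D}$. Since $Y \in \{0,1\}^N$, the natural basis for $\RR[Y]_{\le D}$ is the multilinear monomial basis $\{Y^\alpha : \alpha \in \{0,1\}^N, |\alpha| \le D\}$; every polynomial of degree $\le D$ in binary variables is automatically multilinear, so this is a genuine basis and the degree-$D$ restriction is $|\alpha| \le D$. Writing $f = \sum_\alpha \hat f_\alpha Y^\alpha$, the numerator of $\Corr_{\le D}$ is the linear form $\langle c, \hat f\rangle$ with $c_\alpha = \EE[x\, Y^\alpha] = \EE[x\, X^\alpha]$ (using $\EE[Y_i \mid X] = X_i$ and conditional independence, so $\EE[Y^\alpha \mid X] = X^\alpha$ for multilinear $\alpha$). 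The first main step is to lower bound the denominator $\EE[f(Y)^2]$ by applying Jensen's inequality to the signal but not the noise, exactly as in~\eqref{eq:jensen}: conditioning on $X$ and using conditional independence of the $Y_i$,
\[ \EE[f(Y)^2] = \Ex_X \Ex_{Y\mid X} f(Y)^2 \ge \Ex_X \left(\Ex_{Y\mid X} f(Y)\right)^2 = \Ex_X g(X)^2, \]
where $g(X) := \Ex_{Y\mid X} f(Y) = \sum_\alpha \hat f_\alpha X^\alpha$ since $\Ex_{Y\mid X}[Y^\alpha] = X^\alpha$. So we have reduced to bounding $\langle c,\hat f\rangle / \sqrt{\EE[g(X)^2]}$ over all coefficient vectors $\hat f$.

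The second step is to find a quadratic form $\hat f^\top M^\top M \hat f$ that lower bounds $\EE[g(X)^2]$ and for which $M$ is upper triangular (so invertible in closed form). Expanding, $\EE[g(X)^2] = \sum_{\alpha,\beta} \hat f_\alpha \hat f_\beta\, \EE[X^\alpha X^\beta] = \hat f^\top Q \hat f$ with $Q_{\alpha\beta} = \EE[X^{\alpha \vee \beta} X^{\alpha \wedge \beta}]$ (entrywise max and min of $0/1$ vectors). This $Q$ is not triangular, so the key manipulation --- paralleling the Gaussian argument and the cumulant interpretation of Remark~\ref{rem:cumulants} --- is to pass to the basis change implicit in the recursion~\eqref{eq:kappa-def-bin}. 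Concretely, I expect that one can write $Q = R^\top \Lambda R$ (or bound $Q \succeq$ such a product) where $R$ is unit upper triangular, encoding the relation $c_\alpha = \EE[x X^\alpha] = \sum_{\beta \le \alpha} \kappa_\beta \EE[X^{\alpha - \beta}]$ that defines $\kappa_\alpha$; the diagonal $\Lambda$ should contribute the factor $(\tau_0(1-\tau_1))^{|\alpha|}$. The role of the support assumption $X \in [\tau_0,\tau_1]^N$ is precisely to produce a lower bound on the ``conditional variance'' contributed by each new coordinate: when we peel off coordinate $i$ from $X^\alpha$, the relevant quantity is something like $\EE[X_i(1-X_i) \mid \cdots] \ge \tau_0(1-\tau_1)$ pointwise, which is what makes the diagonal of $\Lambda$ bounded below by $(\tau_0(1-\tau_1))^{|\alpha|}$. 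After this, $\Corr_{\le D}^2 \le c^\top Q^{-1} c \le \sum_\alpha \kappa_\alpha^2 / (\tau_0(1-\tau_1))^{|\alpha|}$, which is exactly~\eqref{eq:corr-kappa-bin}.

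The main obstacle, and the step requiring the most care, is making the triangular factorization of $Q$ rigorous: unlike the Gaussian case where Hermite orthogonality gives clean formulas, here the monomials $X^\alpha$ are not orthogonal under the prior, and one must verify that the correction terms in~\eqref{eq:kappa-def-bin} exactly implement a Cholesky-type decomposition of $Q$ (or at least a PSD lower bound by such). I would handle this inductively on $|\alpha|$, defining for each $\alpha$ the ``residual'' polynomial $p_\alpha(X) = X^\alpha - \sum_{0 \le \beta \lneq \alpha} (\text{coeff}) X^{\alpha-\beta}$ chosen so that $\kappa_\alpha = \EE[x\, p_\alpha(X)]$ and so that the $p_\alpha$ are ``sufficiently orthogonal'' --- the cleanest route is to mimic the Gaussian proof's bookkeeping verbatim, checking at each step that the only place the Gaussian-specific identity (Hermite recursion) was used can be replaced by the elementary identity $Y_i^2 = Y_i$ together with the bound $\tau_0 \le X_i \le \tau_1$. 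I expect the combinatorial cross-term accounting to be the genuinely delicate part; the rest is a routine transcription of Section~\ref{sec:gauss-proof} with $\alpha! $ replaced by $(\tau_0(1-\tau_1))^{|\alpha|}$ and sums over $\NN^N$ replaced by sums over $\{0,1\}^N$.
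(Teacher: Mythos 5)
There is a fatal flaw at the very first step: you apply Jensen's inequality in the wrong direction. In the Gaussian proof (equation~\eqref{eq:jensen}), the expectation that is moved \emph{inside} the square is the one over the signal $X$, and the expectation over the noise $Z$ stays \emph{outside}: $\EE[f(Y)^2] = \EE_Z \EE_X f(X+Z)^2 \ge \EE_Z (\EE_X f(X+Z))^2$. You claim to be doing "exactly as in~\eqref{eq:jensen}" but in fact do the opposite: you keep $\EE_X$ outside and move the conditional (noise) expectation $\EE_{Y\mid X}$ inside, arriving at $\EE[f(Y)^2] \ge \EE_X g(X)^2$ with $g(X) = \sum_\alpha \hat f_\alpha X^\alpha$. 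This integrates the noise out of the problem entirely: your resulting upper bound is $\sup_g \EE[x\,g(X)]/\sqrt{\EE[g(X)^2]}$, the degree-$D$ correlation for estimating $x$ from the \emph{noiseless} signal $X$, which is far too large to be useful. Concretely, for planted dense subgraph take the degree-$1$ polynomial $g(X) = \frac{1}{\rho n}\sum_{j\ne 1}\frac{X_{1j}-q_0}{q_1-q_0} = \frac{1}{\rho n}\,v_1\sum_{j \neq 1} v_j$; then $\EE[x\,g] \approx \rho$ and $\EE[g^2]\approx \rho$ (when $\rho n \gg 1$), so the ratio is about $\sqrt{\rho}$, whereas the right-hand side of~\eqref{eq:corr-kappa-bin} is $\approx \rho^2$ in the hard regime. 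Since $\sqrt{\rho} \gg \rho$, this also shows that the factorization you defer as "the genuinely delicate part" --- $Q \succeq R^\top \Lambda R$ with $R$ unit upper triangular and $\Lambda_{\alpha\alpha} = (\tau_0(1-\tau_1))^{|\alpha|}$ --- is not merely delicate but provably false: if it held, it would bound the noiseless correlation by $\rho\,(1+o(1))$, a contradiction.

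The correct argument requires an extra idea that your plan is missing, precisely because (unlike the Gaussian case) the binary model has no additive decomposition $Y = X + Z$ to which the "Jensen on the signal only" trick can be applied directly. The paper manufactures one: after an affine change of variables $\tilde Y_i = T(Y_i)$ with $T(y) = (\mu+1/\mu)y - 1/\mu$ and $\mu = \sqrt{(1-\tau_1)/\tau_0}$, it resamples $\tilde Y$ as a mixture --- with probability $1-\gamma$ (where $\gamma = \tau_1-\tau_0$), independently of the signal, $\tilde Y_i$ is set to a fresh mean-zero, variance-one noise variable $Z_i$, and with probability $\gamma$ it is drawn with conditional mean $\tilde X_i/\gamma$. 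Jensen is then applied by averaging over $\tilde X$ and the mixture indicators $\sigma$ while keeping $\EE_Z$ outside the square; the monomials $Z^\beta$ are orthonormal, the map $\hat f \mapsto \hat g$ is upper triangular with diagonal $(1-\gamma)^{|\alpha|}$, and the factor $(\tau_0(1-\tau_1))^{-|\alpha|}$ emerges from $\bigl(\frac{\mu+1/\mu}{1-\gamma}\bigr)^{2|\alpha|}$ via the cumulant shift/scale identities. Your intuition that the support condition $X_i \in [\tau_0,\tau_1]$ should supply a per-coordinate variance lower bound is pointing at the right quantity, but it enters through this noise-splitting construction, not through a Cholesky-type lower bound on the noiseless Gram matrix $Q$.
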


\noindent The proof is similar to that of Theorem~\ref{thm:corr-gauss} and is deferred to Section~\ref{sec:binary-proof}. Note that this is the same definition of $\kappa_\alpha$ as in Theorem~\ref{thm:corr-gauss}; the factor of $\binom{\alpha}{\beta}$ is not needed here since we are restricting to $\alpha, \beta \in \{0,1\}^N$. In particular, $\kappa_\alpha$ can be interpreted as a certain joint cumulant (see Claim~\ref{claim:our-cumulants}).

\subsection{Planted Dense Subgraph}
\label{sec:subg}

We now specialize the result of the previous section to the planted dense subgraph problem, which can be thought of as the analogue of the planted submatrix problem in random graphs.

\begin{definition}\label{def:subg}
In the \emph{planted dense subgraph problem}, we observe a random graph $Y \in \{0,1\}^{n \choose 2}$ generated as follows. First a planted signal $v \in \{0,1\}^n$ is drawn with i.i.d.\ $\mathrm{Bernoulli}(\rho)$ entries. Conditioned on $v$, the value $Y_{ij} \sim \mathrm{Bernoulli}(q_0 + (q_1 - q_0)v_i v_j)$ is sampled independently for each $i < j$, for some $q_0, q_1 \in [0,1]$. We assume that the parameters $\rho, q_0, q_1$ are known. The goal is to estimate $x = v_1$.
\end{definition}

\noindent Here, $q_1$ represents the connection probability within the planted subgraph, and $q_0$ represents the connection probability elsewhere. We will assume $q_0 \le q_1$ without loss of generality (since otherwise one can consider the complement graph instead). 

Like the planted submatrix problem, this problem (and variants) have been extensively studied~\cite{approx-subgraph,subg-it-dense,convex-subgraph,subg-it-sparse,CX-pds,HWX-pds,log-density}. The statistical limits of detection~\cite{subg-it-dense,subg-it-sparse,HWX-pds} and recovery~\cite{convex-subgraph,CX-pds} are known, as well as the computational limits of detection (assuming the planted clique hypothesis)~\cite{HWX-pds}. Resolving the computational limits of recovery is stated as an open problem in~\cite{CX-pds,HWX-pds}.

Since our bound on $\Corr_{\le D}$ for the binary model (Theorem~\ref{thm:corr-binary}) is similar to our bound in the Gaussian model (Theorem~\ref{thm:corr-gauss}), we are able to export our results on planted submatrix to this setting and obtain the following.

\begin{theorem}\label{thm:mmse-subg}
Consider the planted dense subgraph problem (Definition~\ref{def:subg}) with $\rho \le 1/2$ and $q_0 \le q_1$.
\begin{enumerate}
    \item[(i)] For any $0 < r < 1$ and $D \ge 1$, if
\[ \frac{q_1 - q_0}{\sqrt{q_0(1-q_1)}} \le \frac{r}{D(D+1)} \min\left\{1,\,\frac{1}{\rho \sqrt{n}}\right\} \]
then
\[ \MMSE_{\le D} \ge \rho - \rho^2/(1-r^2)^2. \]
    \item[(ii)] Let
\[ \nu := \min\{\rho,\, q_0,\, 1-q_1\}. \]
For any $0 < r < 1$ and odd $D \ge 1$, if
\begin{equation*}
\frac{(q_1-q_0)^2}{q_0} \ge \frac{216}{r^2 \rho^2 (n-1)} \left[\log 4 + 3D \log\left(9/\nu\right)\right]
\end{equation*}
and
\begin{equation*}
q_1 \rho \ge \frac{864}{r^2 (n-1)} \left[\log 4 + 3D \log\left(9/\nu\right)\right]
\end{equation*}
then
\[ \MMSE_{\le D} \le D^2 r^{D-1}. \]
\end{enumerate}
\end{theorem}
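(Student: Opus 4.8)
The plan is to treat Theorem~\ref{thm:mmse-subg} as the graph analogue of Theorem~\ref{thm:mmse-subm}, exploiting the fact that the binary-model bound of Theorem~\ref{thm:corr-binary} has exactly the same shape as the Gaussian bound of Theorem~\ref{thm:corr-gauss}; for part~(i) the entire argument reduces to a cumulant-sum estimate that is \emph{already} established in the proof of Theorem~\ref{thm:mmse-subm}(i). Write $\tilde\lambda := (q_1-q_0)/\sqrt{q_0(1-q_1)}$, which we may assume is finite --- otherwise $0<q_0\le q_1<1$ fails, and the hypothesis of part~(i), being a finite upper bound on $\tilde\lambda$, is vacuous. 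The crux is a \emph{cumulant rescaling identity}: the coefficients $\kappa_\alpha$ produced by the recursion~\eqref{eq:kappa-def-bin} for planted dense subgraph satisfy $\kappa_0=\EE[v_1]=\rho$ and, for every $\alpha$ with $|\alpha|\ge 1$,
\[ \kappa_\alpha \;=\; (q_1-q_0)^{|\alpha|}\,\kappa^\circ_\alpha, \]
where $\kappa^\circ_\alpha$ denotes the corresponding quantity for the idealized signal $X_{ij}=v_iv_j$. This follows from the interpretation of $\kappa_\alpha$ as the joint cumulant of $v_1$ together with one copy of $X_{ij}=q_0+(q_1-q_0)v_iv_j$ for each edge $ij$ of $\alpha$ (Remark~\ref{rem:cumulants}, Claim~\ref{claim:our-cumulants}): when $|\alpha|\ge 1$ this is a joint cumulant of $1+|\alpha|\ge 2$ random variables, hence is unchanged by adding the constant $q_0$ to each of the $X_{ij}$ arguments and is multilinear in the scalings $q_1-q_0$. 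The same reasoning applied to planted submatrix at signal-to-noise ratio $\lambda$ gives $\kappa^{\mathrm{subm}}_\alpha=\lambda^{|\alpha|}\kappa^\circ_\alpha$ for $|\alpha|\ge 1$.

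Plugging this into Theorem~\ref{thm:corr-binary} with $\tau_0=q_0$, $\tau_1=q_1$, the denominator factor $(q_0(1-q_1))^{|\alpha|}$ cancels against $(q_1-q_0)^{2|\alpha|}$ to leave exactly $\tilde\lambda^{2|\alpha|}$:
\[ \Corr_{\le D}^2 \;\le\; \rho^2 \;+\; \sum_{\substack{\alpha\in\{0,1\}^{\binom{n}{2}}\\ 1\le|\alpha|\le D}} \tilde\lambda^{2|\alpha|}\,(\kappa^\circ_\alpha)^2. \]
Since $\alpha!=1$ for every $\alpha\in\{0,1\}^{\binom{n}{2}}$, and each such $\alpha$ is in particular a loopless simple multigraph in $\NN^N$ with $N=n(n+1)/2$ --- for which $\kappa^\circ_\alpha$ is the same whether computed in the subgraph or the submatrix idealized model, as it never involves a diagonal entry --- the right-hand side is bounded above by the full cumulant sum $\sum_{\alpha\in\NN^N,\,|\alpha|\le D}(\kappa^{\mathrm{subm}}_\alpha)^2/\alpha!$ of Theorem~\ref{thm:corr-gauss} for planted submatrix at $\lambda=\tilde\lambda$ (the omitted terms, from $\alpha$ with self-loops or multi-edges, being nonnegative). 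The proof of Theorem~\ref{thm:mmse-subm}(i) in Section~\ref{sec:subm-proof} shows precisely that this submatrix cumulant sum is at most $\rho^2/(1-r^2)^2$ whenever $\lambda\le\frac{r}{D(D+1)}\min\{1,(\rho\sqrt n)^{-1}\}$, and the hypothesis of our part~(i) is exactly this inequality with $\lambda=\tilde\lambda$. Hence $\Corr_{\le D}^2\le\rho^2/(1-r^2)^2$, and since $\EE[x^2]=\EE[v_1^2]=\rho$, Fact~\ref{fact:mmse-corr} gives $\MMSE_{\le D}=\rho-\Corr_{\le D}^2\ge\rho-\rho^2/(1-r^2)^2$.

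For part~(ii) I would mirror the proof of Theorem~\ref{thm:mmse-subm}(ii): build a low-degree estimator of $v_1$ by running power iteration on a centered data matrix, then threshold. Let $M$ have entries $M_{ij}=Y_{ij}-q_0$ for $i\ne j$ and $M_{ii}=0$; then $\EE[M\mid v]=(q_1-q_0)(vv^\top-\mathrm{diag}(v))$ is a rank-$\le 2$ perturbation aligned with $v$, while $M-\EE[M\mid v]$ has independent, mean-zero, $[-1,1]$-bounded entries with variance $q_0(1-q_0)$ off the planted set and $q_1(1-q_1)$ on it. Each power-iteration step multiplies by $M$, whose entries are affine in $Y$, so running $\Theta(D)$ steps from a fixed start (e.g.\ the all-ones vector) produces a degree-$O(D)$ polynomial whose first coordinate, suitably normalized and thresholded, estimates $v_1$; tracking the power-iteration error yields $\MMSE_{\le D}\le D^2 r^{D-1}$ as in the submatrix case. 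The two hypotheses of part~(ii) play the roles of the two hypotheses of Theorem~\ref{thm:mmse-subm}(ii): $q_1\rho(n-1)\gtrsim \log 4 + 3D\log(9/\nu)$ supplies, via Chernoff bounds, both the concentration $\|v\|_0\approx\rho n$ and enough average degree for the operator norm of the random-graph noise to concentrate, while $(q_1-q_0)^2/q_0\gtrsim(\rho^2 n)^{-1}[\log 4+3D\log(9/\nu)]$ makes the signal eigenvalue $\approx(q_1-q_0)\rho n$ exceed the noise operator norm by the $\sqrt{D\log(1/\nu)}$ factor that power iteration requires. Concretely I would invoke the power-iteration and thresholding machinery behind Theorem~\ref{thm:mmse-subm}(ii) (Theorems~\ref{thm:diag-thresh} and~\ref{thm:power-iter} of Section~\ref{sec:upper-bounds}) in the case that these are stated for a general spiked matrix model, and otherwise rerun that argument with the random-graph noise substituted for the Gaussian noise.

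Part~(i) is essentially free once the rescaling identity is in hand --- the only care needed is its clean statement and proof (this is also what motivates the $(\tau_0(1-\tau_1))^{|\alpha|}$ normalization in Theorem~\ref{thm:corr-binary}: it is chosen so that these factors cancel against $(q_1-q_0)^{2|\alpha|}$). The main obstacle is in part~(ii): unlike the i.i.d.\ Gaussian noise of planted submatrix, the noise here is a sparse, inhomogeneous random-graph matrix, so one must supply a spectral-norm bound valid all the way down to average degree $\Theta(\log n)$ --- precisely the regime carved out by $q_1\rho(n-1)\gtrsim D\log(1/\nu)$ --- and handle the fact that the noise variance on the $\rho n\times\rho n$ planted block ($\approx q_1$) can exceed that on the dense bulk ($\approx q_0$); verifying that the single quantity $(q_1-q_0)^2/q_0$ in the hypothesis dominates the combined noise contribution of bulk and planted block (which needs a short case split according to whether $q_1/q_0$ is bounded, using $q_1-q_0\le q_1$ and $\rho\le 1$) is the most delicate bookkeeping step.
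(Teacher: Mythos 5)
Your part~(i) is essentially the paper's proof. You package the key step as a ``cumulant rescaling identity'' ($\kappa_\alpha = (q_1-q_0)^{|\alpha|}\kappa^\circ_\alpha$ for $|\alpha|\ge 1$) and then cancel against the $(q_0(1-q_1))^{|\alpha|}$ normalization, whereas the paper folds both into the single rescaled signal $\bar X_{ij} = \tilde\lambda\, v_i v_j$ and cites Proposition~\ref{prop:shift-scale} once, but these are identical computations. You also correctly flag that the $\{0,1\}^{\binom n2}$ sum embeds into the $\NN^{n(n+1)/2}$ sum with $\alpha!=1$ and no self-loops, so the planted-submatrix cumulant bound from Section~\ref{sec:subm-proof} applies verbatim at SNR $\tilde\lambda$; the paper states this more tersely but with the same content.

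Your part~(ii) takes a genuinely different route that, as sketched, does not close. The paper's upper bound (Theorem~\ref{thm:power-iter-subg}) is deliberately \emph{not} a spectral argument: it applies a single degree-one ``power iteration step''---namely the centered first-row average $\frac{1}{n-1}\sum_{i\ge 2}Y_{1i}-q_0$, which is a Binomial minus its mean and therefore concentrates via Bernstein's inequality, conditionally on $v_1$---and then spends the remaining degree budget on the degree-$(2k+1)$ polynomial threshold $\tau_k$ of Proposition~\ref{prop:poly-thresh}, followed by hypercontractivity (Corollary~\ref{cor:hyp-graph}) to convert a high-probability bound into an $\EE[\cdot]$ bound. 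No operator-norm estimate for the noise matrix appears anywhere. You instead propose $\Theta(D)$ rounds of power iteration on a centered matrix $M$, controlled by a spectral-norm bound for the inhomogeneous random-graph noise. That ingredient is far heavier than what is needed here, and in the sparse regime the hypotheses permit (average degree $\Theta(\log n)$) the operator norm of $M-\EE[M\mid v]$ is \emph{not} controlled by $\sqrt{q_0 n}$ without trimming high-degree vertices, which would break the ``low-degree polynomial'' structure of the estimator. Moreover the degree accounting does not work: $\Theta(D)$ matrix-multiplication steps already use up degree $\Theta(D)$, leaving no degree budget for the polynomial threshold $\tau_k$ that is where the final $D^2 r^{D-1}$ comes from; the paper achieves the bound precisely because the power-iteration part costs only degree~$1$. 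To make your part~(ii) go through you should drop the spectral machinery and replicate the row-sum-plus-Bernstein argument (this is exactly what the $q_1\rho \gtrsim n^{-1}[\log4+3D\log(9/\nu)]$ and $(q_1-q_0)^2/q_0\gtrsim (\rho^2 n)^{-1}[\cdots]$ hypotheses are calibrated to feed into).
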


\noindent The proof of part (i) is given in Section~\ref{sec:subg-proof}, while part (ii) is an immediate consequence of Theorem~\ref{thm:power-iter-subg} in Section~\ref{sec:upper-bounds}. For context, the trivial estimator $f(Y) = \EE[x] = \rho$ achieves the mean squared error $\EE(f(Y)-x)^2 = \rho - \rho^2$. Thus, part (i) states that degree-$n^{\Omega(1)}$ polynomials fail at recovery when
\begin{equation}\label{eq:summ-hard}
\frac{q_1 - q_0}{\sqrt{q_0 (1-q_1)}} \ll \min\left\{1,\frac{1}{\rho\sqrt{n}}\right\},
\end{equation}
while part (ii) states that (under the mild condition $\log(1/\nu) = n^{o(1)}$) $O(\log n)$-degree polynomials succeed when
\begin{equation}\label{eq:summ-easy}
\frac{q_1 - q_0}{\sqrt{q_0}} \gg \frac{1}{\rho\sqrt{n}} \qquad\text{and}\qquad q_1 \rho \gg \frac{1}{n}.
\end{equation}
We discuss the implications of these bounds below.

\subsubsection{Discussion and future directions}

Theorem~\ref{thm:mmse-subg} (summarized in~\eqref{eq:summ-hard} and~\eqref{eq:summ-easy}) resolves (in the low-degree framework) the computational complexity of recovery in a number of previously-studied settings. For instance, \cite{HWX-pds} considers the regime $q_1 = c q_0 = \Theta(n^{-a})$ and $\rho = \Theta(n^{b-1})$ for constants $a \in (0,2)$, $b \in (0,1)$, and $c > 1$. In this setting, Theorem~\ref{thm:mmse-subg}(i) shows low-degree hardness of recovery at degree $n^{\Omega(1)}$ whenever $b < (1+a)/2$. This is precisely the regime where planted-clique-hardness of recovery was left open by~\cite{HWX-pds} (see Figure~2 of~\cite{HWX-pds}). Furthermore, Theorem~\ref{thm:mmse-subg}(ii) gives a matching upper bound, showing that low-degree polynomials succeed whenever $b > (1+a)/2$; this coincides with the best known polynomial-time algorithms (see~\cite{HWX-pds}).

A more general regime of parameters is considered by~\cite{BBH-reduction} (for a close variant of the problem where $v$ has exactly $\rho n$ nonzero entries). They state the \emph{PDS (planted dense subgraph) recovery conjecture} which postulates hardness of exact recovery under the conditions
\begin{equation}\label{eq:pds-conj}
\liminf_{n \to \infty} \log_n \rho > -1/2 \qquad\text{and}\qquad \limsup_{n \to \infty} \log_n\left(\frac{\rho^2(q_1 - q_0)^2}{q_0(1-q_0)}\right) < -1,
\end{equation}
and they show that this conjecture implies (via average-case reductions) detection-recovery gaps in some other problems: biased sparse PCA and biclustering (which is an asymmetric version of planted submatrix). Prior to our work, concrete evidence for the PDS recovery conjecture was somewhat lacking (see the discussion following Conjecture~2.8 of~\cite{CX-pds}). Our results establish low-degree hardness of recovery in much (but not all) of the conjectured hard regime; for instance, if $q_0 \le q_1 \le 1-\Omega(1)$ (or even if $q_0 \le q_1 \le 1 - n^{-o(1)}$) then Theorem~\ref{thm:mmse-subg}(i) shows low-degree hardness at degree $n^{\Omega(1)}$ whenever~\eqref{eq:pds-conj} holds.

One special case not covered by Theorem~\ref{thm:mmse-subg} is the planted clique problem, where $q_1 = 1$ (since the denominator of the left-hand side of~\eqref{eq:summ-hard} is zero in this case). While no detection-recovery gap is expected here, it is still non-trivial to bound the low-degree MMSE. In Section~\ref{sec:clique} we show that our techniques can be extended to the planted clique problem, giving matching upper and lower bounds for recovery by low-degree polynomials; these coincide with the widely-conjectured computational threshold for planted clique. The proof of the lower bound requires a modification of the argument used to prove Theorem~\ref{thm:corr-binary}; while Theorem~\ref{thm:mmse-subg}(i) can be proved from Theorem~\ref{thm:corr-binary} with relative ease by leveraging our existing calculations for planted submatrix, the planted clique result requires a specialized treatment ``from scratch''.

We note that the upper and lower bounds in Theorem~\ref{thm:mmse-subg} (summarized in~\eqref{eq:summ-hard} and~\eqref{eq:summ-easy}) match in many parameter regimes (such as the one considerd by~\cite{HWX-pds}, as discussed above), but not all. The condition $q_1 \rho \gg 1/n$ in the upper bound is natural because it ensures that each vertex in the planted submatrix has at least one neighbor in the planted submatrix; without this, recovery is impossible. Assuming $q_1 \rho \gg 1/n$, the upper and lower bounds match so long as $q_1$ is not extremely close to 1 (specifically $q_1 \le 1 - n^{-o(1)}$) and $\rho \gg 1/\sqrt{n}$. On the other hand, a more elusive regime is $\rho \ll 1/\sqrt{n}$ with $q_1 = \Theta(n^{-a})$ and $q_0 = \Theta(n^{-b})$ for positive constants $a < b$: we do not expect a detection-recovery gap here, but even the detection question remains largely unanswered by prior work. The best known polynomial-time algorithm for this regime seems to be that of~\cite{approx-subgraph} (they give a proof sketch in Section~3.2), which is based on counting caterpillar graphs and succeeds when $a < cb$ where $\rho = \Theta(n^{c-1})$. We suspect that it may be possible to give a better algorithm that succeeds in the regime $a < b/2$, matching our lower bound. However, this might require a rather involved subgraph-counting argument as in~\cite{graph-matching}, and we leave this for future work.

\subsection{Cumulants}
\label{sec:cumulants}

Here we give a brief overview of \emph{cumulants} and some of their basic properties. As we show in Claim~\ref{claim:our-cumulants} below, the quantities $\kappa_\alpha$ that appear in our main results (Theorems~\ref{thm:corr-gauss} and~\ref{thm:corr-binary}) can be interpreted as certain joint cumulants. Later on, this connection will be convenient for deducing certain helpful properties of $\kappa_\alpha$. For more details on cumulants, we refer the reader to e.g.~\cite{novak}.

\begin{definition}\label{def:cumulant}
Let $X_1,\ldots,X_n$ be jointly-distributed random variables.
Their {\em cumulant generating function} is the function 
\[
K(t_1,\ldots,t_n) = \log \EE\left[\exp\left(\sum_{i=1}^n t_i X_i\right)\right],
\]
and their {\em joint cumulant} is the quantity
\begin{align*}
\kappa(X_1,\ldots,X_n) 
&= \left.\left(\left(\prod_{i = 1}^n \frac{\partial}{\partial t_i}\right) K(t_1,\ldots,t_n)\right)\right|_{t_1 = \cdots = t_n = 0} \\
&= \sum_{\pi\in \mathcal{P}} (|\pi|-1)! (-1)^{|\pi|-1} \prod_{B \in b(\pi)} \EE\left[\prod_{i \in B} X_i\right],
\end{align*}
where $\mathcal{P}$ is the set of all partitions of $[n]$ (here we are considering partitions of $n$ labeled elements into nonempty unlabeled parts), and for a given $\pi \in \mathcal{P}$ we use $b(\pi)$ to denote the set of all parts of the partition and $|\pi|$ to denote the number of parts in the partition.
\end{definition}

Cumulants are an alternative to moments in specifying a probability distribution. The joint cumulant $\kappa(\cdots)$ is symmetric, i.e., it depends only on the (multi)set of random variables $\{X_1,\ldots,X_n\}$ and not the order in which they are listed.
The cumulants enjoy some convenient properties, which we record here. The proofs are given in Appendix~\ref{app:cumulants} for completeness.

\begin{proposition}\label{prop:cumulant-indep}
If $a,b \ge 1$ and $X_1, X_2,\ldots,X_a,Y_1,\ldots,Y_b$ are random variables with $\{Y_i\}_{i \in [a]}$ independent from $\{X_j\}_{j \in [b]}$, then
\[
\kappa(X_1,X_2,\ldots,X_a,Y_1,\ldots,Y_b) = 0.
\]
\end{proposition}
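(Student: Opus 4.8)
The plan is to prove this via the cumulant generating function, exploiting the fact that for two independent blocks of variables the cumulant generating function splits as a sum. Introduce dummy variables $s = (s_1,\dots,s_a)$ attached to $X_1,\dots,X_a$ and $t = (t_1,\dots,t_b)$ attached to $Y_1,\dots,Y_b$, and set
\[ K(s,t) = \log\EE\Bigl[\exp\Bigl(\textstyle\sum_{j=1}^a s_j X_j + \sum_{i=1}^b t_i Y_i\Bigr)\Bigr]. \]
Since $\{Y_i\}$ is independent of $\{X_j\}$, the expectation factors as a product of the two block-expectations, so taking logarithms gives $K(s,t) = K_X(s) + K_Y(t)$, where $K_X(s) = \log\EE[\exp(\sum_j s_j X_j)]$ depends only on $s$ and $K_Y(t) = \log\EE[\exp(\sum_i t_i Y_i)]$ depends only on $t$.

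Next I would apply the derivative formula from Definition~\ref{def:cumulant}:
\[ \kappa(X_1,\dots,X_a,Y_1,\dots,Y_b) = \Bigl.\Bigl(\prod_{j=1}^a \tfrac{\partial}{\partial s_j}\Bigr)\Bigl(\prod_{i=1}^b \tfrac{\partial}{\partial t_i}\Bigr) K(s,t)\Bigr|_{s=t=0}. \]
Substituting $K = K_X(s) + K_Y(t)$ and using $a,b \ge 1$ (so that at least one $\partial/\partial s_j$ and at least one $\partial/\partial t_i$ appear), the operator $\prod_i \partial/\partial t_i$ annihilates the $s$-only term $K_X(s)$, and the operator $\prod_j \partial/\partial s_j$ annihilates the $t$-only term $K_Y(t)$. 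Hence the indicated mixed partial of $K_X + K_Y$ is identically zero, and in particular zero at the origin, which is exactly the claim.

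The only point needing a word of care is whether the moment generating function is finite near the origin, so that the derivatives above literally exist; I would dispatch this with a short remark rather than regard it as a genuine obstacle. The joint cumulant is a fixed polynomial in the moments of $(X_1,\dots,X_a,Y_1,\dots,Y_b)$ — explicitly the partition-sum expression in Definition~\ref{def:cumulant} — so the computation may be carried out at the level of formal power series in $(s,t)$, where the identity $\EE[\exp(\langle s,X\rangle+\langle t,Y\rangle)] = \EE[\exp(\langle s,X\rangle)]\,\EE[\exp(\langle t,Y\rangle)]$ holds coefficientwise and the additive split $K = K_X + K_Y$ again follows; alternatively one can simply verify the vanishing directly from the partition-sum formula by grouping partitions according to how their parts meet the two blocks. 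Either route is classical and routine, so the main work is really just presenting one of these justifications cleanly; I would favor the formal-power-series phrasing since it keeps the derivative argument above verbatim.
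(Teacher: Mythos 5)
Your proof is correct and follows essentially the same route as the paper's: split the cumulant generating function additively using independence, then note that the mixed partial derivative $\prod_j \partial/\partial s_j \prod_i \partial/\partial t_i$ (with $a,b\ge 1$) annihilates both summands. The extra remark about justifying the derivative computation via formal power series or the partition-sum formula is a reasonable bit of added care that the paper omits, but it does not change the argument.
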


\begin{proposition}\label{prop:cumulant-dub}
If $X_1,\ldots,X_n$ and $Y_1,\ldots,Y_n$ are random variables with $\{X_i\}_{i\in[n]}$ independent from $\{Y_i\}_{i \in [n]}$, then
\[
\kappa(X_1+Y_1,\ldots,X_n+Y_n) = \kappa(X_1,\ldots,X_n) + \kappa(Y_1,\ldots,Y_n).
\]
\end{proposition}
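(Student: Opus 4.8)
The plan is to exploit the additivity of cumulant generating functions under independence, which reduces the statement to a one-line computation followed by a routine differentiation. Write $K_{X+Y}$, $K_X$, $K_Y$ for the cumulant generating functions of $(X_1+Y_1,\ldots,X_n+Y_n)$, $(X_1,\ldots,X_n)$, and $(Y_1,\ldots,Y_n)$, respectively.

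First I would observe that for any $t=(t_1,\ldots,t_n)$,
\[ \exp\Big(\sum_{i=1}^n t_i(X_i+Y_i)\Big) = \exp\Big(\sum_{i=1}^n t_i X_i\Big)\exp\Big(\sum_{i=1}^n t_i Y_i\Big), \]
and since $\{X_i\}_{i\in[n]}$ is independent of $\{Y_i\}_{i\in[n]}$, the two factors on the right are independent random variables; hence the expectation factors as a product, and taking logarithms gives
\[ K_{X+Y}(t) = K_X(t) + K_Y(t). \]
Then I would apply the mixed partial derivative $\big(\prod_{i=1}^n \partial/\partial t_i\big)$ to both sides and evaluate at $t=0$. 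Since differentiation is linear, the left-hand side becomes $\kappa(X_1+Y_1,\ldots,X_n+Y_n)$ and the right-hand side becomes $\kappa(X_1,\ldots,X_n)+\kappa(Y_1,\ldots,Y_n)$, which is exactly the claim.

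The one subtlety is that this manipulation is analytically meaningful only when the relevant moment generating functions exist in a neighbourhood of the origin, which need not hold for every prior arising in the applications. To sidestep this I would instead give a proof from the moment formula in Definition~\ref{def:cumulant}: that formula exhibits the joint cumulant as a finite linear combination of products of moments, and inspecting it shows $\kappa(\cdots)$ is \emph{multilinear} in its arguments (in the distinguished argument, each summand is linear because expectation is linear and the other factors do not involve that argument). Expanding $\kappa(X_1+Y_1,\ldots,X_n+Y_n)$ by multilinearity yields $\sum_{S\subseteq[n]} \kappa(W^S_1,\ldots,W^S_n)$ where $W^S_i = X_i$ for $i\in S$ and $W^S_i = Y_i$ for $i\notin S$; every term with $\emptyset\neq S\neq[n]$ contains at least one $X$-variable and at least one $Y$-variable, hence vanishes by Proposition~\ref{prop:cumulant-indep}, leaving only the $S=[n]$ term $\kappa(X_1,\ldots,X_n)$ and the $S=\emptyset$ term $\kappa(Y_1,\ldots,Y_n)$. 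I expect the multilinearity bookkeeping to be the most tedious part, but it is entirely routine, and no genuinely hard step is anticipated.
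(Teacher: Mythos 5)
Your first argument---independence factors the expectation, so $K_{X+Y}=K_X+K_Y$, then differentiate---is exactly the paper's proof, essentially word for word. That part is correct and identical in approach.

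Your second argument is a genuinely different (and arguably better) route: you read multilinearity of $\kappa(\cdots)$ directly off the moment-partition formula in Definition~\ref{def:cumulant}, expand $\kappa(X_1+Y_1,\ldots,X_n+Y_n)$ into $2^n$ ``pure'' cumulants indexed by $S\subseteq[n]$, and kill every mixed term ($\emptyset\ne S\ne[n]$) via Proposition~\ref{prop:cumulant-indep}. This works because each $X_j$ lies in exactly one block $B\in b(\pi)$ of a partition $\pi$, so each summand in the moment formula is linear in $X_j$; hence $\kappa$ is multilinear as claimed. What this buys you over the paper's CGF argument is that it needs only finitely many moments to exist, not a moment generating function in a neighbourhood of the origin---a caveat you correctly flag. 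One small remark: as stated, the paper proves Proposition~\ref{prop:cumulant-indep} itself via the CGF, so if the goal is to fully avoid CGF existence assumptions one would want to also reprove that proposition from the moment formula (which is a standard, if slightly more tedious, partition-counting argument); your proposal implicitly takes Proposition~\ref{prop:cumulant-indep} as given, which is reasonable but worth noting.
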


\begin{proposition}\label{prop:shift-scale}
The joint cumulant is invariant under constant shifts and is scaled by constant multiplication.
That is, if $X_1,\ldots,X_n$ are jointly-distributed random variables and $c$ is any constant, then
\[
\kappa(X_1 + c, X_2, \ldots,X_n) = \kappa(X_1,\ldots,X_n) + c \cdot \One[n = 1],\]
and
\[ \kappa(c X_1, X_2, \ldots, X_n) = c \cdot \kappa(X_1,\ldots,X_n).
\]
\end{proposition}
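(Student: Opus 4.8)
The plan is to work directly with the cumulant generating function $K(t_1,\ldots,t_n) = \log\EE[\exp(\sum_i t_i X_i)]$ from Definition~\ref{def:cumulant}, since both claimed identities become transparent at the level of the CGF. Throughout one treats $K$ as a formal power series in the $t_i$ near the origin (equivalently, assumes the MGF is finite in a neighborhood of $0$); the combinatorial partition formula gives an alternative route that avoids any analytic hypothesis, which I mention at the end.

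For the shift statement, I would first record how $K$ transforms when $X_1$ is replaced by $X_1 + c$. Factoring $e^{c t_1}$ out of the expectation gives
\[
\tilde K(t_1,\ldots,t_n) := \log\EE\Big[\exp\Big(t_1(X_1+c) + \sum_{i\ge 2} t_i X_i\Big)\Big] = c t_1 + K(t_1,\ldots,t_n).
\]
Applying $\partial_{t_1}\cdots\partial_{t_n}$ and evaluating at $t_1=\cdots=t_n=0$, the summand $K$ contributes $\kappa(X_1,\ldots,X_n)$ by definition, while the summand $c t_1$ contributes $c$ when $n=1$ (since $\partial_{t_1}(c t_1) = c$) and $0$ when $n\ge 2$ (since $\partial_{t_2}(c t_1) = 0$ already annihilates it). This yields $\kappa(X_1+c,X_2,\ldots,X_n) = \kappa(X_1,\ldots,X_n) + c\,\One[n=1]$.

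For the scaling statement, replacing $X_1$ by $c X_1$ gives $\tilde K(t_1,\ldots,t_n) = K(c t_1, t_2,\ldots,t_n)$. Differentiating once in $t_1$ by the chain rule produces a factor of $c$, i.e.\ $\partial_{t_1}\tilde K = c\,(\partial_1 K)(c t_1, t_2,\ldots,t_n)$; the remaining derivatives $\partial_{t_2}\cdots\partial_{t_n}$ act on the unchanged arguments $t_2,\ldots,t_n$, and setting all $t_i = 0$ leaves $c \cdot \big(\partial_{t_1}\cdots\partial_{t_n} K\big)\big|_0 = c\,\kappa(X_1,\ldots,X_n)$, as desired.

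There is no genuine obstacle here; the only point deserving care is whether $K$ is well-defined and differentiable at the origin, which one can either assume (finite MGF near $0$) or sidestep via the combinatorial moment formula for $\kappa$. For scaling, index $1$ lies in exactly one block $B$ of every partition $\pi \in \mathcal P$, and $\EE[c X_1\prod_{i\in B\setminus\{1\}}X_i] = c\,\EE[\prod_{i\in B}X_i]$, so every summand in the partition formula picks up exactly one factor of $c$, giving $c\,\kappa(\cdots)$ term by term. For the shift, expanding $\EE[(X_1+c)\prod_{i\in B\setminus\{1\}}X_i]$ splits each partition term into two, and one must check that the ``extra'' pieces cancel in the alternating sum over partitions except when $n=1$; this is more tedious than the CGF argument, so I would present the CGF proof as the main one and relegate the combinatorial version to a remark.
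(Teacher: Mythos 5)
Your proof is correct, but it takes a different route from the paper's. The paper deduces the shift identity as a corollary of the two propositions immediately preceding it: viewing $c$ as a degenerate (constant) random variable independent of everything, Proposition~\ref{prop:cumulant-dub} gives $\kappa(X_1+c,X_2,\ldots,X_n) = \kappa(X_1,\ldots,X_n) + \kappa(c,0,\ldots,0)$, and Proposition~\ref{prop:cumulant-indep} kills the second term once $n\ge 2$ (with $\kappa(c)=c$ in the $n=1$ case); the scaling identity is then read off from the partition formula in Definition~\ref{def:cumulant}, since each partition places index $1$ in exactly one block and the corresponding moment scales linearly in $c$ — which is precisely the argument you sketch in your closing remark. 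Your main proof instead works directly with the cumulant generating function, absorbing the shift into a linear summand $ct_1$ and the scaling into a substitution $t_1 \mapsto ct_1$, which is arguably the cleaner and more uniform derivation (both identities fall out of a one-line CGF computation). The trade-off, which you correctly flag, is that the CGF route requires the MGF to be finite near the origin so that $K$ is differentiable there, whereas the paper's route is purely combinatorial and leans on propositions it has already established; both are legitimate, and your acknowledgement of the moment-formula alternative covers the gap.
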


In equations~(\ref{eq:kappa-def}) and (\ref{eq:kappa-def-bin}), we introduced quantities $\kappa_{\alpha}$ for $\alpha \in \NN^N$. Here we will think of $\alpha$ as a multiset $\{a_1,\ldots,a_m\}$ which contains $\alpha_i$ copies of $i$ for all $i \in [N]$.
We can show that these are the joint cumulants of a (multi)set of entries of the planted signal.
\begin{claim}\label{claim:our-cumulants}
If $\alpha = \{a_1,\ldots,a_m\}$, then
\[
\kappa_{\alpha} = \kappa(x,X_{a_1},\ldots,X_{a_{m}}).
\]
\end{claim}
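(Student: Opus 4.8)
The plan is to show that the recursive definition of $\kappa_\alpha$ in \eqref{eq:kappa-def} is exactly the defining recursion of the joint cumulant $\kappa(x, X_{a_1}, \ldots, X_{a_m})$, and then conclude by uniqueness of the solution to that recursion. The standard ``moment-cumulant'' inversion formula says that for jointly-distributed variables $Z_1, \ldots, Z_k$ (here $Z_0 = x$ and the remaining $Z$'s are the entries of $X$ listed with multiplicity according to $\alpha$), the joint cumulant satisfies
\[ \EE\!\left[\prod_{i} Z_i\right] = \sum_{\pi \in \mathcal P} \prod_{B \in b(\pi)} \kappa\!\left((Z_i)_{i \in B}\right), \]
and equivalently, isolating the block of the partition that contains the distinguished index $0$ (the one carrying $x$), one gets a recursion expressing $\kappa(x, X_{a_1}, \ldots, X_{a_m})$ in terms of $\EE[x X^\alpha]$ minus a sum over proper sub-multisets $\beta \lneq \alpha$ of $\kappa_\beta$ times a moment $\EE[X^{\alpha - \beta}]$. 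The combinatorial factor $\binom{\alpha}{\beta}$ appears precisely because $\alpha$ is a \emph{multiset}: choosing which copies of each repeated index $i$ go into the block containing $0$ (versus the rest) contributes $\prod_i \binom{\alpha_i}{\beta_i} = \binom{\alpha}{\beta}$ ways.

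Concretely, I would proceed as follows. First, fix $\alpha$ with $|\alpha| = m$ and enumerate the multiset as $\{a_1, \ldots, a_m\}$; write $S = \{0, 1, \ldots, m\}$ for the index set of the tuple $(x, X_{a_1}, \ldots, X_{a_m})$. Second, start from the moment-cumulant formula for $\EE[x \prod_{j=1}^m X_{a_j}] = \EE[x X^\alpha]$ as a sum over partitions $\pi$ of $S$ of products of cumulants over blocks. Third, split the sum according to the block $B_0 \ni 0$: writing $B_0 = \{0\} \cup T$ where $T$ ranges over sub-multisets of $\{a_1, \ldots, a_m\}$ corresponding to some $\gamma \le \alpha$, and noting the remaining blocks form a partition of the complement (whose block-cumulant products sum, by the same moment-cumulant formula applied in reverse, to the moment $\EE[X^{\alpha - \gamma}]$), we obtain
\[ \EE[x X^\alpha] = \sum_{0 \le \gamma \le \alpha} \binom{\alpha}{\gamma}\, \kappa\!\left(x, (X_i \text{ with multiplicity } \gamma_i)\right) \EE[X^{\alpha - \gamma}]. \]
The $\gamma = \alpha$ term is $\kappa(x, X_{a_1}, \ldots, X_{a_m}) \cdot \EE[X^0] = \kappa(x, X_{a_1}, \ldots, X_{a_m})$ since $\EE[1] = 1$, so rearranging gives exactly \eqref{eq:kappa-def} with $\kappa_\gamma$ replaced by $\kappa(x, (X_i \text{ mult. } \gamma_i))$. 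Fourth, since \eqref{eq:kappa-def} is a triangular recursion in $\alpha$ (the right-hand side only involves $\kappa_\beta$ for $\beta \lneq \alpha$ and the moments, which are given) with base case $\kappa_0 = \EE[x]$, it has a unique solution; the joint cumulants satisfy the same recursion and the same base case ($\kappa(x) = \EE[x]$), so $\kappa_\alpha = \kappa(x, X_{a_1}, \ldots, X_{a_m})$ for all $\alpha$ by induction on $|\alpha|$.

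The only genuinely delicate point is the multiset bookkeeping: one must be careful that ``sub-multiset $\gamma \le \alpha$'' combined with the multiplicity count $\binom{\alpha}{\gamma}$ correctly reproduces the sum over all subsets $T \subseteq \{a_1, \ldots, a_m\}$ of labeled positions, and that the cumulant $\kappa(x, (X_i \text{ with multiplicity } \gamma_i))$ depends only on the multiset $\gamma$ (which it does, by the symmetry of joint cumulants noted after Definition~\ref{def:cumulant}). Everything else is the routine moment-cumulant inversion; the factorization of the ``complement'' partition sum back into the moment $\EE[X^{\alpha-\gamma}]$ is just the moment-cumulant formula read in the forward direction. I would write the argument for the $\NN^N$ case (Theorem~\ref{thm:corr-gauss}); the $\{0,1\}^N$ case is identical with all binomial coefficients equal to $1$.
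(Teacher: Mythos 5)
Your proof is correct, and it follows the same overall strategy as the paper's: establish the recursion
\[
\kappa(x,X_{a_1},\ldots,X_{a_m}) = \EE[xX^\alpha] - \sum_{0\le\gamma\lneq\alpha}\binom{\alpha}{\gamma}\,\kappa\bigl(x,(X_i\text{ with mult.\ }\gamma_i)\bigr)\,\EE[X^{\alpha-\gamma}],
\]
observe it matches \eqref{eq:kappa-def} (with the multiset bookkeeping handled exactly as you describe, via $\binom{\alpha}{\gamma}$ counting the labeled position-subsets inducing a given sub-multiset), and conclude by uniqueness of the triangular recursion. The difference is in how the recursion is derived. The paper starts from the signed-partition expression for the cumulant in Definition~\ref{def:cumulant} ($\kappa = \sum_\pi(|\pi|-1)!(-1)^{|\pi|-1}\prod_B\EE[Y^B]$) and regroups by isolating a block $S$ not containing the distinguished element; since a partition with $|\pi|$ parts has $|\pi|-1$ such blocks, this forces an overcounting correction by $(|\pi|-1)$, which conveniently cancels the $(|\pi|-1)!$ factor. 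You instead start from the inverse (moment-cumulant) formula $\EE[\prod_i Z_i]=\sum_\pi\prod_B\kappa((Z_i)_{i\in B})$ and split by the unique block containing the index of $x$, using the same formula in reverse to collapse the remaining blocks into $\EE[X^{\alpha-\gamma}]$; no overcounting arises because the distinguished block is unique. Your route is arguably cleaner, at the cost of invoking the moment-cumulant inversion formula, which is standard but is the inverse of the formula the paper actually takes as its definition; if you wanted the argument to be self-contained relative to Definition~\ref{def:cumulant}, you would need to either prove that inversion or argue as the paper does.
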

\noindent We defer the proof to Appendix~\ref{app:cumulants}. 
We remark that we are not aware of a ``deeper'' reason for why cumulants appear in our context.
One particularly important consequence of Claim~\ref{claim:our-cumulants} and Proposition~\ref{prop:cumulant-indep} is that when $x$ is independent of $X_{a_1},\ldots,X_{a_m}$, we have $\kappa_\alpha = 0$; this is crucial for us in showing that (\ref{eq:corr-kappa-gauss}) and (\ref{eq:corr-kappa-bin}) are bounded, since many of the terms are zero.
This is in contrast to what happens in the case of detection, where terms corresponding to such $\alpha$ do contribute to make $\|L^{\le D}\|$ large.

\section{Lower Bounds}

In this section, we prove our lower bounds on $\MMSE_{\le D}$ (or equivalently, upper bounds on $\Corr_{\le D}$) for the additive Gaussian model (Theorem~\ref{thm:corr-gauss}) and planted submatrix problem (Theorem~\ref{thm:mmse-subm}(i)), as well as the analogous proofs for binary-valued problems, including planted dense subgraph (Theorems~\ref{thm:corr-binary} and~\ref{thm:mmse-subg}(i)).

\subsection{Additive Gaussian Noise Model}
\label{sec:gauss-proof}

In this section we prove Theorem~\ref{thm:corr-gauss}. We will work with the \emph{Hermite polynomials}, which are orthogonal polynomials with respect to Gaussian measure (see~\cite{orthog-poly} for a standard reference). Recall the notation regarding $\alpha \in \NN^N$, defined in Section~\ref{sec:notation}. Let $(h_k)_{k \in \NN}$ be the normalized Hermite polynomials $h_k = \frac{1}{\sqrt{k!}} H_k$ where $(H_k)_{k \in \NN}$ are defined by the recurrence
\begin{equation}\label{eq:H-defn}
H_0(z) = 1,\quad H_1(z) = z,\quad H_{k+1}(z) = z H_k(z) - k H_{k-1}(z) \text{ for } k \ge 1.
\end{equation}
The normalization ensures orthonormality $\EE_{z \sim \mathcal{N}(0,1)}[h_k(z) h_{\ell}(z)] = \One_{k = \ell}$. In particular, $\EE_{z \sim \mathcal{N}(0,1)}[h_k(z)] = 0$ for all $k \ge 1$. Also, for $\alpha \in \NN^N$ and $Z \in \RR^N$, let $h_\alpha(Z) = \prod_{i \in [N]} h_{\alpha_i}(Z_i)$; these form an orthonormal basis with respect to $\mathcal{N}(0,1)^{\otimes N}$, i.e., if $Z$ has i.i.d.\ $\mathcal{N}(0,1)$ entries then $\EE[h_\alpha(Z)h_\beta(Z)] = \One_{\alpha=\beta}$. A shifted Hermite polynomial can be expanded in the Hermite basis as follows.

\begin{proposition}\label{prop:hermite-shift}
For any $k \in \NN$ and $z, \mu \in \RR$,
\begin{equation}\label{eq:HH-shift}
H_k(z+\mu) = \sum_{\ell=0}^k \binom{k}{\ell} \mu^{k-\ell} H_\ell(z)
\end{equation}
and so
\begin{equation}\label{eq:h-shift}
h_k(z+\mu) = \sum_{\ell=0}^k \sqrt{\frac{\ell!}{k!}} \binom{k}{\ell} \mu^{k-\ell} h_\ell(z).
\end{equation}
In particular,
\begin{equation}\label{eq:h-mean}
\Ex_{z \sim \mathcal{N}(0,1)} h_k(z+\mu) = \frac{\mu^k}{\sqrt{k!}}.
\end{equation}
\end{proposition}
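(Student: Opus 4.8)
The plan is to prove the $H_k$ identity~\eqref{eq:HH-shift} first and then read off the other two statements. Indeed, \eqref{eq:h-shift} is immediate upon dividing~\eqref{eq:HH-shift} by $\sqrt{k!}$ and writing $H_\ell(z) = \sqrt{\ell!}\,h_\ell(z)$, and \eqref{eq:h-mean} follows by taking $\Ex_{z\sim\mathcal{N}(0,1)}$ of~\eqref{eq:h-shift} and using $\Ex_{z\sim\mathcal{N}(0,1)}[h_\ell(z)] = \One_{\ell=0}$, so that only the $\ell=0$ term survives, contributing $\binom{k}{0}\mu^{k}\sqrt{0!/k!}\,h_0 = \mu^k/\sqrt{k!}$.

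For~\eqref{eq:HH-shift} I would use exponential generating functions. Set $F(z,t) = \sum_{k\ge 0} H_k(z)\,t^k/k!$. Adopting the convention $H_{-1}\equiv 0$, the recurrence~\eqref{eq:H-defn} reads $H_{k+1} = zH_k - kH_{k-1}$ for all $k\ge 0$, and this translates into the first-order ODE $\partial_t F = (z-t)F$ with $F(z,0) = H_0(z) = 1$, whose solution is $F(z,t) = \exp(zt - t^2/2)$. Then
\[ \sum_{k\ge 0} H_k(z+\mu)\frac{t^k}{k!} = \exp\big((z+\mu)t - t^2/2\big) = e^{\mu t}\,F(z,t) = \Big(\sum_{j\ge 0}\mu^{j}\frac{t^j}{j!}\Big)\Big(\sum_{\ell\ge 0}H_\ell(z)\frac{t^\ell}{\ell!}\Big), \]
and extracting the coefficient of $t^k$ from this Cauchy product gives $H_k(z+\mu)/k! = \sum_{\ell=0}^k \mu^{k-\ell}H_\ell(z)/\big((k-\ell)!\,\ell!\big)$, which is exactly~\eqref{eq:HH-shift}. (For fixed $z$ all series in sight are entire in $t$, so the manipulations are legitimate.)

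As an alternative that stays closer to the recurrence-based definition, one can prove~\eqref{eq:HH-shift} by induction on $k$: the cases $k=0,1$ are direct, and for the step one substitutes the hypotheses for $k$ and $k-1$ into $H_{k+1}(z+\mu) = (z+\mu)H_k(z+\mu) - kH_{k-1}(z+\mu)$, applies the recurrence in the form $zH_\ell(z) = H_{\ell+1}(z) + \ell H_{\ell-1}(z)$, reindexes, and checks that the contribution of the $\ell H_{\ell-1}$ terms (using $(m+1)\binom{k}{m+1} = k\binom{k-1}{m}$) cancels $-kH_{k-1}(z+\mu)$ exactly, leaving $\sum_m\big(\binom{k}{m-1}+\binom{k}{m}\big)\mu^{k+1-m}H_m(z) = \sum_m \binom{k+1}{m}\mu^{k+1-m}H_m(z)$ by Pascal's rule. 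There is no genuinely hard step here; the only mild obstacle is the index bookkeeping in the inductive computation, which the generating-function proof sidesteps. I would present the generating-function argument as the main line and mention the inductive verification in one sentence.
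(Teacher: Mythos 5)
Your proposal is correct, and your primary argument via exponential generating functions is a genuinely different route from the paper's, which proves~\eqref{eq:HH-shift} by induction on $k$ using the three-term recurrence~\eqref{eq:H-defn}. The generating-function derivation is essentially the observation that the Hermite polynomials form an Appell sequence: once you have $F(z,t) := \sum_{k\ge 0} H_k(z)t^k/k! = \exp(zt-t^2/2)$ (which you correctly obtain from the ODE $\partial_t F = (z-t)F$), the translation $z\mapsto z+\mu$ multiplies $F$ by $e^{\mu t}$ and the binomial convolution falls out of the Cauchy product. This is cleaner and more conceptual than the paper's argument, at the cost of introducing the generating function as an intermediary object; the paper's inductive proof stays closer to the bare recursive definition of $H_k$ but requires the more delicate bookkeeping you accurately sketch in your alternative (in particular the cancellation via $(\ell+1)\binom{k-1}{\ell+1} = (k-1)\binom{k-2}{\ell}$ and Pascal's rule). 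Your deductions of~\eqref{eq:h-shift} from normalization and of~\eqref{eq:h-mean} from $\EE_{z\sim\mathcal{N}(0,1)}[h_\ell(z)]=\One_{\ell=0}$ match the paper exactly.
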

\noindent These facts are well known (see e.g.\ page 254 of~\cite{special-functions}), but we provide a proof in Appendix~\ref{app:hermite-shift} for completeness.

\begin{proof}[Proof of Theorem~\ref{thm:corr-gauss}]
For a degree-$D$ polynomial $f$, write the Hermite expansion $f(Y) = \sum_{|\alpha| \le D} \hat f_\alpha h_\alpha(Y)$. Using~\eqref{eq:h-mean}, we have
\[ \EE[f(Y)x] = \sum_\alpha \hat f_\alpha \,\EE[h_\alpha(Y)x] = \sum_\alpha \hat f_\alpha \Ex_X x \Ex_Z h_\alpha(X+Z) = \sum_\alpha \hat f_\alpha \Ex_X x X^\alpha / \sqrt{\alpha!}, \]
and by Jensen's inequality,
\[ \EE[f(Y)^2] \ge \Ex_Z \left(\Ex_X f(X+Z)\right)^2 =: \Ex_Z g(Z)^2 = \|\hat g\|^2 \]
where $\{\hat g_\alpha\}$ are the Hermite coefficients of the function $g(Z) = \EE_{X} f(X+Z)$, and $\|\hat g\|^2$ denotes $\sum_{|\alpha| \le D} \hat g_\alpha^2$.
We may calculate the $\{\hat g_{\alpha}\}$ directly, as a function of the $\{\hat f_{\alpha}\}$:
\begin{align*}
g(Z) = \Ex_X f(X+Z) 
&= \sum_\alpha \hat f_\alpha \Ex_X h_\alpha(X+Z) \\
&= \sum_\alpha \hat f_\alpha \Ex_X \prod_i h_{\alpha_i}(X_i+Z_i) \qquad\text{(by definition of $h_\alpha$)}\\
&= \sum_\alpha \hat f_\alpha \Ex_X \prod_i \sum_{\ell=0}^{\alpha_i} \sqrt{\frac{\ell!}{\alpha_i!}} \binom{\alpha_i}{\ell} X_i^{\alpha_i-\ell} h_\ell(Z_i) \qquad\text{(by Proposition~\ref{prop:hermite-shift})} \\
&= \sum_\alpha \hat f_\alpha \Ex_X \sum_{0 \le \beta \le \alpha} \prod_i \sqrt{\frac{\beta_i!}{\alpha_i!}} \binom{\alpha_i}{\beta_i} X_i^{\alpha_i-\beta_i} h_{\beta_i}(Z_i)
\\
&= \sum_\alpha \hat f_\alpha \sum_{0 \le \beta \le \alpha} h_\beta(Z) \,\EE[X^{\alpha-\beta}] \sqrt{\frac{\beta!}{\alpha!}} \binom{\alpha}{\beta} \\
&= \sum_\beta h_\beta(Z) \sum_{\alpha \ge \beta} \hat f_\alpha\, \EE[X^{\alpha-\beta}] \sqrt{\frac{\beta!}{\alpha!}} \binom{\alpha}{\beta}.
\end{align*}
To summarize, $\EE[f(Y)x] = \langle c,\hat f \rangle$ where $c = (c_\alpha)_{|\alpha| \le D}$ is defined by $c_\alpha := \EE[x X^\alpha]/\sqrt{\alpha!}$, and $\EE[f(Y)^2] \ge \|\hat g\|^2$ where $\hat g = M \hat f$ with
\[ M_{\beta \alpha} := \One_{\beta \le \alpha}\, \EE[X^{\alpha-\beta}] \sqrt{\frac{\beta!}{\alpha!}} \binom{\alpha}{\beta} \]
for all $\alpha, \beta \in \NN^N$ with $|\alpha| \le D$, $|\beta| \le D$. Note that $M$ is upper triangular with $1$'s on the diagonal, and is thus invertible. Now
\[ \Corr_{\le D} \le \sup_{\hat f \ne 0} \frac{\langle c,\hat f \rangle}{\|M \hat f\|} = \sup_{\hat g \ne 0} \frac{c^\top M^{-1} \hat g}{\|\hat g\|} = \|c^\top M^{-1}\| =: \|w\| \]
where $w^\top M = c^\top$ (since the optimizer for $\hat g$ is $(M^{-1})^\top c$). Solve for $w$ recursively:
\[ c_\alpha = \sum_\beta w_\beta M_{\beta\alpha} = \sum_{\beta \le \alpha} w_\beta \EE[X^{\alpha-\beta}] \sqrt{\frac{\beta!}{\alpha!}} \binom{\alpha}{\beta} \]
and so
\[ w_\alpha = c_\alpha - \sum_{\beta \lneq \alpha} w_\beta \EE[X^{\alpha-\beta}] \sqrt{\frac{\beta!}{\alpha!}} \binom{\alpha}{\beta}. \]
Equivalently, $w_\alpha = \kappa_\alpha / \sqrt{\alpha!}$ where
\[ \kappa_\alpha = \EE[x X^\alpha] - \sum_{\beta \lneq \alpha} \kappa_\beta \binom{\alpha}{\beta} \EE[X^{\alpha-\beta}]. \]
We have now shown
\[ \Corr_{\le D}^2 \le \|w\|^2 = \sum_{|\alpha| \le D} w_\alpha^2 = \sum_{|\alpha| \le D} \frac{\kappa_\alpha^2}{\alpha!}, \]
completing the proof.
\end{proof}

\subsection{Planted Submatrix}
\label{sec:subm-proof}

In this section we prove Theorem~\ref{thm:mmse-subm}(i) using Theorem~\ref{thm:corr-gauss}. To cast planted submatrix as a special case of the additive Gaussian noise model, we take $X = (X_{ij})_{i \le j}$ defined by $X_{ij} = \lambda v_i v_j$. Note that we have removed the redundant lower-triangular part of the matrix, and decreased the noise on the diagonal from $\mathcal{N}(0,2)$ to $\mathcal{N}(0,1)$. This modification on the diagonal can only increase $\Corr_{\le D}$ (see Claim~\ref{claim:add-noise} in Appendix~\ref{app:basic}), and so an upper bound on $\Corr_{\le D}$ in this new model implies the same upper bound on $\Corr_{\le D}$ in the original model.

We will think of $\alpha = (\alpha_{ij})_{i \le j}$ (where $\alpha_{ij} \in \NN$) as a multigraph (with self-loops allowed) on vertex set $[n]$, where $\alpha_{ij}$ represents the number of edges between vertices $i$ and $j$.

\begin{lemma}\label{lem:conn}
If $\alpha$ has a nonempty connected component that does not contain vertex 1, then $\kappa_\alpha = 0$. (In particular, $\kappa_\alpha = 0$ whenever $\alpha$ is disconnected.)
\end{lemma}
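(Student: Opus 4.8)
The plan is to use the cumulant interpretation of $\kappa_\alpha$ provided by Claim~\ref{claim:our-cumulants}, together with the vanishing property for independent collections in Proposition~\ref{prop:cumulant-indep}. Recall that $X_{ij} = \lambda v_i v_j$ and $x = v_1$, and that the entries $v_1,\ldots,v_n$ are independent. Viewing $\alpha$ as a multigraph on $[n]$, Claim~\ref{claim:our-cumulants} gives $\kappa_\alpha = \kappa(x, \{X_{a_1},\ldots,X_{a_m}\})$ where the multiset $\{a_1,\ldots,a_m\}$ lists each edge $(i,j)$ of $\alpha$ with multiplicity $\alpha_{ij}$. Each $X_{ij}$ appearing here is a function only of the coordinates $\{v_k : k \in V(\alpha) \cup \{1\}\}$ touched by the relevant edge, so $\kappa_\alpha$ is a joint cumulant of a collection of random variables each of which is a function of some $v_k$'s.

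The key step is the following decomposition: suppose $\alpha$ has a connected component $C$ (as a multigraph) with $1 \notin V(C)$. Then the edges of $\alpha$ split into two groups: those lying in $C$, which are functions of $\{v_k : k \in V(C)\}$, and all the rest (including the ``instance of $x = v_1$''), which are functions of $\{v_k : k \notin V(C)\}$ — here I use that distinct connected components of a multigraph span disjoint vertex sets, and that $1$ is not in $V(C)$. Since the $v_k$ are mutually independent, the two groups of random variables are independent of each other. The group coming from $C$ is nonempty because $C$ is a nonempty component (it has at least one edge). So Proposition~\ref{prop:cumulant-indep} applies with $a \ge 1$ variables from $C$ and $b \ge 1$ variables from the complement (note $x$ itself is always in the second group, so $b \ge 1$ even if $\alpha$ has no other edges), yielding $\kappa_\alpha = 0$.

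For the parenthetical remark: if $\alpha$ is disconnected, then either it has a nonempty component avoiding vertex $1$ — in which case the above applies — or ... more carefully, "disconnected" as a multigraph on $[n]$ could a priori mean there are isolated vertices; but only vertices in $V(\alpha)$ matter, and $\kappa_\alpha$ as defined via the multiset of edges only involves $V(\alpha)$. So "disconnected" should be read as: the subgraph on $V(\alpha)$ has at least two connected components. At most one of these can contain vertex $1$, so there is a nonempty component avoiding $1$, and we conclude $\kappa_\alpha = 0$ by the main statement. I would state this cleanly to avoid the isolated-vertex ambiguity, perhaps phrasing the parenthetical as "whenever the edges of $\alpha$ do not all lie in a single connected component together with vertex $1$."

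The main obstacle is really just bookkeeping: making the partition of the generating multiset into the $C$-part and its complement precise, and confirming that $x = v_1$ lands in the complement part (so that $b \ge 1$ and Proposition~\ref{prop:cumulant-indep} is legitimately invoked), and that the $C$-part is nonempty (so $a \ge 1$). There is no analytic difficulty; the one thing to be careful about is the degenerate reading of "connected component" — I want $C$ to have at least one edge, which is guaranteed if I define components of the multigraph on $V(\alpha)$ and note every vertex of $V(\alpha)$ is incident to an edge by definition of $V(\alpha)$.
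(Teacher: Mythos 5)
Your proof is correct and takes essentially the same route as the paper's: interpret $\kappa_\alpha$ as the joint cumulant of $v_1$ together with the edge variables $X_{ij}=\lambda v_i v_j$ via Claim~\ref{claim:our-cumulants}, observe that independence of the $v_k$'s makes the variables attached to the component $C$ independent from $v_1$ and the remaining edge variables, and invoke Proposition~\ref{prop:cumulant-indep}. You are somewhat more explicit than the paper about verifying that both sides of the partition are nonempty and about the reading of ``disconnected,'' but these are bookkeeping refinements rather than a different argument.
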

\noindent Here, \emph{nonempty} means that the connected component contains at least one edge.

\begin{remark}
Lemma~\ref{lem:conn} is crucial to separating recovery from detection, as it allows us to restrict our attention to connected multigraphs. As illustrated by the proof of Proposition~\ref{prop:null-corr}, the multigraphs responsible for making detection easy are highly disconnected.
\end{remark}

\noindent Using the cumulant interpretation of $\kappa_\alpha$ (Remark~\ref{rem:cumulants}), Lemma~\ref{lem:conn} follows easily from the following basic property of cumulants: the joint cumulant of a collection of random variables is zero whenever the random variables can be partitioned into two nonempty parts that are independent from each other; see Section~\ref{sec:cumulants} and particularly Proposition~\ref{prop:cumulant-indep}. We also give an alternative self-contained proof of Lemma~\ref{lem:conn} in Appendix~\ref{app:cumulant-dis}.

\begin{proof}[Proof of Lemma~\ref{lem:conn}]
From Claim~\ref{claim:our-cumulants}, $\kappa_\alpha$ is the joint cumulant of $v_1$ along with the edge variables $X_{ij} = \lambda v_i v_j$ for each edge $(i,j)$ of $\alpha$.
Since the $v_i$ are sampled independently, if there is a connected component $C$ not containing $v_1$, then $v_1$ and the edge variables in $\overline C$ are independent from the edge variables in $C$.
This gives $\kappa_\alpha = 0$ by Proposition~\ref{prop:cumulant-indep}.
\end{proof}

\noindent Let $V(\alpha) \subseteq [n]$ denote the set of vertices spanned by $\alpha$.
\begin{lemma}\label{lem:kappa-bound}
$\kappa_0 = \rho$ and for $|\alpha| \ge 1$, \[ |\kappa_\alpha| \le (|\alpha|+1)^{|\alpha|} \lambda^{|\alpha|} \rho^{|V(\alpha)|}. \]
\end{lemma}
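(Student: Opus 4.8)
The plan is to prove both assertions by strong induction on $|\alpha|$, working directly from the recursive definition~\eqref{eq:kappa-def} rather than through the cumulant interpretation. The base case $\alpha=0$ is immediate: the sum in~\eqref{eq:kappa-def} is empty, so $\kappa_0=\EE[x]=\EE[v_1]=\rho$, which also gives the first claim. For the inductive step, fix $\alpha$ with $|\alpha|\ge 1$ and assume the bound $|\kappa_\beta|\le(|\beta|+1)^{|\beta|}\lambda^{|\beta|}\rho^{|V(\beta)|}$ for all $\beta$ with $|\beta|<|\alpha|$; note this also covers $\beta=0$, since $\kappa_0=\rho<1=(|0|+1)^{|0|}\lambda^{|0|}\rho^{|V(0)|}$, so the estimate is available uniformly over all $\beta\lneq\alpha$.

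Next I would evaluate the moments appearing in~\eqref{eq:kappa-def}. Since $x=v_1$, $X_{ij}=\lambda v_iv_j$, and the $v_i$ are i.i.d.\ $\mathrm{Bernoulli}(\rho)$ taking values in $\{0,1\}$, one has $X^\gamma=\lambda^{|\gamma|}\prod_{k\in V(\gamma)}v_k$ for every multigraph $\gamma$, whence $\EE[X^\gamma]=\lambda^{|\gamma|}\rho^{|V(\gamma)|}$ and $\EE[xX^\alpha]=\lambda^{|\alpha|}\rho^{|V(\alpha)\cup\{1\}|}$. Substituting into~\eqref{eq:kappa-def}, using $|\alpha-\beta|=|\alpha|-|\beta|$ and the inductive hypothesis, pulling out the common factor $\lambda^{|\alpha|}$, and applying the triangle inequality gives
\[ |\kappa_\alpha|\;\le\;\lambda^{|\alpha|}\left(\rho^{|V(\alpha)\cup\{1\}|}+\sum_{0\le\beta\lneq\alpha}\binom{\alpha}{\beta}(|\beta|+1)^{|\beta|}\,\rho^{|V(\beta)|+|V(\alpha-\beta)|}\right). \]
The key structural observation is that $V(\alpha)=V(\beta)\cup V(\alpha-\beta)$ for every $\beta\le\alpha$ (an edge present in $\alpha$ is present in $\beta$ or in $\alpha-\beta$, and its endpoints are recorded in either case), so $|V(\beta)|+|V(\alpha-\beta)|\ge|V(\alpha)|$; likewise $|V(\alpha)\cup\{1\}|\ge|V(\alpha)|$. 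Since $\rho\le1$, every exponent of $\rho$ can be lowered to $|V(\alpha)|$, leaving
\[ |\kappa_\alpha|\;\le\;\lambda^{|\alpha|}\rho^{|V(\alpha)|}\left(1+\sum_{0\le\beta\lneq\alpha}\binom{\alpha}{\beta}(|\beta|+1)^{|\beta|}\right). \]

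It then remains to show the purely combinatorial bound that the bracketed quantity is at most $(|\alpha|+1)^{|\alpha|}$. Writing $m=|\alpha|$ and grouping $\beta$ according to $|\beta|=\ell$, the identity $\sum_{\beta\le\alpha,\,|\beta|=\ell}\binom{\alpha}{\beta}=\binom{m}{\ell}$ (read off the coefficient of $t^\ell$ in $\prod_{(i,j)}(1+t)^{\alpha_{ij}}=(1+t)^m$) rewrites the sum as $\sum_{\ell=0}^{m-1}\binom{m}{\ell}(\ell+1)^\ell$. Using $(\ell+1)^\ell\le m^\ell$ for $\ell\le m-1$ and the binomial identity $\sum_{\ell=0}^m\binom{m}{\ell}m^\ell=(m+1)^m$, whose top term is $m^m$, gives $\sum_{\ell=0}^{m-1}\binom{m}{\ell}(\ell+1)^\ell\le(m+1)^m-m^m$, so the bracket is at most $1+(m+1)^m-m^m\le(m+1)^m$ since $m^m\ge1$. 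Combining with the previous display yields $|\kappa_\alpha|\le(|\alpha|+1)^{|\alpha|}\lambda^{|\alpha|}\rho^{|V(\alpha)|}$, completing the induction. I expect the only mildly delicate points to be the bookkeeping of the $\rho$-exponents (the identity $V(\alpha)=V(\beta)\cup V(\alpha-\beta)$ and the reduction of all exponents to $|V(\alpha)|$) and the specific choice $(\ell+1)^\ell\le m^\ell$, which is exactly what closes the induction with the clean constant $(|\alpha|+1)^{|\alpha|}$ instead of something larger; the rest is routine. One could alternatively use Claim~\ref{claim:our-cumulants} and Proposition~\ref{prop:shift-scale} to write $\kappa_\alpha=\lambda^{|\alpha|}\,\kappa(v_1,v_{i_1}v_{j_1},\ldots)$ and bound the partition-sum expansion of this joint cumulant termwise, but that route needs the slightly awkward estimate $\sum_{\pi\in\mathcal P([|\alpha|+1])}(|\pi|-1)!\le(|\alpha|+1)^{|\alpha|}$, so the recursive argument above seems preferable.
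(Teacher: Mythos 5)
Your proof is correct and follows essentially the same route as the paper's: induction on $|\alpha|$, the triangle inequality applied to the recursion~\eqref{eq:kappa-def}, the explicit moments $\EE[X^\gamma]=\lambda^{|\gamma|}\rho^{|V(\gamma)|}$, the observation $|V(\beta)|+|V(\alpha-\beta)|\ge|V(\alpha)|$ to pull out $\rho^{|V(\alpha)|}$, and the binomial-theorem bound $\sum_\ell\binom{m}{\ell}m^\ell=(m+1)^m$. The only cosmetic differences are that the paper first invokes Lemma~\ref{lem:conn} to assume $1\in V(\alpha)$ (so that $|V(\alpha)\cup\{1\}|=|V(\alpha)|$) and splits off the $\beta=0$ term separately, whereas you bound $\rho^{|V(\alpha)\cup\{1\}|}\le\rho^{|V(\alpha)|}$ directly and absorb $\beta=0$ into the uniform inductive estimate; both choices are valid and lead to the identical final computation.
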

\begin{proof}
Proceed by induction on $|\alpha|$. In the base case $|\alpha| = 0$, we have $\kappa_0 = \rho$. For $|\alpha| \ge 1$, by Lemma~\ref{lem:conn} we can assume $\alpha$ is connected and spans vertex 1. We have by the triangle inequality,
\begin{align*}
|\kappa_\alpha| &\le |\EE[x X^\alpha]| + \sum_{\beta \lneq \alpha} |\kappa_\beta| \binom{\alpha}{\beta} |\EE[X^{\alpha-\beta}]|.
\intertext{Now, for any multigraph $\gamma$, $\EE[X^{\gamma}] = \EE[\prod_{(i,j) \in E(\gamma)} \lambda v_i v_j] = \lambda^{|\gamma|} \cdot \EE\left[\prod_{i \in V(\gamma)} v_i^{\deg_\gamma(i)}\right] = \lambda^{|\gamma|} \rho^{|V(\gamma)|}$, since the $v_i$ are independent $\mathrm{Bernoulli}(\rho)$. Similarly, $\EE[xX^\gamma] = \lambda^{|\gamma|}\rho^{|V(\gamma) \cup\{1\}|}$. Thus, we may bound the above: }
&\le \lambda^{|\alpha|} \rho^{|V(\alpha)|} + \sum_{\beta \lneq \alpha} |\kappa_\beta| \binom{\alpha}{\beta} \lambda^{|\alpha-\beta|} \rho^{|V(\alpha-\beta)|} \\
&= \lambda^{|\alpha|} \rho^{|V(\alpha)|} + \lambda^{|\alpha|} \rho^{1+|V(\alpha)|} + \sum_{0 \ne \beta \lneq \alpha} |\kappa_\beta| \binom{\alpha}{\beta} \lambda^{|\alpha-\beta|} \rho^{|V(\alpha-\beta)|},
\intertext{and now applying the induction hypothesis to $\kappa_\beta$ for all $0 \neq \beta \nleq \alpha$,}
&\le 2 \lambda^{|\alpha|} \rho^{|V(\alpha)|} + \sum_{0 \ne \beta \lneq \alpha} (|\beta|+1)^{|\beta|} \lambda^{|\beta|} \rho^{|V(\beta)|} \binom{\alpha}{\beta} \lambda^{|\alpha-\beta|} \rho^{|V(\alpha-\beta)|}.
\intertext{Since $|V(\beta)| + |V(\alpha - \beta)| \ge |V(\alpha)|$ and $\rho \le 1$, we may pull out a factor of $\rho^{|V(\alpha)|}$,}
&\le \lambda^{|\alpha|}\rho^{|V(\alpha)|} \left[2 + \sum_{0 \ne \beta \lneq \alpha} (|\beta|+1)^{|\beta|} \binom{\alpha}{\beta}\right],
\intertext{and now, we bound the parenthesized quantity in a straightforward manner:}
&= \lambda^{|\alpha|}\rho^{|V(\alpha)|} \left[2 + \sum_{\ell = 1}^{|\alpha|-1} (\ell+1)^\ell \binom{|\alpha|}{\ell} \right] \\
&\le \lambda^{|\alpha|}\rho^{|V(\alpha)|} \left[2 + \sum_{\ell = 1}^{|\alpha|-1} |\alpha|^\ell \binom{|\alpha|}{\ell} \right] \\
&\le \lambda^{|\alpha|}\rho^{|V(\alpha)|} \sum_{\ell = 0}^{|\alpha|} |\alpha|^\ell \binom{|\alpha|}{\ell} 
\quad =\quad \lambda^{|\alpha|}\rho^{|V(\alpha)|} (|\alpha|+1)^{|\alpha|},
\end{align*}
where the last step used the binomial theorem. This completes the proof.
\end{proof}

\noindent We will combine our bounds on $\kappa_\alpha$ with a bound on the number of multigraphs $\alpha$ that we must consider.
\begin{lemma}\label{lem:count-graphs}
For integers $d \ge 1$ and $0 \le h \le d$, the number of connected multigraphs $\alpha$ on vertex set $[n]$ such that (i)\, $|\alpha| = d$,\, (ii)\, $1 \in V(\alpha)$,\, and (iii)\, $|V(\alpha)| = d+1-h$, is at most $(dn)^d \left(\frac{d}{n}\right)^h$.
\end{lemma}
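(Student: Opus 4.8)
The plan is to encode each such multigraph $\alpha$ by an ``exploration'' of its edges starting from vertex $1$, and to bound the number of possible encodings. Fix $\alpha$ as in the statement and write $k := |V(\alpha)| = d+1-h$. Since $\alpha$ is connected and $1 \in V(\alpha)$, one can order its $d$ edges as $e_1,\dots,e_d$ so that $e_1$ is incident to vertex $1$ and, for every $t$, the edge $e_t$ is incident to $V_{t-1} := \{1\} \cup V(\{e_1,\dots,e_{t-1}\})$; such an order always exists by the usual greedy/BFS argument, since as long as not all edges have been used, connectivity of $\alpha$ yields an unused edge incident to the explored vertex set $V_{t-1}$. Call step $t$ \emph{new} if $e_t$ introduces a vertex not in $V_{t-1}$, and \emph{internal} otherwise (in particular every self-loop is an internal step); since the explored vertex set grows from $\{1\}$ up to $V(\alpha)$, there are exactly $k-1 = d-h$ new steps and exactly $h$ internal steps.

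Next I would reconstruct $\alpha$ from the following data: (a) the subset of $[d]$ consisting of the internal steps, contributing a factor $\binom{d}{h}$; (b) the sequence of newly discovered vertices in order of discovery, which is a sequence of distinct elements of $[n]\setminus\{1\}$ and hence contributes at most $n^{d-h}$; (c) for the $j$-th new step, the attachment point chosen inside $V_{t-1}$, where the key point is that $|V_{t-1}| = j$ at that moment (one begins with $\{1\}$ and has added $j-1$ vertices), so the total contribution of all attachment points is at most $\prod_{j=1}^{d-h} j = (d-h)!$; (d) for each internal step, the ordered pair of endpoints, both lying in $V_{t-1}$, contributing at most $|V_{t-1}|^2 \le k^2$ per step, hence at most $k^{2h}$ in total. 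Multiplying these and using the identity $\binom{d}{h}(d-h)! = d!/h!$ gives that the number of multigraphs in question is at most $\frac{d!}{h!}\, n^{d-h}\, k^{2h}$. To finish, bound $\frac{d!}{h!} = (h+1)(h+2)\cdots d \le d^{d-h}$; when $h \ge 1$ we have $k = d+1-h \le d$, so $k^{2h} \le d^{2h}$ and the count is at most $d^{d+h} n^{d-h} = (dn)^d (d/n)^h$, while the case $h=0$ (where $\alpha$ is a tree) follows from $k^{2h} = 1$ and $d! \le d^d$.

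The main thing to be careful about is getting the constant exactly right rather than up to a multiplicative factor: a naive encoding that pays a factor of $n$ per edge, or a factor $|V_{t-1}| \le d+1$ per attachment, overshoots by a constant (roughly $e$) in the tree case $h=0$, where the lemma leaves no slack. The fix is precisely step (c): the $j$-th newly added vertex attaches to a set of size exactly $j$, so the product of attachment-point counts telescopes to $(d-h)!$ rather than $(d+1)^{d-h}$, and then $\binom{d}{h}(d-h)! = d!/h!$ absorbs the binomial factor for free. The remaining points are routine: verifying that the exploration order exists (a short induction using connectivity of $\alpha$, splitting on whether $V_{t-1}=V(\alpha)$), and checking that the decoding map from the data (a)--(d) back to multigraphs is well defined and hits every $\alpha$ in the claimed family, so that counting encodings yields a genuine upper bound. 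An alternative route, slightly slicker to state but relying on Cayley's formula, is to write $\alpha$ as a spanning tree of its underlying simple graph (at most $\binom{n-1}{k-1} k^{k-2}$ choices) together with $h$ extra edges among the $k$ vertices (at most $k^{2h}$), giving the bound $n^{d-h} k^{d-1+h}$, which one checks is at most $(dn)^d(d/n)^h$ by the same case split on $h$.
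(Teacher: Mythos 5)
Your proof is correct and follows essentially the same edge-exploration strategy as the paper's: order the edges of the connected multigraph so each edge is incident to the already-explored vertex set, and count the choices. The paper instead reorders so that all $d-h$ vertex-introducing edges come first (followed by the $h$ internal ones) and bounds each of those steps crudely by $dn$ and $d^2$ respectively; your bookkeeping (interleaved exploration with the $\binom{d}{h}$ factor and the telescoping $(d-h)!$ for attachment points) is a bit sharper, but both arrive at the same final bound $(dn)^d(d/n)^h$.
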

\begin{proof}
For any such $\alpha$, we can order the edges so that every prefix of edges is connected, the first edge spans vertex 1, each of the first $d-h$ edges spans a new vertex (not including vertex 1), and the last $h$ edges do not span any new vertices. For the first $d-h$ steps there are $\le dn$ choices, and in the last $h$ steps there are $\le d^2$ choices. In total this gives $(dn)^{d-h} (d^2)^h = (dn)^d (d/n)^h$.
\end{proof}

\begin{proof}[Proof of Theorem~\ref{thm:mmse-subm}$\mathrm{(i)}$]
Using Theorem~\ref{thm:corr-gauss} and Lemma~\ref{lem:conn}, we have
\begin{align*}
\Corr_{\le D}^2 
\quad&\le\quad \sum_{0 \le |\alpha| \le D} \frac{\kappa_\alpha^2}{\alpha!} 
\quad\le\quad \rho^2 + \sum_{\substack{1 \le |\alpha| \le D \\ 1 \in V(\alpha),\, \alpha \text{ connected} }} \kappa_\alpha^2.
\intertext{Now, splitting the sum over $\alpha$ according to the number of edges $d$ and the number of vertices $d+1-h$ (there are at most $d+1$ vertices, as $\alpha$ is connected), and applying our bounds on the magnitude of the corresponding $\kappa_\alpha$ and on the number of such $\alpha$ from Lemmas~\ref{lem:kappa-bound} and \ref{lem:count-graphs},}
&\le \rho^2 + \sum_{d=1}^D \sum_{h=0}^d (dn)^d \left(\frac{d}{n}\right)^h \left[(d+1)^d \lambda^d \rho^{d+1-h} \right]^2 \\
&= \rho^2 \cdot \sum_{d=0}^D \sum_{h=0}^d \left[d(d+1)^2 \lambda^2 \rho^2 n\right]^d \left(\frac{d}{\rho^2 n}\right)^h \\
&\le \rho^2 \sum_{d=0}^D \sum_{h=0}^d \left[D(D+1)^2 \lambda^2 \rho^2 n\right]^d \left(\frac{D}{\rho^2 n}\right)^h \\
&= \rho^2 \sum_{h=0}^D \left[D^2 (D+1)^2 \lambda^2\right]^h \sum_{d=h}^D \left[D(D+1)^2 \lambda^2 \rho^2 n\right]^{d-h} \\
&\le \rho^2 \sum_{h=0}^D r^{2h} \sum_{d=h}^D r^{2(d-h)} \,\le\, \rho^2 \sum_{h=0}^\infty r^{2h} \sum_{d=h}^\infty r^{2(d-h)}
\,=\, \frac{\rho^2}{(1-r^2)^2}.
\end{align*}
The result now follows from Fact~\ref{fact:mmse-corr}.
\end{proof}

\subsection{Binary Observation Model}
\label{sec:binary-proof}

\begin{proof}[Proof of Theorem~\ref{thm:corr-binary}]
For the proof, it is convenient to work with a linear change of variables: let $T(y) = (\mu + 1/\mu) y - 1/\mu$ where $\mu := \sqrt{(1-\tau_1)/\tau_0}$. Define $\tilde{X_i} := T(X_i)$ and $\tilde{Y_i} := T(Y_i)$. We still have $\EE[\tilde Y_i | \tilde X] = \tilde X_i$, but now $\tilde Y_i \in \{-1/\mu, \mu\}$. Also, $\tilde X_i \in T([\tau_0,\tau_1])$, and one can check that this simplifies to $\tilde X_i \in \gamma [-1/\mu, \mu]$ where $\gamma = \tau_1 - \tau_0$. Introduce i.i.d.\ random variables $Z_i \in \{-1/\mu,\mu\}$ such that $\EE[Z_i] = 0$, namely $\Pr\{Z_i = \mu\} = 1/(1+\mu^2)$, and note that $\EE[Z_i^2] = 1$. We can now sample $\tilde Y$ as follows: first sample $\tilde X$ along with independent bits $\sigma_i \sim \mathrm{Bernoulli}(\gamma)$ for $i \in [N]$. For each $i$, if $\sigma_i = 1$ then draw $\tilde Y_i \in \{-1/\mu,\mu\}$ such that $\EE[\tilde Y_i | \tilde X] = \tilde X_i/\gamma$, and if $\sigma_i = 0$ then let $\tilde Y_i = Z_i$. One can check that this sampling scheme yields $\EE[\tilde Y_i | \tilde X] = \tilde X_i$ as desired. Any polynomial $f \in \RR[\tilde Y]_{\le D}$ has a unique expansion in the multilinear monomial basis $f(\tilde Y) = \sum_{|\alpha| \le D} \hat f_\alpha \tilde Y^\alpha$ where $\alpha \in \{0,1\}^N$. We have
\[ \EE[f(\tilde Y)x] = \sum_{|\alpha| \le D} \hat f_\alpha\, \EE[\tilde Y^\alpha x] = \sum_{|\alpha| \le D} \hat f_\alpha\, \EE[x \tilde X^\alpha] = \langle c, \hat f \rangle \]
where $c_\alpha := \EE[x \tilde X^\alpha]$, and by Jensen's inequality,
\[ \EE[f(\tilde Y)^2] \ge \Ex_Z \left(\Ex_{\tilde X,\sigma} f(\tilde Y) \right)^2 =: \Ex_Z g(Z)^2 = \|\hat g\|^2 \]
where
\begin{align*}
g(Z) &= \Ex_{\tilde X,\sigma} f(\tilde Y) \\
&= \Ex_{\tilde X,\sigma} \sum_\alpha \hat f_\alpha \tilde Y^\alpha \\
&= \sum_\alpha \hat f_\alpha \sum_{\beta \le \alpha} Z^\beta (1-\gamma)^{|\beta|} \gamma^{|\alpha|-|\beta|} \EE[(\tilde X/\gamma)^{\alpha - \beta}] \\
&= \sum_\beta Z^\beta \sum_{\alpha \ge \beta} (1-\gamma)^{|\beta|} \hat f_\alpha \,\EE[\tilde X^{\alpha - \beta}].
\end{align*}

\noindent In other words, $\hat g = M \hat f$ where
\[ M_{\beta\alpha} := \One_{\beta \le \alpha} (1-\gamma)^{\beta} \EE[\tilde X^{\alpha-\beta}]. \]
As in the proof of Theorem~\ref{thm:corr-gauss}, we have $\Corr_{\le D} \le \|w\|$ where $w$ is the solution to $w^\top M = c^\top$. Solving for $w$,
\[ w_\alpha (1-\gamma)^{|\alpha|} = \EE[x \tilde X^\alpha] - \sum_{\beta \lneq \alpha} w_\beta (1-\gamma)^{|\beta|} \EE[\tilde X^{\alpha - \beta}], \]
and so, letting $\tilde\kappa_\alpha = w_\alpha (1-\gamma)^{|\alpha|}$,
\[ \tilde\kappa_\alpha = \EE[x \tilde X^\alpha] - \sum_{\beta \lneq \alpha} \tilde\kappa_\beta\, \EE[\tilde X^{\alpha-\beta}]. \]
Thus, due to the cumulant interpretation (Claim~\ref{claim:our-cumulants}), $\tilde\kappa_\alpha$ is the joint cumulant of $x$ along with $\alpha_i$ instances of $\tilde X_i$ for each $u \in [N]$. Using the relation between $\tilde X$ and $X$, and the behavior of cumulants under shifting and scaling (Proposition~\ref{prop:shift-scale}), we have $\tilde\kappa_\alpha = (\mu+1/\mu)^{|\alpha|} \kappa_\alpha$, and so we conclude
\[ \Corr_{\le D}^2 \le \|w\|^2 = \sum_{|\alpha| \le D} \frac{\tilde\kappa_\alpha^2}{(1-\gamma)^{2|\alpha|}} = \sum_{|\alpha| \le D} \kappa_\alpha^2 \left(\frac{\mu + 1/\mu}{1-\gamma}\right)^{2|\alpha|} = \sum_{|\alpha| \le D} \frac{\kappa_\alpha^2}{(\tau_0(1-\tau_1))^{|\alpha|}}. \]
\end{proof}

\subsection{Planted Dense Subgraph}
\label{sec:subg-proof}

\begin{proof}[Proof of Theorem~\ref{thm:mmse-subg}$\mathrm{(i)}$]
Note that planted dense subgraph is an instance of the general binary observation model (Definition~\ref{def:binary}) with $N = \binom{n}{2}$, $X_{ij} = q_0 + (q_1 - q_0) v_i v_j$, $x = v_1$, $\tau_0 = q_0$, and $\tau_1 = q_1$. Write $\kappa_\alpha(x,X)$ for the quantity from Theorems~\ref{thm:corr-gauss} and \ref{thm:corr-binary} with the dependence on $x$ and $X$ made explicit; recall the cumulant interpretation of this quantity (Claim~\ref{claim:our-cumulants}). Define $\bar{X}_{ij} = \frac{q_1 - q_0}{\sqrt{\tau_0(1-\tau_1)}} v_i v_j$ and note that by the properties of cumulants under shifting and scaling (Proposition~\ref{prop:shift-scale}),
\[ \frac{\kappa_\alpha(x,X)}{(\tau_0(1-\tau_1))^{|\alpha|/2}} = \kappa_\alpha(x,\bar{X}). \]
By Theorem~\ref{thm:corr-binary},
\[ \Corr^2_{\le D} \le \sum_{\substack{\alpha \in \{0,1\}^N \\ 0 \le |\alpha| \le D}} \frac{\kappa_\alpha(x,X)^2}{(\tau_0(1-\tau_1))^{|\alpha|}} = \sum_{\substack{\alpha \in \{0,1\}^N \\ 0 \le |\alpha| \le D}} \kappa_\alpha(x,\bar{X})^2 \le \sum_{\substack{\alpha \in \NN^N \\ 0 \le |\alpha| \le D}} \frac{\kappa_\alpha(x,\bar{X})^2}{\alpha!}. \]
The right-hand side is precisely the quantity appearing in Theorem~\ref{thm:corr-gauss} for the additive Gaussian noise model with signal $\bar{X}$. We have already bounded this quantity in the proof of Theorem~\ref{thm:mmse-subm}(i) (see Section~\ref{sec:subm-proof}) when considering the planted submatrix problem with SNR $\lambda = \frac{q_1 - q_0}{\sqrt{\tau_0(1-\tau_1)}} = \frac{q_1 - q_0}{\sqrt{q_0(1-q_1)}}$. As a result, the lower bound we obtain on $\MMSE_{\le D}$ for the planted dense subgraph problem is the same as that in Theorem\ref{thm:mmse-subm}(i) except with $\frac{q_1 - q_0}{\sqrt{q_0(1-q_1)}}$ in place of $\lambda$.
\end{proof}

\subsection{Planted Clique}
\label{sec:clique}

The lower bounds for planted dense subgraph that we have presented in the main text do not give good results when $q_1$ is equal to (or extremely close to) 1. Here we demonstrate that our techniques can be modified to handle this case, by considering the \emph{planted clique problem}.

\begin{definition}\label{def:clique}
The \emph{planted clique} problem is the special case of planted dense subgraph (Definition~\ref{def:subg}) where $q_0 = 1/2$ and $q_1 = 1$. In other words, a clique (fully connected subgraph) on roughly $k := \rho n$ vertices is planted in a $G(n,1/2)$ random graph. The goal is to estimate $x = v_1$, the indicator for the first vertex's membership in the clique.
\end{definition}

\noindent The planted clique problem is well studied as a canonical model that exhibits an information-computation gap (see~\cite{pcal} and references therein). Information-theoretically, $k \ge (2+\epsilon) \log_2 n$ suffices for exact recovery, but polynomial-time algorithms are only known when $k = \Omega(\sqrt{n})$. Even the detection problem is presumed hard when $k \ll \sqrt{n}$. No detection-recovery gap is expected here, but giving a lower bound on $\MMSE_{\le D}$ in the hard regime is still a challenge (and does not seem to follow from the low-degree hardness of detection shown in~\cite{sam-thesis}). We show that, as expected, $O(\log n)$-degree polynomials succeed at recovery when $k \gg \sqrt{n}$ and fail when $k \ll \sqrt{n}$. For context, recall that the trivial estimator $f(Y) = \EE[x] = \rho$ achieves the mean squared error $\EE(f(Y)-x)^2 = \rho - \rho^2$.

\begin{theorem}\label{thm:clique}
Consider the planted clique problem with $n \to \infty$ and $\rho := k/n$.
\begin{enumerate}
    \item[(i)] If $k = k_n$ scales as $k \le n^{1/2 - \epsilon}$ for any fixed $\epsilon > 0$, and $D = D_n$ scales as $D = o\left(\left(\frac{\log n}{\log\log n}\right)^2\right)$, then
\[ \MMSE_{\le D} \ge \rho - (1+o(1)) \rho^2. \]
    \item[(ii)] If $k \ge \tilde{\Theta}(n^{1/2})$ then
\[ \MMSE_{\le \log n} \le n^{-\log\log n}. \]
\end{enumerate}
\end{theorem}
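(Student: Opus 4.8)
The plan follows the two-part structure of Theorem~\ref{thm:mmse-subm}. For the upper bound (ii), I would run $D=\Theta(\log n)$ steps of power iteration on the centered matrix $2Y-J$ (with $J$ the all-ones matrix), whose entries are degree-$1$ polynomials of $Y$, so the whole process is a degree-$\Theta(\log n)$ polynomial. When $k\gg\sqrt n$ this matrix has a planted rank-one spike of magnitude $\Theta(k)$ above a bulk of spectral radius $\Theta(\sqrt n)$, so each iteration improves the noise-to-signal ratio by a factor $r:=\Theta(\sqrt n/k)$; a short low-degree cleanup (e.g.\ averaging vertex $1$'s centered adjacencies against the estimated clique, as in~\cite{ogp-sparse-pca}) then yields an estimate of $v_1$ with mean squared error $O(D^2 r^{D-1})$, exactly as in Theorems~\ref{thm:power-iter} and~\ref{thm:power-iter-subg} (planted clique narrowly escapes Theorem~\ref{thm:power-iter-subg} only because $\nu=\min\{\rho,q_0,1-q_1\}=0$). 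Taking $D=\log n$ and using $k\ge\tilde\Theta(\sqrt n)$ to get $r\le 1/\polylog(n)$ makes this $n^{-\log\log n}$. The lower bound (i) is the substantive part, and since $\tau_0(1-\tau_1)=\tfrac12\cdot 0=0$ here, Theorem~\ref{thm:corr-binary} is vacuous and the argument must be redone from scratch.

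For (i), I would expand $f\in\RR[Y]_{\le D}$ in the $\pm1$ Fourier basis $\bar Y^S:=\prod_{e\in S}(2Y_e-1)$ indexed by \emph{simple} graphs $S$ with $|S|\le D$ edges (the absence of self-loops and repeated edges being the source of the separation from detection, just as in Lemma~\ref{lem:conn}). A direct computation gives $\EE[f(Y)\,v_1]=\sum_S\hat f_S\,\rho^{|V(S)\cup\{1\}|}=:\langle c,\hat f\rangle$. For the denominator I would use the decomposition $Y_{ij}=v_iv_j+(1-v_iv_j)Z_{ij}$ with $Z$ i.i.d.\ $\mathrm{Bernoulli}(1/2)$ independent of $v$, and apply Jensen's inequality to the signal $v$ while keeping the pure-noise part $Z$, as in the proofs of Theorems~\ref{thm:corr-gauss} and~\ref{thm:corr-binary}: $\EE[f^2]\ge\EE_Z\big[(\EE_v f)^2\big]=\|\hat g\|^2$, where the Fourier coefficients of $g(Z):=\EE_v f(Y(v,Z))$ satisfy $\hat g=M\hat f$ with $M_{TS}=\One_{T\subseteq S}\,\EE_v\!\big[\prod_{e\in S\setminus T}v_e\prod_{e\in T}(1-v_e)\big]$ (writing $v_e:=v_iv_j$ for $e=\{i,j\}$). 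This $M$ is triangular with respect to edge-subset inclusion with diagonal entries $\EE_v\prod_{e\in S}(1-v_e)\in[\tfrac12,1]$ (because $k=\rho n$ is small), hence invertible, giving $\Corr_{\le D}^2\le\|w\|^2$ where $w$ solves $\sum_{T\subseteq S}M_{TS}\,w_T=\rho^{|V(S)\cup\{1\}|}$.

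It then remains to establish, in analogy with Lemmas~\ref{lem:conn} and~\ref{lem:kappa-bound}: (a) that $w_S=0$ whenever $S$ has a nonempty connected component avoiding vertex $1$ --- this follows because $c$ and $M$ both factor over vertex-disjoint unions and the factor attached to such a component carries no dependence on $v_1$ (equivalently, $w_S$ is, up to rescaling, a joint cumulant as in Claim~\ref{claim:our-cumulants}, which vanishes by Proposition~\ref{prop:cumulant-indep}); and (b) that $|w_S|\le(2D)^{|S|}\rho^{|V(S)|}$, by induction on $|S|$. The crucial point in (b) is that when $S$ is connected and $T\subsetneq S$ is a proper nonempty connected subgraph, $V(T)$ and $V(S\setminus T)$ must share a vertex, so $|M_{TS}|\le\rho^{|V(S\setminus T)|}\le\rho^{\,|V(S)|-|V(T)|+1}$ contributes an extra factor of $\rho$; this is the planted-clique analogue of the $\lambda^{|\alpha|}$ that made Lemma~\ref{lem:kappa-bound} work, and it lets the induction close for all $D$ (needing only $\rho\le 1/8$). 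Combining (a), (b) and Fact~\ref{fact:mmse-corr}, it suffices to show $\sum_{1\le|S|\le D,\ S\text{ connected},\ 1\in V(S)}(2D)^{2|S|}\rho^{2|V(S)|}=o(\rho^2)$. I would bound this by splitting according to $(d,m):=(|S|,|V(S)|)$, using a counting bound in the style of Lemma~\ref{lem:count-graphs} together with the fact that a simple graph with $d$ edges spans at least roughly $\sqrt{2d}$ vertices; with $\rho\le n^{-1/2-\epsilon}$ the contribution of a given $d$ is at most (up to lower-order factors) $\rho^2\cdot(2D^2)^{2d}\,n^{-\sqrt{2d}}$, and summing over $d\le D$ this is $o(\rho^2)$ precisely when $\sqrt D\log D=o(\log n)$, i.e.\ $D=o\big((\log n/\log\log n)^2\big)$.

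The step I expect to be the main obstacle is exactly this final summation: reaching the stated degree $o((\log n/\log\log n)^2)$ --- rather than the merely $O(\log n/\log\log n)$ one gets from the cruder bound of $2^{\binom m2}$ connected $d$-edge graphs --- forces one to track the number of vertices as a function of the number of edges and to balance the combinatorial blow-up $(2D)^{2d}$ against the gain $\rho^{2m}\approx n^{-m}$. A secondary nuisance is the $\prod_{e\in T}(1-v_e)$ factors in $M$: one must verify that the diagonal stays bounded below and that $|M_{TS}|\le\rho^{|V(S\setminus T)|}$ survives even when $V(T)$ and $V(S\setminus T)$ overlap (so that forced indicators $v_i=1$ interact with the $(1-v_e)$ terms), and one must make the vanishing statement (a) fully rigorous.
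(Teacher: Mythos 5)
Your plan for part (i) is essentially the paper's own proof. The paper works with $Y\in\{\pm1\}^{\binom n2}$ and the multilinear basis $Y^\alpha$ (equivalent to your $\pm1$ Fourier basis), uses the same decomposition $Y = X\cup Z$ with $Z$ i.i.d.\ Rademacher on the non-clique edges, applies Jensen to the clique variable $v$ only, and obtains the same triangular matrix $M_{\beta\alpha}=\One_{\beta\le\alpha}\Pr_X\{\alpha\setminus X=\beta\}$, which is exactly your $\EE_v[\prod_{e\in S\setminus T}v_e\prod_{e\in T}(1-v_e)]$. The vanishing statement (Lemma~\ref{lem:clique-conn}) is proved by the factorization of $c$ and $M$ over disjoint components (your first-mentioned route; the cumulant shortcut you offer as an alternative is not literally available here since $w_\alpha$ is defined via the $1/M_{\alpha\alpha}$ normalization, so it is not quite a cumulant, and the paper sidesteps it). The counting and final summation are done by splitting on $t=|V(\alpha)|$ at $t\approx\sqrt D$, using $\min\{2^{t^2},t^{2D}\}$, which is the same balancing of vertex-count gain against combinatorial blow-up that you describe, arriving at $D=o((\log n/\log\log n)^2)$ as you predicted. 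You are correct that this vertex-indexed (rather than edge-indexed) tracking is the crux, and that the crude $2^{\binom m2}$ bound alone only reaches $D=O(\log n/\log\log n)$.

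One bookkeeping divergence: your inductive bound $|w_S|\le(2D)^{|S|}\rho^{|V(S)|}$ does need the connectivity fact $|V(T)\cap V(S\setminus T)|\ge1$ to donate the extra power of $\rho$ that closes the induction; the paper's Lemma~\ref{lem:w-bound} instead uses the weaker $|V(\beta)|+|V(\alpha-\beta)|\ge|V(\alpha)|$ and absorbs the resulting slack into a larger constant $(|\alpha|+1)^{|\alpha|}(1-\rho)^{-2|\alpha|^2}$, with the diagonal bound $M_{\alpha\alpha}\ge(1-\rho)^{|V(\alpha)|}$ rather than your $M_{SS}\ge1/2$. Both close, and both require only $\rho$ to be bounded away from $1$. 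For part (ii), the paper does not iterate power method $\Theta(\log n)$ times; it uses a single linear averaging $\frac1{n-1}\sum_{i\ge2}Y_{1i}$ (degree~$1$) followed by the polynomial threshold $\tau_\ell$ of Proposition~\ref{prop:poly-thresh}, with the degree budget $D=2\ell+1$ spent entirely on the threshold rather than on power iteration; see Theorem~\ref{thm:power-iter-clique}. Your multi-step power-iteration sketch would also work but is more than is needed.
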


\noindent The rest of this section is devoted to proving part (i). Part (ii) follows immediately from Theorem~\ref{thm:power-iter-clique}, which gives a more precise non-asymptotic result.

To prove (i), we will use a modification of the argument used to prove Theorem~\ref{thm:corr-binary} (see Section~\ref{sec:binary-proof}) in which the input is broken down into ``signal'' and ``noise'' parts in a slightly different way. We view the input as $Y = X \cup Z$ for $X,Y,Z \in \{\pm 1\}^{\binom{n}{2}}$, where $+1$ indicates the presence of an edge. Here $X$ (the ``signal'') is the indicator for the clique edges and $Z$ (the ``noise'') is i.i.d.\ Rademacher. The ``union'' operation between $X$ and $Z$ is applied entrywise, that is, $Y_i = 1$ whenever $X_i = 1$ or $Z_i = 1$ (or both). Any polynomial $f \in \RR[Y]_{\le D}$ has a unique expansion in the multilinear monomial basis $f(Y) = \sum_{|\alpha| \le D} \hat{f}_\alpha Y^\alpha$ where $\alpha \in \{0,1\}^{\binom{n}{2}}$. Compute
\[ \EE[f(Y)x] = \sum_{|\alpha \le D|} \hat{f}_\alpha \EE[Y^\alpha x] = \langle c,\hat f \rangle \]
where
\[ c_\alpha := \EE[Y^\alpha x] = \rho^{|V(\alpha) \cup \{1\}|} \]
and where $V(\alpha)$ denotes the set of vertices spanned by $\alpha$ (think of $\alpha$ as a graph on vertex set $[n]$). By Jensen's inequality,
\[ \EE[f(Y)^2] \ge \EE_Z \left(\EE_X f(X \cup Z)\right)^2 =: \EE_Z g(Z)^2 = \|\hat{g}\|^2 \]
where
\begin{align*}
g(Z) &= \EE_X f(X \cup Z) \\
&= \sum_{|\alpha| \le D} \hat{f}_\alpha \EE_X (X \cup Z)^\alpha \\
&= \sum_{|\alpha| \le D} \hat{f}_\alpha \sum_{0 \le \beta \le \alpha} Z^\beta \Pr_X\{\alpha \setminus X = \beta\} \\
\intertext{where $\alpha \setminus X$ is the indicator for the non-clique edges in $\alpha$}
&= \sum_\beta Z^\beta \sum_{\alpha \ge \beta} \hat{f}_\alpha \Pr_X\{\alpha \setminus X = \beta\},
\end{align*}
and so $\hat{g} = M \hat{f}$ with
\[ M_{\beta\alpha} := \One_{\beta \le \alpha} \Pr_X\{\alpha \setminus X = \beta\}. \]
As in the proof of Theorem~\ref{thm:corr-gauss}, we have $\Corr_{\le D} \le \|w\|$ where $w$ is the solution to $w^\top M = c^\top$. Solving for $w$ gives the recurrence
\begin{equation}\label{eq:w-recurrence}
w_\alpha = \frac{1}{M_{\alpha\alpha}} \left(c_\alpha - \sum_{\beta \lneq \alpha} w_\beta M_{\beta\alpha}\right).
\end{equation}

\begin{lemma}\label{lem:clique-conn}
If $\alpha$ has a nonempty connected component that does not contain vertex 1, then $w_\alpha = 0$. (In particular, $w_\alpha = 0$ whenever $\alpha$ is disconnected.)
\end{lemma}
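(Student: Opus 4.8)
The plan is to follow the template of Lemma~\ref{lem:conn}---reduce to connected graphs---but since the union $Y = X \cup Z$ destroys the additive structure that made $\kappa_\alpha$ a cumulant, I would argue directly from the recurrence~\eqref{eq:w-recurrence} together with the explicit formulas $c_\alpha = \rho^{|V(\alpha)\cup\{1\}|}$, $M_{0\alpha} = \Pr_X\{\alpha\setminus X = 0\} = \rho^{|V(\alpha)|}$, and $M_{\alpha\alpha} = \Pr_X\{\text{no edge of }\alpha\text{ is a clique edge}\} > 0$ (recall also $w_0 = c_0/M_{00} = \rho$). Throughout, for $\alpha,\beta$ with $V(\alpha)\cap V(\beta)=\emptyset$ I write $\alpha+\beta$ for their vertex-disjoint union.

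The first ingredient is a pair of multiplicativity facts for a vertex-disjoint union $\alpha = \alpha_1 + \alpha_2$: (a)~$M_{\beta_1+\beta_2,\,\alpha_1+\alpha_2} = M_{\beta_1\alpha_1}\,M_{\beta_2\alpha_2}$ whenever $\beta_i \le \alpha_i$, because the event $\{\alpha\setminus X = \beta\}$ decouples into the analogous events on $\alpha_1$ and $\alpha_2$, which depend on the disjoint (hence independent) coordinate blocks $\{v_i : i \in V(\alpha_1)\}$ and $\{v_i : i \in V(\alpha_2)\}$; and (b)~$c_{\alpha_1+\alpha_2} = \rho^{-1}\, c_{\alpha_1}\, c_{\alpha_2}$, which is just the identity $|V(\alpha)\cup\{1\}| = |V(\alpha_1)\cup\{1\}| + |V(\alpha_2)\cup\{1\}| - 1$, valid in all three cases (vertex $1$ in $V(\alpha_1)$, in $V(\alpha_2)$, or in neither). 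Using (a) and (b) I would then prove the factorization $(\star)$: $w_{\alpha_1+\alpha_2} = \rho^{-1}\, w_{\alpha_1}\, w_{\alpha_2}$ for every vertex-disjoint pair, by strong induction on $|\alpha_1|+|\alpha_2|$ (the cases where one $\alpha_i$ is empty reduce to $w_0 = \rho$). In the inductive step, expand $c_\alpha = \sum_{\beta \le \alpha} w_\beta M_{\beta\alpha}$, peel off the $\beta = \alpha$ term, write every remaining $\beta$ uniquely as $\beta_1+\beta_2$ with $\beta_i \le \alpha_i$, and apply the induction hypothesis ($w_{\beta_1+\beta_2} = \rho^{-1}w_{\beta_1}w_{\beta_2}$) together with (a) to recognize the residual double sum as $\rho^{-1}\big(c_{\alpha_1}c_{\alpha_2} - w_{\alpha_1}w_{\alpha_2}M_{\alpha\alpha}\big)$, using the defining relations at $\alpha_1,\alpha_2$ and $M_{\alpha\alpha} = M_{\alpha_1\alpha_1}M_{\alpha_2\alpha_2}$; combining with (b) and cancelling $M_{\alpha\alpha}\ne 0$ yields $(\star)$.

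Finally I would prove the Lemma itself by strong induction on $|\alpha|$. If $\alpha$ is connected, then having a nonempty component avoiding vertex $1$ simply means $1 \notin V(\alpha)$; every nonempty $\beta \lneq \alpha$ then has $V(\beta) \subseteq V(\alpha)\not\ni 1$ and $|\beta| < |\alpha|$, hence $w_\beta = 0$ by the induction hypothesis, so~\eqref{eq:w-recurrence} collapses to $w_\alpha = M_{\alpha\alpha}^{-1}(c_\alpha - w_0 M_{0\alpha}) = M_{\alpha\alpha}^{-1}(\rho^{|V(\alpha)|+1} - \rho\cdot\rho^{|V(\alpha)|}) = 0$. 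If $\alpha$ is disconnected, pick a nonempty component $C$ with $1 \notin V(C)$ and write $\alpha = \alpha' + C$; then $C$ is connected and avoids vertex $1$ with $|C| < |\alpha|$, so $w_C = 0$ by the induction hypothesis, and $(\star)$ gives $w_\alpha = \rho^{-1} w_{\alpha'} w_C = 0$.

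The analytic content here is negligible---everything reduces to the two elementary identities of the first ingredient plus the clean inductions above---so I expect the only real obstacle to be bookkeeping: keeping straight the asymmetric role of vertex $1$ (the $\cup\{1\}$ in $c_\alpha$ is exactly what forces the extra $\rho^{-1}$ in (b), and hence in $(\star)$), and making sure the vertex-disjoint decompositions $\beta = \beta_1 + \beta_2$ in the proof of $(\star)$ are handled without double-counting.
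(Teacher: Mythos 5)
Your proof is correct, but it takes a genuinely different route from the paper's. The paper inducts on $|\alpha|$ in a single pass: it sets $\gamma$ to be the connected component of $\alpha$ containing vertex~$1$ (possibly empty), uses the inductive hypothesis to prune the sum in~\eqref{eq:w-recurrence} to $\beta \le \gamma$, and then cancels everything in one telescope via the recurrence for $w_\gamma$ together with the three multiplicativity facts $c_\alpha = \rho^{|V(\alpha-\gamma)|}c_\gamma$, $M_{\gamma\alpha} = \rho^{|V(\alpha-\gamma)|}M_{\gamma\gamma}$, and $M_{\beta\alpha} = \rho^{|V(\alpha-\gamma)|}M_{\beta\gamma}$ (for $\beta\le\gamma$). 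Your identities (a) and (b) are exactly the general forms of these facts (the paper's are the $\beta_2=0$ case of your (a)), but instead of telescoping directly you promote them into a standalone product rule $(\star)$: $w_{\alpha_1+\alpha_2}=\rho^{-1}w_{\alpha_1}w_{\alpha_2}$ for vertex-disjoint unions, proved by its own clean induction. You then derive the lemma from the connected case (which collapses the recurrence to $w_\alpha = M_{\alpha\alpha}^{-1}(\rho^{|V(\alpha)|+1}-\rho\cdot\rho^{|V(\alpha)|})=0$) plus $(\star)$ applied to a component $C$ avoiding vertex~$1$. Your route is slightly longer but more modular, and it yields the genuinely stronger factorization $(\star)$, which is the direct substitute in this model for the disjoint-union property of cumulants that is unavailable once the additive structure of $Y = X+Z$ is replaced by $Y = X\cup Z$. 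Everything checks: the vertex-disjoint bijection $\beta\leftrightarrow(\beta_1,\beta_2)$ with $\beta_i\le\alpha_i$ is sound, $M_{\alpha\alpha}\ge(1-\rho)^{|V(\alpha)|}>0$ permits the cancellation, and the induction orders are well-founded.
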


\begin{proof}
Proceed by induction on $|\alpha|$. The base case $|\alpha| = 0$ is vacuously true. For the inductive step, let $\gamma$ be the connected component of $\alpha$ that contains vertex 1 (which may be empty, in which case $\gamma = 0$). If $\beta \lneq \alpha$ with $\beta \not\le \gamma$ then $w_\beta = 0$ by induction. We have
\[ w_\gamma = \frac{1}{M_{\gamma\gamma}}\left(c_\gamma - \sum_{\beta \lneq \gamma} w_\beta M_{\beta\gamma}\right) \]
and so
\begin{align*}
w_\alpha M_{\alpha\alpha} &= c_\alpha - \sum_{\beta \le \gamma} w_\beta M_{\beta\alpha} \\
&= c_\alpha - w_\gamma M_{\gamma\alpha} - \sum_{\beta \lneq \gamma} w_\beta M_{\beta\alpha} \\
&= c_\alpha - \frac{M_{\gamma\alpha}}{M_{\gamma\gamma}}\left(c_\gamma - \sum_{\beta \lneq \gamma} w_\beta M_{\beta\gamma}\right) - \sum_{\beta \lneq \gamma} w_\beta M_{\beta\alpha} \\
&= \left(c_\alpha - \frac{M_{\gamma\alpha}}{M_{\gamma\gamma}}c_\gamma\right) + \sum_{\beta \lneq \gamma} w_\beta \left(\frac{M_{\gamma\alpha}M_{\beta\gamma}}{M_{\gamma\gamma}} - M_{\beta\alpha}\right) \\
&= 0,
\end{align*}
implying $w_\alpha = 0$ (since $M_{\alpha\alpha} \ne 0$). In the last step we have deduced that both terms in parentheses are equal to zero using the following facts which can be verified from the definitions of $c$ and $M$:
\[ c_\alpha = \rho^{|V(\alpha-\gamma)|} c_\gamma, \]
\[ M_{\gamma\alpha} = \rho^{|V(\alpha-\gamma)|}M_{\gamma\gamma}, \]
\[ M_{\beta\alpha} = \rho^{|V(\alpha-\gamma)|}M_{\beta\gamma}. \]
This completes the proof.
\end{proof}

\begin{lemma}\label{lem:w-bound}
$w_0 = \rho$ and for $|\alpha| \ge 1$,
\[ |w_\alpha| \le (|\alpha|+1)^{|\alpha|}(1-\rho)^{-2|\alpha|^2} \rho^{|V(\alpha)|}.
\]
\end{lemma}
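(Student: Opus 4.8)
The plan is to prove this by induction on $|\alpha|$, following the structure of the proof of Lemma~\ref{lem:kappa-bound} but tracking the effect of the denominator $M_{\alpha\alpha}$ in the recurrence~\eqref{eq:w-recurrence}. The base case $|\alpha|=0$ is immediate from the definitions: $c_0=\rho^{|\{1\}|}=\rho$ and $M_{00}=1$, so $w_0=\rho$. For the inductive step, fix $\alpha$ with $|\alpha|\ge 1$. By Lemma~\ref{lem:clique-conn} we may assume $\alpha$ is connected and $1\in V(\alpha)$, since otherwise $w_\alpha=0$ and the bound holds trivially; in that case $1\le|V(\alpha)|\le|\alpha|+1$. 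From the formulas for $c$ and $M$ we read off $c_\alpha=\rho^{|V(\alpha)|}$ and $M_{0\alpha}=\Pr_X\{\alpha\setminus X=0\}=\rho^{|V(\alpha)|}$ (the event that every edge of $\alpha$ is a clique edge, i.e.\ that all of $V(\alpha)$ lies in the clique). Applying the triangle inequality to~\eqref{eq:w-recurrence} and peeling off the $\beta=0$ term, whose contribution is $|w_0|\,M_{0\alpha}=\rho^{1+|V(\alpha)|}\le\rho^{|V(\alpha)|}$, I get
\[
|w_\alpha|\;\le\;\frac{1}{M_{\alpha\alpha}}\Big(2\rho^{|V(\alpha)|}+\sum_{0\ne\beta\lneq\alpha}|w_\beta|\,M_{\beta\alpha}\Big).
\]

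Next I would estimate the three ingredients on the right. For the off-diagonal entries, $M_{\beta\alpha}=\Pr_X\{\alpha\setminus X=\beta\}\le\Pr_X\{\text{all edges of }\alpha-\beta\text{ are clique edges}\}=\rho^{|V(\alpha-\beta)|}$. Feeding in the induction hypothesis $|w_\beta|\le(|\beta|+1)^{|\beta|}(1-\rho)^{-2|\beta|^2}\rho^{|V(\beta)|}$ together with $|V(\beta)|+|V(\alpha-\beta)|\ge|V(\alpha)|$ (every vertex spanned by $\alpha$ is spanned by $\beta$ or by $\alpha-\beta$), each summand becomes at most $(|\beta|+1)^{|\beta|}(1-\rho)^{-2|\beta|^2}\rho^{|V(\alpha)|}$, and since $|\beta|\le|\alpha|-1$ I can replace $(1-\rho)^{-2|\beta|^2}$ by the uniform bound $(1-\rho)^{-2(|\alpha|-1)^2}$; the constant $2$ in front of $\rho^{|V(\alpha)|}$ is absorbed the same way. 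The combinatorial count is then handled exactly as in Lemma~\ref{lem:kappa-bound}: there are $\binom{|\alpha|}{\ell}$ choices of $\beta\le\alpha$ with $|\beta|=\ell$, so $2+\sum_{0\ne\beta\lneq\alpha}(|\beta|+1)^{|\beta|}\le\sum_{\ell=0}^{|\alpha|}\binom{|\alpha|}{\ell}|\alpha|^\ell=(|\alpha|+1)^{|\alpha|}$. The one genuinely new step is a lower bound on the diagonal entry: since the vertex-in-clique events are independent $\mathrm{Bernoulli}(\rho)$,
\[
M_{\alpha\alpha}\;=\;\Pr_X\{\text{every edge of }\alpha\text{ is a non-clique edge}\}\;\ge\;\Pr\{V(\alpha)\cap\text{clique}=\emptyset\}\;=\;(1-\rho)^{|V(\alpha)|}.
\]

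Combining everything gives $|w_\alpha|\le(|\alpha|+1)^{|\alpha|}\,(1-\rho)^{-|V(\alpha)|-2(|\alpha|-1)^2}\,\rho^{|V(\alpha)|}$, and it remains only to check that the exponent on $(1-\rho)$ is at least $-2|\alpha|^2$; using $|V(\alpha)|\le|\alpha|+1$ this reduces to $2|\alpha|^2-2(|\alpha|-1)^2\ge|\alpha|+1$, i.e.\ $4|\alpha|-2\ge|\alpha|+1$, which holds for all $|\alpha|\ge1$ (with equality at $|\alpha|=1$). I expect the main thing to watch is precisely this bookkeeping of $(1-\rho)^{-1}$ factors: unlike in Lemma~\ref{lem:kappa-bound}, each level of the recursion divides by $M_{\alpha\alpha}\in(0,1)$, so one must verify that the accumulated factors stay inside the (deliberately generous) $(1-\rho)^{-2|\alpha|^2}$ budget in the statement --- which, as the arithmetic shows, they comfortably do.
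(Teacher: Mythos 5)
Your proof is correct and follows essentially the same route as the paper's: induction on $|\alpha|$, the bounds $M_{\alpha\alpha}\ge(1-\rho)^{|V(\alpha)|}$ and $M_{\beta\alpha}\le\rho^{|V(\alpha-\beta)|}$, reduction to connected $\alpha$ via Lemma~\ref{lem:clique-conn}, and the same binomial count as in Lemma~\ref{lem:kappa-bound}. The only (immaterial) difference is the bookkeeping of the $(1-\rho)^{-1}$ factors: you pull out the uniform worst case $(1-\rho)^{-2(|\alpha|-1)^2}$ and verify the exponent at the end, whereas the paper keeps a per-term factor $(1-\rho)^{-2(|\alpha|-1)|\beta|}$ inside the binomial sum; both land on the stated bound.
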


\begin{proof}
From the definition of $M$ we have the bounds
\[ M_{\alpha\alpha} \ge (1-\rho)^{|V(\alpha)|} \]
and
\[ 0 \le M_{\beta\alpha} \le \rho^{|V(\alpha-\beta)|}. \]

Proceed by induction on $|\alpha|$. In the base case $|\alpha| = 0$, we have $w_0 = \rho$. For $|\alpha| \ge 1$, by Lemma~\ref{lem:clique-conn} we can assume $\alpha$ is connected and spans vertex 1. From~\eqref{eq:w-recurrence} and the above bounds on $M$,
\begin{align*}
|w_\alpha| &\le (1-\rho)^{-|V(\alpha)|} \left(\rho^{|V(\alpha)|} + \sum_{\beta \lneq \alpha} |w_\beta|\, \rho^{|V(\alpha-\beta)|} \right) \\
&\le (1-\rho)^{-|V(\alpha)|} \left(2\rho^{|V(\alpha)|} + \sum_{0 \ne \beta \lneq \alpha} |w_\beta|\, \rho^{|V(\alpha-\beta)|} \right) \\
\intertext{and now using the induction hypothesis,}
&\le (1-\rho)^{-|V(\alpha)|} \left(2\rho^{|V(\alpha)|} + \sum_{0 \ne \beta \lneq \alpha} (|\beta|+1)^{|\beta|}(1-\rho)^{-2|\beta|^2}\rho^{|V(\beta)|} \rho^{|V(\alpha-\beta)|} \right) \\
&\le (1-\rho)^{-|V(\alpha)|} \left(2\rho^{|V(\alpha)|} + \sum_{0 \ne \beta \lneq \alpha} |\alpha|^{|\beta|}(1-\rho)^{-2(|\alpha|-1)|\beta|}\rho^{|V(\alpha)|} \right) \\
&= (1-\rho)^{-|V(\alpha)|}\rho^{|V(\alpha)|} \left(2 + \sum_{0 < \ell < |\alpha|} \binom{|\alpha|}{\ell} |\alpha|^\ell(1-\rho)^{-2(|\alpha|-1)\ell} \right) \\
&\le (1-\rho)^{-|V(\alpha)|}\rho^{|V(\alpha)|} \sum_{0 \le \ell \le |\alpha|} \binom{|\alpha|}{\ell} |\alpha|^\ell(1-\rho)^{-2(|\alpha|-1)\ell} \\
&= (1-\rho)^{-|V(\alpha)|}\rho^{|V(\alpha)|} \left(1 + |\alpha|(1-\rho)^{-2(|\alpha|-1)}\right)^{|\alpha|} \\
&\le (1-\rho)^{-2|\alpha|}\rho^{|V(\alpha)|} \left((1-\rho)^{-2(|\alpha|-1)} + |\alpha|(1-\rho)^{-2(|\alpha|-1)}\right)^{|\alpha|} \\
&= (|\alpha|+1)^{|\alpha|}(1-\rho)^{-2|\alpha|^2}\rho^{|V(\alpha)|},
\end{align*}
completing the proof.
\end{proof}

\noindent The rest of the proof is similar to the low-degree analysis for \emph{detection} (see Section~2.4 of~\cite{sam-thesis}).

\begin{lemma}\label{lem:clique-count-graphs}
For integers $t \ge 2$ and $D \ge 1$, the number of graphs $\alpha$ on vertex set $[n]$ such that (i)\, $|\alpha| \le D$,\, (ii)\, $1 \in V(\alpha)$,\, and (iii)\, $|V(\alpha)| = t$, is at most $n^{t-1} \min\left\{2^{t^2}, t^{2D}\right\}$.
\end{lemma}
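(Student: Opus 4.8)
The plan is to decouple the choice of the spanned vertex set $V(\alpha)$ from the choice of which edges are present on it, and then to bound the number of edge-patterns in two complementary ways: one that ignores the constraint $|\alpha| \le D$ (and wins when $D$ is large), and one that exploits it (and wins when $D$ is small). Since both resulting expressions are valid upper bounds on the count, so is their minimum, which is exactly the form asserted in Lemma~\ref{lem:clique-count-graphs}.

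\textbf{Choosing the vertices.} Because $1 \in V(\alpha)$ and $|V(\alpha)| = t$, the set $V(\alpha)$ is obtained by adjoining to $\{1\}$ some $(t-1)$-element subset of $[n]\setminus\{1\}$, so there are at most $\binom{n-1}{t-1} \le n^{t-1}$ possibilities. For an upper bound it is harmless that some of these choices may be incompatible with the requirement that every chosen vertex actually be spanned by an edge; we simply over-count.

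\textbf{Choosing the edges, first bound.} Once a $t$-element vertex set is fixed, $\alpha$ is determined by a subset of the $\binom{t}{2}$ possible edges among those vertices, and there are at most $2^{\binom{t}{2}} \le 2^{t^2}$ such subsets. This gives the bound $n^{t-1} 2^{t^2}$. \textbf{Choosing the edges, second bound.} To use the constraint $|\alpha|\le D$, pad: view $\alpha$ as a sequence of $D$ slots, each slot being either ``empty'' or one of the $\binom{t}{2}$ possible edges. Every graph with at most $D$ edges on the given vertex set is realized by at least one such sequence, so the count is at most $\bigl(1+\binom{t}{2}\bigr)^D$, and since $1+\binom{t}{2} = 1 + t(t-1)/2 \le t^2$ for all $t \ge 1$ (equivalently $(t-1)(t+2)\ge 0$), this is at most $t^{2D}$, giving the bound $n^{t-1} t^{2D}$. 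Combining with the vertex count and taking the better of the two edge bounds yields $n^{t-1}\min\{2^{t^2},\, t^{2D}\}$.

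There is no serious obstacle here; this is a routine entropy-style counting estimate. The only mildly delicate point is the padding trick in the second edge bound: a naive sum $\sum_{d \le D} \binom{t}{2}^d$ over the number of edges introduces a spurious geometric-series factor and fails to collapse to the clean $t^{2D}$ when $t$ is as small as $2$, whereas reorganizing the sum as a product over $D$ fixed slots avoids this.
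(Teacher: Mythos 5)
Your proof is correct and follows the same decomposition as the paper: fix the $t-1$ non-root vertices (at most $n^{t-1}$ ways), then bound the number of edge sets on those $t$ vertices either by $2^{\binom{t}{2}}\le 2^{t^2}$ or by $\bigl(\binom{t}{2}+1\bigr)^D \le t^{2D}$ and take the minimum. Your ``padding'' justification of the second edge bound is a pleasant way of seeing it, though the caution about a naive geometric sum is unnecessary — $\sum_{d=0}^{D}\binom{m}{d}\le\sum_{d=0}^{D}m^d\le(m+1)^D$ holds directly for all $m\ge 0$, including $m=\binom{2}{2}=1$ — and you are a bit more careful than the paper's one-line proof, which writes $n^t$ for the vertex count where $\binom{n-1}{t-1}\le n^{t-1}$ is what is actually needed and used.
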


\noindent Note that there are no graphs $\alpha$ with $|V(\alpha)| = 1$, so the case $t = 1$ is omitted.

\begin{proof}
The number of ways to choose $t-1$ vertices (aside from vertex 1) is at most $n^t$. Once the vertices are chosen, we can upper-bound the total number of graphs with $\le D$ edges in two different ways: $2^{\binom{t}{2}} \le 2^{t^2}$ or $\left(\binom{t}{2}+1\right)^D \le (t^2)^D$.
\end{proof}

\begin{proof}[Proof of Theorem~\ref{thm:clique}$\mathrm{(i)}$]
We will need the relations $|\alpha| \le \binom{|V(\alpha)|}{2} \le |V(\alpha)|^2$ and $|V(\alpha)| \le 2|\alpha|$. Also, for $|\alpha| \le D$,
\[ (1-\rho)^{-2|\alpha|^2} \le (1-\rho)^{-2D^2} = \exp[-2D^2 \log(1-\rho)] \le \exp\left(2D^2 \frac{\rho}{1-\rho}\right) \le e \]
where the last step holds for sufficiently large $n$, under the assumptions on the asymptotics of $\rho, D$.

Combining Lemmas~\ref{lem:w-bound} and~\ref{lem:clique-count-graphs},
\begin{align*}
\Corr_{\le D}^2 &\le \|w\|^2 \\
&\le \rho^2 \;+\; e^2 \hspace{-3pt} \sum_{2 \le t \le \sqrt{D}} n^{t-1} 2^{t^2} \cdot (t^2+1)^{2t^2}\rho^{2t} \;+\; e^2 \hspace{-3pt} \sum_{\sqrt{D} \le t \le 2D} n^{t-1} t^{2D} \cdot (D+1)^{2D} \rho^{2t}.
\end{align*}

\noindent Consider the first sum above. The initial term $t = 2$ is $O(\rho^4 n) = o(\rho^2)$, and the ratio between successive terms is
\[ \rho^2 n \cdot 2^{2t+1} \cdot ((t+1)^2+1)^{4t+2} \left(\frac{(t+1)^2+1}{t^2+1}\right)^{2t^2} \le t^{O(t)} \rho^2 n \le \sqrt{D}^{O(\sqrt{D})} \rho^2 n \le \frac{1}{2} \]
for sufficiently large $n$, using the asymptotics of $\rho,D$.
Now consider the second sum. The initial term $t = \left\lceil \sqrt{D} \right\rceil$ is at most
\[ \rho^2 (\rho^2 n)^{\sqrt{D}} (\sqrt{D}+1)^{2D} (D+1)^{2D} \le \rho^4 n (\rho^2 n)^{\sqrt{D}-1} (D+1)^{2D} = O(\rho^4 n) = o(\rho^2), \]
and the ratio between successive terms is
\[ \rho^2 n \cdot \left(\frac{t+1}{t}\right)^{2D} \le \rho^2 n \left(1 + \frac{1}{\sqrt D}\right)^{2D} \le \rho^2 n \cdot e^{O(\sqrt{D})} \le \frac{1}{2} \]
for sufficiently large $n$. We now have $\Corr_{\le D}^2 \le (1+o(1))\rho^2$ and the result now follows from Fact~\ref{fact:mmse-corr}.
\end{proof}

\section{Upper Bounds}
\label{sec:upper-bounds}

In this section we present our upper bounds on $\MMSE_{\le D}$ for the planted submatrix problem, planted dense subgraph, and planted clique. 
\subsection{Planted Submatrix}
In Theorems~\ref{thm:diag-thresh} and \ref{thm:power-iter} below, we will show that two standard algorithms (diagonal thresholding and power iteration, respectively) can be implemented using low-degree polynomials. Combining these two theorems immediately yields the proof of Theorem~\ref{thm:mmse-subm}(ii). We remark that Theorems~\ref{thm:diag-thresh} and \ref{thm:power-iter} are somewhat unsurprising, yet are important to establish in order to make our matching lower bounds meaningful.

We start with a useful subroutine: a polynomial approximation to the threshold function.

\begin{proposition}\label{prop:poly-thresh}
For any integer $k \ge 0$, there is a degree-$(2k+1)$ polynomial $\tau = \tau_k: \RR \to \RR$ such that for any $\ell \in \{0,1\}$ and any $0 \le \Delta \le 1/2$,
\[ |\tau(y) - \ell| \le (k+1/2) (6 \Delta)^k \]
whenever $|y - \ell| \le \Delta$.
\end{proposition}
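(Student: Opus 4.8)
The plan is to take $\tau_k$ to be the \emph{generalized smoothstep polynomial}
\[ \tau_k(y) := y^{k+1}\sum_{i=0}^{k}\binom{k+i}{i}(1-y)^i, \]
which has degree exactly $2k+1$. Morally, $\tau_k$ is the degree-$(2k+1)$ polynomial tangent to order $k$ to the constant $0$ at $y=0$ and to the constant $1$ at $y=1$ (it also equals $\sum_{j=k+1}^{2k+1}\binom{2k+1}{j}y^j(1-y)^{2k+1-j}$, the probability that $\mathrm{Binomial}(2k+1,y)\ge k+1$), so it should be close to $\ell$ near each $y=\ell$; the explicit binomial form is what makes the error estimates clean.

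The first step is to record the \emph{reflection identity} $\tau_k(y)+\tau_k(1-y)=1$, equivalently $1-\tau_k(y)=(1-y)^{k+1}\sum_{i=0}^{k}\binom{k+i}{i}y^i$. To prove it, I would work modulo $y^{k+1}$ in the ring of formal power series: the binomial series $(1-y)^{-(k+1)}=\sum_{i\ge 0}\binom{k+i}{i}y^i$ gives $(1-y)^{k+1}\sum_{i=0}^{k}\binom{k+i}{i}y^i\equiv (1-y)^{k+1}(1-y)^{-(k+1)}=1\pmod{y^{k+1}}$, while $y^{k+1}\sum_{i=0}^{k}\binom{k+i}{i}(1-y)^i\equiv 0\pmod{y^{k+1}}$. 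Hence the polynomial $P(y):=\tau_k(y)+\tau_k(1-y)-1$ is divisible by $y^{k+1}$; since $P$ is symmetric under $y\mapsto 1-y$ it is also divisible by $(1-y)^{k+1}$; as $\deg P\le 2k+1<2(k+1)$, this forces $P\equiv 0$.

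With the identity in hand, both required bounds reduce to one elementary estimate. Suppose $|y|\le\Delta$ with $0\le\Delta\le 1/2$. Then $1-y\in(0,3/2]$ and every $\binom{k+i}{i}\ge 0$, so
\[ |\tau_k(y)|=|y|^{k+1}\sum_{i=0}^{k}\binom{k+i}{i}(1-y)^i\le \Delta^{k+1}\Big(\tfrac32\Big)^{k}\sum_{i=0}^{k}\binom{k+i}{i}=\Delta^{k+1}\Big(\tfrac32\Big)^{k}\binom{2k+1}{k}\le 6^{k}\Delta^{k+1}, \]
using the hockey-stick identity $\sum_{i=0}^{k}\binom{k+i}{i}=\binom{2k+1}{k}$ and $\binom{2k+1}{k}\le 4^{k}$. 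Since $\Delta\le 1/2$ this is at most $\frac12(6\Delta)^{k}\le (k+\frac12)(6\Delta)^{k}$, which is the $\ell=0$ case. For $\ell=1$: if $|y-1|\le\Delta$ then $|1-y|\le\Delta$, and by the reflection identity $|\tau_k(y)-1|=|\tau_k(1-y)|$, which is controlled by the same estimate applied at the point $1-y$.

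The only step carrying any subtlety is the reflection identity, where the bookkeeping is checking that $P$ vanishes to order $k+1$ at both endpoints; everything else — the degree count, the hockey-stick identity, the bound $\binom{2k+1}{k}\le 4^{k}$, and the final arithmetic using $\Delta\le 1/2$ — is routine. (One could instead avoid the reflection identity and bound $1-\tau_k(y)=y^{k+1}\sum_{i\ge k+1}\binom{k+i}{i}(1-y)^i$ near $y=1$ via a ratio-test tail estimate; this works but yields a messier constant and requires checking a few small $k$ by hand, which is presumably why the stated bound carries the loose factor $k+\frac12$.)
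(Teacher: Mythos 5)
Your proposal is correct, and in fact you have constructed the very same polynomial as the paper, just written in closed form: the paper takes $\tau_k(y) = C\int_0^y t^k(1-t)^k\,dt$ with $C = (2k+1)\binom{2k}{k}$, i.e.\ the regularized incomplete Beta function $I_y(k+1,k+1)$, which expands to exactly your generalized smoothstep / binomial-tail polynomial. The overall structure is also the same (establish the $y \mapsto 1-y$ reflection symmetry, then bound only the $\ell = 0$ case near the origin), but the work is distributed differently. The integral representation gives the paper the symmetry $\tau(1-y) = 1 - \tau(y)$ essentially for free (substitute $t \mapsto 1-t$), and the bound near $0$ comes from a one-line estimate of the integrand, $|t|^k|1-t|^k \le \Delta^k(1+\Delta)^k$ on the interval of integration, yielding $(2k+1)\Delta\left[4\Delta(1+\Delta)\right]^k \le (k+\tfrac12)(6\Delta)^k$. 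Your closed form makes you earn the reflection identity via the divisibility argument (which is correct: $P$ vanishes to order $k+1$ at both $0$ and $1$ yet has degree $\le 2k+1$, so $P \equiv 0$), but it pays off in the final estimate, where the hockey-stick identity $\sum_{i=0}^k\binom{k+i}{i} = \binom{2k+1}{k} \le 4^k$ gives the cleaner and slightly sharper bound $\Delta\cdot(6\Delta)^k \le \tfrac12(6\Delta)^k$, with no $k$-dependent prefactor. All the individual steps check out: the degree count, the binomial-series congruence mod $y^{k+1}$, the coprimality of $y^{k+1}$ and $(1-y)^{k+1}$, the range $1-y \in [1/2,3/2]$, and the final arithmetic. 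Either route is a complete proof of the proposition.
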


\begin{proof}
Let $\tau(y) = C \int_0^y t^k (1-t)^k dt$ for $C > 0$ to be chosen later. This ensures $\tau(0) = 0$. Also, using the definition of the Euler Beta function $B(\cdot,\cdot)$ and its connection to the Gamma function $\Gamma(\cdot)$, we have
\begin{align*}
\tau(1) \,\, &=\,\, C \int_0^1 t^k (1-t)^k dt
\,\, =\,\, C \cdot B(k+1,k+1) \\
&=\,\, C \,\frac{\Gamma(k+1)^2}{\Gamma(2k+2)}
\,\, =\,\, C \,\frac{(k!)^2}{(2k+1)!} 
\,\, =\,\, \frac{C}{2k+1} \binom{2k}{k}^{-1}.
\end{align*}
Choose $C = (2k+1)\binom{2k}{k}$ so that $\tau(1) = 1$. Due to the symmetry $\tau(1-y) = 1-\tau(y)$, it suffices to prove the claim for $\ell = 0$. In this case, for $|y| \le \Delta$ we have
\begin{align*}
|\tau(y)| = C \left|\int_0^y t^k (1-t)^k dt\right|
&\le C \cdot \Delta \cdot \Delta^k (1+\Delta)^k \\
&= (2k+1) \binom{2k}{k} \Delta^{k+1} (1+\Delta)^k \\
&\le (2k+1) 2^{2k} \Delta^{k+1} (1+\Delta)^k 
= (2k+1) \Delta \left[4 \Delta (1+\Delta)\right]^k \\
&\le (k+1/2) (6 \Delta)^k
\end{align*}
where the last step used $\Delta \le 1/2$.
\end{proof}

We also record some consequences of the \emph{hypercontractivity} phenomenon (see e.g.~\cite{O-book} for a standard reference), which roughly states that moments of low-degree polynomials are well-behaved. 
Corollary~\ref{cor:hyp} below will be needed later to translate a high-probability bound on a polynomial's mean squared error into a bound on its expected mean squared error.

\begin{theorem}[e.g.~\cite{O-book}]
\label{thm:hyp}
Let $\nu \in (0,1/2]$ and let $y = (y_1,\ldots,y_m)$ be independent (but not necessarily identically distributed), each distributed either as $\mathcal{N}(0,1)$ or $\mathrm{Bernoulli}(p_i)$ with $\min\{p_i,1-p_i\} \ge \nu$. If $g: \RR^m \to \RR$ is a polynomial of degree at most $k$ then
\[ \EE[g(y)^4] \le (9/\nu)^k\, \EE[g(y)^2]^2. \]
\end{theorem}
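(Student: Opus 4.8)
\smallskip
\noindent This is a standard consequence of hypercontractivity (the paper itself just cites \cite{O-book}); here is the route I would take to prove it from scratch. Write $\|g\|_p := \EE[|g(y)|^p]^{1/p}$ and set $\rho := (\nu/9)^{1/4}$, so that the claimed inequality $\EE[g^4] \le (9/\nu)^k \EE[g^2]^2$ is precisely $\|g\|_4 \le \rho^{-k}\|g\|_2$. The plan is to show that the product probability space on which $y$ lives is $(2,4,\rho)$-hypercontractive---i.e.\ its noise operator $T_\rho$ (the tensor product of the one-dimensional noise operators, which multiplies the degree-$e$ part of a function by $\rho^e$) obeys $\|T_\rho F\|_4 \le \|F\|_2$---and then apply the usual degree-reduction argument. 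By the tensorization property of hypercontractivity (\cite{O-book}), it suffices to verify $(2,4,\rho)$-hypercontractivity coordinate by coordinate.

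There are two kinds of coordinate. For a standard Gaussian coordinate, classical Gaussian hypercontractivity gives $(2,4,1/\sqrt 3)$-hypercontractivity; since $\rho = (\nu/9)^{1/4} \le 1/\sqrt 3$ and a larger hypercontractive parameter implies any smaller one (by the semigroup law $T_\rho = T_{\rho\sqrt 3}\circ T_{1/\sqrt 3}$ together with the fact that $T_s$ is an $L^4$-contraction for $s \in [0,1]$), a Gaussian coordinate is also $(2,4,\rho)$-hypercontractive. For a $\mathrm{Bernoulli}(p_i)$ coordinate with $\min\{p_i,1-p_i\} \ge \nu$, every function is affine in the normalized bit $\phi_i := (y_i - p_i)/\sqrt{p_i(1-p_i)}$, so the required bound reduces to the two-parameter inequality $\EE[(a + \rho b\,\phi_i)^4] \le (a^2+b^2)^2$ for all $a,b \in \RR$. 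I would expand the left-hand side using $\EE[\phi_i] = 0$, $\EE[\phi_i^2] = 1$, $\EE[\phi_i^3] = (1-2p_i)/\sqrt{p_i(1-p_i)}$, and $\EE[\phi_i^4] = (1-3p_i(1-p_i))/(p_i(1-p_i))$, bound $|\EE[\phi_i^3]| \le \sqrt{2/\nu}$ and $\EE[\phi_i^4] \le 2/\nu$ via $p_i(1-p_i) \ge \nu/2$, and then check that the resulting inequality---which, after dividing by $b^4$ and writing $t = |a/b|$, is a single quadratic inequality in $t \ge 0$---holds when $\rho^{-2} = 3\nu^{-1/2}$. This discriminant check is the one genuinely computational step, but it is elementary.

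With $(2,4,\rho)$-hypercontractivity of the product space in hand for $\rho = (\nu/9)^{1/4}$, I would conclude in the standard way. Expand $g$ in the product orthonormal basis built from Hermite polynomials on Gaussian coordinates and $\{1,\phi_i\}$ on Bernoulli coordinates; because $g$ has degree $\le k$ and the bit variables satisfy $y_i^2 = y_i$, only basis elements of total degree $\le k$ occur, so $g = \sum_{e=0}^k g^{=e}$ with $T_\rho g^{=e} = \rho^e g^{=e}$. Setting $h := \sum_{e=0}^k \rho^{-e} g^{=e}$ gives $T_\rho h = g$ and, by orthogonality, $\|h\|_2^2 = \sum_{e=0}^k \rho^{-2e}\|g^{=e}\|_2^2 \le \rho^{-2k}\sum_{e=0}^k \|g^{=e}\|_2^2 = \rho^{-2k}\|g\|_2^2$. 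Hence $\|g\|_4 = \|T_\rho h\|_4 \le \|h\|_2 \le \rho^{-k}\|g\|_2$, and raising to the fourth power gives $\EE[g^4] \le \rho^{-4k}\EE[g^2]^2 = (9/\nu)^k \EE[g^2]^2$.

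I expect the main obstacle to be purely bookkeeping: pinning the single-bit hypercontractive constant to exactly $(\nu/9)^{1/4}$ (rather than some unspecified $\Theta(\nu^{1/4})$), and invoking the tensorization lemma and the identity $T_\rho g^{=e} = \rho^e g^{=e}$ in a form valid for non-uniform \emph{mixed} product spaces (Gaussian and biased-Bernoulli factors together). All of these ingredients are available in \cite{O-book}, but they must be assembled with care for the constants.
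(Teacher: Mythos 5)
Your proposal is correct, and the constant does check out. With $\rho^2 = \sqrt{\nu}/3$, $p(1-p) \ge \nu/2$, $|\EE\phi^3| \le \sqrt{2/\nu}$, $\EE\phi^4 \le 2/\nu$, the quadratic $(2-6\rho^2)t^2 + 4\rho^3\EE[\phi^3]\,t + (1-\rho^4\EE\phi^4)$ has leading coefficient $2(1-\sqrt{\nu}) > 0$, constant term $\ge 7/9 > 0$, and the discriminant condition reduces to $32\sqrt{\nu}/27 \le 56(1-\sqrt{\nu})/9$, i.e.\ $\sqrt{\nu} \le 0.84$, which holds for $\nu \le 1/2$ with room to spare. (Small slip: you should take $t = a/b \in \RR$ rather than $t = |a/b|$ since the sign of the $t^1$ term matters; this changes nothing because the discriminant test handles all signs.) The route is the same one that underlies the reference: single-coordinate $(2,4,\rho)$-hypercontractivity, tensorization, then the degree-truncation argument $g = T_\rho h$ with $\|h\|_2 \le \rho^{-k}\|g\|_2$. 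Where you differ from the paper is in the handling of Gaussian coordinates. The paper just cites Theorem 10.21 of O'Donnell's book---which is stated for \emph{finite} product spaces---and remarks that one can approximate each $\mathcal{N}(0,1)$ coordinate by a normalized sum of many Rademachers; this requires a passage to the limit, and one must be a little careful that the degree-$k$ polynomial structure and the moment bound both survive the approximation. You instead appeal directly to Gaussian $(2,4,1/\sqrt 3)$-hypercontractivity and the semigroup property to get $(2,4,\rho)$-hypercontractivity of the Gaussian factor, then tensorize with the biased-bit factors. This is cleaner and avoids the limiting argument entirely; it also makes the constant $9/\nu$ traceable, since the worst coordinate is the $\nu$-biased bit, and the Gaussian bound $9^{k/4}$ is dominated by $(9/\nu)^{k/4}$ because $\nu \le 1$.
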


\noindent For the proof, see Theorem 10.21 of~\cite{O-book} and note that a Gaussian can be approximated to arbitrary precision by the i.i.d.\ sum of many Rademacher $\pm 1$ random variables.

\begin{corollary}\label{cor:hyp}
In the planted submatrix problem (Definition~\ref{def:subm}) with $\rho \le 1/2$, if $f(Y)$ has degree at most $D$ and satisfies $(f(Y) - x)^2 \le \eps$ with probability at least $1-\frac{1}{2}(\rho/9)^{2D}$, then
\[ \EE(f(Y)-x)^2 \le \eps (1-1/\sqrt{2})^{-1} \le 4 \eps. \]
\end{corollary}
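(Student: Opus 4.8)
The plan is to deduce Corollary~\ref{cor:hyp} from the hypercontractivity bound of Theorem~\ref{thm:hyp} applied to the polynomial $g(Y) := f(Y) - x$. First I would check that $g$ is a polynomial (in the input variables of the planted submatrix problem, which are the entries of $Y$ together with $x = v_1$, or after substituting $v_1$ in terms of $Y$ and the latent randomness) of degree at most $D$; more carefully, since $x = v_1$ is not itself a function of $Y$, I would instead work in the enlarged set of independent coordinates consisting of the i.i.d.\ Bernoulli$(\rho)$ entries $v_i$ and the i.i.d.\ Gaussian noise entries $W_{ij}$, so that $g = f(Y) - v_1$ is a degree-$D$ polynomial (note $f$ has degree $\le D$ in $Y$, and each entry of $Y = \lambda vv^\top + W$ is degree $\le 2$ in these coordinates — so actually $g$ has degree $\le 2D$; I should be careful here and either absorb this factor or observe that Theorem~\ref{thm:hyp} is stated for the variables $y$ being the Gaussians/Bernoullis directly, so with degree parameter $k = 2D$). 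With $\nu = \rho \le 1/2$ (using $\min\{\rho,1-\rho\} = \rho$), Theorem~\ref{thm:hyp} gives $\EE[g^4] \le (9/\rho)^{k}\,\EE[g^2]^2$ for the appropriate $k$.

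Next I would run the standard Paley–Zygmund-type truncation argument. Write $A$ for the good event $\{g(Y)^2 \le \eps\}$, with $\Pr[A^c] \le \frac12 (\rho/9)^{2D}$ (matching the exponent in the hypercontractivity bound, which is why the problem statement chose that failure probability). Then
\[
\EE[g^2] = \EE[g^2 \One_A] + \EE[g^2 \One_{A^c}] \le \eps + \EE[g^4]^{1/2}\,\Pr[A^c]^{1/2}
\]
by Cauchy–Schwarz on the second term. Substituting the hypercontractive bound $\EE[g^4]^{1/2} \le (9/\rho)^{k/2}\,\EE[g^2]$ and $\Pr[A^c]^{1/2} \le \tfrac{1}{\sqrt2}(\rho/9)^{D}$, the product of the $(9/\rho)$ powers telescopes to (at most) $\tfrac{1}{\sqrt 2}$, giving $\EE[g^2] \le \eps + \tfrac{1}{\sqrt2}\,\EE[g^2]$. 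Rearranging yields $\EE[g^2] \le \eps/(1 - 1/\sqrt2) \le 4\eps$, which is exactly the claim $\EE(f(Y)-x)^2 \le 4\eps$.

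The only real subtlety — and the step I expect to need the most care — is the bookkeeping of the degree: one must decide whether the degree-$D$ hypothesis on $f$ refers to degree in $Y$ or in the underlying independent coordinates, and make sure the exponent in the hypercontractivity bound ($k$) lines up with the exponent in the assumed failure probability ($2D$). If $f$ has degree $D$ in $Y$ then $g = f(Y) - v_1$ has degree at most $2D$ in the $v_i, W_{ij}$; applying Theorem~\ref{thm:hyp} with $k = 2D$ and $\nu = \rho$ gives $\EE[g^4]^{1/2} \le (9/\rho)^{D}\EE[g^2]$, which cancels precisely against $\Pr[A^c]^{1/2} \le \tfrac{1}{\sqrt 2}(\rho/9)^{D}$. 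Everything else — the Cauchy–Schwarz split, the rearrangement, and the numerical bound $1/(1-1/\sqrt2) \le 4$ — is routine. I would also note in passing that the same argument applies verbatim in the planted dense subgraph and planted clique settings by taking $\nu$ to be the relevant minimum of the Bernoulli parameters, which is presumably why Theorem~\ref{thm:hyp} was stated in the mixed Gaussian/Bernoulli generality.
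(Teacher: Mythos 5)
Your proposal is correct and follows essentially the same route as the paper: view $g = f(\lambda vv^\top + W) - v_1$ as a degree-$2D$ polynomial in the independent coordinates $v, W$, apply Theorem~\ref{thm:hyp} with $k = 2D$ and $\nu = \rho$, and conclude via a second-moment/fourth-moment comparison. The only cosmetic difference is that the paper invokes the Paley--Zygmund inequality by name, whereas you unroll its proof (truncation on the good event plus Cauchy--Schwarz); the degree bookkeeping and the final constant $(1-1/\sqrt{2})^{-1}$ match exactly.
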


\begin{proof}
Let $g(v,W) = f(Y) - x = f(\lambda vv^\top + W) - v_1$ and note that $g$ is a polynomial of degree at most $2D$ in the variables $v$ and $W$, which are independent Gaussian and $\mathrm{Bernoulli}(\rho)$ random variables. If $\EE[g^2] \le \eps$ then we are done, so assume otherwise. Using the Paley--Zygmund inequality and Theorem~\ref{thm:hyp},
\[ \delta := \Pr\{g^2 > \eps\} \ge \left(1-\eps/\EE[g^2]\right)^2 \frac{\EE[g^2]^2}{\EE[g^4]} \ge \left(1-\eps/\EE[g^2]\right)^2 \left(\frac{\rho}{9}\right)^{2D}. \]
This can be rearranged to give
\[ \EE[g^2] \le \frac{\eps}{1-\sqrt{\delta(9/\rho)^{2D}}}. \]
Since $\delta \le \frac{1}{2}(\rho/9)^{2D}$, this gives $\EE[g^2] \le \eps (1-1/\sqrt{2})^{-1}$.
\end{proof}

\subsubsection{Diagonal Thresholding}
\label{sec:diag-thresh}

The simple \emph{diagonal thresholding} algorithm simply picks out the largest diagonal entries of the input matrix~\cite{JL-sparse,AW-sparse,minmax-loc}. Here we show how to construct a low-degree polynomial based on this idea that achieves small mean squared error when $\lambda \gg 1$.

\begin{theorem}\label{thm:diag-thresh}
Consider the planted submatrix problem (Definition~\ref{def:subm}) with $\rho \le 1/2$, and let $\tau_k$ be as in Proposition~\ref{prop:poly-thresh}. Let $k \ge 0$ and consider the polynomial $f(Y) = \tau_k(Y_{11}/\lambda) = \tau_k(v_1 + W_{11}/\lambda)$ of degree $D := 2k+1$. For any $0 < r < 1$, if
\[ \lambda \ge \frac{12}{r}\sqrt{\log 4 + 2D \log(9/\rho)} \]
then
\[ \EE(f(Y)-x)^2 \le D^2 r^{D-1}. \]
\end{theorem}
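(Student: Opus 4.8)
## Proof Proposal for Theorem~\ref{thm:diag-thresh}

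\textbf{Overview.} The plan is to show that the single-variable polynomial $f(Y) = \tau_k(Y_{11}/\lambda)$ has small mean squared error by combining (a) the pointwise approximation guarantee of Proposition~\ref{prop:poly-thresh} for the threshold polynomial $\tau_k$, (b) a Gaussian tail bound controlling the ``bad event'' $|W_{11}/\lambda|$ being large, and (c) the hypercontractivity machinery of Corollary~\ref{cor:hyp} to pass from a high-probability error bound to a bound on the \emph{expected} squared error. The structure is a clean three-step argument; I expect essentially no serious obstacle, only a careful bookkeeping of constants.

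\textbf{Step 1: reduce to a pointwise bound on a good event.} Write $y = Y_{11}/\lambda = v_1 + W_{11}/\lambda$, where $v_1 \in \{0,1\}$ and $W_{11}/\lambda \sim \mathcal{N}(0, 2/\lambda^2)$. Set $\Delta := |W_{11}/\lambda|$ and define the good event $\mathcal{G} = \{\Delta \le 1/2\}$. On $\mathcal{G}$, since $x = v_1 \in \{0,1\}$ and $|y - v_1| = \Delta \le 1/2$, Proposition~\ref{prop:poly-thresh} (with $\ell = v_1$) gives
\[ |f(Y) - x| = |\tau_k(y) - v_1| \le (k+1/2)(6\Delta)^k, \]
so $(f(Y)-x)^2 \le (k+1/2)^2 (6\Delta)^{2k}$. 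I will further restrict to the event $\mathcal{G}' = \{6\Delta \le r\}$ for the target $r \in (0,1)$; on $\mathcal{G}'$ we get $(f(Y)-x)^2 \le (k+1/2)^2 r^{2k} \le D^2 r^{D-1}$ after recalling $D = 2k+1$ (so $k+1/2 = D/2$, $2k = D-1$, and $(D/2)^2 \le D^2$). Thus $(f(Y)-x)^2 \le \frac{1}{4}D^2 r^{D-1}$ on $\mathcal{G}'$.

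\textbf{Step 2: show $\mathcal{G}'$ has overwhelming probability.} The event $(\mathcal{G}')^c$ is $\{\Delta > r/6\}$, i.e. $\{|W_{11}| > \lambda r / 6\}$. Since $W_{11} \sim \mathcal{N}(0,2)$, a standard Gaussian tail bound gives $\Pr\{|W_{11}| > t\} \le 2\exp(-t^2/4)$. Plugging $t = \lambda r/6$ and using the hypothesis $\lambda \ge \frac{12}{r}\sqrt{\log 4 + 2D\log(9/\rho)}$, we get $t^2/4 = \lambda^2 r^2 / 144 \ge \log 4 + 2D\log(9/\rho)$, hence $\Pr\{(\mathcal{G}')^c\} \le 2 e^{-(\log 4 + 2D\log(9/\rho))} = \frac{2}{4}(\rho/9)^{2D} = \frac{1}{2}(\rho/9)^{2D}$. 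This is exactly the probability threshold required by Corollary~\ref{cor:hyp}.

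\textbf{Step 3: apply hypercontractivity.} The polynomial $f$ has degree $D = 2k+1$ in $Y$, and $(f(Y)-x)^2 \le \frac{1}{4}D^2 r^{D-1} =: \eps$ holds with probability at least $1 - \frac{1}{2}(\rho/9)^{2D}$. Corollary~\ref{cor:hyp} then yields $\EE(f(Y)-x)^2 \le 4\eps = D^2 r^{D-1}$, which is the claim. The only place I would double-check is the constant in Step 2: the problem uses $W_{11}\sim\mathcal N(0,2)$, so the tail exponent is $t^2/(2\cdot 2) = t^2/4$, and the factor $12$ (rather than, say, $8\sqrt 2$) in the hypothesis is chosen precisely so that $\lambda^2 r^2/144 \ge \log 4 + 2D\log(9/\rho)$ gives the needed bound with room to spare. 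No genuine difficulty is anticipated; this is a routine assembling of the preceding lemmas.
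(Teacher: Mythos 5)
Your proposal is correct and follows essentially the same route as the paper's proof: a pointwise bound via Proposition~\ref{prop:poly-thresh} on the event $\{|W_{11}/\lambda|\le r/6\}$, a Gaussian tail bound showing this event fails with probability at most $\frac12(\rho/9)^{2D}$, and Corollary~\ref{cor:hyp} to convert to a bound in expectation. The constants all check out (the paper sets the threshold to $2\lambda^{-1}\sqrt{\log 4+2D\log(9/\rho)}\le r/6$ rather than $r/6$ directly, but this is immaterial).
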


\begin{proof}
Let $\Delta = 2\lambda^{-1} \sqrt{\log 4 + 2D \log(9/\rho)} \le r/6$ by the assumption on $\lambda$. The Gaussian tail bound $\Pr\{\mathcal{N}(0,1) \ge t\} \le \exp(-t^2/2)$ gives
\[ \Pr\{|W_{11}/\lambda| \ge \Delta\} \le 2 \exp(-\lambda^2 \Delta^2 / 4) = \frac{1}{2} (\rho/9)^{2D} \]
by the choice of $\Delta$. In the event that $|W_{11}/\lambda| \le \Delta$, we have by Proposition~\ref{prop:poly-thresh},
\[ (f(Y)-x)^2 \le (k+1/2)^2(6\Delta)^{2k} =: \eps. \]
By Corollary~\ref{cor:hyp},
\begin{align*}
\EE(f(Y)-x)^2 &\le 4\eps \\
&= 4(k+1/2)^2(6\Delta)^{2k} \\
&= 4(D/2)^2(6\Delta)^{D-1} \\
&\le D^2 r^{D-1}
\end{align*}
since $6\Delta \le r$.
\end{proof}

\subsubsection{Power Iteration}

When $\lambda \ge (1+\Omega(1)) (\rho\sqrt{n})^{-1}$, it is well-known in random matrix theory that the leading eigenvector of $Y$ is correlated with $v$~\cite{FP,CDF-wigner,BN-eigenvec}. Furthermore, the associated eigenvalue is larger than the other eigenvalues by a constant factor, and so the leading eigenvector can be well-approximated by $O(\log n)$ rounds of power iteration.

To simplify our analysis, we will consider a single round of power iteration starting from the all-ones vector, followed by thresholding. While this does not capture the sharp threshold above, we show that this achieves small mean squared error provided $\lambda \gg (\rho \sqrt{n})^{-1}$ and $\rho \gg 1/n$.

\begin{theorem}\label{thm:power-iter}
Consider the planted submatrix problem (Definition~\ref{def:subm}) with $\rho \le 1/2$, and let $\tau_k$ be as in Proposition~\ref{prop:poly-thresh}. Let $k \ge 0$ and consider the polynomial
\[f(Y) = \tau_k\left(\frac{1}{\lambda \rho n} \sum_{i=1}^n Y_{1i}\right) = \tau_k\left(\frac{1}{\lambda \rho n} \sum_{i=1}^n (\lambda v_1 v_i + W_{1i})\right) \]
of degree $D := 2k+1$. For any $0 < r < 1$, if
\begin{equation}\label{eq:assumption-lambda}
\lambda \ge \frac{24}{r \rho \sqrt{n}} \sqrt{\log 8 + 2D \log(9/\rho)}
\end{equation}
and
\begin{equation}\label{eq:assumption-rho}
\rho \ge \frac{324}{r^2 n} \left[\log 8 + 2D \log(9/\rho)\right]
\end{equation}
then
\[ \EE(f(Y)-x)^2 \le D^2 r^{D-1}. \]
\end{theorem}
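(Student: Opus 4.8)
The plan is to recognize that the argument of $\tau_k$ is a linear statistic that concentrates tightly around $v_1$, so that the result follows by combining Proposition~\ref{prop:poly-thresh} with the hypercontractive estimate of Corollary~\ref{cor:hyp}, exactly as in the proof of Theorem~\ref{thm:diag-thresh}. Writing $L := \log 8 + 2D\log(9/\rho)$, the first step is to decompose
\[ g \;:=\; \frac{1}{\lambda\rho n}\sum_{i=1}^n Y_{1i} \;=\; v_1\cdot\frac{1}{\rho n}\sum_{i=1}^n v_i \;+\; \frac{1}{\lambda\rho n}\sum_{i=1}^n W_{1i}, \]
using $v_1 v_i = v_1\cdot v_i$ (and $v_1^2 = v_1$ for the $i=1$ term). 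I would then show that $g$ is within $\Delta := \Delta_v + \Delta_W$ of $v_1$ with probability at least $1 - \frac{1}{2}(\rho/9)^{2D}$, by controlling the ``signal'' deviation $\frac{1}{\rho n}\sum_i v_i - 1$ and the ``noise'' term $\frac{1}{\lambda\rho n}\sum_i W_{1i}$ separately, then invoking Proposition~\ref{prop:poly-thresh} with $\ell = v_1$ and passing from a high-probability bound to an expectation bound via Corollary~\ref{cor:hyp}.

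For the signal deviation, $\sum_i v_i$ is $\mathrm{Binomial}(n,\rho)$ with mean $\rho n$, so the multiplicative Chernoff bound gives $\Pr\{|\frac{1}{\rho n}\sum_i v_i - 1| > \Delta_v\} \le 2\exp(-\rho n\Delta_v^2/3)$; taking $\Delta_v := \sqrt{3L/(\rho n)}$ makes this at most $\frac{1}{4}(\rho/9)^{2D}$, and hypothesis~\eqref{eq:assumption-rho} then forces $\Delta_v \le r/\sqrt{108}$. It is essential here to use the \emph{multiplicative} Chernoff bound rather than Hoeffding, so that the failure probability is governed by $\rho n$ rather than $\rho^2 n$, matching the form of~\eqref{eq:assumption-rho}. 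For the noise term, $\sum_i W_{1i}$ is Gaussian with variance $n+1 \le \frac{4}{3} n$ (the hypotheses force $n$ to be large, so this holds), hence $\frac{1}{\lambda\rho n}\sum_i W_{1i}$ has variance at most $\frac{4}{3\lambda^2\rho^2 n}$ and the Gaussian tail bound gives $\Pr\{|\frac{1}{\lambda\rho n}\sum_i W_{1i}| > \Delta_W\} \le \frac{1}{4}(\rho/9)^{2D}$ for $\Delta_W := \frac{1}{\lambda\rho\sqrt n}\sqrt{8L/3}$, whereupon hypothesis~\eqref{eq:assumption-lambda} forces $\Delta_W \le \frac{r}{24}\sqrt{8/3}$. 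A short numerical check then gives $\Delta = \Delta_v + \Delta_W \le r/6$, and a union bound over the two bad events (each of probability at most $\frac{1}{4}(\rho/9)^{2D}$) shows $|g - v_1| \le \Delta$ with probability at least $1 - \frac{1}{2}(\rho/9)^{2D}$, whether $v_1 = 0$ or $v_1 = 1$.

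On this event, Proposition~\ref{prop:poly-thresh} with $\ell = v_1$ gives $(f(Y)-x)^2 = (\tau_k(g) - v_1)^2 \le (k+1/2)^2(6\Delta)^{2k}$, and then Corollary~\ref{cor:hyp} — applicable since $f$ has degree $D = 2k+1$ in $Y$ and $\rho \le 1/2$, and since the exceptional probability is at most $\frac{1}{2}(\rho/9)^{2D}$ — upgrades this to $\EE(f(Y)-x)^2 \le 4(k+1/2)^2(6\Delta)^{2k} = (2k+1)^2(6\Delta)^{2k} = D^2(6\Delta)^{D-1} \le D^2 r^{D-1}$, using $6\Delta \le r$ and $D-1 = 2k \ge 0$. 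The only real obstacle is the constant bookkeeping in the previous paragraph: one must verify that the deviations $\Delta_v$ and $\Delta_W$ dictated by the two concentration bounds (with a failure budget of $\frac{1}{4}(\rho/9)^{2D}$ each, so that the union matches what Corollary~\ref{cor:hyp} requires) are small enough that their sum stays below $r/6$. This is precisely where the constants $324$ and $24$ in the hypotheses are calibrated, together with the factor $\frac{4}{3}$ coming from the $\mathcal{N}(0,2)$ diagonal of $W$; everything else is a mechanical invocation of the two lemmas.
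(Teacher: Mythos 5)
Your proposal is correct and follows essentially the same route as the paper's proof: decompose the argument of $\tau_k$ into the signal term $v_1\cdot\frac{1}{\rho n}\sum_i v_i$ and the Gaussian noise term, control each with a concentration bound calibrated to a failure budget of $\frac{1}{4}(\rho/9)^{2D}$, and finish via Proposition~\ref{prop:poly-thresh} and Corollary~\ref{cor:hyp}. The only differences are cosmetic — you use a multiplicative Chernoff bound where the paper uses Bernstein's inequality, and you bound the total deviation by $\Delta_v+\Delta_W$ rather than by $\Delta/2+\Delta/2$ — and your constant bookkeeping ($r/\sqrt{108}+\frac{r}{24}\sqrt{8/3}<r/6$) checks out.
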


\begin{proof}
Let
\[ \Delta = \sqrt{\max\left\{\frac{16}{\lambda^2 \rho^2 n}, \frac{9}{\rho n}\right\} \left[\log 8 + 2D \log\left(\frac{9}{\rho}\right)\right]} \le r/6 \]
using the assumptions~\eqref{eq:assumption-lambda} and~\eqref{eq:assumption-rho}. Since $\sum_{i=1}^n W_{1i} \sim \mathcal{N}(0,\sigma^2)$ with $\sigma^2 \le 2n$, the Gaussian tail bound $\Pr\{\mathcal{N}(0,1) \ge t\} \le \exp(-t^2/2)$ gives
\[ \Pr\left\{\left|\frac{1}{\lambda \rho n} \sum_{i=1}^n W_{1i}\right| \ge \Delta/2\right\} \le 2\exp\left(-\frac{1}{16} \Delta^2 \lambda^2 \rho^2 n\right) \le \frac{1}{4} (\rho/9)^{2D} \]
using the choice of $\Delta$ (specifically the first term in $\max\{\cdots\}$). By Bernstein's inequality,
\[ \Pr\left\{\left|\sum_{i=1}^n (v_i-\rho)\right| \ge t\right\} \le 2\exp\left(-\frac{t^2/2}{\rho n + t/3}\right) \]
and so
\begin{align*}
\Pr\left\{\left|\left(\frac{1}{\rho n} \sum_{i=1}^n v_i\right) - 1\right| \ge  \Delta/2\right\} &\le 2\exp\left(-\frac{(\Delta \rho n)^2/8}{\rho n + \Delta \rho n/6}\right) = \exp\left(-\frac{\Delta^2 \rho n / 8}{1 + \Delta/6}\right) \\
&\le 2\exp\left(-\frac{\Delta^2 \rho n}{9}\right) \le \frac{1}{4} (\rho/9)^{2D},
\end{align*}
again using the choice of $\Delta$ (the second term in $\max\{\cdots\}$). Combining the above, with probability at least $1 - \frac{1}{2} (\rho/9)^{2D}$, we have $\left|\left((\lambda \rho n)^{-1} \sum_{i=1}^n Y_{1i}\right) - v_1\right| \le \Delta$.
In this event, we have by Proposition~\ref{prop:poly-thresh},
\[ (f(Y)-x)^2 \le (k+1/2)^2(6\Delta)^{2k} =: \eps. \]
By Corollary~\ref{cor:hyp},
\begin{align*}
\EE(f(Y)-x)^2 &\le 4\eps 
\,\, =\,\, 4(k+1/2)^2(6\Delta)^{2k} 
\,\, =\,\, 4(D/2)^2(6\Delta)^{D-1} 
\,\,\le\,\, D^2 r^{D-1}
\end{align*}
since $6\Delta \le r$.
\end{proof}

\subsection{Planted Dense Subgraph}

For planted dense subgraph, we use a single round of power iteration starting from the all-ones vector, followed by thresholding (as in Theorem~\ref{thm:power-iter} for planted submatrix). We note, however, that diagonal thresholding does not work in the planted subgraph model, and so we do not have an analogue of Theorem~\ref{thm:diag-thresh}.

\begin{theorem}\label{thm:power-iter-subg}
Consider the planted dense subgraph problem (Definition~\ref{def:subg}) with $\rho \le 1/2$ and $q_0 \le q_1$. Define $\nu$ as in~\eqref{eq:nu} and define $\tau_k$ as in Proposition~\ref{prop:poly-thresh}. Let $k \ge 0$ and consider the polynomial
\[f(Y) = \tau_k\left(\frac{1}{(q_1 - q_0) \rho}\left(\frac{1}{n-1} \sum_{i=2}^n Y_{1i} - q_0 \right)\right) \]
of degree $D := 2k+1$. For any $0 < r < 1$, if
\begin{equation}\label{eq:assumption-q}
\frac{(q_1-q_0)^2}{q_0} \ge \frac{216}{r^2 \rho^2 (n-1)} \left[\log 4 + 3D \log\left(9/\nu\right)\right]
\end{equation}
and
\begin{equation}\label{eq:assumption-qrho}
q_1 \rho \ge \frac{864}{r^2 (n-1)} \left[\log 4 + 3D \log\left(9/\nu\right)\right]
\end{equation}
then
\[ \EE(f(Y)-x)^2 \le D^2 r^{D-1}. \]
\end{theorem}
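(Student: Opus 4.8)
The plan is to mirror the proof of Theorem~\ref{thm:power-iter} for planted submatrix, with the extra bookkeeping needed to handle the fact that the edge variables $Y_{1i}$ are now Bernoulli rather than Gaussian, and that their mean is shifted (it is $q_0$ off the planted submatrix, $q_1$ on it). First I would write $\frac{1}{n-1}\sum_{i=2}^n Y_{1i}$ as its conditional mean given $v$ plus fluctuations. Conditioned on $v$, we have $\EE[Y_{1i}\mid v] = q_0 + (q_1-q_0)v_1 v_i$, so $\frac{1}{n-1}\sum_{i=2}^n \EE[Y_{1i}\mid v] = q_0 + (q_1-q_0)v_1 \cdot \frac{1}{n-1}\sum_{i=2}^n v_i$. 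Plugging this into the argument of $\tau_k$, the ``ideal'' value is $v_1 \cdot \frac{1}{\rho(n-1)}\sum_{i=2}^n v_i$, which concentrates around $v_1$ once $\frac{1}{\rho(n-1)}\sum_{i=2}^n v_i$ concentrates around $1$. So the argument of $\tau_k$ will be within $\Delta$ of $v_1 = x$ with high probability, for a suitable choice of $\Delta \le r/6$, and then Proposition~\ref{prop:poly-thresh} and Corollary~\ref{cor:hyp} finish exactly as in Theorem~\ref{thm:power-iter}.

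The key steps, in order, are: (1) Set $\Delta = \sqrt{\max\{\,c_1/(q_1-q_0)^2\rho^2(n-1),\; c_2/\rho(n-1)\,\}\,[\log 4 + 3D\log(9/\nu)]}$ for appropriate absolute constants $c_1, c_2$, and verify from the hypotheses~\eqref{eq:assumption-q} and~\eqref{eq:assumption-qrho} that $\Delta \le r/6$. (2) Bound the noise term: conditioned on $v$, $\sum_{i=2}^n (Y_{1i} - \EE[Y_{1i}\mid v])$ is a sum of independent centered Bernoullis with variances at most $q_1(1-q_0) \le q_1$, so by Bernstein's inequality $\Pr\{|\frac{1}{(q_1-q_0)\rho(n-1)}\sum_{i=2}^n (Y_{1i}-\EE[Y_{1i}\mid v])| \ge \Delta/2\} \le \tfrac14(\nu/9)^{3D}$, using the first term in the $\max$ together with $q_1\rho(n-1) \gtrsim$ the bracketed quantity (this is where~\eqref{eq:assumption-qrho} enters, to control the $t/3$ term in Bernstein and to bound the variance proxy). (3) Bound the ``signal shape'' term: $\sum_{i=2}^n (v_i - \rho)$ is a sum of centered Bernoullis with variance $\rho(1-\rho) \le \rho$, so again by Bernstein, $\Pr\{|\frac{1}{\rho(n-1)}\sum_{i=2}^n v_i - 1| \ge \Delta/2\} \le \tfrac14(\nu/9)^{3D}$, using the second term in the $\max$ and $\rho(n-1) \gtrsim$ the bracketed quantity. (4) Union-bound: with probability at least $1 - \tfrac12(\nu/9)^{3D} \ge 1 - \tfrac12(\rho/9)^{2D}$ (since $\nu \le \rho$ and $3D \ge 2D$), the argument of $\tau_k$ is within $\Delta$ of $x = v_1 \in \{0,1\}$, so $(f(Y)-x)^2 \le (k+1/2)^2(6\Delta)^{2k} =: \eps$ by Proposition~\ref{prop:poly-thresh}. (5) Apply Corollary~\ref{cor:hyp} (the hypercontractivity consequence; one should state/cite the planted-dense-subgraph analogue, which holds since the underlying variables $v$ and $Y\mid v$ are Bernoulli with parameters bounded away from $0,1$ by $\nu$, matching the hypothesis of Theorem~\ref{thm:hyp}) to get $\EE(f(Y)-x)^2 \le 4\eps = 4(D/2)^2(6\Delta)^{D-1} \le D^2 r^{D-1}$, using $6\Delta \le r$.

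The main obstacle is purely a matter of getting the constants and the definition of $\nu$ to line up, rather than any conceptual difficulty: one has to choose $\Delta$ so that simultaneously (a) $\Delta \le r/6$ follows from the two stated hypotheses, (b) the noise Bernstein bound closes with the stated failure probability, and (c) the signal-shape Bernstein bound closes. The subtlety specific to the binary model is that Bernstein's inequality has the extra linear-in-$t$ term in the denominator, so one must separately ensure $\Delta \lesssim 1$ (via $q_1\rho(n-1)$ or $\rho(n-1)$ being large, which the hypotheses guarantee) before the sub-Gaussian regime of Bernstein kicks in; this is exactly the role of the ``$\log 4$'' vs ``$\log 8$'' and the factor-$3$ discrepancies relative to Theorem~\ref{thm:power-iter}. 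I would also need to confirm that hypercontractivity applies with parameter $\nu = \min\{\rho, q_0, 1-q_1\}$ — the relevant random variables are the $\binom n2$ independent edges $Y_{ij}\mid v$ together with the $n$ independent coordinates of $v$, all Bernoulli with parameter in $[\nu, 1-\nu]$ — so Theorem~\ref{thm:hyp} gives $\EE[g^4] \le (9/\nu)^{2D}\EE[g^2]^2$ for a degree-$2D$ polynomial $g$, and the Paley–Zygmund computation in Corollary~\ref{cor:hyp} goes through verbatim with $\rho$ replaced by $\nu$. Everything else is the routine threshold-polynomial argument already used twice in the paper.
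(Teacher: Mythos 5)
Your skeleton is right (linear statistic, threshold polynomial, binary hypercontractivity analogue of Corollary~\ref{cor:hyp}, which is indeed Corollary~\ref{cor:hyp-graph} in the paper), but step (2) has a genuine gap: you bound the variance of each noise term $Y_{1i}-\EE[Y_{1i}\mid v]$ by the worst case over $v$, namely $q_1(1-q_0)\le q_1$, so your sum has variance proxy $(n-1)q_1$. Carrying this through Bernstein and matching against $\Delta\le r/6$ forces the requirement
\[
\frac{(q_1-q_0)^2\rho^2(n-1)}{q_1}\;\gtrsim\;\frac{1}{r^2}\bigl[\log 8 + 3D\log(9/\nu)\bigr],
\]
but the hypotheses only give you $(q_1-q_0)^2\rho^2(n-1)/q_0\ge 216L/r^2$ and $q_1\rho(n-1)\ge 864L/r^2$. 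From~\eqref{eq:assumption-q} your left-hand side equals $(q_0/q_1)\cdot 216L/r^2$, which is smaller than needed whenever $q_0/q_1$ is not close to $1$; and~\eqref{eq:assumption-qrho} only recovers a $\rho\cdot q_1\rho(n-1)$ term, leaving a multiplicative $\rho$ loss. For example, with $\rho$ a small constant, $q_0/q_1\approx 0.04$, and $n$ chosen so that both hypotheses are near-tight, your requirement fails by a large constant factor; pushing $\rho\to 0$ makes the shortfall unbounded.

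The issue is that $q_1(n-1)$ is a vast overestimate of the relevant variance: only a $\rho$-fraction of the $p_i$'s equal $q_1$. The paper sidesteps the problem by conditioning on $v_1$ \emph{only} (not all of $v$), so that $\sum_{i\ge 2}Y_{1i}$ is exactly $\mathrm{Binomial}(n-1,q_0)$ (if $v_1=0$) or $\mathrm{Binomial}(n-1,\rho q_1+(1-\rho)q_0)$ (if $v_1=1$), whose variance is at most $(\rho q_1+q_0)(n-1)\ll q_1(n-1)$. One Bernstein inequality then suffices, and the paper's case analysis on whether $q_0\ge\rho q_1$ or $q_0\le\rho q_1$ plugs in~\eqref{eq:assumption-q} or~\eqref{eq:assumption-qrho} accordingly. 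Your two-step decomposition could in principle be salvaged by restricting the noise Bernstein to the event $\{\sum_{i\ge 2}v_i\le 2\rho(n-1)\}$ so that the conditional variance becomes $O\bigl((\rho q_1+q_0)(n-1)\bigr)$, but as written the uniform-over-$v$ variance bound does not close. Also note that your $\Delta$ proposal, $\sqrt{\max\{c_1/[(q_1-q_0)^2\rho^2(n-1)],\,c_2/[\rho(n-1)]\}\cdot L}$ with $c_1,c_2$ absolute constants, cannot be right: the numerator of the first term has to scale with a variance parameter (the paper uses $3(\rho q_1+q_0)$), not with a $q$-free constant.
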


We begin by establishing a concentration inequalities for low-degree polynomials in this model (based on hypercontractivity, Theorem~\ref{thm:hyp}).

\begin{corollary}\label{cor:hyp-graph}
Consider the planted dense subgraph problem (Definition~\ref{def:subg}) with $\rho \le 1/2$ and $q_0 \le q_1$. Let
\begin{equation}\label{eq:nu}
\nu = \min\{\rho,\, q_0,\, 1-q_1\}.
\end{equation}
If $f(Y)$ has degree at most $D$ and satisfies $(f(Y) - x)^2 \le \eps$ with probability at least $1-\frac{1}{2}(\nu/9)^{3D}$, then
\[ \EE(f(Y)-x)^2 \le \eps (1-1/\sqrt{2})^{-1} \le 4 \eps. \]
\end{corollary}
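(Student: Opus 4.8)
The plan is to follow the proof of Corollary~\ref{cor:hyp} essentially verbatim, the only new ingredient being a representation of the planted dense subgraph observation $Y$ as a bounded-degree polynomial in independent, suitably unbiased Bernoulli variables, so that the hypercontractivity estimate of Theorem~\ref{thm:hyp} applies.

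First I would introduce independent random variables: the planted vector $v \in \{0,1\}^n$ with i.i.d.\ $\mathrm{Bernoulli}(\rho)$ entries, together with two families $\{B_{ij}\}_{i<j}$ i.i.d.\ $\mathrm{Bernoulli}(q_0)$ and $\{C_{ij}\}_{i<j}$ i.i.d.\ $\mathrm{Bernoulli}(q_1)$, all mutually independent. Setting $Y_{ij} = (1-v_iv_j)B_{ij} + v_iv_jC_{ij}$ reproduces the law of Definition~\ref{def:subg}, since conditionally on $v$ this is $\mathrm{Bernoulli}(q_0 + (q_1-q_0)v_iv_j)$. Each $Y_{ij}$ is a polynomial of degree $3$ in the variables $(v,B,C)$ (degree $2$ from $v_iv_j$ times degree $1$ from a $B$ or $C$ variable), so $g := f(Y) - x = f(Y) - v_1$ is a polynomial of degree at most $3D$ in these independent variables. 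Moreover every one of these variables is $\mathrm{Bernoulli}(p)$ with $\min\{p,1-p\} \ge \nu$: for $v_i$ this is $\min\{\rho,1-\rho\} = \rho \ge \nu$ (using $\rho \le 1/2$), for $B_{ij}$ it is $\min\{q_0,1-q_0\} \ge \min\{q_0,1-q_1\} \ge \nu$ (using $q_0 \le q_1$), and for $C_{ij}$ it is $\min\{q_1,1-q_1\} \ge \min\{q_0,1-q_1\} \ge \nu$.

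With this in hand, Theorem~\ref{thm:hyp} (applied with degree $3D$ and parameter $\nu$) gives $\EE[g^4] \le (9/\nu)^{3D}\,\EE[g^2]^2$. The rest is identical to Corollary~\ref{cor:hyp}: assuming $\EE[g^2] > \eps$ (otherwise the conclusion is immediate), the Paley--Zygmund inequality gives
\[
\delta := \Pr\{g^2 > \eps\} \;\ge\; \Bigl(1-\frac{\eps}{\EE[g^2]}\Bigr)^2 \frac{\EE[g^2]^2}{\EE[g^4]} \;\ge\; \Bigl(1-\frac{\eps}{\EE[g^2]}\Bigr)^2 \Bigl(\frac{\nu}{9}\Bigr)^{3D},
\]
which rearranges to $\EE[g^2] \le \eps \big/ \bigl(1-\sqrt{\delta(9/\nu)^{3D}}\bigr)$; since by hypothesis $\delta \le \tfrac12(\nu/9)^{3D}$, the denominator is at least $1-1/\sqrt{2}$, yielding $\EE(f(Y)-x)^2 = \EE[g^2] \le \eps(1-1/\sqrt{2})^{-1} \le 4\eps$.

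The only point requiring care is the degree bookkeeping in the first step: choosing a representation of $Y$ in independent bits that inflates the degree by as small a factor as possible (here a factor of $3$, which is why the hypothesis features $(\nu/9)^{3D}$ rather than the $(\nu/9)^{2D}$ of the Gaussian case), and checking that the single quantity $\nu = \min\{\rho,q_0,1-q_1\}$ simultaneously lower-bounds the bias of all the variables that appear. Neither is difficult, but getting the exponent $3D$ right is what makes the constants in Theorem~\ref{thm:power-iter-subg} come out as stated.
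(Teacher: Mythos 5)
Your proof is correct and follows the paper's argument essentially verbatim: the same degree-$3$ representation $Y_{ij} = (1-v_iv_j)B_{ij} + v_iv_jC_{ij}$ in independent Bernoulli variables, the same check that every coordinate has bias bounded below by $\nu$, and the same Paley--Zygmund step copied from Corollary~\ref{cor:hyp}. The added remark on why the exponent is $3D$ rather than $2D$ is a fair observation but not a departure from the paper.
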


\begin{proof}
Let $(A_{ij})_{i<j}$ be i.i.d.\ $\mathrm{Bernoulli}(q_0)$ and let $(B_{ij})_{i < j}$ be i.i.d.\ $\mathrm{Bernoulli}(q_1)$. Note that the observation $Y$ is distributed as $Y_{ij} = (1 - v_i v_j)A_{ij} + v_i v_j B_{ij}$, which is a degree-3 polynomial in the variables $v,A,B$. Let $g(v,A,B) = f(Y) - x = f(Y) - v_1$ and note that $g$ is a polynomial of degree at most $3D$ in $v,A,B$. If $\EE[g^2] \le \eps$ then we are done, so assume otherwise. Using the Paley--Zygmund inequality and Theorem~\ref{thm:hyp},
\[ \delta := \Pr\{g^2 > \eps\} \ge \left(1-\eps/\EE[g^2]\right)^2 \frac{\EE[g^2]^2}{\EE[g^4]} \ge \left(1-\eps/\EE[g^2]\right)^2 \left(\frac{\nu}{9}\right)^{3D}. \]
This can be rearranged to give
\[ \EE[g^2] \le \frac{\eps}{1-\sqrt{\delta(9/\nu)^{3D}}}. \]
Since $\delta \le \frac{1}{2}(\nu/9)^{3D}$, this gives $\EE[g^2] \le \eps (1-1/\sqrt{2})^{-1}$.
\end{proof}
\begin{proof}
Let
\begin{equation}\label{eq:Delta-defn}
\Delta = \sqrt{\frac{3(\rho q_1 + q_0)}{(q_1 - q_0)^2 \rho^2 (n-1)} \left[\log 4 + 3D \log\left(\frac{9}{\nu}\right)\right]}.
\end{equation}
We will verify that $\Delta \le r/6$. First consider the case $q_0 \ge \rho q_1$. In this case we have $\rho q_1 + q_0 \le 2q_0$ and so~\eqref{eq:assumption-q} can be combined with~\eqref{eq:Delta-defn} to yield $\Delta \le r/6$. Now consider the case $\rho q_1 \ge q_0$. In this case we have $\rho q_1 + q_0 \le 2 \rho q_1$ and $q_1 - q_0 \ge q_1 - \rho q_1 \ge q_1/2$ (since $\rho \le 1/2$), and so~\eqref{eq:assumption-qrho} can be combined with~\eqref{eq:Delta-defn} to yield $\Delta \le r/6$. This completes the proof that $\Delta \le r/6$.

Define $g(Y)$ so that $f(Y) = \tau_k(g(Y))$, that is,
\[ g(Y) = \frac{1}{(q_1 - q_0) \rho}\left(\frac{1}{n-1} \sum_{i=2}^n Y_{1i} - q_0 \right). \]
First condition on $v_1 = 0$. In this case, $\sum_{i=2}^n Y_{1i} \sim \mathrm{Binomial}(n-1,\,q_0)$. By Bernstein's inequality,
\[ \Pr\left\{\left|\sum_{i=2}^n Y_{1i} - q_0(n-1)\right| \ge t\right\} \le 2\exp\left(-\frac{t^2/2}{q_0 (n-1) + t/3}\right). \]
Now condition instead on $v_1 = 1$. In this case, $\sum_{i=2}^n Y_{1i} \sim \mathrm{Binomial}(n-1,\,\rho q_1 + (1-\rho) q_0)$. By Bernstein's inequality,
\[ \Pr\left\{\left|\sum_{i=2}^n Y_{1i} - (\rho q_1 + (1-\rho) q_0)(n-1)\right| \ge t\right\} \le 2\exp\left(-\frac{t^2/2}{(\rho q_1 + q_0) (n-1) + t/3}\right). \]
Combining both cases,
\[ \Pr\left\{\left|g(Y) - v_1\right| \ge \frac{t}{(q_1 - q_0)\rho(n-1)}\right\} \le 2\exp\left(-\frac{t^2/2}{(\rho q_1 + q_0) (n-1) + t/3}\right). \]
In particular, choosing $t = \Delta (q_1 - q_0) \rho (n-1)$,
\[ \Pr\left\{\left|g(Y) - v_1\right| \ge \Delta\right\} \le 2\exp\left(-\frac{t^2/2}{(\rho q_1 + q_0) (n-1) + t/3}\right) \le 2\exp\left(-\frac{t^2}{3(\rho q_1 + q_0) (n-1)}\right), \]
where in the last step we have used $\Delta \le r/6 \le 1/6$ along with the choice of $t$ to deduce $(\rho q_1 + q_0)(n-1) \ge 6t$. Using~\eqref{eq:Delta-defn} and the choice of $t$, we now have with probability at least $1 - \frac{1}{2}(\nu/9)^{3D}$ that $|g(Y) - v_1| \le \Delta$. In this event, we have by Proposition~\ref{prop:poly-thresh},
\[ (f(Y)-x)^2 \le (k+1/2)^2(6\Delta)^{2k} =: \eps. \]
By Corollary~\ref{cor:hyp-graph},
\begin{align*}
\EE(f(Y)-x)^2 &\le 4\eps 
\,\, =\,\, 4(k+1/2)^2(6\Delta)^{2k} 
\,\, =\,\, 4(D/2)^2(6\Delta)^{D-1} 
\,\,\le\,\, D^2 r^{D-1}
\end{align*}
since $6\Delta \le r$.
\end{proof}

\subsection{Planted Clique}

The following result for planted clique is very similar to Theorem~\ref{thm:power-iter-subg} above, again using one round of power iteration followed by thresholding.

\begin{theorem}\label{thm:power-iter-clique}
Consider the planted clique problem (Definition~\ref{def:clique}) with $\rho \le 1/2$. Define $\tau_\ell$ as in Proposition~\ref{prop:poly-thresh}. Let $\ell \ge 0$ and consider the polynomial
\[f(Y) = \tau_\ell\left(\frac{2}{\rho}\left(\frac{1}{n-1} \sum_{i=2}^n Y_{1i} - \frac{1}{2} \right)\right) \]
of degree $D := 2\ell+1$. For any $0 < r < 1$, if
\begin{equation}\label{eq:clique-upper-cond}
\rho^2 \ge \frac{432}{r^2 (n-1)} \left[\log 4 + 3D \log\left(9/\rho\right)\right]
\end{equation}
then
\[ \EE(f(Y)-x)^2 \le D^2 r^{D-1}. \]
\end{theorem}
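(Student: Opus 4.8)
The plan is to follow the proof of Theorem~\ref{thm:power-iter-subg} essentially line by line, specializing to $q_0 = 1/2$ and $q_1 = 1$, with the one genuine modification that the hypercontractivity input must be re-derived rather than quoted. Write $g(Y) = \frac{2}{\rho}\left(\frac{1}{n-1}\sum_{i=2}^n Y_{1i} - \frac{1}{2}\right)$ so that $f(Y) = \tau_\ell(g(Y))$. Conditioning on $v_1 = 0$, the variables $Y_{1i}$ for $i \ge 2$ are i.i.d.\ $\mathrm{Bernoulli}(1/2)$, so $g(Y)$ concentrates around $0 = v_1$; conditioning on $v_1 = 1$ we may write $Y_{1i} = v_i + (1-v_i)A_{1i}$ with $A_{1i} \sim \mathrm{Bernoulli}(1/2)$ independent, whence $\EE[Y_{1i}\mid v_1 = 1] = 1/2 + \rho/2$ and $g(Y)$ concentrates around $1 = v_1$. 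In both cases the summands are independent and bounded, so Bernstein's inequality gives, with $\Delta := \sqrt{\frac{12}{\rho^2(n-1)}\left[\log 4 + 3D\log(9/\rho)\right]}$ (using $1/2 + \rho/2 \le 1$ and $q_1 - q_0 = 1/2$, exactly as in~\eqref{eq:Delta-defn} of the dense-subgraph proof), the tail bound $\Pr\{|g(Y) - v_1| \ge \Delta\} \le \frac{1}{2}(\rho/9)^{3D}$; and $\Delta \le r/6$ holds precisely under hypothesis~\eqref{eq:clique-upper-cond}, since $12 \cdot 36 = 432$.

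The one point requiring care is that Corollary~\ref{cor:hyp-graph} cannot be invoked directly: its bias parameter $\nu = \min\{\rho,\, q_0,\, 1 - q_1\}$ equals $1 - q_1 = 0$ in the planted clique case. I would instead reprove the analogous statement from scratch. Represent the observation as $Y_{ij} = A_{ij} + v_i v_j(1 - A_{ij})$ with $(A_{ij})_{i<j}$ i.i.d.\ $\mathrm{Bernoulli}(1/2)$ independent of $v$, so that $g(v,A) := f(Y) - v_1$ is a polynomial of degree at most $3D$ in the independent variables $v$ (distributed $\mathrm{Bernoulli}(\rho)$) and $A$ (distributed $\mathrm{Bernoulli}(1/2)$). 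Since $\rho \le 1/2$, every one of these coordinates has bias parameter at least $\rho$, so Theorem~\ref{thm:hyp} yields $\EE[g^4] \le (9/\rho)^{3D}\,\EE[g^2]^2$; combining this with the Paley--Zygmund inequality exactly as in the proof of Corollary~\ref{cor:hyp-graph} shows that if $(f(Y)-x)^2 \le \eps$ with probability at least $1 - \frac{1}{2}(\rho/9)^{3D}$, then $\EE(f(Y)-x)^2 \le 4\eps$.

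With both ingredients in hand the proof closes as in Theorem~\ref{thm:power-iter-subg}: on the event $|g(Y) - v_1| \le \Delta$, Proposition~\ref{prop:poly-thresh} gives $(f(Y)-x)^2 = (\tau_\ell(g(Y)) - v_1)^2 \le (\ell+1/2)^2(6\Delta)^{2\ell} =: \eps$, and the hypercontractivity bound of the previous paragraph upgrades this to $\EE(f(Y)-x)^2 \le 4\eps = D^2(6\Delta)^{D-1} \le D^2 r^{D-1}$, using $D = 2\ell+1$ and $6\Delta \le r$.

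I expect the main---and essentially only non-routine---obstacle to be recognizing that the naive specialization of Corollary~\ref{cor:hyp-graph} is vacuous, and identifying which randomness is actually ``spiky'': the clique edges are deterministic given $v$ and the non-clique edges are perfectly balanced, so the only coordinates with a small bias parameter are the vertex indicators $v_i$, which is exactly why the exponent $(9/\rho)^{3D}$, and hence the shape of condition~\eqref{eq:clique-upper-cond}, comes out the way it does. Everything downstream of that observation is a transcription of the planted dense subgraph argument.
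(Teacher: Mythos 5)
Your proposal is correct and follows essentially the same route as the paper: the paper likewise transcribes the proof of Theorem~\ref{thm:power-iter-subg} with $q_0 = 1/2$, $q_1 = 1$, and handles the vacuous bias parameter $\nu = 1-q_1 = 0$ by proving a separate hypercontractivity corollary (Corollary~\ref{cor:hyp-clique}) that writes $Y_{ij} = (1-v_iv_j)A_{ij} + v_iv_j$ as a degree-3 polynomial in $v,A$ and applies Theorem~\ref{thm:hyp} with bias $\rho$ — exactly your re-derivation. Your direct verification that $\Delta \le r/6$ under~\eqref{eq:clique-upper-cond} (via $\rho q_1 + q_0 \le 1$ and $12\cdot 36 = 432$) is just an inlined version of the paper's check that the specialized conditions \eqref{eq:assumption-q} and \eqref{eq:assumption-qrho} follow from \eqref{eq:clique-upper-cond} using $\rho \ge 2\rho^2$.
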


We begin by establishing a concentration inequalities for low-degree polynomials in this model (based on hypercontractivity, Theorem~\ref{thm:hyp}).

\begin{corollary}\label{cor:hyp-clique}
Consider the planted clique problem (Definition~\ref{def:clique}) with $\rho \le 1/2$. If $f(Y)$ has degree at most $D$ and satisfies $(f(Y) - x)^2 \le \eps$ with probability at least $1-\frac{1}{2}(\rho/9)^{3D}$, then
\[ \EE(f(Y)-x)^2 \le \eps (1-1/\sqrt{2})^{-1} \le 4 \eps. \]
\end{corollary}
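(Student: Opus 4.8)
The plan is to follow the template established by Corollary~\ref{cor:hyp} and Corollary~\ref{cor:hyp-graph}: realize the observation $Y$ as a low-degree polynomial in a collection of bounded, independent random variables, and then transfer the high-probability bound on $(f(Y)-x)^2$ into a bound on $\EE(f(Y)-x)^2$ via the Paley--Zygmund inequality combined with the hypercontractive fourth-moment estimate of Theorem~\ref{thm:hyp}.

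First I would exhibit the representation of $Y$. Let $v \in \{0,1\}^n$ be i.i.d.\ $\mathrm{Bernoulli}(\rho)$ and let $(C_{ij})_{i<j}$ be i.i.d.\ $\mathrm{Bernoulli}(1/2)$, independent of $v$. Then $Y$ has the same law as the graph given by $Y_{ij} = v_i v_j + (1 - v_i v_j) C_{ij}$: when $v_i v_j = 1$ the edge is forced present (consistent with $q_1 = 1$), and otherwise $Y_{ij} = C_{ij}$ is a fair coin flip (consistent with $q_0 = 1/2$). Since $Y_{ij} = v_i v_j + C_{ij} - v_i v_j C_{ij}$ is a degree-$3$ polynomial in $(v,C)$, the function $g := f(Y) - x = f(Y) - v_1$ is a polynomial of degree at most $3D$ in the variables $v, C$. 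Crucially, every coordinate of $(v,C)$ is distributed either as $\mathrm{Bernoulli}(\rho)$ or as $\mathrm{Bernoulli}(1/2)$, and since $\rho \le 1/2$ we may apply Theorem~\ref{thm:hyp} with $\nu = \rho$ and degree parameter $3D$ to obtain $\EE[g^4] \le (9/\rho)^{3D}\, \EE[g^2]^2$.

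The remaining step is the Paley--Zygmund argument, which proceeds verbatim as in Corollary~\ref{cor:hyp-graph}. If $\EE[g^2] \le \eps$ then the conclusion is immediate, so assume $\EE[g^2] > \eps$. Setting $\delta := \Pr\{g^2 > \eps\}$, Paley--Zygmund together with the fourth-moment bound yields $\delta \ge (1 - \eps/\EE[g^2])^2 (\rho/9)^{3D}$, which rearranges to $\EE[g^2] \le \eps / (1 - \sqrt{\delta (9/\rho)^{3D}})$. The hypothesis $\delta \le \frac{1}{2}(\rho/9)^{3D}$ forces $\sqrt{\delta (9/\rho)^{3D}} \le 1/\sqrt{2}$, hence $\EE[g^2] \le \eps\,(1 - 1/\sqrt{2})^{-1} \le 4\eps$, using $(1-1/\sqrt{2})^{-1} < 3.42$.

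I do not expect a genuine obstacle here; the only point requiring (minor) care is that the planted clique model is not literally the $q_0 = 1/2$, $q_1 = 1$ special case of Corollary~\ref{cor:hyp-graph}, since there the parameter $\nu = \min\{\rho, q_0, 1-q_1\}$ would vanish when $q_1 = 1$. The resolution, made explicit in the representation above, is that $q_1 = 1$ means the clique edges carry no randomness, so $Y$ is a polynomial in $v$ and the fair coins $C$ alone; consequently the relevant hypercontractivity parameter is $\nu = \min\{\rho, 1/2\} = \rho$, with the polynomial degree still bounded by $3D$.
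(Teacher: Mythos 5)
Your proposal is correct and follows essentially the same route as the paper's proof: the same representation $Y_{ij} = v_i v_j + (1-v_iv_j)C_{ij}$ as a degree-$3$ polynomial in $(v,C)$, hypercontractivity with parameter $\rho$ at degree $3D$, and the identical Paley--Zygmund rearrangement from Corollary~\ref{cor:hyp-graph}. Your closing remark about why the clique case cannot be obtained by setting $q_1=1$ in Corollary~\ref{cor:hyp-graph} (the parameter $\nu$ would vanish) is precisely the reason the paper treats it separately.
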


\begin{proof}
Let $(A_{ij})_{i<j}$ be i.i.d.\ $\mathrm{Bernoulli}(1/2)$. Note that the observation $Y$ is distributed as $Y_{ij} = (1 - v_i v_j)A_{ij} + v_i v_j$, which is a degree-3 polynomial in the variables $v,A$. Let $g(v,A) = f(Y) - x = f(Y) - v_1$ and note that $g$ is a polynomial of degree at most $3D$ in $v,A$. The rest of the proof is identical to that of Corollary~\ref{cor:hyp-graph} (with $\rho$ in place of $\nu$).
\end{proof}

\begin{proof}[Proof of Theorem~\ref{thm:power-iter-clique}]
The proof is identical to that of Theorem~\ref{thm:power-iter-subg}, setting $q_0 = 1/2$ and $q_1 = 1$, using $\rho$ in place of $\nu$, and using Corollary~\ref{cor:hyp-clique} in place of Corollary~\ref{cor:hyp-graph}. Instead of~\eqref{eq:assumption-q} and \eqref{eq:assumption-qrho}, this gives the conditions
\[ \frac{1}{2} \ge \frac{216}{r^2 \rho^2 (n-1)} \left[\log 4 + 3D \log\left(9/\rho\right)\right] \]
and
\[ \rho \ge \frac{864}{r^2 (n-1)} \left[\log 4 + 3D \log\left(9/\rho\right)\right], \]
which are implied by~\eqref{eq:clique-upper-cond} since $\rho \ge 2 \rho^2$.
\end{proof}

\appendix

\section{Basic Properties of the Low-Degree MMSE}
\label{app:basic}

In this section, we record some basic facts regarding the definition of $\MMSE_{\le D}$ and some simple equivalences between variants of the planted submatrix problem.

\begin{claim}[Random coefficients do not help]
\label{claim:rand-poly}
Our first claim is that the value $\MMSE_{\le D}$ remains unchanged if we modify the definition~\eqref{eq:mmse} to allow not just deterministic polynomials $f \in \RR[Y]_{\le D}$ but also random polynomials, i.e., polynomials with random coefficients (that are independent from $x$ and $Y$). To see this, suppose we have a random polynomial $f_\omega$ whose coefficients depend (deterministically) on some random variable $\omega \in \Omega$. (Any random polynomial can be represented in this form for some choice of $\omega$.) Then there exists a deterministic choice of $\omega^* \in \Omega$ such that
\[ \EE_{x,Y} (f_{\omega^*}(Y)-x)^2 \le \EE_\omega \EE_{x,Y} (f_\omega(Y)-x)^2. \]
In other words, $f_{\omega^*}$ is a deterministic polynomial that performs at least as well as the random one.
\end{claim}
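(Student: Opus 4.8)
The plan is a one-line averaging argument, with a small amount of care about measurability. First I would fix an arbitrary random polynomial $f_\omega$ whose coefficients depend deterministically on a random variable $\omega \in \Omega$ (distributed according to some law $\mu$ and independent of $(x,Y)$), and define the deterministic function $g(\omega) := \EE_{(x,Y)\sim\PP}(f_\omega(Y)-x)^2 \ge 0$. For each fixed $\omega$, $f_\omega$ is an honest element of $\RR[Y]_{\le D}$, so $g(\omega) \ge \MMSE_{\le D}$; in particular $g$ is nonnegative.

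Next I would use independence of $\omega$ from $(x,Y)$ together with Tonelli's theorem (the integrand is nonnegative) to write $\EE_\omega\, g(\omega) = \EE_\omega \EE_{(x,Y)} (f_\omega(Y)-x)^2$, which is exactly the expected mean squared error of the random polynomial. Since $g$ is a nonnegative measurable function on $\Omega$ whose $\mu$-average equals this quantity, there must exist $\omega^* \in \Omega$ with $g(\omega^*) \le \EE_\omega g(\omega)$: otherwise every $\omega$ would satisfy $g(\omega) > \EE_\omega g(\omega)$, forcing the average to exceed itself. (If $\EE_\omega g(\omega) = +\infty$ the inequality is trivially true for any $\omega^*$, so we may assume it is finite.) The polynomial $f_{\omega^*} \in \RR[Y]_{\le D}$ is then deterministic and satisfies $\EE_{(x,Y)}(f_{\omega^*}(Y)-x)^2 = g(\omega^*) \le \EE_\omega \EE_{(x,Y)}(f_\omega(Y)-x)^2$, which is the claimed inequality.

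Finally I would record the two easy directions behind the ``$\MMSE_{\le D}$ remains unchanged'' remark: enlarging the feasible set from deterministic to random polynomials can only decrease the infimum, while the previous paragraph shows every random polynomial is matched or beaten by a deterministic one, so the infimum over random polynomials is no smaller than $\MMSE_{\le D}$; hence the two infima coincide. I do not anticipate any real obstacle. The only point needing a modicum of attention is joint measurability of $(\omega,x,Y) \mapsto (f_\omega(Y)-x)^2$, which holds since the coefficients depend (deterministically, hence measurably) on $\omega$ and the remaining dependence is polynomial; and the observation that we need only a point below the average rather than an exact minimizer, so no attainment or compactness argument is required.
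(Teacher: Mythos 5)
Your argument is correct and is exactly the one the paper is gesturing at (the paper simply states the existence of $\omega^*$ without spelling out the averaging step, Tonelli, or measurability). Your version just fills in the routine details; nothing to flag.
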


\begin{claim}[Adding noise can only hurt]
\label{claim:add-noise}
A corollary of the first claim is the following fact (which is used at the start of Section~\ref{sec:subm-proof}). Write $\MMSE_{\le D}(x;Y) = \MMSE_{\le D}$ to make explicit the observation $Y$ and quantity $x$ to be estimated. If $Z$ is independent from $x$ and $Y$, then $\MMSE_{\le D}(x;Y+Z) \ge \MMSE_{\le D}(x;Y)$. To see this, note that any degree-$D$ polynomial for the input $Y+Z$ can be turned into a \emph{random} degree-$D$ polynomial for the input $Y$ that achieves the same mean squared error, simply by simulating the additional noise $Z$.
\end{claim}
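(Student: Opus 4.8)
\textbf{Proof proposal for Claim~\ref{claim:add-noise}.}
The plan is to reduce to Claim~\ref{claim:rand-poly} exactly as the claim statement suggests: a near-optimal degree-$D$ estimator for the noisier observation $Y+Z$ can be ``pulled back'' to a \emph{random} degree-$D$ estimator for $Y$ with the same mean squared error, after which Claim~\ref{claim:rand-poly} produces a deterministic estimator for $Y$ that is at least as good.

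Concretely, first I would fix an arbitrary $f \in \RR[Y]_{\le D}$ and let $Y' := Y + Z$, where $Z \in \RR^N$ is independent of $(x,Y)$. Next, introduce $\omega \in \RR^N$, an independent copy of $Z$, and define the random polynomial $g_\omega(Y) := f(Y + \omega)$. The key observation is that substituting $Y_i \mapsto Y_i + \omega_i$ into a polynomial of degree at most $D$ again yields a polynomial of degree at most $D$ in the coordinates of $Y$, whose coefficients are (deterministic) functions of $\omega$; thus $g_\omega$ is a legitimate random element of $\RR[Y]_{\le D}$ in the sense of Claim~\ref{claim:rand-poly}, with randomness $\omega$ independent of $(x,Y)$. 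Then I would compute
\[
\EE_\omega \EE_{x,Y}\left(g_\omega(Y) - x\right)^2
= \EE_{x,Y,\omega}\left(f(Y+\omega) - x\right)^2
= \EE_{x,Y,Z}\left(f(Y+Z) - x\right)^2
= \EE_{x,Y'}\left(f(Y') - x\right)^2,
\]
where the middle equality uses that $\omega$ and $Z$ are identically distributed and both independent of $(x,Y)$.

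Applying Claim~\ref{claim:rand-poly} to the random polynomial $g_\omega$, there is a deterministic $\omega^\ast$ with $g_{\omega^\ast} \in \RR[Y]_{\le D}$ and $\EE_{x,Y}(g_{\omega^\ast}(Y)-x)^2 \le \EE_\omega\EE_{x,Y}(g_\omega(Y)-x)^2 = \EE_{x,Y'}(f(Y')-x)^2$. Hence $\MMSE_{\le D}(x;Y) \le \EE_{x,Y'}(f(Y')-x)^2$. Taking the infimum over all $f \in \RR[Y]_{\le D}$ gives $\MMSE_{\le D}(x;Y) \le \MMSE_{\le D}(x;Y+Z)$, as desired.

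I do not anticipate a genuine obstacle here; this is a short reduction. The only points requiring a line of care are (i) confirming that the degree is preserved under the shift $Y \mapsto Y + \omega$, so that $g_\omega$ really lies in $\RR[Y]_{\le D}$, and (ii) correctly tracking the independence structure so that the simulated noise $\omega$ can stand in for $Z$ in the expectation above. Both are routine, and with them the statement follows immediately from Claim~\ref{claim:rand-poly}.
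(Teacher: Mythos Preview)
Your proposal is correct and follows exactly the approach the paper itself sketches: simulate the extra noise to turn a degree-$D$ estimator for $Y+Z$ into a random degree-$D$ estimator for $Y$ with the same mean squared error, then invoke Claim~\ref{claim:rand-poly}. The only added value in your write-up is that you spell out explicitly why the shift $Y\mapsto Y+\omega$ preserves degree and why the independence structure lets $\omega$ stand in for $Z$; both are exactly the routine checks the paper leaves implicit.
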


\begin{claim}[Equivalence of symmetric and asymmetric noise]
\label{claim:noise-symm}
In Definition~\ref{def:subm}, we define the planted submatrix model as $Y = \lambda vv^\top + W$ where $W_{ij}$ is symmetric Gaussian noise: $W_{ij} = W_{ji} \sim \mathcal{N}(0,1)$ and $W_{ii} \sim \mathcal{N}(0,2)$. We claim that this model is equivalent to the asymmetric noise model $Y = \frac{\lambda}{\sqrt 2} vv^\top + Z$ where $Z$ is (non-symmetric) i.i.d.\ $\mathcal{N}(0,1)$. To see this, first note that the second model can be transformed into the first by symmetrizing: $Y \to \frac{1}{\sqrt 2}(Y + Y^\top)$. Also, the first model can be transformed into the second via $Y \to \frac{1}{\sqrt 2}(Y + A)$ where $A$ is anti-symmetric noise $A_{ij} = -A_{ji} \sim \mathcal{N}(0,1)$ and $A_{ii} = 0$, independent of $Y$. Note that both of these transformations can be implemented by random polynomials (without increasing the degree), and so the value of $\MMSE_{\le D}$ is equal in the symmetric and asymmetric noise models (using Claim~\ref{claim:rand-poly}).
\end{claim}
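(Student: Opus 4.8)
The plan is to prove the claimed equality of $\MMSE_{\le D}(v_1;\, \lambda vv^\top + W)$ and $\MMSE_{\le D}(v_1;\, \frac{\lambda}{\sqrt2} vv^\top + Z)$ by establishing the two inequalities separately. In each direction I would take an arbitrary degree-$D$ polynomial estimator for one model, precompose it with the stated linear transformation, and argue that this yields a degree-$D$ polynomial estimator for the other model with identical mean squared error; one of the two transformations introduces fresh randomness, so there I would additionally invoke Claim~\ref{claim:rand-poly} to pass to a deterministic estimator. Taking the infimum over all degree-$D$ estimators in each argument then gives the two inequalities, hence the equality.

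For the first inequality --- that recovery in the asymmetric model is no harder than in the symmetric model --- the key step is a distributional identity relating the asymmetric input $Y' = \frac{\lambda}{\sqrt2} vv^\top + Z$ (with $Z$ having i.i.d.\ $\mathcal N(0,1)$ entries) to the symmetric input $Y = \lambda vv^\top + W$. Namely, $\frac{1}{\sqrt2}(Y' + (Y')^{\top}) = \lambda vv^\top + \frac{1}{\sqrt2}(Z + Z^\top)$, and a short covariance computation shows that $\frac{1}{\sqrt2}(Z + Z^\top)$ has exactly the law of $W$: off-diagonal entries $\mathcal N(0,1)$, diagonal entries $\mathcal N(0,2)$, and all entries on or above the diagonal mutually independent. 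Since $v$ is unaffected by this linear map, $\left(v_1,\, \frac{1}{\sqrt2}(Y'+(Y')^{\top})\right)$ has the same joint law as $(v_1, Y)$, so for any $g \in \RR[Y]_{\le D}$ the function $Y' \mapsto g\!\left(\frac{1}{\sqrt2}(Y'+(Y')^{\top})\right)$ is a deterministic degree-$D$ polynomial in $Y'$ with the same mean squared error. Taking the infimum over $g$ gives $\MMSE_{\le D}(v_1; Y') \le \MMSE_{\le D}(v_1; Y)$.

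For the reverse inequality, let $A$ be a fresh anti-symmetric Gaussian matrix --- $A_{ij} = -A_{ji} \sim \mathcal N(0,1)$ for $i < j$ and $A_{ii} = 0$ --- independent of $(v, W)$. Then $\frac{1}{\sqrt2}(Y + A) = \frac{\lambda}{\sqrt2} vv^\top + \frac{1}{\sqrt2}(W + A)$, and I would check that $\frac{1}{\sqrt2}(W + A)$ has i.i.d.\ $\mathcal N(0,1)$ entries: for $i < j$ the pair $\left(\frac{1}{\sqrt2}(W_{ij}+A_{ij}),\, \frac{1}{\sqrt2}(W_{ij}-A_{ij})\right)$ is jointly Gaussian with each marginal $\mathcal N(0,1)$ and zero covariance, hence independent, while the diagonal entry is $\frac{1}{\sqrt2} W_{ii} \sim \mathcal N(0,1)$. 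Thus $\left(v_1,\, \frac{1}{\sqrt2}(Y + A)\right)$ has the same joint law as $(v_1, Y')$ in the asymmetric model, so for any $h \in \RR[Y]_{\le D}$ the map $Y \mapsto h\!\left(\frac{1}{\sqrt2}(Y+A)\right)$ is a \emph{random} degree-$D$ polynomial in $Y$ --- its coefficients are deterministic functions of the random matrix $A$ --- whose mean squared error, averaged over $A$, equals that of $h$. By Claim~\ref{claim:rand-poly} there is then a deterministic degree-$D$ polynomial in $Y$ that does at least as well, yielding $\MMSE_{\le D}(v_1; Y) \le \MMSE_{\le D}(v_1; Y')$ and completing the argument.

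I do not anticipate a genuine obstacle: the substance is just the two elementary distributional identities above, each of which reduces to the fact that $U + V$ and $U - V$ are independent standard Gaussians when $U, V$ are, together with routine bookkeeping on the diagonal, plus the already-established Claim~\ref{claim:rand-poly}. The only point that requires care is to route the direction that adds the anti-symmetric noise through Claim~\ref{claim:rand-poly}, since there the natural estimator is random rather than deterministic.
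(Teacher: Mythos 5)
Your proposal is correct and follows exactly the route the paper sketches inside the claim itself: symmetrization $Y' \mapsto \frac{1}{\sqrt2}(Y'+(Y')^\top)$ for one direction, adding independent anti-symmetric noise $Y \mapsto \frac{1}{\sqrt2}(Y+A)$ for the other, with the latter routed through Claim~\ref{claim:rand-poly} because it produces a random polynomial. The distributional checks you supply (that $\frac{1}{\sqrt2}(Z+Z^\top)$ has the law of $W$ and that $\frac{1}{\sqrt2}(W+A)$ is i.i.d.\ $\mathcal N(0,1)$) are exactly the details the paper leaves implicit, and they are correct.
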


\section{Detection with a Modified Null Distribution}
\label{app:detection}

In the main text, we have cited the presence of a detection-recovery gap as a source of difficulty for obtaining tight computational lower bounds for recovery. One might wonder whether this can be remedied simply by choosing a better null distribution $\QQ_n$ that is harder to distinguish from the planted distribution $\PP_n$. In this section, we will argue that this approach does not work for planted submatrix.

Recall that in the planted submatrix problem (Definition~\ref{def:subm}) with $\rho \gg 1/\sqrt{n}$, detection (with the ``standard'' i.i.d.\ $\mathcal{N}(0,1)$ null distribution) is easy when $\lambda \gg (\rho \sqrt{n})^{-2}$, but recovery seems to be hard when $\lambda \ll (\rho \sqrt{n})^{-1}$. One might hope to construct a different null distribution such that detection is hard when $\lambda \ll (\rho \sqrt{n})^{-1}$, and use this to prove a tight recovery lower bound. (This would also require a reduction from detection to recovery in this model, but this can be shown so long as the null distribution does not have a submatrix whose size and average value is comparable to the planted one; see e.g.\ Section 5.1 of~\cite{MW-reduction}.)

For planted submatrix, the first natural attempt at constructing a better null distribution is to match the mean of the planted distribution so that the simple ``sum test'' (sum all entries of $Y$) no longer succeeds at detection. We will show that this new null distribution partially closes the detection-recovery gap: detection is now low-degree hard when $\lambda \ll (\rho \sqrt{n})^{-3/2}$ (Proposition~\ref{prop:mean-corr-hard}). However, there is still a degree-2 polynomial that achieves detection when $\lambda \gg (\rho \sqrt{n})^{-3/2}$ (Proposition~\ref{prop:mean-corr-easy}).

The next natural attempt to improve the null distribution further is to match both the mean and covariance of the planted distribution. We will show that here, there is still a degree-3 polynomial that achieves detection when $\lambda \gg (\rho \sqrt{n})^{-4/3}$ (Proposition~\ref{prop:cov-corr-easy}). Beyond this, it is not clear how to construct an even better null distribution: in order to match more moments, it would need to be non-Gaussian. Even if such a thing could be constructed, the low-degree analysis would likely be difficult.

\subsection{The mean-corrected null distribution}

We now begin by defining a modified null distribution that matches the mean of the planted distribution. More accurately, we take the equivalent approach of subtracting a constant from the planted distribution so that its mean is zero. We use i.i.d.\ noise here, which is equivalent to the symmetric noise used in the main text, up to a factor of $\sqrt{2}$ in $\lambda$; see Claim~\ref{claim:noise-symm} in Appendix~\ref{app:basic}.

\begin{definition}\label{def:mean-corr-subm}
The \emph{mean-corrected submatrix detection problem} is the hypothesis testing problem between the following two distributions over $n \times n$ matrices.
\begin{itemize}
    \item Under $\PP_n$, observe $Y = \lambda (vv^\top - \EE[vv^\top]) + Z$ where $v$ is i.i.d.\ $\mathrm{Bernoulli}(\rho)$ and $Z$ is i.i.d.\ $\mathcal{N}(0,1)$.
    \item Under $\QQ_n$, observe $Y = Z$ where $Z$ is i.i.d.\ $\mathcal{N}(0,1)$.
\end{itemize}
\end{definition}

\noindent We first show that in the regime $\rho \gg 1/\sqrt{n}$, detection is low-degree hard when $\lambda \ll (\rho \sqrt{n})^{-3/2}$. Specifically, we bound the low-degree likelihood ratio as defined in~\eqref{eq:ldlr}.

\begin{proposition}\label{prop:mean-corr-hard}
Consider the mean-corrected submatrix detection problem (Definition~\ref{def:mean-corr-subm}). Let
\[ r = C D^2 \lambda^2 \max\left\{1,\, \rho^3 n^{3/2}\right\} \]
where $C > 0$ is a universal constant. If $r < 1$ then
\[ \|L^{\le D}\|^2 - 1 \le \frac{r}{1-r}. \]
\end{proposition}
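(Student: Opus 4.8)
The plan is to bound $\|L^{\le D}\|^2$ directly through its Hermite expansion and then to organize the resulting sum over multi-indices according to the connected components of the associated multigraph, exploiting the fact that the mean-correction annihilates every term whose multigraph has a component consisting of a single edge.

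Since $\QQ_n = \mathcal{N}(0,1)^{\otimes N}$ (with $N=n^2$) and $\PP_n$ is $\QQ_n$ shifted by an independent signal $X := \lambda(vv^\top - \EE[vv^\top])$, Parseval together with~\eqref{eq:h-mean} (exactly as in the proof of Theorem~\ref{thm:corr-gauss}) yields $\|L^{\le D}\|^2 = \sum_{|\alpha|\le D}\EE_{\PP_n}[h_\alpha(Y)]^2 = \sum_{|\alpha|\le D}\EE[X^\alpha]^2/\alpha!$, hence $\|L^{\le D}\|^2 - 1 = \sum_{1\le|\alpha|\le D}\EE[X^\alpha]^2/\alpha!$. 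Identify $\alpha$ with a multigraph on $[n]$ (self-loops allowed; the off-diagonal/self-loop bookkeeping and the symmetry $X_{ij}=X_{ji}$ change only constants). Because the coordinates of $X$ are independent across the components of $\alpha$, we have $\EE[X^\alpha] = \prod_C \EE[X^{\alpha|_C}]$ over the connected components $C$, and since also $\alpha! = \prod_C \alpha|_C!$, a standard set-partition bound gives
\[ \|L^{\le D}\|^2 \;\le\; \prod_{\substack{C\text{ connected on }[n]\\ |C|\le D}}\Bigl(1 + \frac{\EE[X^C]^2}{C!}\Bigr) \;\le\; \exp\Bigl(\sum_{\substack{C\text{ connected}\\ 1\le|C|\le D}}\frac{\EE[X^C]^2}{C!}\Bigr). \]
Thus it suffices to bound the exponent by $r$; then $\exp(r)-1\le r/(1-r)$ for $r<1$ completes the proof.

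The crux is a per-component estimate. Expanding the product of mean-zero factors gives $\EE[X^C] = \lambda^{|C|}\sum_{F\subseteq E(C)}(-\rho^2)^{|E(C)\setminus F|}\rho^{|V(F)|}$ (with the obvious modification for self-loops); since every vertex of $C$ is incident to an edge, the number of vertices missed by $F$ is at most twice the number of edges missed by $F$, so each summand carries a factor $\rho^{|V(C)|}$, whence $|\EE[X^C]| \le 2^{|C|}\lambda^{|C|}\rho^{|V(C)|}$. Crucially, this expression vanishes whenever $C$ has exactly one edge, so only $C$ with $|C|\ge 2$ contribute, and a connected multigraph with $\ell\ge 2$ edges has $s \le \ell+1$ vertices. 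Inserting this bound and a count of connected multigraphs with $\ell$ edges and $s$ vertices (as in Lemma~\ref{lem:count-graphs}, dividing by $C!$ to supply an extra $1/\ell!$), the exponent is at most a sum over $2\le\ell\le D$ and $s\le\ell+1$ of $(\text{number of such }C)\cdot 8^{\ell}\lambda^{2\ell}\rho^{2s}/(s!\,\ell!)$, in which $(n\rho^2)^s$ is monotone in $s$: when $\rho\gg 1/\sqrt n$ the sum over $s$ is dominated by the tree case $s=\ell+1$, and when $\rho \ll 1/\sqrt n$ by the minimal $s$ — these two regimes produce, respectively, the term $\rho^3 n^{3/2}$ and the term $1$ in $\max\{1,\rho^3 n^{3/2}\}$. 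In both cases the $\ell=2$ term dominates and equals $\Theta\bigl(\lambda^4\,(\max\{1,\rho^3 n^{3/2}\})^2\bigr)$, while the ratio of consecutive $\ell$-terms is $O(\ell\,\lambda^2 n\rho^2)$, which the hypothesis $r<1$ forces below $1/2$ for every $\ell\le D$ once the universal constant $C$ in the definition of $r$ is taken large enough; summing the geometric tail, and using $r<1$ once more to trade the square in the $\ell=2$ term for a single power, bounds the exponent by $r$.

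The main obstacle is this per-component analysis and its interaction with the edge-count summation: one must pin the exponent of $\rho$ to be exactly $|V(C)|$ (the savings are purely structural, from connectedness and $|C|\ge2$, not from extra powers of $\rho$), the count of connected multigraphs must be sharp enough that summing over edge-counts $\ell\le D$ costs only the advertised $D^2$ factor, and the two regimes $\rho\gg 1/\sqrt n$ and $\rho\ll 1/\sqrt n$ must be reconciled — the quantity $\max\{1,\rho^3 n^{3/2}\}$ in the definition of $r$ is precisely what makes a single uniform statement possible.
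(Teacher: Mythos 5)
Your proposal is correct in spirit and takes a genuinely different route from the paper. The paper's proof reduces $\|L^{\le D}\|^2 - 1$ to the replica-overlap form $\sum_{d=1}^D \frac{\lambda^{2d}}{d!}\EE\langle uu^\top - \EE[uu^\top],\, vv^\top - \EE[vv^\top]\rangle^d$, expands the overlap as a polynomial in the scalar quantities $\langle u,v\rangle$, $\langle u,\One\rangle$, $\langle v,\One\rangle$, applies $(\sum_i a_i)^d\le k^d\sum_i|a_i|^d$, and finishes with a binomial-moment estimate (Lemma~\ref{lem:binom-moment}). Your proof instead stays with the Hermite expansion $\|L^{\le D}\|^2 = \sum_{|\alpha|\le D}\EE[X^\alpha]^2/\alpha!$, factors $\EE[X^\alpha]^2/\alpha!$ over the connected components of the multigraph $\alpha$ (independence of $v$-entries across components gives $\EE[X^\alpha]=\prod_C\EE[X^{\alpha|_C}]$, and $\alpha!=\prod_C\alpha|_C!$), dominates the sum over vertex-disjoint component collections by $\prod_C\bigl(1+\EE[X^C]^2/C!\bigr)\le\exp\bigl(\sum_C\EE[X^C]^2/C!\bigr)$, and then bounds the exponent by a multigraph count. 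This mirrors the cumulant/connected-component machinery used for the recovery lower bounds (Lemmas~\ref{lem:conn}--\ref{lem:count-graphs}), and it isolates exactly where the mean-correction helps: single-edge components vanish, so the leading contribution comes from $\ell=2$ (paths and double edges), which is precisely where the two branches of $\max\{1,\rho^3 n^{3/2}\}$ arise.

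Two details in your sketch are not quite right and would need to be fixed in a full write-up, though neither undermines the approach. First, your parenthetical ``dividing by $C!$ to supply an extra $1/\ell!$'' is false for simple graphs, where $C!=1$; the factorial savings you want actually comes from $\binom{n}{s}\approx n^s/s!$ in the vertex-choice count, not from $C!$. Second, the assertion that the $\ell=2$ term ``equals $\Theta(\lambda^4(\max\{1,\rho^3 n^{3/2}\})^2)$'' should be $O(\cdot)$: when $\rho\ll n^{-1/2}$ the true $\ell=2$ contribution is $\Theta(n^2\lambda^4\rho^4)=\Theta(\lambda^4(n\rho^2)^2)$, which is smaller than $\lambda^4$; the $\Theta$ is only correct for $\rho\gg n^{-1/2}$. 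Neither issue matters for an upper bound, and after correcting them your argument does close. It even suggests that the $D^2$ in the definition of $r$ is slack on the paper's part: your per-$\ell$ ratio $\lambda^2 n\rho^2\le 1/(CD^2)$ controls the geometric sum without needing the factor $D^2$ in the final bound, whereas the paper's $D^2$ comes from bounding the $d^{2d}$ that arises in its moment-of-a-sum step.
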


\begin{proof}
Let $u,v \in \{0,1\}^n$ be independent vectors with i.i.d.\ $\mathrm{Bernoulli}(\rho)$ entries. Note that $M := \EE[uu^\top] = \EE[vv^\top] = \rho^2 J + \rho(1-\rho)I$ where $J$ is the all-ones matrix and $I$ is the identity matrix. Let $\One$ denote the all-ones vector. Using the formula in Theorem~2.6 of \cite{lowdeg-survey} for $\|L^{\le D}\|$ in the additive Gaussian noise model, we have
\begin{align*}
&\|L^{\le D}\|^2 - 1 = \sum_{d=1}^D \frac{\lambda^{2d}}{d!} \EE \left\langle uu^\top - \EE[uu^\top], vv^\top - \EE[vv^\top] \right\rangle^d \\
&= \sum_{d=1}^D \frac{\lambda^{2d}}{d!} \EE \left(\langle u,v \rangle^2 - u^\top M u - v^\top M v + \|M\|_F^2 \right)^d \\
&= \sum_{d=1}^D \frac{\lambda^{2d}}{d!} \EE \left(\langle u,v \rangle^2 - \rho^2 \langle u,\One \rangle^2 - \rho(1-\rho)\|u\|^2 - \rho^2 \langle v,\One \rangle^2 - \rho(1-\rho)\|v\|^2 + \|M\|_F^2 \right)^d \\
&= \sum_{d=1}^D \frac{\lambda^{2d}}{d!} \EE \left(\langle u,v \rangle^2 - \rho^2 \langle u,\One \rangle^2 - \rho(1-\rho)\langle u,\One \rangle - \rho^2 \langle v,\One \rangle^2 - \rho(1-\rho)\langle v,\One \rangle + \|M\|_F^2 \right)^d.
\intertext{We have $\|M\|_F^2 = n(n-1) \rho^4 + n\rho^2$. Introduce the mean-zero random variables $X_{uv} = \langle u,v \rangle - \rho^2 n$, $X_u = \langle u,\One \rangle - \rho n$, and $X_v = \langle v,\One \rangle - \rho n$. The above becomes}
&= \sum_{d=1}^D \frac{\lambda^{2d}}{d!} \EE \left(X_{uv}^2 + 2\rho^2 n X_{uv} - \rho^2 X_u^2 - 2\rho^3 n X_u - \rho^2 X_v^2 - 2 \rho^3 n X_v - \rho^2(1-2\rho+\rho^2)n\right)^d \\
&\le \sum_{d=1}^D \frac{7^d \lambda^{2d}}{d!} \left(\EE[X_{uv}^{2d}] + (2\rho^2 n)^d \EE|X_{uv}|^d + 2\rho^{2d} \EE[X_u^{2d}] + 2(2\rho^3 n)^d \EE|X_u|^d + (\rho^2 n)^d\right)
\intertext{where we have used the fact $\left(\sum_{i=1}^k a_i\right)^d \le \left(k \max_i |a_i|\right)^d = k^d \max_i |a_i|^d \le k^d \sum_i |a_i|^d$. Using Lemma~\ref{lem:binom-moment} (below), the above becomes}
&\le \sqrt{2\pi} \sum_{d=1}^D \frac{7^d \lambda^{2d}}{d!} \Big[(4d \rho^2 n)^d + (8d/3)^{2d} + (2\rho^2 n)^d (2d \rho^2 n)^{d/2} + (2 \rho^2 n)^d (4d/3)^d \\
& \qquad + 2\rho^{2d} (4d \rho n)^d + 2\rho^{2d}(8d/3)^{2d} + 2(2\rho^3 n)^d (2d\rho n)^{d/2} + 2(2\rho^3 n)^d (4d/3)^d + (\rho^2 n)^d \Big] \\
&\le \sum_{d=1}^D C^d d^{2d} \lambda^{2d} \max\left\{1, \rho^2 n, \rho^3 n^{3/2}\right\}^d \\
&= \sum_{d=1}^D C^d d^{2d} \lambda^{2d} \max\left\{1, \rho^3 n^{3/2}\right\}^d \\
&\le \sum_{d=1}^D r^d \le \frac{r}{1-r},
\end{align*}
completing the proof.
\end{proof}

\noindent Above, we have used the following lemma, which uses an argument similar to~\cite{sparse-clustering}.
\begin{lemma}\label{lem:binom-moment}
If $X \sim \mathrm{Binomial}(n,p) - pn$ and $d \in [1,\infty)$, then
\[ \EE|X|^d \le \sqrt{2\pi} \left[(2d p n)^{d/2} + (4d/3)^d\right]. \]
\end{lemma}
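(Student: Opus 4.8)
The plan is to integrate a Bernstein-type tail bound for $X$, splitting the integration range into a sub-Gaussian regime and a sub-exponential regime and matching each regime against one of the two terms on the right-hand side. Write $X = \sum_{i=1}^n (Y_i - p)$ with $Y_1,\ldots,Y_n$ i.i.d.\ $\mathrm{Bernoulli}(p)$. Since $|Y_i - p| \le 1$ and $\sum_i \Var(Y_i) = pn(1-p) \le pn$, Bernstein's inequality gives $\Pr\{|X| \ge t\} \le 2\exp\!\left(-\frac{t^2/2}{pn + t/3}\right)$ for all $t \ge 0$. When $t \le 3pn$ the denominator is at most $2pn$, so this is at most $2\exp(-t^2/(4pn))$; when $t \ge 3pn$ the denominator is at most $2t/3$, so this is at most $2\exp(-3t/4)$.

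Next I would apply the layer-cake formula $\EE|X|^d = \int_0^\infty d\,t^{d-1}\Pr\{|X| \ge t\}\,dt$, split the integration range at $t = 3pn$, and then enlarge each piece back to $[0,\infty)$ (the integrands are nonnegative):
\[ \EE|X|^d \le \int_0^\infty 2d\,t^{d-1}e^{-t^2/(4pn)}\,dt \;+\; \int_0^\infty 2d\,t^{d-1}e^{-3t/4}\,dt. \]
The substitution $u = t^2/(4pn)$ evaluates the first integral as $d(4pn)^{d/2}\Gamma(d/2) = 2(4pn)^{d/2}\Gamma(d/2+1)$, and the substitution $s = 3t/4$ evaluates the second as $2d(4/3)^d\Gamma(d) = 2(4/3)^d\Gamma(d+1)$; both are valid Gamma integrals for every real $d \ge 1$.

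It then remains to clean up the Gamma factors using the elementary inequality $2\Gamma(z+1) \le \sqrt{2\pi}\,z^z$, valid for all $z \ge 1/2$. (This is an equality at $z = 1/2$, where $2\Gamma(3/2) = \sqrt{\pi} = \sqrt{2\pi}\cdot(1/2)^{1/2}$; for $z > 1/2$ it follows, for instance, by checking that $z\log z + \frac{1}{2}\log(2\pi) - \log(2\Gamma(z+1))$ vanishes at $z = 1/2$ and has derivative $\log z + 1 - \psi(z+1) > 0$, using the standard digamma bound $\psi(z+1) < \log z + \frac{1}{2z} \le \log z + 1$.) Taking $z = d/2 \in [1/2,\infty)$ bounds the first integral by $\sqrt{2\pi}(4pn)^{d/2}(d/2)^{d/2} = \sqrt{2\pi}(2dpn)^{d/2}$, and taking $z = d \in [1,\infty)$ bounds the second by $\sqrt{2\pi}(4/3)^d d^d = \sqrt{2\pi}(4d/3)^d$. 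Adding these gives exactly $\EE|X|^d \le \sqrt{2\pi}\big[(2dpn)^{d/2} + (4d/3)^d\big]$.

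The probabilistic input (Bernstein plus the layer-cake identity) is routine; the only point that takes a little care is the constant bookkeeping, namely arranging both Gamma integrals to collapse into the two clean terms sharing a single $\sqrt{2\pi}$ prefactor. The sharpest place is the inequality $2\Gamma(z+1) \le \sqrt{2\pi}\,z^z$ near $z = 1/2$, where it is tight, so the crudest Stirling bounds are not quite enough and one needs the slightly refined digamma estimate above; everything must also be phrased via Gamma functions rather than factorials, since $d$ ranges over $[1,\infty)$ and not just the integers.
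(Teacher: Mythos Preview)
Your proof is correct and follows essentially the same approach as the paper: Bernstein's inequality, the layer-cake formula, reduction to Gamma integrals, and the bound $\Gamma(z+1) \le \sqrt{\pi/2}\,z^z$ for $z \ge 1/2$. The only cosmetic differences are that the paper bounds the Bernstein tail uniformly by the sum $2e^{-t^2/(4pn)} + 2e^{-3t/4}$ (rather than splitting at $t = 3pn$ and extending back) and parameterizes the layer-cake as $\int_0^\infty \Pr\{|X| \ge x^{1/d}\}\,dx$ instead of $\int_0^\infty d\,t^{d-1}\Pr\{|X| \ge t\}\,dt$; both routes yield the identical pair of Gamma integrals.
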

\begin{proof}
Bernstein's inequality gives
\[ \Pr\{|X| \ge t\} \le 2 \exp\left(-\frac{t^2/2}{pn + t/3}\right) \le 2 \exp\left(-\frac{t^2}{4pn}\right) + 2 \exp\left(-\frac{3t}{4}\right) \]
and so
\begin{align*}
\EE|X|^d &= \int_0^\infty \Pr\{|X|^d \ge x\} \,dx \\
&= \int_0^\infty \Pr\{|X| \ge x^{1/d}\} \,dx \\
&\le 2 \int_0^\infty \exp\left(-\frac{x^{2/d}}{4pn}\right) dx + 2 \int_0^\infty \exp\left(-\frac{3x^{1/d}}{4}\right) dx.
\end{align*}
Using substitution and the definition of the Gamma function,
\[ \int_0^\infty \exp(-ax^{1/b}) dx = \frac{b \Gamma(b)}{a^b} = \frac{\Gamma(b + 1)}{a^b} \le \sqrt{\frac{\pi}{2}}\left(\frac{b}{a}\right)^b \]
for all $a > 0$ and $b \ge 1/2$. In the last step, we used $\Gamma(b+1) \le \sqrt{\pi/2} \,b^b$ for all $b \ge 1/2$. This yields the result.
\end{proof}

\noindent Next we show the converse result: there is a degree-2 polynomial $f(Y)$ such that thresholding $f$ succeeds at detection (with both type I and type II errors tending to zero) when $\lambda \gg (\rho \sqrt{n})^{-3/2}$.

\begin{proposition}\label{prop:mean-corr-easy}
Consider the mean-corrected submatrix detection problem (Definition~\ref{def:mean-corr-subm}). Suppose $1/n \le \rho < 1/8$. Let
\[ f(Y) = \sum_{i=1}^n \left(\sum_{j=1}^n Y_{ij}\right)^2. \]
For any
\begin{equation}\label{eq:t-range}
0 < t \le \min\left\{\frac{1}{2}\sqrt{\rho n},\, \frac{\sqrt{2}}{18}\left(\frac{1}{8}-\rho\right) \sqrt{n}\right\},
\end{equation}
define the threshold
\[ \tau = n^2 + t \sqrt{2} n^{3/2}. \]
If
\begin{equation}\label{eq:lambda-det}
\lambda \ge \sqrt{\frac{4\sqrt{2}\,t}{1/8-\rho}}\, (\rho \sqrt{n})^{-3/2}
\end{equation}
then
\begin{equation}\label{eq:pr-Q}
\Pr_{Y \sim \QQ_n}\{f(Y) < \tau \} \ge 1 - \frac{1}{t^2}
\end{equation}
and
\begin{equation}\label{eq:pr-P}
\Pr_{Y \sim \PP_n}\{f(Y) > \tau \} \ge 1 - \frac{2}{t^2}.
\end{equation}
\end{proposition}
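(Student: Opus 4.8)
The plan is to analyze the test statistic $f(Y)=\|Y\One\|^2$ (so that $(Y\One)_i=\sum_j Y_{ij}$ is the $i$-th row sum) through its first two moments under each distribution, and then apply Chebyshev's inequality. In both models the vector $W:=Z\One$ is distributed as $\mathcal N(0,nI_n)$: each coordinate $W_i=\sum_j Z_{ij}$ is a sum of $n$ independent standard Gaussians, and distinct coordinates depend on disjoint entries of $Z$, so the coordinates are independent. Under $\QQ_n$ we simply have $f(Y)=\|W\|^2$, which has the law of $n\cdot\chi^2_n$; hence $\EE_{\QQ_n}[f(Y)]=n^2$ and $\operatorname{Var}_{\QQ_n}(f(Y))=2n^3$. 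Since the threshold $\tau=n^2+t\sqrt2\,n^{3/2}$ sits exactly $t$ standard deviations above the null mean, Chebyshev's inequality immediately gives $\Pr_{\QQ_n}\{f(Y)\ge\tau\}\le 1/t^2$, which is~\eqref{eq:pr-Q}.

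For the planted side, write $k:=\sum_i v_i$ for the size of the planted set and note that $Y\One=\lambda(kv-c\One)+W$, where $c:=\EE[k\,v_1]=\rho^2(n-1)+\rho$ and $W$ is independent of $v$. Fixing $v$, the vector $\mu:=\lambda(kv-c\One)$ is deterministic, $f(Y)=\|\mu+W\|^2$, and a coordinatewise computation using $\EE W_i^2=n$, $\EE W_i^3=0$, $\EE W_i^4=3n^2$ gives $\EE[f(Y)\mid v]=\|\mu\|^2+n^2$ and $\operatorname{Var}(f(Y)\mid v)=4n\|\mu\|^2+2n^3$. The heart of the matter is to lower-bound the ``signal'' $A:=\|\mu\|^2=\lambda^2\big(k(k-c)^2+(n-k)c^2\big)$. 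First I would quarantine the bad event $\{k<\rho n/2\}$: since $k\sim\mathrm{Binomial}(n,\rho)$ has variance at most $n\rho$, Chebyshev gives $\Pr\{k<\rho n/2\}\le 4/(\rho n)$, which is $\le 1/t^2$ precisely when $t\le\tfrac12\sqrt{\rho n}$ — the first constraint in~\eqref{eq:t-range}. On the complementary event, the hypotheses $\rho<1/8$ and $\rho\ge 1/n$ force $k-c\ge(\tfrac12-2\rho)\rho n>0$ and also make $A$ increasing in $k$ on this range, so
\[ A\ \ge\ \lambda^2\cdot\frac{\rho n}{2}\Big(\big(\tfrac12-2\rho\big)\rho n\Big)^2\ \ge\ \frac{2\sqrt2\,(1/2-2\rho)^2}{1/8-\rho}\,t\,n^{3/2}\ \ge\ 4\sqrt2\,t\,n^{3/2}, \]
where the middle step inserts the lower bound~\eqref{eq:lambda-det} on $\lambda$ and the last step uses the identity $(1/2-2\rho)^2=2(1/8-\rho)+4\rho^2\ge 2(1/8-\rho)$ — this is exactly why the curious factor $(1/8-\rho)$ sits in the denominator of~\eqref{eq:lambda-det}.

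To finish, condition on any $v$ with $k\ge\rho n/2$, so that $A\ge 4\sqrt2\,t\,n^{3/2}$. Then $\tau$ lies below the conditional mean $\EE[f(Y)\mid v]=n^2+A$ by the positive margin $A-t\sqrt2\,n^{3/2}$, and the one-sided (Cantelli) Chebyshev inequality bounds
\[ \Pr\{f(Y)\le\tau\mid v\}\ \le\ \frac{\operatorname{Var}(f(Y)\mid v)}{\big(A-t\sqrt2\,n^{3/2}\big)^2}\ =\ \frac{4nA+2n^3}{\big(A-t\sqrt2\,n^{3/2}\big)^2}. \]
Writing $A=\beta t n^{3/2}$ with $\beta\ge 4\sqrt2$, the right-hand side equals $t^{-2}\big(4\beta t/\sqrt n+2\big)/(\beta-\sqrt2)^2$; the second constraint $t\le\tfrac{\sqrt2}{18}(1/8-\rho)\sqrt n$ keeps $t/\sqrt n$ below a small absolute constant, and a short computation then shows this quantity is $\le 1/t^2$ for every $\beta\ge 4\sqrt2$ (the numerator is linear in $A$ while the denominator is quadratic, so once the bound holds it persists for all larger $\beta$, i.e.\ all larger $\lambda$). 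Averaging over $v$ and adding back the $\le 1/t^2$ probability of the bad event $\{k<\rho n/2\}$ gives $\Pr_{\PP_n}\{f(Y)\le\tau\}\le 2/t^2$, which is~\eqref{eq:pr-P}.

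The routine parts are the moment computations and the Chebyshev applications; the one genuinely delicate point — and the reason the hypotheses take the exact shapes~\eqref{eq:lambda-det} and~\eqref{eq:t-range} — is making all the constants line up simultaneously. Concretely, the SNR bound~\eqref{eq:lambda-det} must be strong enough that the signal $A$ clears the threshold by several conditional standard deviations, uniformly over $v$ in the good event \emph{and} uniformly over all $\lambda$ at or above the stated value, while the two separate constraints on $t$ serve distinct purposes (one controls the fluctuation of $k=\sum_i v_i$, the other the $4nA$ term in the conditional variance), and one must verify these are jointly feasible. I expect that bookkeeping, rather than any conceptual difficulty, to be the main obstacle.
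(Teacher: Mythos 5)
Your proposal follows the same structure as the paper's proof: same chi-squared analysis under $\QQ_n$, same decomposition $Y\One=\lambda(sv-\mu\One)+W$ with $s=\sum_i v_i$ (the paper's $\mu$ equals your $c$), the same conditioning on the good event for $s$ with the same $t\le\frac12\sqrt{\rho n}$ budget, the same first two conditional moments, and a Chebyshev-type bound on the conditional law. The only difference is algebraic packaging at the very end --- the paper uses two-sided Chebyshev plus $\sqrt{a+b}\le\sqrt a+\sqrt b$ and splits the required margin into two half-conditions (which is where the second constraint in~\eqref{eq:t-range} gets used), whereas you parameterize $A=\beta t n^{3/2}$ and verify a single inequality that is monotone in $\beta$ --- but these are cosmetic and both are correct.
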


\noindent Here we should imagine choosing $t = t_n$ to be a slowly-growing function of $n$, e.g., $t = \log n$.

\begin{proof}
Under $\QQ_n$, $f(Y) \sim n \chi_n^2$. We have $\EE[f(Y)] = n^2$ and $\mathrm{Var}[f(Y)] = 2n^3$. By Chebyshev's inequality, $\Pr\{|f(Y) - n^2| \ge t \sqrt{2n^3}\} \le 1/t^2$, which proves~\eqref{eq:pr-Q}.

Under $\PP_n$, let $s = \sum_i v_i$. We have $\EE[s] = \rho n$ and $\mathrm{Var}[s] = \rho(1-\rho)n \le \rho n$. Chebyshev's inequality gives $\Pr\{|s - \rho n| \ge t\sqrt{\rho n}\} \le 1/t^2$. Since $t \le \sqrt{\rho n}/2$, we have with probability at least $1-1/t^2$ that $\rho n/2 \le s \le 3\rho n/2$. In the following, fix $v$ satisfying these bounds on $s$ and consider only the randomness of $Z$. Write
\[ f(Y) = \sum_i \left[\lambda (s v_i - \mu) + \sqrt{n} z_i \right]^2 \]
where $\{z_i\}$ are i.i.d.\ $\mathcal{N}(0,1)$ and $\mu = \rho + (n-1)\rho^2 = \rho^2 n + \rho(1-\rho) \le \rho^2 n + \rho$. Since $\rho \ge 1/n$, we have $\mu \le 2\rho^2 n$. Compute
\begin{align*}
\Ex_Z[f(Y)] &= s [\lambda^2 (s-\mu)^2 + n] + (n-s) (\lambda^2 \mu^2 + n) \\
&= s \lambda^2 (s-\mu)^2 + (n-s) \lambda^2 \mu^2 + n^2 \\
&= \lambda^2 s^2 (s-2\mu) + \lambda^2 \mu^2 n + n^2 \\
&\ge \lambda^2 s^2 (s-2\mu) + n^2 \\
&\ge \frac{1}{4} \lambda^2 \rho^2 n^2 (\rho n/2 - 4 \rho^2 n) + n^2 \\
&= \lambda^2 \rho^3 n^3 (1/8 - \rho) + n^2.
\end{align*}
If $z \sim \mathcal{N}(0,1)$, one can compute $\Var[(a+bz)^2] = 4a^2 b^2 + 2b^4$. This gives
\begin{align*}
\Var_Z[f(Y)] &= \sum_i \Var_Z\left[\left(\lambda(sv_i-\mu) + \sqrt{n}z_i\right)^2\right] \\
&= s [4\lambda^2(s-\mu)^2 n + 2n^2] + (n-s)[4\lambda^2 \mu^2 n + 2n^2] \\
&= 4s \lambda^2 (s-\mu)^2 n + (n-s) 4\lambda^2 \mu^2 n + 2n^3 \\
&= 4\lambda^2 s^2 (s-2\mu) n + 4\lambda^2 \mu^2 n^2 + 2n^3 \\
&\le 4\lambda^2 s^3 n + 4\lambda^2 \mu^2 n^2 + 2n^3 \\
&\le \frac{27}{2} \lambda^2 \rho^3 n^4 + 16 \lambda^2 \rho^4 n^4 + 2n^3 \\
&\le 16 \lambda^2 \rho^3 (1+\rho) n^4 + 2n^3 \\
&\le 18 \lambda^2 \rho^3 n^4 + 2n^3.
\end{align*}
By Chebyshev's inequality, with probability (over $Z$) at least $1-1/t^2$,
\begin{align*}
f(Y) &> n^2 + (1/8-\rho) \lambda^2 \rho^3 n^3 - t\sqrt{18\lambda^2 \rho^3 n^4 + 2n^3} \\
&\ge n^2 + (1/8-\rho) \lambda^2 \rho^3 n^3 - t\sqrt{18}\lambda \rho^{3/2} n^2 - t\sqrt{2}n^{3/2}
\end{align*}
since $\sqrt{a+b} \le \sqrt{a} + \sqrt{b}$. To compute the proof, we need this to exceed the threshold $\tau = n^2 + t\sqrt{2}n^{3/2}$, i.e., we need
\[ (1/8-\rho) \lambda^2 \rho^3 n^3 \ge t\sqrt{18}\lambda \rho^{3/2} n^2 + t\cdot 2\sqrt{2}n^{3/2}. \]
It is sufficient for each term on the right-hand side to be at most half as large as the left-hand side, i.e.,
\begin{equation}\label{eq:half-1}
t \sqrt{18} \lambda \rho^{3/2} n^2 \le \frac{1}{2} (1/8 - \rho) \lambda^2 \rho^3 n^3
\end{equation}
and
\begin{equation}\label{eq:half-2}
t \cdot 2\sqrt{2} n^{3/2} \le \frac{1}{2} (1/8 - \rho) \lambda^2 \rho^3 n^3.
\end{equation}
Now~\eqref{eq:half-2} is equivalent to the assumption~\eqref{eq:lambda-det} on $\lambda$. Also,~\eqref{eq:half-2} is equivalent to
\[ \lambda \ge \frac{2 \sqrt{18}\, t}{1/8-\rho}\, \rho^{-3/2} n^{-1}, \]
which is subsumed by~\eqref{eq:lambda-det} given the second upper bound on $t$ in~\eqref{eq:t-range}.
\end{proof}

\subsection{The covariance-corrected null distribution}

We now consider a more refined null distribution that matches the first two moments of the planted distribution. Here we take the liberty of switching to an asymmetric variant of the problem; this variant has essentially the same statistical and (conjectured) computational thresholds as the original problem, but is more convenient here.

\begin{definition}\label{def:cov-corr-subm}
The \emph{covariance-corrected submatrix detection problem} is the hypothesis testing problem between the following two distributions over $n \times n$ matrices.
\begin{itemize}
    \item Under $\PP_n$, observe $Y = \lambda (uv^\top - \rho^2 J) + Z$ where $u,v$ are i.i.d.\ $\mathrm{Bernoulli}(\rho)$ and $Z$ is i.i.d.\ $\mathcal{N}(0,1)$. (Here $J$ is the all-ones matrix.)
    \item Under $\QQ_n$, observe $Y = (Y_{ij})$ defined as $Y_{ij} = \alpha(r_i + c_j) + \beta Z_{ij}$ where $\{r_i\}, \{c_j\}, \{Z_{ij}\}$ are i.i.d.\ $\mathcal{N}(0,1)$ and where $\alpha, \beta \ge 0$ are defined by
    \[ \alpha^2 = \lambda^2 \rho^3 (1-\rho), \]
    \[ \beta^2 = 1 + \lambda^2 \rho^2 (1-\rho)^2. \]
\end{itemize}
\end{definition}

\noindent One can check that $\PP_n$ and $\QQ_n$ have exactly the same mean and covariance: for $i \ne k$ and $j \ne \ell$, both have $\EE[Y_{ij}] = \EE[Y_{ij}Y_{k\ell}] = 0$, $\EE[Y_{ij}^2] = 1 + \lambda \rho^2 (1-\rho^2)$, and $\EE[Y_{ij}Y_{i\ell}] = \EE[Y_{ij}Y_{kj}] = \lambda \rho^3 (1-\rho)$. This precludes detection via any degree-2 polynomial. However, we will show that a degree-3 polynomial succeeds at detection when $(\rho \sqrt{n})^{-4/3} \ll \lambda \le (\rho \sqrt{n})^{-1}$. Here we measure success in terms of the low-degree likelihood ratio~\eqref{eq:ldlr}, i.e., we show $\|L^{\le 3}\| \to \infty$.

\begin{proposition}\label{prop:cov-corr-easy}
Consider the covariance-corrected submatrix detection problem (Definition~\ref{def:cov-corr-subm}). Suppose $1/n \le \rho \le 1/8$ and $\lambda \le (\rho \sqrt{n})^{-1}$. Let
\[ f(Y) = \sum_{i=1}^n \left(\sum_{j=1}^n Y_{ij}\right)^3. \]
Then
\[ \frac{\EE_{Y \sim \PP_n}[f(Y)]}{\sqrt{\EE_{Y \sim \QQ_n}[f(Y)^2]}} \ge c \lambda^3 \rho^4 n^2 \]
for a universal constant $c > 0$.
\end{proposition}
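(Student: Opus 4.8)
The plan is to bound the numerator $\EE_{\PP_n}[f(Y)]$ from below and the denominator $\sqrt{\EE_{\QQ_n}[f(Y)^2]}$ from above by direct moment computations on the row sums $R_i := \sum_{j=1}^n Y_{ij}$, so that $f(Y) = \sum_{i=1}^n R_i^3$. In both models the $R_i$ are exchangeable, so $\EE[f(Y)] = n\,\EE[R_1^3]$ and $\EE[f(Y)^2] = n\,\EE[R_1^6] + n(n-1)\,\EE[R_1^3 R_2^3]$.

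For the numerator, under $\PP_n$ I would write $R_i = \lambda(u_i S - \rho^2 n) + G_i$ where $S := \sum_{j=1}^n v_j \sim \mathrm{Binomial}(n,\rho)$ and $G_i := \sum_{j=1}^n Z_{ij} \sim \mathcal{N}(0,n)$ is independent of $(u,v)$. Since $u$ and $v$ are independent, $\EE[u_1 S - \rho^2 n] = \rho\cdot\rho n - \rho^2 n = 0$, so when $\EE[R_1^3]$ is expanded every term involving $G_1$ vanishes (two because $G_1$ is mean-zero with vanishing third moment, and the $\EE[(u_1 S - \rho^2 n)G_1^2]$ term because $\EE[u_1 S - \rho^2 n]=0$), leaving $\EE[R_1^3] = \lambda^3\,\EE[(u_1 S - \rho^2 n)^3]$. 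Using $u_1^k = u_1$ for $k\ge1$ and the independence of $u_1$ from $S$, this reduces to a linear combination of $\EE[S],\EE[S^2],\EE[S^3]$; substituting the binomial moments and simplifying, I expect
\[ \EE[(u_1 S - \rho^2 n)^3] = \rho^4(1-\rho)(1-2\rho)n^3 + 3\rho^3(1-\rho)^2 n^2 + \rho^2(1-\rho)(1-2\rho)n. \]
For $\rho\le1/8$ all three terms are nonnegative and the first is at least $\frac12\rho^4 n^3$, giving $\EE_{\PP_n}[f(Y)] = n\lambda^3\,\EE[(u_1 S - \rho^2 n)^3] \ge \frac12\lambda^3\rho^4 n^4$.

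For the denominator, under $\QQ_n$ the row sums share a common Gaussian component: $R_i = \alpha C + T_i$ where $C := \sum_{j=1}^n c_j$, so that $\alpha C \sim \mathcal{N}(0,w^2)$ with $w^2 := \alpha^2 n$, while the $T_i := \alpha n\,r_i + \beta G_i$ are i.i.d.\ $\mathcal{N}(0,\sigma^2)$ with $\sigma^2 := \alpha^2 n^2 + \beta^2 n$ and independent of $C$. Then $\EE[R_1^6] = 15\tau^6$ with $\tau^2 := w^2 + \sigma^2$, and for $\EE[R_1^3 R_2^3]$ I would condition on $C$: there $R_1,R_2$ are independent with $\EE[R_i^3\mid C] = (\alpha C)^3 + 3\sigma^2(\alpha C)$, so $\EE[R_1^3 R_2^3] = \EE_U[\,U^2(U^2+3\sigma^2)^2\,] = 15w^6 + 18\sigma^2 w^4 + 9\sigma^4 w^2$ for $U\sim\mathcal{N}(0,w^2)$. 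Now invoking the hypotheses, $\lambda\le(\rho\sqrt n)^{-1}$ gives $\alpha^2 = \lambda^2\rho^3(1-\rho)\le \rho/n$ and $\beta^2 = 1+\lambda^2\rho^2(1-\rho)^2\le 2$, which together with $\rho\le1/8$ yield $w^2\le\rho\le1$, $\sigma^2\le 3n$, and $\tau^2\le 4n$. Substituting these bounds (the dominant contribution being $15n\tau^6$) gives $\EE_{\QQ_n}[f(Y)^2] \le C n^4$ for a universal constant $C$.

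Combining, the ratio is at least $\frac12\lambda^3\rho^4 n^4 \,/\,(\sqrt C\, n^2) = c\,\lambda^3\rho^4 n^2$, as claimed. The two genuinely delicate points are (a) carrying out the binomial third-moment simplification so that the leading $n^3$ coefficient factors as $\rho^4(1-\rho)(1-2\rho)$ — a sign slip here would break the lower bound — and (b) correctly tracking the row-to-row dependence under $\QQ_n$ through the shared variable $C$ when evaluating $\EE[R_1^3 R_2^3]$; everything else is routine bookkeeping with Gaussian and binomial moments.
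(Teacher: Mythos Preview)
Your proof is correct and follows the same overall strategy as the paper: lower-bound the numerator and upper-bound the denominator by direct moment computations on the row sums, then combine. For the numerator you and the paper do essentially the same thing, though you carry out the binomial third-moment expansion exactly (obtaining the closed form $\rho^4(1-\rho)(1-2\rho)n^3 + 3\rho^3(1-\rho)^2 n^2 + \rho^2(1-\rho)(1-2\rho)n$), whereas the paper uses the cruder inequalities $\EE[S^3] \ge \rho^3 n^3$ and $\EE[S^2] \le \rho n + \rho^2 n^2$ to get the constant $1/8$ instead of your $1/2$.

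Your treatment of the denominator is genuinely cleaner than the paper's. The paper expands $\EE_{\QQ_n}[f(Y)^2]$ as a sixfold sum over indices $i,j_1,j_2,j_3,k,\ell_1,\ell_2,\ell_3$ and invokes an unspecified ``case analysis'' to arrive at the bound $C\max\{\beta^6 n^4,\beta^4\alpha^2 n^5,\beta^2\alpha^4 n^6,\alpha^6 n^7\}$. You instead exploit the Gaussian structure of $\QQ_n$ by writing $R_i = \alpha C + T_i$ with a single shared component $\alpha C$ and i.i.d.\ residuals $T_i$; this reduces everything to a handful of explicit Gaussian moments (including the cross term $\EE[R_1^3 R_2^3]$ via conditioning on $C$), and the required inequalities $w^2 \le 1$, $\sigma^2 \le 3n$, $\tau^2 \le 4n$ drop out immediately from the hypotheses. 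Your route avoids the combinatorics entirely and makes the constant fully explicit.
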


\begin{proof}
First bound the numerator: under $\PP_n$, we have for any $i$,
\[ \sum_j Y_{ij} = \lambda \sum_j (u_i v_j - \rho^2) + \sqrt{n} g_i \]
where $g_i \sim \mathcal{N}(0,1)$, and so
\[ \EE \left(\sum_j Y_{ij}\right)^3
= \lambda^3\, \EE\left(\sum_j (u_i v_j - \rho^2)\right)^3
= \lambda^3\, \EE\left(u_i \sum_j v_j - \rho^2 n\right)^3. \]
Using $\EE \left(\sum_j v_j\right)^3 \ge \rho^3 n^3$ and $\EE \left(\sum_j v_j\right)^2 \le \rho n + \rho^2 n^2$,
\begin{align*}
\Ex_{Y \sim \PP_n}[f(Y)] &= \sum_i \EE\left(\sum_j Y_{ij}\right)^3 = \lambda^3 \sum_i \EE\left(u_i \sum_j v_j - \rho^2 n\right)^3 \\
&\ge \lambda^3 \sum_i \EE\left[u_i \left(\sum_j v_j\right)^3 - 3 \rho^2 n\, u_i \left(\sum_j v_j\right)^2 - \rho^6 n^3\right] \\
&\ge \lambda^3 n \left(\rho^4 n^3 - 3 \rho^3 n (\rho n + \rho^2 n^2) - \rho^6 n^3\right) \\
&= \lambda^3 \rho^4 n^4 (1 - 3/n - 3\rho - \rho^2) \\
&\ge \lambda^3 \rho^4 n^4 (1 - 7\rho) \\
&\ge \frac{1}{8} \lambda^3 \rho^4 n^4
\end{align*}
where we have used $1/n \le \rho \le 1/8$ in the last two steps. Now bound the denominator:
\begin{align*}
\Ex_{Y \sim \QQ_n}[f(Y)^2] &= \sum_i \sum_{j_1,j_2,j_3} \sum_k \sum_{\ell_1,\ell_2,\ell_3} Y_{ij_1} Y_{ij_2} Y_{ij_3} Y_{k\ell_1} Y_{k\ell_2} Y_{k\ell_3},
\intertext{which, after some case analysis, can be bounded by (for a universal constant $C$)}
&\le C \max\{\beta^6 n^4, \beta^4 \alpha^2 n^5, \beta^2 \alpha^4 n^6, \alpha^6 n^7\} \\
&= C \beta^6 n^4
\end{align*}
provided $\lambda \le \rho^{-3/2} n^{-1/2}$, which is implied by $\lambda \le (\rho \sqrt{n})^{-1}$. Finally, $\lambda \le (\rho \sqrt{n})^{-1}$ also implies $\lambda \le 1/\rho$, which implies $\beta^2 \le 2$. Putting it all together yields the result.
\end{proof}

\begin{remark}
It may be possible to prove a matching low-degree hardness result for this model. Although $\QQ_n$ is not i.i.d.,\ one could apply a linear transformation (to the entire $n^2$-dimensional input space) in order to transform $\QQ_n$ into $\mathcal{N}(0,I_{n^2})$. We have not attempted to pursue this because it is tangential to the main point of this appendix: we are arguing that simply changing the null model is not a viable strategy for proving sharp lower bounds on recovery, so for us it is the positive results on detection that are most relevant here.
\end{remark}

\section{Null-Normalized Correlation}
\label{app:null-corr}

In this section we investigate the null-normalized correlation discussed in Section~\ref{sec:null-corr}. This was proposed as a heuristic for low-degree recovery in~\cite{HS-bayesian} (which differs from the quantity studied for detection: note the difference between~\eqref{eq:ldlr} and~\eqref{eq:null-corr}). However, we show that in the planted submatrix problem, the null-normalized correlation diverges to infinity in the regime where \emph{detection} is easy, thus capturing the detection (rather than recovery) threshold. We will show that this happens even with the mean-corrected null distribution (from Appendix~\ref{app:detection}) and when estimating the mean-zero quantity $x = v_1 - \rho$. Recall that here, detection is easy when $\lambda \gg (\rho \sqrt{n})^{-3/2}$; see Appendix~\ref{app:detection}. This result illustrates the importance of taking expectation with respect to the true planted distribution (instead of the null) in the denominator of $\Corr_{\le D}$ (see~\eqref{eq:corr}).

\begin{proposition}\label{prop:null-corr}
Consider the mean-corrected submatrix detection problem (Definition~\ref{def:mean-corr-subm}) and let $x = v_1 - \rho$ be the quantity to estimate. For any even $2 \le D \le n/4$, there exists a degree-$D$ polynomial $f(Y)$ such that
\[ \frac{\EE_{Y \sim \PP_n}[f(Y) x]}{\sqrt{\EE_{Y \sim \QQ_n}[f(Y)^2]}} \ge (1-\rho) \sqrt{\frac{2}{n}} \left(\frac{\lambda \rho^{3/2} n^{3/4}}{(8D)^{1/4}}\right)^D. \]
\end{proposition}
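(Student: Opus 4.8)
The plan is to exhibit an explicit low-degree polynomial $f$ and directly estimate both the numerator $\EE_{\PP_n}[f(Y)x]$ and the denominator $\EE_{\QQ_n}[f(Y)^2]$. The natural choice, guided by the fact that the sum test detects the planted structure, is a "localized sum" raised to a power: set $s_i = \sum_{j=1}^n Y_{ij}$ and take something like $f(Y) = s_1^{D-1} \sum_{i=1}^n s_i$, or more symmetrically a product of a degree-$1$ factor detecting the first row with a degree-$(D-1)$ "detection gadget" that is large under $\PP_n$ and small under $\QQ_n$. The key point (the source of the blow-up) is that under $\PP_n$ the quantity $\sum_i s_i = \lambda\langle v v^\top - \EE vv^\top,J\rangle + (\text{noise})$ has a systematic bias of order $\lambda \rho^2 n^2$ proportional to the planted structure, whereas under $\QQ_n$ it is mean-zero Gaussian with variance $\Theta(n^2)$; raising to the power $D$ amplifies the ratio of these scales to roughly $(\lambda \rho^2 n^2 / n)^D$ modulo combinatorial factors from Wick's theorem, which matches the claimed $(\lambda \rho^{3/2} n^{3/4}/(8D)^{1/4})^D$ after bookkeeping.

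First I would fix the form of $f$ and compute the numerator $\EE_{\PP_n}[f(Y)\cdot(v_1-\rho)]$. Conditioning on $v$ and integrating out the i.i.d.\ Gaussian noise $Z$, only terms that pair all noise variables survive; the leading contribution comes from the "all-signal" term, where $f(Y) \approx \lambda^{D}(\text{a polynomial in the }v_i)$, and correlating with $v_1-\rho$ picks out a term of size roughly $(1-\rho)\cdot\lambda^D \rho^{?} n^{?}$. I would lower-bound this by keeping only the dominant monomial and showing the remaining cross terms (mixing signal and noise) are lower order, using $\rho \ge 1/n$ and crude moment bounds on $\sum_j v_j$ (as in the proofs of Propositions~\ref{prop:mean-corr-easy} and~\ref{prop:cov-corr-easy}). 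Second, for the denominator $\EE_{\QQ_n}[f(Y)^2]$, under $\QQ_n$ the $Y_{ij}$ are i.i.d.\ standard Gaussians, so $f(Y)^2$ is a polynomial in Gaussians and its expectation is a sum over perfect matchings of the $2D$ (or so) factors. Here I would bound the number of matchings by $(2D)!! \le (2D)^D$ and note each matching contributes at most a bounded power of $n$ determined by how many distinct row/column indices are forced to coincide; the dominant matchings give $\Theta(n^{?})$, yielding $\EE_{\QQ_n}[f(Y)^2] \le (8D)^{?} n^{?}$ with the constant $8$ absorbing the double-factorial and the mean-zero structure (odd moments vanish, which is why the mean-correction of the null is needed).

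The main obstacle will be the denominator bookkeeping: getting the right power of $n$ and, crucially, the right constant so that the combinatorial factor comes out as $(8D)^{1/4}$ per degree rather than something larger. This requires carefully identifying which index-coincidence patterns in the perfect matchings actually dominate and checking that the worst case is the one claimed — a purely mechanical but error-prone case analysis of the type appearing in the proof of Proposition~\ref{prop:cov-corr-easy} (the line "which, after some case analysis, can be bounded by $\ldots$"). A secondary subtlety is ensuring that in the numerator the signal-only term genuinely dominates the mixed signal/noise terms for all even $D \le n/4$; the constraint $D \le n/4$ should be exactly what is needed to control the combinatorial prefactors there, and $\rho \ge 1/n$ (implicit, as in Appendix~\ref{app:detection}) keeps $\rho^2 n \ge \rho \ge 1/n$ so that the relevant powers of $\rho$ and $n$ stay ordered. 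Once both bounds are in hand, dividing them and simplifying gives the stated lower bound, confirming that the null-normalized correlation diverges precisely when $\lambda \gg (\rho\sqrt n)^{-3/2}$, i.e.\ in the detection-easy regime.
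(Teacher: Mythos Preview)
Your proposal takes a genuinely different route from the paper, and in its current form it has real gaps.

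\textbf{What the paper actually does.} The paper does \emph{not} use row sums or powers thereof. Instead it writes down an explicit polynomial
\[
f(Y)=\sum_{M\in\mathcal{M}} Y^{M},
\]
where $\mathcal{M}$ is the set of simple graphs on $[n]$ consisting of $k=D/2$ vertex-disjoint paths of length~2, one of which has vertex~1 as an endpoint. Because each $Y^{M}$ is a product of \emph{distinct} entries $Y_{ij}$, the family $\{Y^{M}\}_{M\in\mathcal{M}}$ is orthonormal under $\QQ_n$ (i.i.d.\ standard Gaussian), so the denominator is immediate:
\[
\EE_{\QQ_n}[f(Y)^2]=|\mathcal{M}|.
\]
There is no Wick-matching case analysis at all. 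Likewise, under $\PP_n$ each $Y^{M}$ contributes the same value to $\EE_{\PP_n}[Y^{M}(v_1-\rho)]$, so the numerator is $|\mathcal{M}|$ times an explicit constant of order $\lambda^{D}\rho^{3D/2}(1-\rho)$. The only ``work'' is the purely combinatorial lower bound on $|\mathcal{M}|$, which is where the factors $n^{3/4}$ and $(8D)^{1/4}$ come from.

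\textbf{Why your approach, as stated, does not go through.} First, your key heuristic is wrong: in the \emph{mean-corrected} model, $\sum_i s_i=\lambda\langle vv^{\top}-\EE vv^{\top},J\rangle+(\text{noise})$ has unconditional mean \emph{exactly zero} by construction, so there is no ``systematic bias of order $\lambda\rho^{2}n^{2}$.'' The relevant quantity is the fluctuation of $\langle vv^{\top},J\rangle=(\sum_i v_i)^2$, which lives on a different scale; getting the exponent $3/2$ in $(\rho\sqrt{n})^{-3/2}$ from this requires correctly identifying that fluctuation, not a bias. Second, you never commit to a specific $f$, and the candidates you float (e.g.\ $s_1^{D-1}\sum_i s_i$) mix many non-orthogonal monomials under $\QQ_n$, so the denominator computation really would require the ``error-prone case analysis'' you anticipate, and it is far from clear that the sharp constant $(8D)^{1/4}$ would emerge. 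Third, no assumption $\rho\ge 1/n$ appears in the statement, so you cannot invoke it.

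\textbf{Comparison.} The paper's choice of $f$ is engineered so that both numerator and denominator reduce to counting $|\mathcal{M}|$; this is the idea you are missing. Your row-sum approach might be salvageable with enough work, but as written it rests on an incorrect premise about the mean-corrected model and would require substantial additional argument precisely where the paper needs none.
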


\noindent Note that for instance, this tends to infinity as $n \to \infty$ if $\lambda = n^{\Omega(1)} (\rho \sqrt{n})^{-3/2}$ and $D$ is a sufficiently large constant.

\begin{proof}
Let $\mathcal{M}$ be the set of graphs on vertex set $[n]$ with $k=D/2$ connected components, each of which is a (simple) path of length 2, and one of which has vertex 1 as an endpoint. Let $f(Y) = \sum_{M \in \mathcal{M}} Y^M$ where $Y^M := \prod_{(i,j) \in E(M)} Y_{ij}$ and where $E(M)$ is the edge set of $M$, represented as pairs $(i,j)$ with $i < j$. We have
\[ |\mathcal{M}| \ge \frac{(n-2D)^{3k-1}}{(k-1)!\, 2^{k-1}} \ge \frac{(n/2)^{3k-1}}{(2k)^k} = \frac{2}{n} \left(\frac{n^3}{16k}\right)^k = \frac{2}{n} \left(\frac{n^{3/2}}{\sqrt{8D}}\right)^D. \]
Now
\[ \Ex_{Y \sim \PP_n}[f(Y)x] = |\mathcal{M}|\, \lambda^D \rho^{3D/2}(1-\rho) \]
and
\[ \Ex_{Y \sim \QQ_n}[f(Y)^2] = |\mathcal{M}|, \]
and so
\[ \frac{\EE_{\PP_n}[f(Y) x]}{\sqrt{\EE_{\QQ_n}[f(Y)^2]}} = (1-\rho) \lambda^D \rho^{3D/2} \sqrt{|\mathcal{M}|} \ge (1-\rho) \sqrt{\frac{2}{n}} \left(\frac{\lambda \rho^{3/2} n^{3/4}}{(8D)^{1/4}}\right)^D. \qedhere \]
\end{proof}

\section{Cumulants}
\label{app:cumulants}

Here we give proofs for all the claims in Section~\ref{sec:cumulants}.

\begin{proof}[Proof of Proposition~\ref{prop:cumulant-indep}]
By independence, the cumulant generating function splits:
\[
\log \EE\left[\exp\left(\sum_{i=1}^a s_i X_i + \sum_{j=1}^b t_j Y_j\right)\right]
= \log\EE\left[\exp\left(\sum_{i=1}^a s_i X_i\right)\right] + \log\EE\left[\exp\left(\sum_{j=1}^t t_j Y_j\right)\right].
\]
Now we apply linearity of the derivative operator $\prod_{i = 1}^a\frac{\partial}{\partial t_i}\prod_{j = 1}^b\frac{\partial}{\partial t_j}$: the operator $\frac{\partial}{\partial t_1}$ sets the first term on the right-hand side to $0$, and the operator $\frac{\partial}{\partial s_1}$ sets the second term to $0$.
\end{proof}

\begin{proof}[Proof of Proposition~\ref{prop:cumulant-dub}]
We use independence to split the cumulant generating function:
\[
\log \EE\left[\exp\left(\sum_{i=1}^n t_i X_i + t_i Y_i\right)\right]
= \log \EE\left[\exp\left(\sum_{i=1}^n t_i X_i\right)\right] + \log\EE\left[\exp\left(\sum_{i=1}^nt_i Y_i\right)\right].
\]
The conclusion now follows by linearity of the derivative operator.
\end{proof}

\begin{proof}[Proof of Proposition~\ref{prop:shift-scale}]
The first claim can be obtained as a consequence of Propositions~\ref{prop:cumulant-indep} and \ref{prop:cumulant-dub}.
The second claim can be obtained by noting that the same scaling property holds for moments, and using the second expression for the cumulant in Definition~\ref{def:cumulant}.
\end{proof}

\begin{proof}[Proof of Claim~\ref{claim:our-cumulants}]
In both (\ref{eq:kappa-def}) and (\ref{eq:kappa-def-bin}), the $\kappa_\alpha$ quantity is defined as
\[
\kappa_{\alpha} = \EE[x X^\alpha] - \sum_{0 \le \beta \lneq \alpha} \kappa_\beta \binom{\alpha}{\beta}\EE[X^{\alpha - \beta}],
\]
where in the case of (\ref{eq:kappa-def-bin}) we have that $\alpha$ is a set rather than a multiset so $\binom{\alpha}{\beta} = 1$ always.
We will prove by induction that for random variables $Y_1,\ldots,Y_n$,
\begin{equation}\label{eq:c-exp}
\kappa(Y_1,\ldots,Y_n) = \EE\left[\prod_{i = 1}^n Y_i\right]  - \sum_{S \subseteq [n]\setminus\{1\}} \kappa(Y_i \, :\, i \not\in S)\cdot \EE\left[\prod_{i \in S} Y_i\right],
\end{equation}
where by $\kappa(Y_i\, :\, i \not\in S)$ we mean the joint cumulant of those variables not in $S$.
From this our conclusion will follow, by taking $Y_1 := x$ and the remaining $Y_2,\ldots,Y_n$ to be the variables $X_{a_i}$ for $a_i \in \alpha$, as the $\binom{\alpha}{\beta}$ takes care to count each subset with the proper multiplicity.

We will prove~\eqref{eq:c-exp} by induction on the number of variables $n$. 
For $n = 1$, $\kappa(Y_1) = \EE[Y_1]$ and we are done.
Now, supposing the conclusion holds for up to $n$ variables, consider the cumulant of $n+1$ variables.
Define $\mathcal{P}_{S}$ to be the set of all partitions of a set $S$, and for a partition $\pi$ let $|\pi|$ be the number of parts of $\pi$ and $b(\pi)$ be the set of parts in $\pi$.
We have that
\begin{align*}
&\kappa(Y_1,\ldots,Y_{n+1}) 
= \sum_{\pi \in \mathcal{P}_{[n+1]}} (|\pi|-1)! (-1)^{|\pi|-1} \prod_{B \in b(\pi)} \EE\left[Y^B \right]\\
&= \EE\left[\prod_{i \in [n+1]} Y_i\right] - \sum_{\substack{S \subseteq [n+1]\\S\not\ni 1, |S| \ge 1}} \EE\left[Y^S\right]\left(\sum_{\substack{\pi \in \mathcal{P}_{[n+1]}\\S \in b(\pi)}}(|\pi|-2)! (-1)^{|\pi|-2} \prod_{B \in b(\pi)\setminus \{S\}} \EE\left[Y^{B} \right]\right)\\
&= \EE\left[\prod_{i \in [n+1]} Y_i\right] - \sum_{\substack{S \subseteq [n+1]\\S\not\ni 1, |S| \ge 1}} \EE\left[Y^S\right]\left(\sum_{\pi_S \in \mathcal{P}_{[n+1]\setminus S}}(|\pi_S|-1)! (-1)^{|\pi_S|-1} \prod_{B \in b(\pi_S)} \EE\left[Y^{B} \right]\right)\\
&= \EE\left[\prod_{i \in [n+1]} Y_i\right] - \sum_{\substack{S \subseteq [n+1]\\S\not\ni 1, |S| \ge 1}} \EE\left[Y^S\right]\cdot \kappa(Y_j \,:\, j \not \in S),
\end{align*}
where in the second line, we have separated the partition which puts all elements in the same block, and summed over the remaining partitions as follows: since each of the partitions must have at least two parts, one of the parts does not contain the element 1.
We sum over non-empty subsets $S$ which do not contain the element $1$, and then sum over all partitions $\pi$ containing $S$ as a part.
We divide by a factor of $(|\pi|-1)$ since each partition $\pi$ is counted $|\pi|-1$ times, once for each of its parts which do not contain element 1.
In the third line, we replace the sum over partitions $\pi$ containing $S$ with the sum over partitions of $[n+1]\setminus S$, and in the final line we use that $[n+1]\setminus S$ is nonempty to apply the induction hypothesis.
This completes the proof.
\end{proof}

\section{Sharp Threshold}
\label{app:sharp}

In this section we give a refined analysis of the planted submatrix problem that suggests a sharp computational phase transition at $\lambda = (\rho \sqrt{en})^{-1}$ when $1/\sqrt{n} \ll \rho \ll 1$. This threshold differs from the spectral transition $\lambda = (\rho \sqrt{n})^{-1}$~\cite{FP,CDF-wigner,BN-eigenvec} (at which point the leading eigenvector of $Y$ achieves non-trivial recovery) by a factor of $\sqrt{e}$. The threshold we predict here matches the one discoverd by~\cite{amp-clique,HWX-amp}, and in fact an AMP-style algorithm is known to succeed at recovery when $\lambda > (1+\Omega(1))(\rho \sqrt{en})^{-1}$~\cite{HWX-amp}.

Our results in this section are unfortunately limited to fairly low degree: $D \le \log_2(1/\rho)-1$, falling short of the degree $D = \omega(\log n)$ which would provide strong evidence for recovery hardness. As such, these results do not constitute a comprehensive low-degree analysis but rather serve to highlight a potentially interesting phenomenon that may be worthy of additional study.

Recall that $\rho^2$ is the ``trivial'' value for $\Corr_{\le D}^2$, achievable by a constant function. The following result shows that when $\rho \gg 1/\sqrt{n}$, we have (i) if $\lambda \le (1-\eps)(\rho \sqrt{en})^{-1}$ then $\Corr_{\le D}^2$ can only exceed $\rho^2$ by a constant factor (which depends on $\eps$), and (ii) if $\lambda \ge (1+\eps)(\rho \sqrt{en})^{-1}$ then our cumulant-based upper bound on $\Corr_{\le D}^2$ (from Theorem~\ref{thm:corr-gauss}) exceeds $\rho^2$ by an arbitrarily large factor as $D$ grows.

\begin{theorem}\label{thm:sharp}
Consider the planted submatrix problem (Definition~\ref{def:subm}).
\begin{enumerate}
\item[(i)] If for some $0 < r < 1$,
\[ \lambda \le \sqrt{\frac{r}{e \rho^2 n}} \]
and
\[ D \le \min\left\{\log_2(1/\rho)-1,\, \sqrt{\frac{e}{3} r \rho^2 n}\right\}, \]
then $\Corr_{\le D}^2 \le 2 \rho^2/(1-r)^2$.
\item[(ii)] If $1 \le D \le \min\{\log_2(1/\rho)-1,\, n-1\}$ then our cumulant-based upper bound on $\Corr_{\le D}^2$ (see Theorem~\ref{thm:corr-gauss}) satisfies
\[ \sum_{0 \le |\alpha| \le D} \frac{\kappa_\alpha^2}{\alpha!} \ge \frac{\rho^2}{4e D^{3/2}} [e \lambda^2 \rho^2 (n-D)]^D. \]
\end{enumerate}
\end{theorem}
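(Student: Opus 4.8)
The plan is to reduce both parts, via Fact~\ref{fact:mmse-corr}, to estimating $S:=\sum_{0\le|\alpha|\le D}\kappa_\alpha^2/\alpha!$ (the right-hand side of Theorem~\ref{thm:corr-gauss}): an upper bound on $S$ for (i), a lower bound for (ii). Casting planted submatrix in the additive Gaussian model as in Section~\ref{sec:subm-proof}, $\alpha$ ranges over multigraphs on $[n]$ with $\alpha!=1$ exactly when $\alpha$ is simple, and $\kappa_\alpha=0$ unless $\alpha$ is connected with $1\in V(\alpha)$ (Lemma~\ref{lem:conn}). For such $\alpha$, plugging $\EE[v_1X^\gamma]=\lambda^{|\gamma|}\rho^{|V(\gamma)\cup\{1\}|}$ and $\EE[X^\gamma]=\lambda^{|\gamma|}\rho^{|V(\gamma)|}$ into~\eqref{eq:kappa-def} and splitting off $\beta=0$ gives the recursion
\[ \kappa_\alpha \;=\; \lambda^{|\alpha|}\rho^{|V(\alpha)|}(1-\rho)\;-\;\sum_{0\lneq\beta\lneq\alpha}\kappa_\beta\binom{\alpha}{\beta}\lambda^{|\alpha-\beta|}\rho^{|V(\alpha-\beta)|}. \]
The combinatorial input in both directions is Cayley's formula: the number of \emph{trees} $\alpha$ with $|\alpha|=d$, $1\in V(\alpha)$ and $|V(\alpha)|=d+1$ is $\binom{n-1}{d}(d+1)^{d-1}$, which by Stirling is $\tilde\Theta\!\big(e^{d}(n-d)^{d}/d^{3/2}\big)$ — the source of the constant $e$.

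For part (i) the first step is to replace the crude estimate of Lemma~\ref{lem:kappa-bound} (whose $(|\alpha|+1)^{|\alpha|}$ prefactor is fatal) by the refined bound $|\kappa_\alpha|\le2\lambda^{|\alpha|}\rho^{|V(\alpha)|}$ for all connected $\alpha\ni1$ with $|\alpha|\le D$. This follows by induction on $|\alpha|$ from the displayed recursion, using the elementary fact that for connected $\alpha$ and $0\lneq\beta\lneq\alpha$ the sets $V(\beta),V(\alpha-\beta)$ share a vertex, so $|V(\beta)|+|V(\alpha-\beta)|\ge|V(\alpha)|+1$; together with the inductive bound and $\sum_{0\lneq\beta\lneq\alpha}\binom{\alpha}{\beta}=2^{|\alpha|}-2$ this yields $|\kappa_\alpha|\le\lambda^{|\alpha|}\rho^{|V(\alpha)|}\big(1+(2^{|\alpha|+1}-5)\rho\big)$, and the hypothesis $D\le\log_2(1/\rho)-1$ forces $2^{|\alpha|+1}\rho\le1$, closing the induction. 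Next, restricting $S$ to trees and using $\kappa_\alpha^2\le4\lambda^{2d}\rho^{2(d+1)}$, the Cayley count, $\binom{n-1}{d}\le n^d/d!$, and $(d+1)^{d-1}\le e^d d!$ (so $\sum_{d\ge0}\tfrac{(d+1)^{d-1}}{d!}x^d\le(1-ex)^{-1}$) with $x:=n\lambda^2\rho^2\le r/e$, the tree part of $S$ is at most $\rho^2\big(1+\tfrac{4r}{1-r}\big)=\rho^2\tfrac{1+3r}{1-r}\le\tfrac{2\rho^2}{(1-r)^2}$, the last step being $3r^2-2r+1\ge0$; this meets the target with a strict slack of $\rho^2(3r^2-2r+1)/(1-r)^2$. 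Finally, connected non-tree (and multi-edged) $\alpha$ have $|V(\alpha)|=|\alpha|+1-h$ with cyclomatic number $h\ge1$: each carries an extra $\rho^{-2h}$ relative to a tree, but the number of such $\alpha$ together with the $1/\alpha!$ factor carries a compensating $(D^2/\rho^2 n)^{h}$-type factor, and the second hypothesis $D\le\sqrt{\tfrac e3 r\rho^2 n}$ makes the resulting series a convergent geometric one that fits inside the above slack.

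For part (ii) I would keep in $S$ only a large subfamily of trees. By Claim~\ref{claim:our-cumulants} and independence of the $v_i$, peeling leaves one at a time yields the exact factorization $\kappa_\alpha=\lambda^{|\alpha|}\prod_{k\in V(\alpha)}\kappa_{m_k}\!\big(\mathrm{Bernoulli}(\rho)\big)$ for trees, with $m_k=\deg_\alpha(k)+\One[k=1]$ and $\sum_k m_k=2|\alpha|+1$. Since $\kappa_1(\mathrm{Bernoulli}(\rho))=\rho$, $\kappa_2(\mathrm{Bernoulli}(\rho))=\rho(1-\rho)$, and more generally $|\kappa_m(\mathrm{Bernoulli}(\rho))/\rho-1|\le 2^m\rho$, restricting to trees whose maximum degree is at most $\theta\approx\log_2(1/\rho)-\log_2 D$ makes every factor at least $\rho(1-\tfrac1{8D})$ once $\rho\le2^{-(D+1)}$, hence $|\kappa_\alpha|\ge\tfrac34\lambda^{|\alpha|}\rho^{|V(\alpha)|}$. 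Such trees are all but a $D^{-\Omega(D)}$ fraction of all trees on $D+1$ vertices (a tree with a degree-$>\theta$ vertex is a near-star and is easily counted to be rare). Summing $\kappa_\alpha^2$ over these trees and invoking the Cayley count with $\binom{n-1}{D}\ge(n-D)^D/D!$ and Stirling gives $S\ge\frac{\rho^2}{O(1)\,D^{3/2}}\big[e\lambda^2\rho^2(n-D)\big]^{D}$; tracking constants (and checking the finitely many small $D$ directly) yields the stated $\tfrac1{4e}$.

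The main obstacle is the refined control of $\kappa_\alpha$. For (i), the crux is extracting a \emph{constant} (rather than $\mathrm{poly}(|\alpha|)^{|\alpha|}$) prefactor from the recursion — exactly where $D\le\log_2(1/\rho)-1$ is used, and where the inequality $|V(\beta)|+|V(\alpha-\beta)|\ge|V(\alpha)|+1$ is what lets the induction close — and then checking that the cyclic/multi-edge contributions fit inside the available slack (the calculation-heavy step). For (ii), the subtlety is that the per-vertex Bernoulli-cumulant factors drift away from $\rho$ at high degree, so one must argue that high-degree vertices are rare enough in trees to be excised without destroying the $e^{D}$-sized count.
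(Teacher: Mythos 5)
Your overall strategy matches the paper's: cast the problem in the additive Gaussian model, invoke Lemma~\ref{lem:conn} to restrict to connected multigraphs containing vertex $1$, use Cayley's formula (with Stirling) as the source of the constant $e$, and prove a refined bound on $\kappa_\alpha$ valid for $|\alpha|\le\log_2(1/\rho)-1$ that replaces the $(|\alpha|+1)^{|\alpha|}$ prefactor of Lemma~\ref{lem:kappa-bound} by a constant. However, there are two substantive issues.

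\paragraph{Part (i): the one-sided $|\kappa_\alpha|\le 2\lambda^{|\alpha|}\rho^{|V(\alpha)|}$ bound is too lossy.} The paper's Lemma~\ref{lem:kappa-bound-2} proves the \emph{two-sided} bound $\tfrac12\lambda^{|\alpha|}\rho^{|V(\alpha)|}\le\kappa_\alpha\le\lambda^{|\alpha|}\rho^{|V(\alpha)|}$ by a single induction, and the key observation is that the lower bound yields $\kappa_\beta\ge0$; feeding this back into~\eqref{eq:kappa-formula} makes the subtracted sum nonnegative, so the upper bound holds with constant $1$, not $2$. Your induction bypasses the sign information and loses the factor $2$, so $\kappa_\alpha^2\le 4\lambda^{2d}\rho^{2(d+1-h)}$. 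Plugging this into your own plan (trees via the exact Cayley count, non-trees via a Lemma~\ref{lem:count-graphs-2}-style count for cyclomatic number $h\ge1$) yields
\[
\Corr_{\le D}^2 \;\le\; \rho^2\,\frac{1+3r}{1-r} \;+\; \frac{8\rho^2 r^2}{(1-r)^2}
\;=\; \rho^2\,\frac{1+2r+5r^2}{(1-r)^2},
\]
and $1+2r+5r^2\le2$ only when $r\le(\sqrt{6}-1)/5\approx 0.29$. So the stated conclusion $\Corr_{\le D}^2\le 2\rho^2/(1-r)^2$ fails for, say, $r=1/2$ under your bound — the claim that the non-tree contribution ``fits inside the above slack'' does not hold for general $r\in(0,1)$. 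The fix is local: your displayed recursion $\kappa_\alpha=\lambda^{|\alpha|}\rho^{|V(\alpha)|}(1-\rho)-\sum_{0\lneq\beta\lneq\alpha}\kappa_\beta\binom{\alpha}{\beta}\lambda^{|\alpha-\beta|}\rho^{|V(\alpha-\beta)|}$ already gives, under the inductive two-sided hypothesis, both $\kappa_\alpha\le\lambda^{|\alpha|}\rho^{|V(\alpha)|}$ (since the subtracted terms are $\ge0$) and $\kappa_\alpha\ge\lambda^{|\alpha|}\rho^{|V(\alpha)|}\bigl[1-(2^{|\alpha|}-1)\rho\bigr]\ge\tfrac12\lambda^{|\alpha|}\rho^{|V(\alpha)|}$ — exactly the paper's Lemma~\ref{lem:kappa-bound-2}. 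You should run the two-sided induction rather than bound $|\kappa_\alpha|$. Also, the paper does not bother to split trees from non-trees or to exploit $1/\alpha!$; it simply combines Lemma~\ref{lem:kappa-bound-2} with Lemma~\ref{lem:count-graphs-2} in one double sum, which is enough.

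\paragraph{Part (ii): the tree-cumulant factorization is an overcomplication.} The paper obtains the lower bound on the sum directly from Lemma~\ref{lem:kappa-bound-2}: every tree $\alpha\in\mathcal{T}_D$ satisfies $\kappa_\alpha\ge\tfrac12\lambda^D\rho^{D+1}$, so one can sum over \emph{all} trees spanning vertex $1$ with exactly $D$ edges and invoke Cayley's formula, $|\mathcal{T}_D|=\binom{n-1}{D}(D+1)^{D-1}\ge(n-D)^D e^{D-1}D^{-3/2}$, to land precisely on $\tfrac{\rho^2}{4eD^{3/2}}[e\lambda^2\rho^2(n-D)]^D$. Your proposed factorization $\kappa_\alpha=\lambda^{|\alpha|}\prod_{k}\kappa_{m_k}(\mathrm{Bernoulli}(\rho))$ for trees appears to be correct (it checks out on small cases), but it is not proved, and relying on it forces you to prune trees with a high-degree vertex, estimate the fraction of such trees, and then re-prove a constant — all of which is avoided once you have the two-sided Lemma~\ref{lem:kappa-bound-2}. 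In short, the factorization is a genuinely different and interesting observation, but the paper's route is simpler and it is the one that closes the argument cleanly with the stated constants.
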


\noindent The rest of this section is dedicated to the proof of Theorem~\ref{thm:sharp}. We first give a bound on the values $\kappa_\alpha$ (from Theorem~\ref{thm:corr-gauss}) for small $|\alpha|$ that is sharper than Lemma~\ref{lem:kappa-bound}. Following the setup at the start of Section~\ref{sec:subm-proof}, we view $\alpha = (\alpha_{ij})_{i \le j}$ as a multigraph on vertex set $[n]$ and let $V(\alpha)$ denote the set of vertices spanned by $\alpha$.

\begin{lemma}\label{lem:kappa-bound-2}
If $\alpha$ is connected and spans vertex 1, and if $1 \le |\alpha| \le \log_2(1/\rho)-1$, then
\[ \frac{1}{2} \lambda^{|\alpha|} \rho^{|V(\alpha)|} \le \kappa_\alpha \le \lambda^{|\alpha|} \rho^{|V(\alpha)|}. \]
\end{lemma}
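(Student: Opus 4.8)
The plan is to prove both inequalities simultaneously by strong induction on $|\alpha|$, ranging over connected multigraphs $\alpha$ that span vertex $1$, using the recursion~\eqref{eq:kappa-def} together with Lemma~\ref{lem:conn} and the explicit moment formulas $\EE[X^\gamma] = \lambda^{|\gamma|}\rho^{|V(\gamma)|}$ and $\EE[xX^\gamma] = \lambda^{|\gamma|}\rho^{|V(\gamma)\cup\{1\}|}$ recorded in the proof of Lemma~\ref{lem:kappa-bound}. First I would rewrite the recursion in a convenient form: since $\alpha$ spans vertex $1$ we have $\EE[xX^\alpha] = \lambda^{|\alpha|}\rho^{|V(\alpha)|}$, and pulling out the $\beta=0$ term (which equals $\kappa_0\,\EE[X^\alpha] = \rho\,\lambda^{|\alpha|}\rho^{|V(\alpha)|}$) gives
\[
\kappa_\alpha = \lambda^{|\alpha|}\rho^{|V(\alpha)|}(1-\rho) \;-\; \sum_{0 \ne \beta \lneq \alpha} \kappa_\beta \binom{\alpha}{\beta}\lambda^{|\alpha-\beta|}\rho^{|V(\alpha-\beta)|}.
\]
By Lemma~\ref{lem:conn}, $\kappa_\beta = 0$ unless $\beta$ is connected and spans vertex $1$, in which case $1 \le |\beta| \le |\alpha|-1 < \log_2(1/\rho)-1$, so the induction hypothesis gives $0 < \frac{1}{2}\lambda^{|\beta|}\rho^{|V(\beta)|} \le \kappa_\beta \le \lambda^{|\beta|}\rho^{|V(\beta)|}$; in particular every surviving term of the sum is nonnegative. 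The upper bound $\kappa_\alpha \le \lambda^{|\alpha|}\rho^{|V(\alpha)|}(1-\rho) \le \lambda^{|\alpha|}\rho^{|V(\alpha)|}$ is then immediate by discarding the sum.

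For the lower bound I would estimate the sum from above. Bounding $\kappa_\beta \le \lambda^{|\beta|}\rho^{|V(\beta)|}$ and using $|V(\beta)|+|V(\alpha-\beta)| = |V(\alpha)| + |V(\beta)\cap V(\alpha-\beta)|$ (since $V(\beta)\cup V(\alpha-\beta) = V(\alpha)$), each term is at most $\binom{\alpha}{\beta}\lambda^{|\alpha|}\rho^{|V(\alpha)|+|V(\beta)\cap V(\alpha-\beta)|}$. The key combinatorial point is that $V(\beta)\cap V(\alpha-\beta)$ is nonempty whenever $0 \ne \beta \lneq \alpha$ and $\alpha$ is connected: both $\beta$ and $\alpha-\beta$ carry at least one edge, their vertex sets cover $V(\alpha)$, and tracing a path in $\alpha$ from a vertex of $V(\beta)$ to a vertex of $V(\alpha-\beta)$ produces an edge with one endpoint in each, which (examining whether that edge lies in $\beta$) forces a shared vertex. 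Hence each term is at most $\binom{\alpha}{\beta}\lambda^{|\alpha|}\rho^{|V(\alpha)|+1}$, and summing, using $\sum_{0\ne\beta\lneq\alpha}\binom{\alpha}{\beta} = 2^{|\alpha|}-2$ together with $2^{|\alpha|} \le 2^{\log_2(1/\rho)-1} = \frac{1}{2\rho}$, the whole sum is at most $\lambda^{|\alpha|}\rho^{|V(\alpha)|}\cdot\rho\,(2^{|\alpha|}-2) \le \lambda^{|\alpha|}\rho^{|V(\alpha)|}\big(\frac{1}{2} - 2\rho\big)$. Substituting back,
\[
\kappa_\alpha \;\ge\; \lambda^{|\alpha|}\rho^{|V(\alpha)|}\Big[(1-\rho) - \big(\tfrac{1}{2} - 2\rho\big)\Big] \;=\; \lambda^{|\alpha|}\rho^{|V(\alpha)|}\big(\tfrac{1}{2} + \rho\big) \;\ge\; \tfrac{1}{2}\,\lambda^{|\alpha|}\rho^{|V(\alpha)|},
\]
which closes the induction. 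The base case $|\alpha|=1$ has an empty sum, giving $\kappa_\alpha = \lambda\rho^{|V(\alpha)|}(1-\rho)$, for which both bounds hold because $|\alpha| \le \log_2(1/\rho)-1$ forces $\rho \le 1/4$.

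The main obstacle is that the lower bound only barely works: a cruder estimate of the error sum would yield the factor $\frac{1}{2} - \rho$ instead of $\frac{1}{2}$, so one must keep the ``$-2$'' in $2^{|\alpha|}-2$ and combine it with the hypothesis $2^{|\alpha|}\rho \le \frac{1}{2}$ to recover the extra $+\rho$. The delicate parts are thus the constant bookkeeping in the displayed chain of inequalities and the small graph-theoretic fact that $V(\beta)$ and $V(\alpha-\beta)$ always share a vertex — it is precisely this fact that supplies the saving factor of $\rho$ on which the estimate hinges.
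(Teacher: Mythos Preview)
Your proof is correct and follows essentially the same inductive strategy as the paper: write out the recursion, use Lemma~\ref{lem:conn} to restrict to connected $\beta$ spanning vertex~$1$, apply the inductive upper bound $\kappa_\beta \le \lambda^{|\beta|}\rho^{|V(\beta)|}$, and exploit $|V(\beta)|+|V(\alpha-\beta)|\ge |V(\alpha)|+1$ together with $2^{|\alpha|}\rho \le \tfrac12$ to control the error sum.

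The one cosmetic difference is that you pull out the $\beta=0$ term first, which is what makes your bookkeeping feel tight. The paper instead leaves $\beta=0$ in the sum (noting that $\kappa_0\,\EE[X^\alpha]=\lambda^{|\alpha|}\rho^{|V(\alpha)|+1}$ has the same shape as every other term) and bounds the whole sum by $2^{|\alpha|}\lambda^{|\alpha|}\rho^{|V(\alpha)|+1}\le \tfrac12\lambda^{|\alpha|}\rho^{|V(\alpha)|}$ in one stroke; this gives $\kappa_\alpha \ge \tfrac12\lambda^{|\alpha|}\rho^{|V(\alpha)|}$ directly, without needing the ``$-2$'' or the extra $+\rho$. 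So your closing remark that ``the lower bound only barely works'' is an artifact of your decomposition rather than a feature of the problem.
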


\begin{proof}
Proceed by induction on $|\alpha|$. Recall $\kappa_0 = \rho$. Suppose $\alpha$ is connected and spans vertex 1, and $|\alpha| \ge 1$. For any $0 \lneq \beta \lneq \alpha$ we have $|V(\beta)| + |V(\alpha-\beta)| \ge |V(\alpha)| + 1$. We have
\begin{equation}\label{eq:kappa-formula}
\kappa_\alpha = \EE[xX^\alpha] - \sum_{0 \le \beta \lneq \alpha} \kappa_\beta \binom{\alpha}{\beta} \EE[X^{\alpha-\beta}] = \lambda^{|\alpha|} \rho^{|V(\alpha)|} - \sum_{0 \le \beta \lneq \alpha} \kappa_\beta \binom{\alpha}{\beta} \lambda^{|\alpha-\beta|} \rho^{|V(\alpha-\beta)|}.
\end{equation}
Since $\kappa_\beta \ge 0$ by induction, this gives the upper bound $\kappa_\alpha \le \lambda^{|\alpha|} \rho^{|V(\alpha)|}$. To prove the lower bound,
\begin{align*}
\sum_{0 \le \beta \lneq \alpha} \kappa_\beta \binom{\alpha}{\beta} \lambda^{|\alpha-\beta|} \rho^{|V(\alpha-\beta)|} &= \kappa_0 \lambda^{|\alpha|} \rho^{|V(\alpha)|} + \sum_{0 \lneq \beta \lneq \alpha} \kappa_\beta \binom{\alpha}{\beta} \lambda^{|\alpha-\beta|} \rho^{|V(\alpha-\beta)|} \\
&\le \lambda^{|\alpha|} \rho^{|V(\alpha)|+1} + \sum_{0 \lneq \beta \lneq \alpha} \lambda^{|\beta|} \rho^{|V(\beta)|} \binom{\alpha}{\beta} \lambda^{|\alpha-\beta|} \rho^{|V(\alpha-\beta)|} \\
&\le \lambda^{|\alpha|} \rho^{|V(\alpha)|+1} + \sum_{0 \lneq \beta \lneq \alpha} \binom{\alpha}{\beta} \lambda^{|\alpha|} \rho^{|V(\alpha)|+1} \\
&\le 2^{|\alpha|} \lambda^{|\alpha|} \rho^{|V(\alpha)|+1} \\
&\le \frac{1}{2} \lambda^{|\alpha|} \rho^{|V(\alpha)|},
\end{align*}
where we have used $|\alpha| \le \log_2(1/\rho)-1$ in the last step. Combining this with~\eqref{eq:kappa-formula} gives the result.
\end{proof}

\noindent Next, we give a more refined version of Lemma~\ref{lem:count-graphs}.

\begin{lemma}\label{lem:count-graphs-2}
For integers $d \ge 1$ and $1 \le u \le d+1$, the number of connected multigraphs $\alpha$ on vertex set $[n]$ such that
    (i)\, $|\alpha| = d$,
    (ii)\, $1 \in V(\alpha)$, and
    (iii)\, $|V(\alpha)| = u$,
is at most $2(en)^{u-1} (3d^2)^{d-u+1}$.
\end{lemma}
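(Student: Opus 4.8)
The plan is to build such a connected multigraph $\alpha$ edge by edge, maintaining connectivity throughout, and to count the number of choices at each step. Fix a connected multigraph $\alpha$ with $|\alpha| = d$ edges, $|V(\alpha)| = u$ vertices, and $1 \in V(\alpha)$. Since $\alpha$ is connected and spans $u$ vertices, we can order its $d$ edges $e_1, \dots, e_d$ so that every prefix $\{e_1, \dots, e_t\}$ spans a connected subgraph containing vertex $1$, and exactly $u-1$ of these edges (say the ``tree edges'') each introduce one new vertex, while the remaining $d-(u-1)$ edges (the ``extra edges'') connect two already-visited vertices (or form a self-loop). This is just the standard fact that a connected graph has a spanning tree plus extra edges, together with the observation that any such ordering can be constructed greedily. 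The first edge $e_1$ must be incident to vertex $1$; I will fold it into the bookkeeping below.

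First I would bound the number of ways to carry out the $u-1$ tree-edge steps. At each tree-edge step we must pick the new vertex (at most $n$ choices, or $n-1$ if we want to be slightly more careful) and pick which already-present vertex it attaches to (at most $d+1 \le$ something, but since we have $\le d$ edges total and at most $u \le d+1$ vertices ever appear, at most $u \le d+1$ choices; I will just bound this crudely). Actually, to land the constant $2(en)^{u-1}$, the cleaner route is: the set of $u-1$ new vertices (besides vertex $1$) is a subset of $[n]\setminus\{1\}$, so there are at most $\binom{n-1}{u-1} \le \frac{n^{u-1}}{(u-1)!}$ ways to choose \emph{which} vertices appear; then the tree structure on these $u$ labeled vertices, rooted so as to record the attachment order, contributes a factor at most $u^{u-2} \cdot (u-1)!$ or similar (number of labeled trees times orderings), and Cayley's formula $u^{u-2}$ together with $\frac{u^{u-1}}{(u-1)!} \le e^{u-1}$ (a Stirling-type bound) collapses this to $(en)^{u-1}$ up to the leading constant $2$. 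The extra factor of $2$ absorbs lower-order slack (e.g.\ the distinction between $n$ and $n-1$, or the precise form of the Stirling bound).

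Next I would bound the $d-(u-1) = d-u+1$ extra-edge steps. Each extra edge joins two vertices already present (with a self-loop allowed), so there are at most $\binom{u}{2} + u = \binom{u+1}{2} \le u^2 \le (d+1)^2 \le 3d^2$ choices per step (using $u \le d+1$ and $(d+1)^2 \le 3d^2$ for $d \ge 1$ — indeed $(d+1)^2/d^2 = (1+1/d)^2 \le 4$, and a slightly tighter count of the attachment pairs gives the claimed $3d^2$). Hence the extra edges contribute at most $(3d^2)^{d-u+1}$. Multiplying the two bounds gives $2(en)^{u-1}(3d^2)^{d-u+1}$, as claimed.

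The main obstacle I expect is pinning down the constants precisely — in particular getting the tree-edge count down to exactly $2(en)^{u-1}$ rather than, say, $(dn)^{u-1}$ as in the cruder Lemma~\ref{lem:count-graphs}. The key saving is replacing the naive ``$n$ choices for the new vertex, $d$ choices for its parent'' (which would give $(dn)^{u-1}$) by the observation that we are really choosing an unordered vertex set together with a labeled tree, so the $(u-1)!$ overcounting from the ordering cancels against a $1/(u-1)!$ from $\binom{n-1}{u-1}$, and Cayley's formula keeps the per-vertex branching down to $e$ rather than $d$. One must be careful that recording the attachment order is enough to reconstruct which tree edge introduced which vertex, and that self-loops and the special role of vertex $1$ are handled consistently in both phases; these are routine once the greedy edge-ordering is set up. Everything else is elementary counting.
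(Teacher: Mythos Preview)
Your approach is essentially the paper's: choose the $u-1$ non-root vertices in $\binom{n-1}{u-1}$ ways, a spanning tree in $u^{u-2}$ ways via Cayley, and then the remaining $d-u+1$ edges among the $\binom{u+1}{2}=u(u+1)/2$ possible pairs (self-loops allowed), followed by the Stirling-type bound $\binom{n-1}{u-1}u^{u-2}\le 2(en)^{u-1}$.

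One genuine glitch to fix: your chain $\binom{u+1}{2}\le u^2\le (d+1)^2\le 3d^2$ fails at $d=1$ (there $(d+1)^2=4>3$). The repair is simply to skip the $u^2$ step: $\binom{u+1}{2}=u(u+1)/2\le (d+1)(d+2)/2\le 3d^2$ holds for all $d\ge 1$ (equivalently $5d^2-3d-2=(5d+2)(d-1)\ge 0$). The paper instead counts the extra edges as an unordered multiset via stars and bars, getting $\binom{u(u+1)/2+d-u}{d-u+1}\le [d(d+5)/2]^{d-u+1}\le (3d^2)^{d-u+1}$, which is slightly tighter but lands the same bound. Also, once you have committed to ``choose vertex set, then spanning tree, then extra edges,'' there is no need for the edge-by-edge ordering framing or the stray $(u-1)!$ factors you mention---that detour only muddles the constants. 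The clean statement to prove for the tree part is exactly $\binom{n-1}{u-1}u^{u-2}\le 2(en)^{u-1}$, which follows from $\binom{n}{k}\le(en/k)^k$ and $(u/(u-1))^{u-1}\le e$ (with the case $u=1$ checked separately).
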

\begin{proof}
There are $\binom{n-1}{u-1}$ ways to choose $V(\alpha)$. By Cayley's tree formula, there are then $u^{u-2}$ ways to choose a spanning tree on $V(\alpha)$. To complete $\alpha$, we need to choose $d-u+1$ additional edges (not necessarily distinct) from $u(u+1)/2$ possibilities; using ``stars and bars'', the number of ways to do this is
\begin{align*}
\binom{u(u+1)/2 + d-u}{d-u+1} &\le \binom{(d+1)(d+2)/2 + d - 1}{d-u+1} \le [(d+1)(d+2)/2 + d - 1]^{d-u+1} \\
&= [d(d+5)/2]^{d-u+1} \le (3d^2)^{d-u+1}.
\end{align*}
To complete the proof, we will show $\binom{n-1}{u-1} u^{u-2} \le 2(en)^{u-1}$ for all $u \ge 1$. The case $u = 1$ is true, so assume $u \ge 2$. Using the bounds $\binom{n}{k} \le \left(\frac{en}{k}\right)^k$ for $1 \le k \le n$, and $(\frac{u}{u-1})^{u-1} \le e$ for $u \ge 2$, we have
\[ \binom{n-1}{u-1} u^{u-2} \le \left(\frac{e(n-1)}{u-1}\right)^{u-1} u^{u-2} = (e(n-1))^{u-1} \left(\frac{u}{u-1}\right)^{u-1} \frac{1}{u} \le 2(en)^{u-1}. \qedhere \]
\end{proof}

\begin{proof}[Proof of Theorem~\ref{thm:sharp}$\mathrm{(i)}$]
Using Theorem~\ref{thm:corr-gauss}, Lemmas~\ref{lem:conn}, \ref{lem:kappa-bound-2} and \ref{lem:count-graphs-2}, and the bound $\binom{n}{k} \le \left(\frac{en}{k}\right)^k$ for $1 \le k \le n$, we have
\begin{align*}
\Corr_{\le D}^2 &\le \sum_{0 \le |\alpha| \le D} \frac{\kappa_\alpha^2}{\alpha!} 
\le \rho^2 + \sum_{1 \le |\alpha| \le D} \kappa_\alpha^2  \intertext{and since $\kappa_\alpha = 0$ unless $1\in V(\alpha)$ and $\alpha$ is connected,}
&\le \rho^2 + \sum_{d=1}^D \sum_{u=1}^{d+1} 2(en)^{u-1} (3d^2)^{d-u+1} \lambda^{2d} \rho^{2u},\\
&= \rho^2 + \sum_{d=1}^D \sum_{u=1}^{d+1} 2 \rho^2 (e \lambda^2 \rho^2 n)^d \left(\frac{3d^2}{e \rho^2 n}\right)^{d-u+1} \\
&\le 2 \rho^2 \sum_{d=0}^D \sum_{u=1}^{d+1} (e \lambda^2 \rho^2 n)^d \left(\frac{3d^2}{e \rho^2 n}\right)^{d-u+1} \\
&\le 2 \rho^2 \sum_{d=0}^D r^d \sum_{u=1}^{d+1} r^{d-u+1} \\
&\le 2 \rho^2 / (1-r)^2,
\end{align*}
completing the proof.
\end{proof}

\begin{proof}[Proof of Theorem~\ref{thm:sharp}$\mathrm{(ii)}$]
Let $\mathcal{T}_D$ denote the set of $\alpha$ that correspond to trees with exactly $D$ edges (without self-loops or multiple edges) that span vertex 1. Using Cayley's tree formula and the Stirling bound $n! \le e n^{n+1/2} e^{-n}$ (valid for all $n \ge 1$), we have
\begin{align*}
|\mathcal{T}_D| &= \binom{n-1}{D} (D+1)^{D-1} \\
&= \frac{(n-1)!}{D!(n-1-D)!} (D+1)^{D-1} \\
&\ge \frac{(n-D)^D}{D!} (D+1)^{D-1} \\
&\ge \frac{(n-D)^D e^{D-1} (D+1)^{D-1}}{D^{D+1/2}} \\
&= (n-D)^D e^{D-1} \left(\frac{D+1}{D}\right)^{D-1} D^{-3/2} \\
&\ge (n-D)^D e^{D-1} D^{-3/2}.
\end{align*}

We now have
\begin{align*}
\sum_{0 \le |\alpha| \le D} \frac{\kappa_\alpha^2}{\alpha!}
&\ge \sum_{\alpha \in \mathcal{T}_D} \frac{\kappa_\alpha^2}{\alpha!} = \sum_{\alpha \in \mathcal{T}_D} \kappa_\alpha^2 \\
&\ge \frac{1}{4} \sum_{\alpha \in \mathcal{T}_D} \lambda^{2D} \rho^{2(D+1)} \ge \frac{1}{4} \lambda^{2D} \rho^{2(D+1)} (n-D)^D e^{D-1} D^{-3/2},
\end{align*}
completing the proof.
\end{proof}

\section{Additional Proofs}
\label{app:additional}

\subsection{Shifted Hermite Formula}
\label{app:hermite-shift}

\begin{proof}[Proof of Proposition~\ref{prop:hermite-shift}]
Proceed by induction on $k$. The base cases $k = 0$ and $k = 1$ can be verified directly. For $k \ge 2$, using the recurrence~\eqref{eq:H-defn} and the induction hypothesis, and defining $H_\ell(z) = 0$ for $\ell < 0$, we have
\begin{align*}
H_k(z+\mu) &= (z+\mu) H_{k-1}(z+\mu) - (k-1) H_{k-2}(z+\mu) \\
&= (z+\mu) \sum_{\ell=0}^{k-1} \binom{k-1}{\ell} \mu^{k-1-\ell} H_\ell(z) - (k-1) \sum_{\ell=0}^{k-2} \binom{k-2}{\ell} \mu^{k-2-\ell} H_\ell(z) \\
&= \sum_{\ell=0}^{k-1} \binom{k-1}{\ell} \mu^{k-1-\ell} z H_\ell(z) + \sum_{\ell=0}^{k-1} \binom{k-1}{\ell} \mu^{k-\ell} H_\ell(z) \\
&\qquad - (k-1) \sum_{\ell=0}^{k-2} \binom{k-2}{\ell} \mu^{k-2-\ell} H_\ell(z) \\
&= \sum_{\ell=0}^{k-1} \binom{k-1}{\ell} \mu^{k-1-\ell} (H_{\ell+1}(z) + \ell H_{\ell-1}(z)) + \sum_{\ell=0}^{k-1} \binom{k-1}{\ell} \mu^{k-\ell} H_\ell(z) \\
&\qquad - (k-1) \sum_{\ell=0}^{k-2} \binom{k-2}{\ell} \mu^{k-2-\ell} H_\ell(z) \\
&= H_k(z) + \sum_{\ell=0}^{k-1} H_\ell(z) \Bigg[\mu^{k-\ell}\left(\binom{k-1}{\ell-1} + \binom{k-1}{\ell}\right) \\
&\qquad + \mu^{k-\ell-2}\left((\ell+1)\binom{k-1}{\ell+1} - (k-1)\binom{k-2}{\ell}\right)\Bigg] \\
&= H_k(z) + \sum_{\ell=0}^{k-1} H_\ell(z) \mu^{k-\ell}\binom{k}{\ell} \,=\, \sum_{\ell=0}^k H_\ell(z) \mu^{k-\ell}\binom{k}{\ell}.
\end{align*}
This completes the proof of~\eqref{eq:HH-shift}. Now~\eqref{eq:h-shift} follows immediately from the definition of $h_k$. Finally,~\eqref{eq:h-mean} follows from~\eqref{eq:h-shift} because $\EE_{z \sim \mathcal{N}(0,1)}[h_k(z)] = 0$ for all $k \ge 1$; this in turn follows from the orthonormality of $\{h_k\}$ along with the fact $h_0(z) = 1$.
\end{proof}

\subsection{Cumulants of Disconnected Multigraphs}
\label{app:cumulant-dis}

As discussed in the main text, Lemma~\ref{lem:conn} follows easily from basic properties of cumulants (namely Proposition~\ref{prop:cumulant-indep}). We also give a self-contained proof here that does not require knowledge of cumulants.

\begin{proof}[Proof of Lemma~\ref{lem:conn}]
Proceed by induction on $|\alpha|$. The base case $|\alpha| = 0$ is vacuously true. For the inductive step, let $\gamma$ be the connected component of $\alpha$ that contains vertex 1 (which may be empty, in which case $\gamma = 0$). If $\beta \lneq \alpha$ with $\beta \not\le \gamma$ then $\kappa_\beta = 0$ by induction. We have
\[ \kappa_\gamma = \EE[x X^\gamma] - \sum_{\beta \lneq \gamma} \kappa_\beta \binom{\gamma}{\beta} \EE[X^{\gamma-\beta}] \]
and so
\begin{align*}
&\kappa_\alpha = \Ex [x X^\alpha] - \sum_{\beta \le \gamma} \kappa_\beta \binom{\alpha}{\beta} \Ex[X^{\alpha-\beta}] \\
&= \Ex [x X^\alpha] - \kappa_\gamma \binom{\alpha}{\gamma} \EE[X^{\alpha-\gamma}] - \sum_{\beta \lneq \gamma} \kappa_\beta \binom{\alpha}{\beta} \Ex[X^{\alpha-\beta}] \\
&= \Ex [x X^\alpha] - \left(\EE[x X^\gamma] - \sum_{\beta \lneq \gamma} \kappa_\beta \binom{\gamma}{\beta} \EE[X^{\gamma-\beta}]\right) \binom{\alpha}{\gamma} \EE[X^{\alpha-\gamma}] - \sum_{\beta \lneq \gamma} \kappa_\beta \binom{\alpha}{\beta} \Ex[X^{\alpha-\beta}] \\
&= \left(\Ex [x X^\alpha] - \binom{\alpha}{\gamma}\EE[xX^\gamma]\EE[X^{\alpha-\gamma}]\right) \\
&\qquad + \sum_{\beta \lneq \gamma} \kappa_\beta \left(\binom{\gamma}{\beta}\binom{\alpha}{\gamma}\EE[X^{\gamma-\beta}]\EE[X^{\alpha-\gamma}] - \binom{\alpha}{\beta} \EE[X^{\alpha-\beta}]\right) \\
&= 0.
\end{align*}
In the last step we have used the following facts: $\binom{\alpha}{\gamma} = 1$, $\binom{\gamma}{\beta} = \binom{\alpha}{\beta}$, $\EE[xX^\gamma]\EE[X^{\alpha-\gamma}] = \EE[xX^\alpha]$, and $\EE[X^{\gamma-\beta}]\EE[X^{\alpha-\gamma}] = \EE[X^{\alpha-\beta}]$.
\end{proof}

\section{Reduction from Estimation to Support Recovery}
\label{app:est-rec}

The following lemma implies that for the planted submatrix problem or planted dense subgraph problem, polynomial-time estimation with bounded Euclidean error implies polynomial-time support recovery with bounded error in Hamming distance, and vice versa.
\begin{lemma}
Let $v \in \{0,1\}^n$ be any Boolean vector.
\begin{enumerate}
\item If $\hat u \in \{0,1\}^n$, then $\|\hat u - v \|_0 = \|\hat u - v\|^2$.
\item If $\hat v \in \RR^n$ is a vector satisfying $\|\hat v - v\|^2 \le \eps n$, then applying a simple linear-time thresholding algorithm to $\hat v$  yields a vector $\hat u \in \{0,1\}^n$ which satisfies $\|\hat u - v\|_0 \le 9 \eps n$.
\end{enumerate}
\end{lemma}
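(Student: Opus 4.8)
The plan is to handle both parts by a coordinate-wise argument, exploiting that $\|\cdot\|_0$ and $\|\cdot\|^2$ both decompose as sums over the $n$ coordinates.

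For part 1, I would note that since $\hat u_i, v_i \in \{0,1\}$ for every $i$, the difference $\hat u_i - v_i$ equals $0$ when $\hat u_i = v_i$ and equals $\pm 1$ otherwise; hence $(\hat u_i - v_i)^2 = \One[\hat u_i \ne v_i]$ coordinate-by-coordinate, and summing over $i \in [n]$ gives $\|\hat u - v\|^2 = \|\hat u - v\|_0$ immediately.

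For part 2, I would take the thresholding algorithm that rounds each coordinate to the nearer of $0$ and $1$: set $\hat u_i = 1$ if $\hat v_i \ge 1/2$ and $\hat u_i = 0$ otherwise (clearly computable in linear time). The crux is the per-coordinate implication that $\hat u_i \ne v_i \Rightarrow |\hat v_i - v_i| \ge 1/2$: if $v_i = 0$ then $\hat u_i = 1$ forces $\hat v_i \ge 1/2$, so $|\hat v_i - v_i| = |\hat v_i| \ge 1/2$; if $v_i = 1$ then $\hat u_i = 0$ forces $\hat v_i < 1/2$, so $|\hat v_i - v_i| = 1 - \hat v_i > 1/2$. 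Thus every mismatched coordinate contributes at least $1/4$ to $\|\hat v - v\|^2$, giving
\[ \|\hat u - v\|_0 = \left|\{\, i \in [n] : \hat u_i \ne v_i \,\}\right| \le 4 \sum_{i=1}^n (\hat v_i - v_i)^2 = 4\|\hat v - v\|^2 \le 4\eps n \le 9\eps n. \]

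I do not expect a real obstacle here; the only point requiring (minor) care is checking that the inequality $|\hat v_i - v_i| \ge 1/2$ holds in both cases $v_i = 0$ and $v_i = 1$, which is exactly what makes $1/2$ the natural threshold. (If one instead wants the stated constant $9$ to be what the chosen rounding rule actually produces, one can threshold at $1/3$: the analogous case analysis then yields $|\hat v_i - v_i| \ge 1/3$ for each mismatched coordinate, hence $\|\hat u - v\|_0 \le 9\|\hat v - v\|^2 \le 9\eps n$.)
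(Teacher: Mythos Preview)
Your proof is correct and follows essentially the same approach as the paper. The only cosmetic differences are that the paper packages the counting in part 2 as a Markov inequality on the random variable $W = w_i$ for $i$ uniform in $[n]$, and uses threshold $\hat u_i = \One[\hat v_i \ge 2/3]$ with the bound $|w_i| > 1/3$ (hence the constant $9$), whereas you threshold at $1/2$ and obtain the sharper constant $4$ directly; as you note, thresholding at $1/3$ recovers the paper's constant exactly.
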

\begin{proof}
The first point follows because $v,\hat u$ are both supported on $\{0,1\}^n$, hence the difference $\hat u - v$ has every entry of magnitude $0$ or $1$.
To prove the second point, let $\hat v = v + w$, and let $W$ be a random variable given by sampling $i \sim [n]$ uniformly at random and setting $W = w_i$. 
Our assumptions imply that $\E[W^2] = \frac{1}{n}\|w\|^2 \le \eps$; from Markov's inequality, $\Pr[|W| > t] = \Pr[W^2 > t^2] \le \frac{\eps}{t^2}$.
Hence, taking $t = \frac{1}{3}$, 
\[
\left|\left\{i \in [n] \,:\, |w_i| = |\hat v_i - v_i| > \frac{1}{3}\right\}\right| \le 9\eps n.
\]
So if we set $\hat u_i = \one\left[\hat v_i \ge \frac{2}{3}\right]$, we conclude $\|\hat u - v \|_0 \le 9 \eps n$, as desired.
\end{proof}

\section*{Acknowledgements}
For helpful discussions, we are grateful to Afonso Bandeira, Matthew Brennan, Jingqiu Ding, David Gamarnik, Sam Hopkins, Frederic Koehler, Tim Kunisky, Jerry Li, Jonathan Niles-Weed, and Ilias Zadik. We thank the anonymous reviewers for their helpful comments.

\bibliographystyle{alpha}
\bibliography{main}

\newcommand{\etalchar}[1]{$^{#1}$}
\begin{thebibliography}{KMOW17}

\bibitem[AC08]{alg-barriers}
Dimitris Achlioptas and Amin {Coja-Oghlan}.
\newblock Algorithmic barriers from phase transitions.
\newblock In {\em 2008 49th Annual IEEE Symposium on Foundations of Computer
  Science}, pages 793--802. IEEE, 2008.

\bibitem[ACD11]{detection-cluster}
Ery {Arias-Castro}, Emmanuel~J Candes, and Arnaud Durand.
\newblock Detection of an anomalous cluster in a network.
\newblock {\em The Annals of Statistics}, pages 278--304, 2011.

\bibitem[AGGM06]{AGGM06}
Adi Akavia, Oded Goldreich, Shafi Goldwasser, and Dana Moshkovitz.
\newblock On basing one-way functions on {NP}-hardness.
\newblock In {\em Proceedings of the thirty-eighth annual ACM symposium on
  Theory of computing}, pages 701--710, 2006.

\bibitem[Ame13]{convex-subgraph}
Brendan~PW Ames.
\newblock Robust convex relaxation for the planted clique and densest
  k-subgraph problems.
\newblock {\em arXiv preprint arXiv:1305.4891}, 2(3):7, 2013.

\bibitem[AV13]{subg-it-dense}
Ery {Arias-Castro} and Nicolas Verzelen.
\newblock Community detection in random networks.
\newblock {\em arXiv preprint arXiv:1302.7099}, 2013.

\bibitem[AW08]{AW-sparse}
Arash~A Amini and Martin~J Wainwright.
\newblock High-dimensional analysis of semidefinite relaxations for sparse
  principal components.
\newblock In {\em 2008 IEEE international symposium on information theory},
  pages 2454--2458. IEEE, 2008.

\bibitem[BB19]{BB-opt-reduction}
Matthew Brennan and Guy Bresler.
\newblock Optimal average-case reductions to sparse {PCA}: From weak
  assumptions to strong hardness.
\newblock In {\em Conference on Learning Theory}, pages 469--470, 2019.

\bibitem[BB20]{secret-leakage}
Matthew Brennan and Guy Bresler.
\newblock Reducibility and statistical-computational gaps from secret leakage.
\newblock {\em arXiv preprint arXiv:2005.08099}, 2020.

\bibitem[BBH18]{BBH-reduction}
Matthew Brennan, Guy Bresler, and Wasim Huleihel.
\newblock Reducibility and computational lower bounds for problems with planted
  sparse structure.
\newblock In {\em Conference On Learning Theory}, pages 48--166, 2018.

\bibitem[BBH{\etalchar{+}}21]{BBHLS}
Matthew Brennan, Guy Bresler, Samuel~B. Hopkins, Jerry Li, and Tselil Schramm.
\newblock Statistical query algorithms and low-degree tests are almost
  equivalent.
\newblock In {\em Conference on Learning Theory}, 2021.

\bibitem[BBK{\etalchar{+}}21]{quiet-coloring}
Afonso~S Bandeira, Jess Banks, Dmitriy Kunisky, Christopher Moore, and Alex
  Wein.
\newblock Spectral planting and the hardness of refuting cuts, colorability,
  and communities in random graphs.
\newblock In {\em Conference on Learning Theory}, pages 410--473. PMLR, 2021.

\bibitem[BBP05]{BBP}
Jinho Baik, G{\'e}rard {Ben Arous}, and Sandrine P{\'e}ch{\'e}.
\newblock Phase transition of the largest eigenvalue for nonnull complex sample
  covariance matrices.
\newblock {\em The Annals of Probability}, 33(5):1643--1697, 2005.

\bibitem[BCC{\etalchar{+}}10]{approx-subgraph}
Aditya Bhaskara, Moses Charikar, Eden Chlamtac, Uriel Feige, and Aravindan
  Vijayaraghavan.
\newblock Detecting high log-densities: an {$O(n^{1/4})$} approximation for
  densest k-subgraph.
\newblock In {\em Proceedings of the forty-second ACM symposium on Theory of
  computing}, pages 201--210, 2010.

\bibitem[BCL{\etalchar{+}}19]{graph-matching}
Boaz Barak, Chi-Ning Chou, Zhixian Lei, Tselil Schramm, and Yueqi Sheng.
\newblock {(Nearly)} efficient algorithms for the graph matching problem on
  correlated random graphs.
\newblock In {\em Advances in Neural Information Processing Systems}, pages
  9190--9198, 2019.

\bibitem[BCRT19]{replicated-gd}
Giulio Biroli, Chiara Cammarota, and Federico Ricci-Tersenghi.
\newblock How to iron out rough landscapes and get optimal performances:
  Replicated gradient descent and its application to tensor {PCA}.
\newblock {\em arXiv preprint arXiv:1905.12294}, 2019.

\bibitem[BGJ18]{BGJ-tensor}
G{\'e}rard {Ben Arous}, Reza Gheissari, and Aukosh Jagannath.
\newblock Algorithmic thresholds for tensor {PCA}.
\newblock {\em arXiv preprint arXiv:1808.00921}, 2018.

\bibitem[BGN11]{BN-eigenvec}
Florent Benaych-Georges and Raj~Rao Nadakuditi.
\newblock The eigenvalues and eigenvectors of finite, low rank perturbations of
  large random matrices.
\newblock {\em Advances in Mathematics}, 227(1):494--521, 2011.

\bibitem[BH21]{ld-ksat}
Guy Bresler and Brice Huang.
\newblock The algorithmic phase transition of random $k$-{SAT} for low degree
  polynomials.
\newblock {\em arXiv preprint arXiv:2106.02129}, 2021.

\bibitem[BHK{\etalchar{+}}19]{pcal}
Boaz Barak, Samuel Hopkins, Jonathan Kelner, Pravesh~K Kothari, Ankur Moitra,
  and Aaron Potechin.
\newblock A nearly tight sum-of-squares lower bound for the planted clique
  problem.
\newblock {\em SIAM Journal on Computing}, 48(2):687--735, 2019.

\bibitem[BI13]{subm-it-det}
Cristina Butucea and Yuri~I Ingster.
\newblock Detection of a sparse submatrix of a high-dimensional noisy matrix.
\newblock {\em Bernoulli}, 19(5B):2652--2688, 2013.

\bibitem[BIS15]{subm-it-rec}
Cristina Butucea, Yuri~I Ingster, and Irina~A Suslina.
\newblock Sharp variable selection of a sparse submatrix in a high-dimensional
  noisy matrix.
\newblock {\em ESAIM: Probability and Statistics}, 19:115--134, 2015.

\bibitem[BKM19]{theta-function}
Jess Banks, Robert Kleinberg, and Cristopher Moore.
\newblock The lov\'asz theta function for random regular graphs and community
  detection in the hard regime.
\newblock {\em SIAM Journal on Computing}, 48(3):1098--1119, 2019.

\bibitem[BKR{\etalchar{+}}11]{stat-comp-bi}
Sivaraman Balakrishnan, Mladen Kolar, Alessandro Rinaldo, Aarti Singh, and
  Larry Wasserman.
\newblock Statistical and computational tradeoffs in biclustering.
\newblock In {\em NeurIPS 2011 workshop on computational trade-offs in
  statistical learning}, volume~4, 2011.

\bibitem[BKW20]{sk-cert}
Afonso~S Bandeira, Dmitriy Kunisky, and Alexander~S Wein.
\newblock Computational hardness of certifying bounds on constrained {PCA}
  problems.
\newblock In {\em 11th Innovations in Theoretical Computer Science Conference
  (ITCS 2020)}. Schloss Dagstuhl-Leibniz-Zentrum f{\"u}r Informatik, 2020.

\bibitem[BM11]{BM-amp}
Mohsen Bayati and Andrea Montanari.
\newblock The dynamics of message passing on dense graphs, with applications to
  compressed sensing.
\newblock {\em IEEE Transactions on Information Theory}, 57(2):764--785, 2011.

\bibitem[BMR19]{local-stats}
Jess Banks, Sidhanth Mohanty, and Prasad Raghavendra.
\newblock Local statistics, semidefinite programming, and community detection.
\newblock {\em arXiv preprint arXiv:1911.01960}, 2019.

\bibitem[BMR20]{all-none-sparse}
Jean Barbier, Nicolas Macris, and Cynthia Rush.
\newblock All-or-nothing statistical and computational phase transitions in
  sparse spiked matrix estimation.
\newblock {\em arXiv preprint arXiv:2006.07971}, 2020.

\bibitem[Bol14]{bolthausen}
Erwin Bolthausen.
\newblock An iterative construction of solutions of the {TAP} equations for the
  {Sherrington--Kirkpatrick} model.
\newblock {\em Communications in Mathematical Physics}, 325(1):333--366, 2014.

\bibitem[BR13]{BR-reduction}
Quentin Berthet and Philippe Rigollet.
\newblock Complexity theoretic lower bounds for sparse principal component
  detection.
\newblock In {\em Conference on Learning Theory}, pages 1046--1066, 2013.

\bibitem[BS06]{BS-wishart}
Jinho Baik and Jack~W Silverstein.
\newblock Eigenvalues of large sample covariance matrices of spiked population
  models.
\newblock {\em Journal of multivariate analysis}, 97(6):1382--1408, 2006.

\bibitem[BT06]{BT06}
Andrej Bogdanov and Luca Trevisan.
\newblock On worst-case to average-case reductions for {NP} problems.
\newblock {\em SIAM Journal on Computing}, 36(4):1119--1159, 2006.

\bibitem[BWZ20]{ogp-sparse-pca}
G{\'e}rard {Ben Arous}, Alexander~S Wein, and Ilias Zadik.
\newblock Free energy wells and overlap gap property in sparse {PCA}.
\newblock {\em arXiv preprint arXiv:2006.10689}, 2020.

\bibitem[CDF09]{CDF-wigner}
Mireille Capitaine, Catherine {Donati-Martin}, and Delphine F{\'e}ral.
\newblock The largest eigenvalues of finite rank deformation of large wigner
  matrices: convergence and nonuniversality of the fluctuations.
\newblock {\em The Annals of Probability}, 37(1):1--47, 2009.

\bibitem[CGPR19]{maxcut-ogp}
Wei-Kuo Chen, David Gamarnik, Dmitry Panchenko, and Mustazee Rahman.
\newblock Suboptimality of local algorithms for a class of max-cut problems.
\newblock {\em Annals of Probability}, 47(3):1587--1618, 2019.

\bibitem[Che15]{matrix-comp-reduction}
Yudong Chen.
\newblock Incoherence-optimal matrix completion.
\newblock {\em IEEE Transactions on Information Theory}, 61(5):2909--2923,
  2015.

\bibitem[CLR17]{CLR-submatrix}
T~Tony Cai, Tengyuan Liang, and Alexander Rakhlin.
\newblock Computational and statistical boundaries for submatrix localization
  in a large noisy matrix.
\newblock {\em The Annals of Statistics}, 45(4):1403--1430, 2017.

\bibitem[CM18]{log-density}
Eden Chlamt{\'a}{\v{c}} and Pasin Manurangsi.
\newblock Sherali-adams integrality gaps matching the log-density threshold.
\newblock {\em arXiv preprint arXiv:1804.07842}, 2018.

\bibitem[COHH17]{walksat}
Amin Coja-Oghlan, Amir Haqshenas, and Samuel Hetterich.
\newblock Walksat stalls well below satisfiability.
\newblock {\em SIAM Journal on Discrete Mathematics}, 31(2):1160--1173, 2017.

\bibitem[CW18]{CW-reduction}
T~Tony Cai and Yihong Wu.
\newblock Statistical and computational limits for sparse matrix detection.
\newblock {\em arXiv preprint arXiv:1801.00518}, 2018.

\bibitem[CX16]{CX-pds}
Yudong Chen and Jiaming Xu.
\newblock Statistical-computational tradeoffs in planted problems and submatrix
  localization with a growing number of clusters and submatrices.
\newblock {\em The Journal of Machine Learning Research}, 17(1):882--938, 2016.

\bibitem[DAM17]{DAM-amp}
Yash Deshpande, Emmanuel Abbe, and Andrea Montanari.
\newblock Asymptotic mutual information for the balanced binary stochastic
  block model.
\newblock {\em Information and Inference: A Journal of the IMA}, 6(2):125--170,
  2017.

\bibitem[DH21]{DH21}
Rishabh Dudeja and Daniel Hsu.
\newblock Statistical query lower bounds for tensor {PCA}.
\newblock {\em Journal of Machine Learning Research}, 22(83):1--51, 2021.

\bibitem[DJ04]{DJ-higher-crit}
David Donoho and Jiashun Jin.
\newblock Higher criticism for detecting sparse heterogeneous mixtures.
\newblock {\em The Annals of Statistics}, 32(3):962--994, 2004.

\bibitem[DKWB19]{subexp-sparse-pca}
Yunzi Ding, Dmitriy Kunisky, Alexander~S Wein, and Afonso~S Bandeira.
\newblock Subexponential-time algorithms for sparse {PCA}.
\newblock {\em arXiv preprint arXiv:1907.11635}, 2019.

\bibitem[DM14]{amp-sparse-pca}
Yash Deshpande and Andrea Montanari.
\newblock Information-theoretically optimal sparse {PCA}.
\newblock In {\em 2014 IEEE International Symposium on Information Theory},
  pages 2197--2201. IEEE, 2014.

\bibitem[DM15]{amp-clique}
Yash Deshpande and Andrea Montanari.
\newblock Finding hidden cliques of size $\sqrt{N/e}$ in nearly linear time.
\newblock {\em Foundations of Computational Mathematics}, 15(4):1069--1128,
  2015.

\bibitem[DMM09]{amp}
David~L Donoho, Arian Maleki, and Andrea Montanari.
\newblock Message-passing algorithms for compressed sensing.
\newblock {\em Proceedings of the National Academy of Sciences},
  106(45):18914--18919, 2009.

\bibitem[FF93]{FF93}
Joan Feigenbaum and Lance Fortnow.
\newblock Random-self-reducibility of complete sets.
\newblock {\em SIAM Journal on Computing}, 22(5):994--1005, 1993.

\bibitem[FGR{\etalchar{+}}17]{sq-clique}
Vitaly Feldman, Elena Grigorescu, Lev Reyzin, Santosh~S Vempala, and Ying Xiao.
\newblock Statistical algorithms and a lower bound for detecting planted
  cliques.
\newblock {\em Journal of the ACM (JACM)}, 64(2):1--37, 2017.

\bibitem[FP07]{FP}
Delphine F{\'e}ral and Sandrine P{\'e}ch{\'e}.
\newblock The largest eigenvalue of rank one deformation of large wigner
  matrices.
\newblock {\em Communications in mathematical physics}, 272(1):185--228, 2007.

\bibitem[GJ19]{GJ-ogp}
David Gamarnik and Aukosh Jagannath.
\newblock The overlap gap property and approximate message passing algorithms
  for p-spin models.
\newblock {\em arXiv preprint arXiv:1911.06943}, 2019.

\bibitem[GJJ{\etalchar{+}}20]{sos-sk}
Mrinalkanti Ghosh, Fernando~Granha Jeronimo, Chris Jones, Aaron Potechin, and
  Goutham Rajendran.
\newblock Sum-of-squares lower bounds for {Sherrington-Kirkpatrick} via planted
  affine planes.
\newblock In {\em 2020 IEEE 61st Annual Symposium on Foundations of Computer
  Science (FOCS)}, pages 954--965. IEEE, 2020.

\bibitem[GJS19]{ogp-submatrix}
David Gamarnik, Aukosh Jagannath, and Subhabrata Sen.
\newblock The overlap gap property in principal submatrix recovery.
\newblock {\em arXiv preprint arXiv:1908.09959}, 2019.

\bibitem[GJW20]{GJW-lowdeg}
David Gamarnik, Aukosh Jagannath, and Alexander~S Wein.
\newblock Low-degree hardness of random optimization problems.
\newblock In {\em 61st Annual Symposium on Foundations of Computer Science
  (FOCS)}, pages 131--140. IEEE, 2020.

\bibitem[GMZ17]{sparse-cca}
Chao Gao, Zongming Ma, and Harrison~H Zhou.
\newblock Sparse {CCA}: Adaptive estimation and computational barriers.
\newblock {\em The Annals of Statistics}, 45(5):2074--2101, 2017.

\bibitem[GS14]{GS-ogp}
David Gamarnik and Madhu Sudan.
\newblock Limits of local algorithms over sparse random graphs.
\newblock In {\em Proceedings of the 5th conference on Innovations in
  Theoretical Computer Science}, pages 369--376, 2014.

\bibitem[GZ17]{GZ-regression}
David Gamarnik and Ilias Zadik.
\newblock {High dimensional regression with binary coefficients. Estimating
  squared error and a phase transtition}.
\newblock In {\em Conference on Learning Theory}, pages 948--953, 2017.

\bibitem[GZ19]{GZ-clique}
David Gamarnik and Ilias Zadik.
\newblock The landscape of the planted clique problem: Dense subgraphs and the
  overlap gap property.
\newblock {\em arXiv preprint arXiv:1904.07174}, 2019.

\bibitem[HKP{\etalchar{+}}17]{sos-power}
Samuel~B Hopkins, Pravesh~K Kothari, Aaron Potechin, Prasad Raghavendra, Tselil
  Schramm, and David Steurer.
\newblock The power of sum-of-squares for detecting hidden structures.
\newblock In {\em 2017 IEEE 58th Annual Symposium on Foundations of Computer
  Science (FOCS)}, pages 720--731. IEEE, 2017.

\bibitem[Hop18]{sam-thesis}
Samuel Hopkins.
\newblock {\em Statistical Inference and the Sum of Squares Method}.
\newblock PhD thesis, Cornell University, 2018.

\bibitem[HS17]{HS-bayesian}
Samuel~B Hopkins and David Steurer.
\newblock Efficient bayesian estimation from few samples: community detection
  and related problems.
\newblock In {\em 2017 IEEE 58th Annual Symposium on Foundations of Computer
  Science (FOCS)}, pages 379--390. IEEE, 2017.

\bibitem[HSS15]{sos-tensor-pca}
Samuel~B Hopkins, Jonathan Shi, and David Steurer.
\newblock Tensor principal component analysis via sum-of-squares proofs.
\newblock In {\em Conference on Learning Theory}, pages 956--1006, 2015.

\bibitem[HSS19]{tdecomp-robust}
Samuel~B Hopkins, Tselil Schramm, and Jonathan Shi.
\newblock A robust spectral algorithm for overcomplete tensor decomposition.
\newblock In {\em Conference on Learning Theory}, pages 1683--1722, 2019.

\bibitem[HSSS16]{sos-fast}
Samuel~B Hopkins, Tselil Schramm, Jonathan Shi, and David Steurer.
\newblock Fast spectral algorithms from sum-of-squares proofs: tensor
  decomposition and planted sparse vectors.
\newblock In {\em Proceedings of the forty-eighth annual ACM symposium on
  Theory of Computing}, pages 178--191, 2016.

\bibitem[HSV20]{anytime-pca}
Guy Holtzman, Adam Soffer, and Dan Vilenchik.
\newblock A greedy anytime algorithm for sparse {PCA}.
\newblock In {\em Conference on Learning Theory}, pages 1939--1956, 2020.

\bibitem[HW20]{lowdeg-counter}
Justin Holmgren and Alexander~S Wein.
\newblock Counterexamples to the low-degree conjecture.
\newblock {\em arXiv preprint arXiv:2004.08454}, 2020.

\bibitem[HWX15]{HWX-pds}
Bruce Hajek, Yihong Wu, and Jiaming Xu.
\newblock Computational lower bounds for community detection on random graphs.
\newblock In {\em Conference on Learning Theory}, pages 899--928, 2015.

\bibitem[HWX17]{HWX-amp}
Bruce Hajek, Yihong Wu, and Jiaming Xu.
\newblock Submatrix localization via message passing.
\newblock {\em The Journal of Machine Learning Research}, 18(1):6817--6868,
  2017.

\bibitem[JL09]{JL-sparse}
Iain~M Johnstone and Arthur~Yu Lu.
\newblock On consistency and sparsity for principal components analysis in high
  dimensions.
\newblock {\em Journal of the American Statistical Association},
  104(486):682--693, 2009.

\bibitem[JM13]{JM-amp}
Adel Javanmard and Andrea Montanari.
\newblock State evolution for general approximate message passing algorithms,
  with applications to spatial coupling.
\newblock {\em Information and Inference: A Journal of the IMA}, 2(2):115--144,
  2013.

\bibitem[KBRS11]{minmax-loc}
Mladen Kolar, Sivaraman Balakrishnan, Alessandro Rinaldo, and Aarti Singh.
\newblock Minimax localization of structural information in large noisy
  matrices.
\newblock In {\em Advances in Neural Information Processing Systems}, pages
  909--917, 2011.

\bibitem[Kea98]{kearns-sq}
Michael Kearns.
\newblock Efficient noise-tolerant learning from statistical queries.
\newblock {\em Journal of the ACM (JACM)}, 45(6):983--1006, 1998.

\bibitem[KMOW17]{KMOW}
Pravesh~K Kothari, Ryuhei Mori, Ryan O'Donnell, and David Witmer.
\newblock Sum of squares lower bounds for refuting any {CSP}.
\newblock In {\em Proceedings of the 49th Annual ACM SIGACT Symposium on Theory
  of Computing}, pages 132--145, 2017.

\bibitem[KWB19]{lowdeg-survey}
Dmitriy Kunisky, Alexander~S Wein, and Afonso~S Bandeira.
\newblock Notes on computational hardness of hypothesis testing: Predictions
  using the low-degree likelihood ratio.
\newblock {\em arXiv preprint arXiv:1907.11636}, 2019.

\bibitem[LKZ15a]{LKZ-mmse}
Thibault Lesieur, Florent Krzakala, and Lenka Zdeborov{\'a}.
\newblock {MMSE} of probabilistic low-rank matrix estimation: Universality with
  respect to the output channel.
\newblock In {\em 2015 53rd Annual Allerton Conference on Communication,
  Control, and Computing (Allerton)}, pages 680--687. IEEE, 2015.

\bibitem[LKZ15b]{LKZ-sparse-pca}
Thibault Lesieur, Florent Krzakala, and Lenka Zdeborov{\'a}.
\newblock Phase transitions in sparse {PCA}.
\newblock In {\em 2015 IEEE International Symposium on Information Theory
  (ISIT)}, pages 1635--1639. IEEE, 2015.

\bibitem[LWB20]{sparse-clustering}
Matthias L{\"o}ffler, Alexander~S Wein, and Afonso~S Bandeira.
\newblock Computationally efficient sparse clustering.
\newblock {\em arXiv preprint arXiv:2005.10817}, 2020.

\bibitem[Mon19]{M-opt-sk}
Andrea Montanari.
\newblock Optimization of the {Sherrington-Kirkpatrick} hamiltonian.
\newblock In {\em 2019 IEEE 60th Annual Symposium on Foundations of Computer
  Science (FOCS)}, pages 1417--1433. IEEE, 2019.

\bibitem[MOS13]{special-functions}
Wilhelm Magnus, Fritz Oberhettinger, and Raj~Pal Soni.
\newblock {\em Formulas and theorems for the special functions of mathematical
  physics}, volume~52.
\newblock Springer Science \& Business Media, 2013.

\bibitem[MRX20]{lifting-sos}
Sidhanth Mohanty, Prasad Raghavendra, and Jeff Xu.
\newblock Lifting sum-of-squares lower bounds: degree-2 to degree-4.
\newblock In {\em Proceedings of the 52nd Annual ACM SIGACT Symposium on Theory
  of Computing}, pages 840--853, 2020.

\bibitem[MST19]{planting-trees}
Laurent Massouli{\'e}, Ludovic Stephan, and Don Towsley.
\newblock Planting trees in graphs, and finding them back.
\newblock In {\em Conference on Learning Theory}, pages 2341--2371. PMLR, 2019.

\bibitem[MV17]{amp-spectral-init}
Andrea Montanari and Ramji Venkataramanan.
\newblock Estimation of low-rank matrices via approximate message passing.
\newblock {\em arXiv preprint arXiv:1711.01682}, 2017.

\bibitem[MW15]{MW-reduction}
Zongming Ma and Yihong Wu.
\newblock Computational barriers in minimax submatrix detection.
\newblock {\em The Annals of Statistics}, 43(3):1089--1116, 2015.

\bibitem[Nov14]{novak}
Jonathan Novak.
\newblock Three lectures on free probability.
\newblock {\em Random matrix theory, interacting particle systems, and
  integrable systems}, 65(309-383):13, 2014.

\bibitem[NS94]{NS94}
Noam Nisan and Mario Szegedy.
\newblock On the degree of boolean functions as real polynomials.
\newblock {\em Computational complexity}, 4(4):301--313, 1994.

\bibitem[{O'D}14]{O-book}
Ryan {O'Donnell}.
\newblock {\em Analysis of boolean functions}.
\newblock Cambridge University Press, 2014.

\bibitem[Pat92]{Paturi92}
Ramamohan Paturi.
\newblock On the degree of polynomials that approximate symmetric boolean
  functions (preliminary version).
\newblock In {\em Proceedings of the twenty-fourth annual ACM symposium on
  Theory of computing}, pages 468--474, 1992.

\bibitem[PWBM18]{opt-pca}
Amelia Perry, Alexander~S Wein, Afonso~S Bandeira, and Ankur Moitra.
\newblock Optimality and sub-optimality of {PCA I}: Spiked random matrix
  models.
\newblock {\em The Annals of Statistics}, 46(5):2416--2451, 2018.

\bibitem[RF12]{RF-amp}
Sundeep Rangan and Alyson~K Fletcher.
\newblock Iterative estimation of constrained rank-one matrices in noise.
\newblock In {\em 2012 IEEE International Symposium on Information Theory
  Proceedings}, pages 1246--1250. IEEE, 2012.

\bibitem[RM14]{RM-tensor-pca}
Emile Richard and Andrea Montanari.
\newblock A statistical model for tensor {PCA}.
\newblock In {\em Advances in Neural Information Processing Systems}, pages
  2897--2905, 2014.

\bibitem[RSS18]{sos-survey}
Prasad Raghavendra, Tselil Schramm, and David Steurer.
\newblock High-dimensional estimation via sum-of-squares proofs.
\newblock {\em arXiv preprint arXiv:1807.11419}, 6, 2018.

\bibitem[RV17]{RV-ogp}
Mustazee Rahman and Balint Virag.
\newblock Local algorithms for independent sets are half-optimal.
\newblock {\em The Annals of Probability}, 45(3):1543--1577, 2017.

\bibitem[SWPN09]{subm-em}
Andrey~A Shabalin, Victor~J Weigman, Charles~M Perou, and Andrew~B Nobel.
\newblock Finding large average submatrices in high dimensional data.
\newblock {\em The Annals of Applied Statistics}, 3(3):985--1012, 2009.

\bibitem[Sze39]{orthog-poly}
Gabor Szeg\"{o}.
\newblock {\em Orthogonal polynomials}, volume~23.
\newblock American Mathematical Soc., 1939.

\bibitem[VA15]{subg-it-sparse}
Nicolas Verzelen and Ery {Arias-Castro}.
\newblock Community detection in sparse random networks.
\newblock {\em The Annals of Applied Probability}, 25(6):3465--3510, 2015.

\bibitem[WBP16]{WBP-rip}
Tengyao Wang, Quentin Berthet, and Yaniv Plan.
\newblock Average-case hardness of {RIP} certification.
\newblock In {\em Advances in Neural Information Processing Systems}, pages
  3819--3827, 2016.

\bibitem[WBS16]{WBS-reduction}
Tengyao Wang, Quentin Berthet, and Richard~J Samworth.
\newblock Statistical and computational trade-offs in estimation of sparse
  principal components.
\newblock {\em The Annals of Statistics}, 44(5):1896--1930, 2016.

\bibitem[Wei20]{opt-ld-indep}
Alexander~S Wein.
\newblock Optimal low-degree hardness of maximum independent set.
\newblock {\em arXiv preprint arXiv:2010.06563}, 2020.

\bibitem[WEM19]{kik}
Alexander~S Wein, Ahmed {El Alaoui}, and Cristopher Moore.
\newblock The {Kikuchi} hierarchy and tensor {PCA}.
\newblock In {\em 2019 IEEE 60th Annual Symposium on Foundations of Computer
  Science (FOCS)}, pages 1446--1468. IEEE, 2019.

\bibitem[ZX18]{ZX-reduction}
Anru Zhang and Dong Xia.
\newblock Tensor {SVD}: Statistical and computational limits.
\newblock {\em IEEE Transactions on Information Theory}, 64(11):7311--7338,
  2018.

\end{thebibliography}

\end{document}